\documentclass[tikz,border=10pt]{article}
\UseRawInputEncoding
%% Language and font encodings
\usepackage[english]{babel}
\usepackage[utf8x]{inputenc}
\usepackage[T1]{fontenc}

\usepackage{tikz}
% Optional PGF libraries
\usepackage{pgflibraryarrows}
\usepackage{pgflibrarysnakes}
\usetikzlibrary{decorations.markings}

%% Sets page size and margins
\usepackage[a4paper,top=3cm,bottom=2cm,left=3cm,right=3cm,marginparwidth=1.75cm]{geometry}

%% Useful packages
\usepackage{amsmath}
\usepackage{graphicx}
\usepackage[colorinlistoftodos]{todonotes}
\usepackage[colorlinks=true, allcolors=blue]{hyperref} \usepackage{mathrsfs}
\usepackage{amssymb}
\usepackage{lineno}
\usepackage{bbding}
\usepackage{fancyhdr}\usepackage{graphics}
\usepackage{titlesec}\usepackage{titletoc}
\usepackage{indentfirst}
\usepackage{amsmath}\usepackage{amsfonts}\usepackage{latexsym}
\usepackage{eucal}\usepackage{epsfig}\usepackage{mathrsfs,amsmath,amsthm}
\usepackage{geometry}

\newtheorem{theorem}{Theorem}[section]
 
 \newtheorem{proposition}[theorem]{Proposition}
 
 \newtheorem{definition}[theorem]{Definition}

 \newcount\refno
 \refno=0

 %%%%%%%%%%%%%%%%% Math definitions %%%%%%%%%%%%%%%%%%

\def\CC{{\mathbb C}}

 \def\RR{{\mathbb R}}
 \def\NN{{\mathbb N}}

 \def\ZZ{{\mathbb Z}}

 \def\SF{{\mathscr F}}
 
 %\def\SS{{\mathscr S}}%

 %%%%%%%%%%%%%%%%%%%%%%%%%%%%%%%%%%%%%%

 \title{\bf Hardy spaces  associated with  One-dimensional Dunkl transform for  $\frac{2\lambda}{2\lambda+1}<p\leq1$
 \thanks{1} \footnote{E-mail:
huzhuoran010@163.com[ZhuoRan Hu].}}

\author{{ZhuoRan Hu}\\
{\small }\\
{\small Beijing 100048, China}}

\begin{document}
\maketitle \setcounter{page}{1} \pagestyle{myheadings}
 \markboth{Hu}{Hardy spaces  associated with One-dimensional Dunkl transform for  $\frac{2\lambda}{2\lambda+1}<p\leq1$}
\begin{abstract}
This paper mainly contains two parts.  In the first part, we will characterize the Homogeneous Hardy spaces on the real line  by  a  kernel with a compact support  for $\frac{1}{1+\gamma}<p\leq1$ where $0<\gamma\leq1$.

In the second part of this paper,  we will study the Hardy spaces associated with  One-Dimensional Dunkl transform. The usual analytic function is replaced  by the $\lambda$-analytic function which is based upon the $\lambda$-Cauchy-Riemann equations: $D_xu-\partial_y v=0, \partial_y u +D_xv=0$, where $D_x$ is the Dunkl operator:$D_xf(x)=f'(x)+\frac{\lambda}{x}[f(x)-f(-x)].$  The real characterization of  the Complex-Hardy Spaces $H_{\lambda}^p(\RR_+^2)$   will be obtained for $p>\frac{2\lambda}{2\lambda+1}$. We will also prove that the Real Hardy spaces $H_{\lambda}^p(\RR)$ is  Homogeneous Hardy spaces  for $\frac{1}{1+\gamma_\lambda}< p\leq1$ where  $\gamma_\lambda=\frac{1}{2(2\lambda+1)}$ ($\lambda>0$) from which we could obtain  the real-variable method of $H_{\lambda}^p(\RR)$. These results  extend  the results about the Hankel transform of Muckenhoupt and Stein in \cite{MS} to a general case and contain a number of further results.

\vskip .2in
 \noindent
 {\bf 2000 MS Classification:}
 \vskip .2in
 \noindent
 {\bf Key Words and Phrases:}  Hardy spaces, Dunkl transform,
 Dunkl setting,  Kernel, Homogeneous  Hardy spaces
 \end{abstract}

 %%%%%%%%%%%%%%%%%%%%%%%%%%%%%%%%%%%%%%%%%%%%%%%%%%%%%%%%%%%%%%%%%%%%%%%%%%%%%%%%%%%%%%%%%%
\setcounter{page}{1}
%%%%%%%%%%%%%%%%%%%%%%%%%%%%%%%%%%%%%%%%%%%%%%%%%%%%%%%%%%%%%%%%%%%%%%%%%%%%%%%%%%%%%%%%%%

\subsection{Introduction}
In 1965,   Muckenhoupt and Stein studied the  Hardy spaces associated with the Hankel transform in \cite{MS}.
 Their starting point is the generalized Cauchy-Riemann equations:
\begin{eqnarray}\label{lam-c-r}
u_x-v_y=0,\ \ \ u_y+v_x+\frac{2\lambda}{x}v=0
\end{eqnarray}
for functions $u(x,y)$, $v(x,y)$ on the domain $\{(x, y): x>0, y>0\}$. And  they introduced a notion of conjugacy associated with
the Bessel operators $\triangle_{B\lambda}$, $\lambda>0$, defined by
$$\triangle_{B\lambda} f(x)= -\frac{d^2}{dx^2}f(x)-\frac{2\lambda}{x}f(x),\ \ \ x>0.$$
They developed in this setting a theory parallel to the classical case associated to the Euclidean Laplacian. In \cite{MS}, definitions
of Poisson kernels, harmonic functions, conjugate functions and fractional integrals associated with $\triangle_{B\lambda}$ are given. Results
parallel to the classical case about $L^p((0,\infty), x^{2\lambda}dx)$-boundedness, $1\leq p<\infty$, for these operators were obtained. In sight of the whole half-plane
$\RR_{+}^{2} = \{(x,y): x \in \RR, y> 0\}$, the study in \cite{MS} is restricted to the case when u is even
in x and v is odd in x,  and the nonsymmetry of
u and v   lead to some ambiguous treatments in any further study.
And very  little progress has been made on  the real characterization and the  real-variable method  in \cite{MS} on the upper half plane for the case  $p<1$.

To generalize the results in \cite{ZhongKai Li 2} and \cite{MS},  the Harmonic Analysis associated with the Dunkl transform on the line is studied in \cite{ZhongKai Li 3}. The
$\lambda$-subharmonic function,  $\lambda$-Poisson integral,  conjugate $\lambda$-Poisson integral,
and the associated maximal functions are studied in \cite{ZhongKai Li 3}.
 The theory of the associated Complex-Hardy spaces $H^p_{\lambda}(\RR^2_+)$  for $p > p_0 = \frac{2\lambda}{2\lambda+1}$ in \cite{ZhongKai Li 3} extends the results of Muckenhoupt and Stein in \cite{MS}. However, it is difficult to generalize the results of $H_{\lambda}^p(D^+)$ on the disk  in \cite{ZhongKai Li 2} to the results of $H^p_{\lambda}(\RR^2_+)$ on the upper half plane in \cite{ZhongKai Li 3} for   $p > p_0 = \frac{2\lambda}{2\lambda+1}$.  Theory of the real characterization of $H^p_{\lambda}(\RR^2_+)$ and  the real-variable method of $H_{\lambda}^p(\RR)$ are still unknown in \cite{ZhongKai Li 3}. By the theory of Uchiyama's result in \cite{U}, $H_{\lambda}^p(\RR)$
is  Homogeneous Hardy spaces for $p_1<p\leq1$ (for some  $p_1$ close to 1) in \cite{JianQuan Liao 1}. In this paper, we will  give a real characterization of the $H^p_{\lambda}(\RR^2_+)$ for the range of $ p>\frac{2\lambda}{2\lambda+1}$, and we  also prove that $H_{\lambda}^p(\RR)$ is  Homogeneous Hardy spaces for the range of $1\geq p>\frac{1}{1+\gamma_\lambda}$, where $\gamma_\lambda=\frac{1}{2(2\lambda+1)}$. Thus   the real-variable method of $H_{\lambda}^p(\RR)$ could be obtained by the properties of Homogeneous Hardy spaces. These results extend the results in \cite{ZhongKai Li 2} and \cite{ZhongKai Li 3}.

For $0<p<\infty$, $L_{\lambda}^p(\RR)$ is the set of measurable functions satisfying
$ \|f\|_{L_{\lambda}^p}=\Big(c_{\lambda}\int_{\RR}|f(x)|^p|x|^{2\lambda}dx\Big)^{1/p}$ $<\infty$,
$c_{\lambda}^{-1}=2^{\lambda+1/2}\Gamma(\lambda+1/2)$,
and $p=\infty$ is the usual $L^\infty(\RR)$ space.
For $\lambda\geq0$, The Dunkl operator on the line is:
$$D_xf(x)=f'(x)+\frac{\lambda}{x}[f(x)-f(-x)]$$
involving a reflection part. The associated Fourier transfrom for the Dunkl setting for $f\in L_{\lambda}^1(\RR)$ is given by:
\begin{eqnarray}\label{fourier}
(\SF_{\lambda}f)(\xi)=c_{\lambda}\int_{\RR}f(x)E_\lambda(-ix\xi)|x|^{2\lambda}dx,\quad
\xi\in\RR , \,\,f\in L_{\lambda}^1(\RR).
\end{eqnarray}
$E_{\lambda}(-ix\xi)$ is the Dunkl kernel
$$E_{\lambda}(iz)=j_{\lambda-1/2}(z)+\frac{iz}{2\lambda+1}j_{\lambda+1/2}(z),\ \  z\in\CC$$
where $j_{\alpha}(z)$ is the normalized Bessel function
$$j_{\alpha}(z)=2^{\alpha}\Gamma(\alpha+1)\frac{J_{\alpha}(z)}{z^{\alpha}}=\Gamma(\alpha+1)\sum_{n=0}^{\infty}\frac{(-1)^n(z/2)^{2n}}{n!\Gamma(n+\alpha+1)} .$$
Since $j_{\lambda-1/2}(z)=\cos z$, $j_{\lambda+1/2}(z)=z^{-1}\sin z$, it follows that $E_0(iz)=e^{iz}$, and $\SF_{0}$ agrees with the usual Fourier transform. We assume $\lambda>0$ in
what follows.
And the associated $\lambda$-translation in Dunkl setting is
\begin{eqnarray}\label{tau}
 \tau_y f(x)=c_\lambda
     \int_{{\RR}}(\SF_{\lambda}f)(\xi)E(ix\xi)E(iy\xi)|\xi|^{2\lambda}d\xi,
     \ \ x,y\in{\RR} .
\end{eqnarray}
The $\lambda$-convolution$(f\ast_{\lambda}g)(x)$ of two appropriate functions $f$ and $g$ on $\RR$ associated to the $\lambda$-translation $\tau_t$ is defined by
$$(f\ast_{\lambda}g)(x)=c_{\lambda}\int_{\RR}f(t)\tau_{x}g(-t)|t|^{2\lambda}dt.$$
The "Laplace Equation" associated with the Dunkl setting is given by:
$$(\triangle_{\lambda}u)(x, y)=\left(D_x^2+ \partial_y^2\right) u(x, y)=\left(\partial_x^2+ \partial_y^2\right)u+ \frac{\lambda}{x}\partial_xu-\frac{\lambda}{x^2}\left(u(x, y)-u(-x, y)\right).$$
A $C^2$ function $u(x, y)$ satisfying $\triangle_{\lambda}u=0$ is  $\lambda$-harmonic.
When u and v are $\lambda$-harmonic functions satisfying $\lambda$-Cauchy-Riemann equations:
\begin{eqnarray}\label{a c r0}
\left\{\begin{array}{ll}
                                    D_xu-\partial_y v=0,&  \\
                                    \partial_y u +D_xv=0&
                                 \end{array}\right.
\end{eqnarray}
the function F(z)=F(x,y)=u(x,y)+iv(x,y)\,(z=x+iy)\, is a $\lambda$-analytic function.
 We define the Complex-Hardy spaces $H^p_\lambda(\RR^2_+)$ to be the set of
$\lambda$-analytic functions F=u+iv on $\RR^2_+$ satisfying
$$\|F\|_{H^p_\lambda(\RR^2_+)}=\sup\limits_{y>0}\left\{c_{\lambda}\int_{\RR}|F(x+iy)|^p|x|^{2\lambda}dx \right\}^{1/p}<\infty.$$

We use the symbol $D^+$ and $C^+$ to denote the Disk $D^+= \{(x,y)\in\RR^2,
x^2+y^2<1, y>0\}$ and half plane $C^+=\{(x,y)\in R^2:
x>0,y>0\}$. In \cite{JB}, Hardy spaces associated with Bessel operator is introduced for the case $p=1$. In \cite{BDT}  the characterization  of $H_{\lambda}^1(C^+)$ of maximal functions and atomic decomposition could be obtained by the theory in \cite{U}. In \cite{ZhongKai Li 2}, the Complex-Hardy spaces associated with the Dunkl setting on the Disk  $H_{\lambda}^p(D^+)$ have been studied for the range of  $\frac{2\lambda}{2\lambda+1}<p\leq1$. In \cite{MS2} the Homogeneous  Hardy spaces  could be characterized by  atoms for $\frac{1}{1+\gamma}<p\leq1$.  In \cite{Ho}  the real-variable theory of Homogeneous Hardy spaces  is studied by the way of Littlewood–Paley function for $p\in(\omega/(\omega+\eta), 1]$. In \cite{J.P. Anker}, the Real-Hardy spaces $H^1$ in high dimensions  have been studied. In \cite{J.P. Anker2}, the Complex-Hardy spaces  in the rational Dunkl setting $H^1$ in high dimensions  have been studied.
The following is the main structure of this paper:

\textbf{\textsl{b. Summary of Section 1.}}

In Section 1, we will characterize the Homogeneous  Hardy spaces by a kernel.   The theory of  $H_{\mu\gamma}^p(\RR)$ is studied when $\frac{1}{1+\gamma}<p\leq1$ with $0<\gamma<1$ by  \cite{MS2}. However, we will use a way different to \cite{MS2} to characterize the Homogeneous Hardy spaces $H_{\mu\beta}^p(\RR)$ when $0<p\leq1$, with $\beta>p^{-1}-1$  in Theorem\,\ref{H spa1}. For any $f\in A^{n, p}(\RR)$ and $n\geq[p^{-1}-1]$, we could obtain $$A^{n, p}(\RR)= H_{\mu\beta_2}^p(\RR)=H_{\mu\beta_1}^p(\RR),\ \hbox{for}\,\beta_1,\beta_2>p^{-1}-1$$
$$\|f\|_{A^{n, p}(\RR)}\sim\|f\|_{H_{\mu\beta_1}^p(\RR)}\sim\|f\|_{H_{\mu\beta_2}^p(\RR)}.$$

 Kernel is introduced in \cite{U} to characterize the Homogeneous Hardy spaces. Let $X$ be a topological space, $\rho$   a quasi-distance and $\mu$  a Borel doubling measure on $X$, then Hardy spaces $H^p(X)$ associated to this type $(X, \rho, \mu)$ is investigated in a series of studies. $H^p(X)$ becomes trival when $p$ is near to 1.
Let$$F(r, x, f)=\int_{X}K(r, x, y)f(y)d\mu(y)/r,\ \ \ f^\times(x)=\sup_{r>0}|F(r, x, f)| $$
where $K(r, x, y)$ is a kind of nonnegative function on $X\times X$ enjoying several properties. Uchiyama showed that for $1-p>0$ small enough, the maximal function $f^\times(x)$
can be used to characterize the atomic Hardy spaces $H^p(X)$.
\begin{theorem}\cite{U}\label{kk} $\exists p_1$ with $1\geq p_1 $,  such that the following inequality holds:
$$\|f_{\gamma}^*\|_{L^p(X, \mu)}\leq c_1\|f_1^\times\|_{L^p(X, \mu)}\ \ \hbox{for}\ p> p_1 $$
$c_1$ is a constant depending only on $X$ and $p$, $1\geq\gamma>0$.
\end{theorem}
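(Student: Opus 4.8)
The plan is to reduce the theorem to a single pointwise inequality and then invoke the Hardy--Littlewood maximal theorem on the space of homogeneous type $(X,\rho,\mu)$. First I would fix the interpretation of the objects: write $f^\times=f_1^\times$ for the radial maximal function $\sup_{r>0}|F(r,x,f)|$, and $f_\gamma^*(x)=\sup\{|F(r,y,f)|:\rho(x,y)<\gamma r\}$ for the nontangential maximal function of aperture $\gamma$. The conclusion will follow once I establish, for a suitable exponent $q\in(0,1)$ depending only on the doubling (homogeneous) dimension $\omega$ of $(X,\mu)$ and on the H\"older regularity exponent $\eta$ of the kernel $K$, the pointwise bound
$$\big(f_\gamma^*(x)\big)^q\le C\,M\big((f^\times)^q\big)(x),\qquad x\in X,$$
where $M$ is the uncentered Hardy--Littlewood maximal operator on $X$. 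Granting this, for any $p>q$ one has $p/q>1$, so $M$ is bounded on $L^{p/q}(X,\mu)$, and
$$\|f_\gamma^*\|_{L^p}^q=\big\|(f_\gamma^*)^q\big\|_{L^{p/q}}\le C\big\|M((f^\times)^q)\big\|_{L^{p/q}}\le C'\big\|(f^\times)^q\big\|_{L^{p/q}}=C'\|f^\times\|_{L^p}^q,$$
which is exactly the asserted inequality with $p_1=q$.

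Second, I would introduce the tangential maximal function with decay,
$$f_N^{**}(x)=\sup_{r>0,\,y\in X}|F(r,y,f)|\Big(1+\frac{\rho(x,y)}{r}\Big)^{-N},$$
for a parameter $N>0$ to be fixed. Since on the cone $\rho(x,y)<\gamma r$ the weight is bounded below by $(1+\gamma)^{-N}$, one has $f_\gamma^*(x)\le(1+\gamma)^N f_N^{**}(x)$; thus it suffices to prove the displayed pointwise bound with $f_\gamma^*$ replaced by $f_N^{**}$.

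Third --- and this is where the kernel hypotheses enter decisively --- I would run a spreading-and-iteration argument to estimate $f_N^{**}$. Fixing $x$ and a pair $(r,y)$ and a small $\sigma>0$, I use the size bound $K(r,y,\cdot)\lesssim\mu(B(y,r))^{-1}$ together with the H\"older continuity $|K(r,y,w)-K(r,z,w)|\lesssim(\rho(y,z)/r)^\eta\mu(B(y,r))^{-1}$ to compare $F(r,z,f)$ with $F(r,y,f)$ for $z\in B(y,\sigma r)$, obtaining $|F(r,z,f)-F(r,y,f)|\le C\sigma^\eta\,G(r,y)$, where $G(r,y)$ is the local supremum of $|F(s,z,f)|$ over $z\in B(y,\sigma r)$ and comparable scales $s\sim r$. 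Two cases arise. Either the error is at most half of $|F(r,y,f)|$, in which case $|F(r,z,f)|\ge\tfrac12|F(r,y,f)|$ throughout $B(y,\sigma r)$, so $f^\times\ge\tfrac12|F(r,y,f)|$ on a ball whose measure is comparable (by doubling) to that of $B(x,C(1+\rho(x,y)/r)r)$, which feeds directly into $M((f^\times)^q)(x)$ after absorbing the factor $(1+\rho(x,y)/r)^{\omega/q}$; or the error dominates, forcing $G(r,y)\ge c\,\sigma^{-\eta}|F(r,y,f)|$ and producing a new pair $(s,z)$ whose weighted value exceeds that of $(r,y)$ by a fixed factor. Since $f_N^{**}<\infty$, the second alternative cannot repeat indefinitely, so the first must eventually occur, and tracking the accumulated constants yields the bound.

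The main obstacle is the bookkeeping in this iteration: one must choose $\sigma$ and $N$ so that each application of the ``bad'' case strictly increases the weighted value while the factors $(1+\rho/r)^{N}$ and the doubling factor $(1+\rho/r)^{\omega}$ stay controlled, and so that the resulting geometric series of constants converges. This is precisely what forces $q$ (hence $p_1$) to lie in a range of the form $\omega/(\omega+\eta)<q\le1$: the H\"older exponent $\eta$ of $K$ measures how much regularity is available to spread a large value of $F$ onto a whole ball, and $q$ may be pushed below $1$ only down to that threshold. The quasi-triangle inequality for $\rho$ and the doubling property of $\mu$ are used repeatedly to pass between balls centered at $x$, $y$, and $z$ and to compare their measures.
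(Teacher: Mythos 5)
Your proposal founders on two points, one of interpretation and one of substance. First, in Theorem \ref{kk} (and throughout \cite{U} and this paper) $f_{\gamma}^*$ is \emph{not} a nontangential maximal function of $F(r,y,f)$: by Definition \ref{11}, Formula (\ref{sr3}), it is the grand maximal function $\sup_{\phi,r}\left|\int f\phi\,d\mu\right|/r$ taken over all test functions $\phi$ supported in $B_\mu(x,r)$ with $\|\phi\|_{L^\infty}\le 1$ and $L(\phi,\gamma)\le r^{-\gamma}$. The nontangential object you work with is what the paper calls $f_{1\bigtriangledown_{\gamma}}^\times$, and the inequality $\|f_{1\bigtriangledown_{\gamma}}^\times\|_{L^p}\lesssim\|f_1^\times\|_{L^p}$ is only one piece (Proposition \ref{no11}) of the full statement. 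The heart of the theorem --- controlling integrals of $f$ against \emph{arbitrary} H\"older test functions by the radial maximal function of the single kernel $K_1$ --- is entirely absent from your plan; in the paper this is Proposition \ref{no6}, which synthesizes an arbitrary test function from (smoothed copies of) the kernel via a Calder\'on-type reproducing formula (Proposition \ref{no3} applied to the regularizations of Proposition \ref{exx6}), and in Uchiyama's original argument it is his decomposition/iteration lemma. So even if your outline were airtight, it would prove a strictly weaker inequality than the one asserted.

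Second, the step your iteration leans on is false as stated: H\"older continuity of $K$ gives a pointwise bound on the difference kernel $K(r,z,\cdot)-K(r,y,\cdot)$, but it does not give $|F(r,z,f)-F(r,y,f)|\le C\sigma^{\eta}G(r,y)$ with $G$ a local supremum of $|F|$. The quantity $\int\left(K(r,z,w)-K(r,y,w)\right)f(w)\,d\mu(w)/r$ involves a signed, possibly highly oscillating $f$; a size bound on the difference kernel controls it only through $\int|f|\,d\mu$, not through values of $F$, unless one first writes the difference kernel as a superposition of genuine kernels $K(s,w,\cdot)$ with summable coefficients --- which is again exactly the reproducing-formula machinery you have not supplied. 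The paper's way around this (proof of Proposition \ref{no11}) is different: the normalized difference is itself an admissible test function of H\"older exponent $\gamma-\alpha$ (Proposition \ref{kernel1}), hence the difference is bounded by $C\delta^{\alpha}f_{\gamma-\alpha}^*(x)$, and the loop is closed by a Fefferman--Stein good-set/bad-set absorption over $F=\{x: f_{\gamma-\alpha}^*(x)\le\sigma f_{2\bigtriangledown_{\gamma}}^\times(x)\}$ together with the pointwise bound by $M_\mu[(Df)^q]^{1/q}$, rather than by an increasing-weighted-value iteration. That route also sidesteps the a priori finiteness of your $f_N^{**}(x)$, which your claim that ``the bad case cannot repeat indefinitely'' requires but which is not known in advance for general $f$; the paper avoids this by working first with $f\in L^1(\RR,\mu)$ and extending by density and limiting arguments. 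As it stands, your outline establishes neither the grand-maximal bound nor, absent the missing decomposition, even the nontangential one.
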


Notice that the  topological space $X$ of Real-Hardy spaces $H_{\lambda}^p(\RR)$ is  $\RR$. Thus we will extend  Uchiyama's result in \cite{U} from $p_1<p\leq1$ (for some  $p_1$ close to 1) to the range $\frac{1}{1+\gamma}<p\leq1$ $(0<\gamma<1)$ when the topological space $X$ is $\RR$ with a quasi-distance $\rho$.
 Then  we will  obtain Theorem\,\ref{important}:   the maximal function $f_1^\times(x)$
can be used to  characterize the atomic Hardy spaces $H^p_{\mu}(\RR)$: for  $f\in S'(\RR, d_\mu x)$, $\frac{1}{1+\gamma}<p\leq1$, $(0<\gamma<1)$
$$\|f_{\gamma}^*(x)\|_{L^p(\RR, \mu)}\sim\| f_1^\times(x)\|_{L^p(\RR, \mu)}\sim\| f_{1\bigtriangledown_{\gamma}}^\times(x)\|_{L^p(\RR, \mu)}.$$
where the kernels satisfy Definition\,\ref{us14}.
(We do not need the kernels $K_1(r, x, y)$ or $K_2(r, x, y)$  to be continuous on $r$ variable.)

\textbf{\textsl{c. Summary of Section 2.}}
Section 2 mainly deals with the real characterization of $H^p_{\lambda}(\RR^2_+)$ and  the real-variable method of $H_{\lambda}^p(\RR)$. One of our  results is that we will prove   Theorem\,\ref{u2}  in \S\ref{area}.  We will use another way different from Burkholder-Gundy-Silverstein theorem in\,\cite{BGS}.

Then we will characterize the Real-Hardy spaces $H_{\lambda}^p(\RR)$  by Definition\,\ref{o1} and Theorem\,\ref{tan12}.  The relation of Complex-Hardy spaces $H_{\lambda}^p(\RR_+^2)$,  Real-Hardy spaces $H_{\lambda}^p(\RR)$ and Homogeneous Hardy spaces is characterized by Definition\,\ref{o1}, Theorem\,\ref{tan12} and Proposition\,\ref{s3}.

In \S\ref{homo}, the $\lambda$-Poisson kernel is introduced.   We will prove   that the Real-Hardy spaces $H^p_{\lambda}(\RR)$ is a kind of Homogeneous Hardy spaces for $ \frac{1}{1+\gamma_\lambda}<p\leq1$ in Theorem\,\ref{tan12}.  Thus  the $H_{\lambda}^p(\RR)$   can be characterized by the maximal functions in Homogeneous Hardy spaces, and the definition of $H_{\lambda}^p(\RR)$  can be evolved from the properties of $\lambda$-analytic functions.

\textbf{\textsl{Main Result}}
The main result of this paper is Theorem\,\ref{u2} and Theorem\,\ref{tan12}. By Theorem\,\ref{u2}, we could know that $H^p_{\lambda}(\RR_+^2)$
can be characterized by $H_{\lambda}^p(\RR)$ for $ \frac{2\lambda}{2\lambda+1}<p\leq1$. By Theorem\,\ref{tan12},  $H_{\lambda}^p(\RR)$ is Homogeneous Hardy spaces for $ \frac{1}{1+\gamma_\lambda}<p\leq1$. The Homogeneous Hardy spaces have many good properties including atomic decomposition.

\textbf{\textsl{e. Notation.}}
 Let $S(\RR, dx)$  the space of  $C^{\infty}$ functions on $\RR$ with the Euclidean distance rapidly decreasing together with their derivatives(Classic Schwartz Class), $L_{\lambda,{\rm loc}}(\RR)$  the set of locally integrable functions on $\RR$ associated with the measure $|x|^{2\lambda}dx$.  $\SF_{\lambda}$ is the Dunkl transform and $\SF$ the Fourier transform.

We use $A\lesssim B$ to denote the estimate $|A|\leq CB$ for some  absolute universal constant $C>0$, which may vary from line to line,
$A\gtrsim B$ to denote the estimate $|A|\geq CB$ for some absolute universal constant $C>0$, $A\sim B$  to denote the estimate $|A|\leq C_1B$, $|A|\geq C_2B$ for some absolute universal constant $C_1, C_2$.

We use  $B(x_0, r_0)$ or $B_{\lambda}(x_0, r_0)$  to denote the ball in the homogenous space in the Dunkl setting: $B(x_0, r_0)=B_{\lambda}(x_0, r_0)=\{y: d_{\lambda}(y, x_0)<r_0\},$
$d_{\lambda}(x, y)$ to denote the distance in the homogeneous space associated with Dunkl setting: $d_{\lambda}(x, y)=\left|(2\lambda+1)\int_y^x|t|^{2\lambda}dt\right|$,  $p_0$  to denote $p_0 = \frac{2\lambda}{2\lambda+1}$, $\Omega$ to denote a domain and $\partial\Omega$ to denote the boundary of $\Omega$, $\gamma_\lambda$ to denote $\gamma_\lambda=\frac{1}{2(2\lambda+1)}$,
$d_{\mu}(x, y)$ to denote the distance in the homogeneous space associated with a positive Radon measure $\mu$ on
the real line satisfying $\mu\left(x, y\right)=\int_y^x d\mu(t)$ and $d_\mu(x, y)= |\mu(x, y)|$,  $B_\mu(x_0, r_0)$ to denote the ball in the homogenous space: $B_\mu(x_0, r_0)=\{y: d_{\mu}(y, x_0)<r_0\}.$ For a measurable set $E\subseteq \RR$, we use $E^c$ to denote  the set $E^c=\{x\in\RR: x\notin E\}.$ For two sets A and B, $A\backslash B$ means that $A\bigcap B^c.$ Thoughout this paper, we assume $\lambda>0$ and $0<\gamma\leq1$.  In section\,\ref{sss5}, $\psi_{t}(x)$ denotes
$$\psi_{t}(x)=\left(\frac{1}{t}\right)^{2\lambda+1}\psi\left(\frac{x}{t}\right).$$

\section{ Homogeneous Hardy spaces on $\RR$ with a  kernel}\label{sss4}
In  this section\,\ref{sss4}, we will characterize the Homogeneous Hardy spaces on the real line by a kernel. We will extend the Uchiyama's result in \cite{U}  when the topological space $X$ is $\RR$ with a quasi-distance $\rho$.
\begin{definition}[$\mathbf{d_\mu(x, y)}$]
 $d_\mu(x, y)$ is a quasi-distance on the real line $\RR$ endowed with a positive Radon measure $\mu$, $\mu\left(x, y\right)=\int_y^x d\mu(t)$, $d_\mu(x, y)= |\mu(x, y)|$ , satisfying the following conditions (for some fixed constant $A>0$):\\
{\rm(i)} \   $d_\mu(x, y)=d_\mu(y, x)$ , for any $x, y\in \RR$; \\
{\rm(ii)}\  $d_\mu(x, y)>0$ , if $x\neq y$; \\
{\rm(iii)}\ $d_\mu(x, z)\leq A\left(d_\mu(x, y)+d_\mu(y, z)\right)$, for any $x, y, z \in \RR$ \\
{\rm(iv)}\  $A^{-1}r\leq\mu \left(B_\mu(x, r)\right)\leq r$, for any $r>0$.  \\
{\rm(v)}\  $B_\mu(x, r)=\left\{y\in \RR: d_\mu(x, y)<r\right\}$ form a basis of open neighbourhoods of the point x.\\
{\rm(vi)}\  $f(u)=\mu(x, u)$ is a continuous bijection on $\RR$ for any fixed $ x\in\RR$.
\end{definition}

\begin{definition}[$\mathbf{S(\RR, d_{\mu}x)}$]
The derivative associated with the  quasi-distance $d_\mu(x, y)$  is defined as follows:
$$\frac{d}{d_\mu x}\phi(x)=\lim_{\varepsilon\rightarrow0, d_\mu(x, y)<\varepsilon}\frac{\phi(y)-\phi(x)}{\mu(y, x)}.$$
Then the  Schwartz Class S associated with the  quasi-distance $d_\mu(x, y)$ could be defined as:
$$\|\phi\|_{(\alpha, \beta)_{\mu}}=\sup_{x\in\RR}\left|\left(d_\mu(x, 0)\right)^{\alpha}\left(\frac{d}{d_\mu x}\right)^{\beta}\phi(x)\right|<\infty$$
for  natural numbers $\alpha$ and $\beta$. This kind of Schwartz Class is denoted as $S(\RR, d_{\mu}x)$.

$\phi(u)\in C(\RR, dx)$ means $\phi(u) \rightarrow \phi(u_0)$ as $u \rightarrow u_0$ in Euclid space, $\phi(u)\in C(\RR, d_\mu x)$ means $\phi(u) \rightarrow \phi(u_0)$ as $d_\mu(u, u_0)\rightarrow 0$.
\end{definition}
\begin{proposition}\label{sk6}
For any $\phi \in S(\RR, d_\mu x)$  with $supp\,\phi(u)\subset B_{\mu}(x_0, r_0)$, there exists $\psi(t)\in S(\RR, dt)$ with $supp\,\psi(t)\subseteq [-1, 1]$  satisfying
$\psi\left(\frac{\mu(x_0, u)}{r_0} \right)=\phi(u)$ for $u\in B_{\mu}(x_0, r_0)$ in $S(\RR, d_\mu x)$ space.
\end{proposition}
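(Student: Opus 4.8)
The plan is to make the substitution $t=\mu(x_0,u)/r_0$ explicit and to show that, although $\mu$ need carry no smooth density, this substitution behaves like a smooth change of variables once derivatives are measured in the appropriate calculus. Write $F(u)=\mu(x_0,u)$. By condition (vi), $F$ is a continuous bijection of $\RR$; a continuous bijection of $\RR$ is monotone with continuous inverse, so $u\mapsto F(u)/r_0$ is a homeomorphism of $\RR$ whose inverse I call $g$. I then define $\psi:=\phi\circ g$, so that $\psi(\mu(x_0,u)/r_0)=\phi(u)$ holds by construction. Since $\operatorname{supp}\phi$ is a closed subset of the open ball $B_\mu(x_0,r_0)=\{u:|F(u)|<r_0\}$ and $|F(u)|\to\infty$ as $|u|\to\infty$, the function $\phi$ (and, as will be seen, each of its $d_\mu$-derivatives) vanishes near $\partial B_\mu(x_0,r_0)$; in the $t$-variable this region is $\{|t|\ge 1\}$, so $\operatorname{supp}\psi\subseteq[-1,1]$.

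The heart of the argument is that $F$ has constant $d_\mu$-derivative. Using $F(y)-F(u)=\int_y^u d\mu=\mu(u,y)=-\mu(y,u)$ together with $\mu(y,u)\neq 0$ for $y\neq u$ (condition (ii)), I get
\[
\frac{d}{d_\mu u}F(u)=\lim_{y\to u}\frac{F(y)-F(u)}{\mu(y,u)}=\lim_{y\to u}\frac{\mu(u,y)}{\mu(y,u)}=-1 .
\]
This lets me transport difference quotients directly, and in particular I never need $g$ to be classically differentiable (it generally is not). Fixing $t_0$ and writing $u_0=g(t_0)$, $u=g(t)$, one has $t-t_0=\bigl(F(u)-F(u_0)\bigr)/r_0=-\mu(u,u_0)/r_0$ and $u\to u_0$ as $t\to t_0$ by continuity of $g$, whence
\[
\psi'(t_0)=\lim_{t\to t_0}\frac{\phi(u)-\phi(u_0)}{t-t_0}=-r_0\lim_{u\to u_0}\frac{\phi(u)-\phi(u_0)}{\mu(u,u_0)}=-r_0\,\frac{d}{d_\mu u}\phi(u_0),
\]
the limit existing because $\phi\in S(\RR,d_\mu x)$.

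Since $\frac{d}{d_\mu u}\phi$ again lies in $S(\RR,d_\mu x)$, I iterate the same computation and obtain, for every $\beta$,
\[
\psi^{(\beta)}(t)=(-r_0)^{\beta}\Bigl[\bigl(\tfrac{d}{d_\mu u}\bigr)^{\beta}\phi\Bigr]\bigl(g(t)\bigr).
\]
In particular every $\psi^{(\beta)}$ exists; being differentiable it is continuous, so $\psi\in C^\infty(\RR,dt)$. As $\operatorname{supp}\psi\subseteq[-1,1]$ and $\|\phi\|_{(0,\beta)_\mu}<\infty$ bounds $\psi^{(\beta)}$, every classical Schwartz seminorm $\sup_t|t^{\alpha}\psi^{(\beta)}(t)|$ is finite; hence $\psi\in S(\RR,dt)$ with support in $[-1,1]$, as required.

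I expect the only genuine obstacle to be the passage between the measure-derivative $\frac{d}{d_\mu u}$ and the ordinary derivative $\frac{d}{dt}$: one must justify substituting $u=g(t)$ inside the limits without any smoothness of $g$. The identity $\frac{d}{d_\mu u}F=-1$ is exactly what removes this difficulty, and the remainder is bookkeeping about supports and seminorms.
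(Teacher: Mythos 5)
Your proposal is correct and follows essentially the same route as the paper: define $\psi=\phi\circ g$ with $g$ the inverse of $u\mapsto\mu(x_0,u)/r_0$, transport difference quotients through the identity $|t-t_0|=d_\mu(u,u_0)/r_0$ to get $\psi^{(\beta)}(t)=(-r_0)^{\beta}\bigl[\bigl(\tfrac{d}{d_\mu u}\bigr)^{\beta}\phi\bigr](g(t))$, and read off membership in $S(\RR,dt)$. Your write-up is in fact tighter than the paper's at the points the paper leaves implicit — the support claim, the homeomorphism property of $g$, and the logical direction (deducing existence of $\psi^{(\beta)}$ from the $d_\mu$-derivatives of $\phi$ rather than presupposing it) — but these are refinements of the same argument, not a different one.
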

\begin{proof}
Let $f(u)=\frac{\mu(x_0, u)}{r_0} $ for fixed $x_0\in\RR$ and $r_0>0$. Thus $f(u)$ is a  bijection and  has an inverse function. Let $g(x)$ to be the  inverse function of $f(x)$: $g\circ f(u)=u.$
Thus for any $ \phi \in S(\RR, d_\mu x)$, we could write $ \phi $ as:
$$\phi(u)=\phi \left(g\circ f(u)\right)=\phi \circ  g\left(\frac{\mu(x_0, u)}{r_0} \right).$$
 We use $\psi$ to denote $\psi=\phi \circ g$ and $\displaystyle{\psi^{(n)}(t)}$  to denote $\displaystyle{\psi^{(n)}(t)=\frac{d^n}{dt^n}\psi(t)}$. Then we could deduce that:
\begin{eqnarray*}
\frac{d}{d_\mu x}\phi(x)&=&\lim_{\varepsilon\rightarrow0, d_\mu(x, y)<\varepsilon}\frac{\phi(y)-\phi(x)}{\mu(y, x)}
\\&=&\lim_{\varepsilon\rightarrow0, d_\mu(x, y)<\varepsilon}-\frac{1}{r_0}\frac{\psi\left(\frac{\mu(x_0, y)}{r_0}\right)-\psi\left(\frac{\mu(x_0, x)}{r_0}\right)}{\frac{\mu(x_0, y)}{r_0}-\frac{\mu(x_0, x)}{r_0}}
\\&=&-\frac{1}{r_0}\psi^{(1)}\left(\frac{\mu(x_0, x)}{r_0} \right).
\end{eqnarray*}
Thus
\begin{eqnarray*}
\left(\frac{d}{d_{\mu}x}\right)^n\phi(x)=\left(-\frac{1}{r_0}\right)^n\psi^{(n)}\left(\frac{\mu(x_0, x)}{r_0} \right).
\end{eqnarray*}
Notice that $\mu$ is a bijection on $\RR$, together with the fact $\phi\in S(\RR, d_\mu  x)$, we could deduce that $\psi \in S(\RR, dx)$. This proves the proposition.
\end{proof}
In the same way as Proposition\,\ref{sk6}, we could obtain:
\begin{proposition}\label{sk66}
For any $ \phi \in C(\RR, d_\mu x)$, there exists $\psi\in C(\RR, dx)$,  satisfying
$\psi\left(\frac{\mu(x_0, u)}{r_0} \right)=\phi(u)$  in  $C(\RR, d_\mu x)$ space.
\end{proposition}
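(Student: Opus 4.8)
The plan is to follow the argument of Proposition \ref{sk6} almost verbatim, simply dropping the estimates on derivatives since here only continuity must be controlled. First I would set $f(u)=\mu(x_0,u)/r_0$ for the fixed $x_0\in\RR$ and $r_0>0$. By condition (vi) the map $u\mapsto\mu(x_0,u)$ is a continuous bijection of $\RR$, and dividing by the positive constant $r_0$ preserves this, so $f:\RR\to\RR$ is a continuous bijection. A continuous bijection of $\RR$ onto itself is strictly monotone, hence a homeomorphism; therefore its inverse $g=f^{-1}$ is continuous for the Euclidean metric and satisfies $g\circ f(u)=u$. Setting $\psi:=\phi\circ g$ then yields, exactly as in Proposition \ref{sk6},
$$\psi\left(\frac{\mu(x_0,u)}{r_0}\right)=\phi\circ g\circ f(u)=\phi(u),$$
so the required identity is immediate from the construction.

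It remains to verify that $\psi\in C(\RR,dx)$, i.e. that $\psi$ is continuous for the Euclidean metric, and the one point needing care is the dictionary between the two notions of convergence. Suppose $t\to t_0$ in the Euclidean sense and write $u=g(t)$, $u_0=g(t_0)$; since $g$ is Euclidean-continuous, $u\to u_0$ in the Euclidean metric. Using the additivity $\mu(u,u_0)=\mu(x_0,u_0)-\mu(x_0,u)$, which follows from $\mu(x,y)=\int_y^x d\mu(t)$, together with the continuity of $u\mapsto\mu(x_0,u)$ granted by condition (vi), I obtain
$$d_\mu(u,u_0)=|\mu(u,u_0)|=|\mu(x_0,u_0)-\mu(x_0,u)|\to 0.$$
Because $\phi\in C(\RR,d_\mu x)$ means precisely that $\phi(u)\to\phi(u_0)$ whenever $d_\mu(u,u_0)\to0$, it follows that $\psi(t)=\phi(u)\to\phi(u_0)=\psi(t_0)$, which proves $\psi\in C(\RR,dx)$.

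The only genuine obstacle here is mild: it amounts to establishing the equivalence of the $d_\mu$-topology and the Euclidean topology on $\RR$, and this is exactly where condition (vi) does the work. It is used once through the continuity of the bijection $\mu(x_0,\cdot)$ (to pass from Euclidean to $d_\mu$ convergence in the display above) and once through the continuity of its inverse (to produce the Euclidean-continuous $g$). No derivative bounds are needed, so the Schwartz seminorm estimates that appear in Proposition \ref{sk6} are replaced by this single continuity check.
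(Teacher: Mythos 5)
Your proof is correct and follows essentially the same route as the paper, which simply derives Proposition \ref{sk66} "in the same way as" Proposition \ref{sk6}: define $f(u)=\mu(x_0,u)/r_0$, invert it using condition (vi), and set $\psi=\phi\circ g$. You in fact supply more detail than the paper does (the monotonicity/homeomorphism argument and the explicit translation between Euclidean and $d_\mu$ convergence), which is exactly the continuity bookkeeping that replaces the seminorm estimates of Proposition \ref{sk6}.
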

By  Proposition\,\ref{sk66}, together with the fact that $S(\RR, dx)$ is dense in $C_0(\RR, dx)$, we could know that $S(\RR, d_\mu x)$ is dense in $C_0(\RR, d_\mu x)$.

\begin{definition}[ $\mathbf{S'(\RR, d_{\mu}x)}$]\label{01}
A tempered distribution is a linear functional on $S(\RR, d_{\mu}x)$ that is continuous in the topology on $S(\RR, d_{\mu}x)$ induced by this family of seminorms. We shall refer to tempered distributions simply as distributions. Similar to the classical definition, we say a distribution $f$ is bounded if
$$\left|\int_\RR f(y) \phi(y) d\mu(y)\right| \in  L^\infty(\RR, \mu)$$
whenever  $\phi \in S(\RR, d_\mu x)$. We use  $S'(\RR, d_{\mu}x)$ to denote the bounded distributions.
\end{definition}

Then we will define the kernels  $K_1(r, x, y)$ and $K_2(r, x, y)$ as follows:
\begin{definition}[$\mathbf{kernel}\,\mathbf{K_1(r, x, y)}$]\label{us14}
For  constant $A > 0$ and
 constant $1\geq\gamma> 0$, let $K_1(r, x, y)$ be a nonnegative continuous function defined on $\RR^{+}\times \RR\times \RR$ satisfying the following conditions:

 {\rm(i)} \  $K_1(r, x, x) > 1/A, \  \hbox{for}\ \ r>0, x\in\mathbb R$;

{\rm(ii)}  \ $0\le K_1(r, x, t)\le
1, \  \hbox{for} \ \ r>0, x,
t\in\mathbb R$;

{\rm(iii)} \ For $r>0, x, t,z\in\mathbb R$
$$
| K_1(r, x, t)-K_1(r, x, z))|\le
\Big(\frac{d_\mu(t,z)}{r}\Big)^\gamma.$$

{\rm(iv)} $K_1(r, x, y)=0$, if $d_\mu(x, y)>r.$

{\rm(v)} $K_1(r, x, y)=K_1(r, y, x)$.

\end{definition}

\begin{definition}[$\mathbf{kernel}\,\mathbf{K_2(r, x, y)}$]\label{us5} For constants $C_i > 0$, $i=1, 2, 3, 4$ and
 constant $1\geq\gamma> 0$, let $K_2(r, x, y)$ be a nonnegative continuous function defined on $\RR^{+}\times \RR\times \RR$ satisfying the following conditions:

 {\rm(i)} \  $K_2(r, x, x) > C_1, \  \hbox{for}\ \ r>0, x\in\mathbb R$;

{\rm(ii)}  \ $0\le K_2(r, x, t)\le
C_2\Big(1+\frac{d_\mu(x,t)}{r}\Big)^{-\gamma-1}, \  \hbox{for} \ \ r>0, x,
t\in\mathbb R$;

{\rm(iii)} \ For $r>0, x, t,z\in\mathbb R$, if $\frac{d_\mu(t,
z)}{r}\leq C_3\min\{1+\frac{d_\mu(x,t)}{r}, 1+\frac{d_\mu(x,z)}{r}\} $, then
$$
| K_2(r, x, t)-K_2(r, x, z))|\le
C_4\Big(\frac{d_\mu(t,z)}{r}\Big)^\gamma\Big(1+\frac{d_\mu(x,t)}{r}\Big)^{-2\gamma-1}.$$

{\rm(iv)} $K_2(r, x, y)=K_2(r, y, x)$.
\end{definition}

\begin{definition}[$\mathbf{maximal\  functions}$]\label{11}
For  $f\in L^1(\RR, \mu)$, $0<\gamma\leq1$, let
$$F_i(r, x, f)=\int_\RR K_i(r, x, y)f(y)d\mu(y)/r,\ \ \ f_i^\times(x)=\sup_{r>0}|F_i(r, x, f)|,  \ \ \ f_{i\bigtriangledown_{\gamma}}^\times(x)=\sup_{r>0, d_\mu(s, x)< r}|F_i(r, s, f)|$$
for $i=1, 2.$ We use $L(f, 0)$ and $L(f, \alpha)$ to denote as following:
$$L(f, 0)=\sup_{x\in \RR, r>0}\inf_{c\in\RR}\int_{B_\mu(x, r)}|f(y)-c|d\mu(y)/r,$$
$$ L(f, \alpha)=\sup_{x, y\in\RR, x\neq y}|f(x)-f(y)|/d_\mu(x, y)^{\alpha},\ \ \ \hbox{for} \  1\geq\alpha>0.$$
We use $f_{\gamma}^*(x)$ to denote as:
\begin{eqnarray}\label{sr3}
f_{\gamma}^*(x)=\sup_{\phi,r}\left\{\left|\int_\RR f(y)\phi(y)d\mu(y)\right|/r:r>0,{\rm
supp}\,\phi\subset B_\mu(x, r), L(\phi, \gamma)\leq r^{-\gamma}, \|\phi\|_{L^{\infty}}\leq1 \right\}.
\end{eqnarray}
The Hardy-Littlewood maximal operator $M_\mu$  is defined as:
$$ M_\mu f(x)=\sup_{r>0}\frac{1}{r}\int_{B_\mu(x, r)}|f(y)|d\mu(y).$$
Then $M_\mu$ is weak-(1, 1) bounded and (p, p) bounded for $p>1$.
\end{definition}

\begin{definition}[$\mathbf{maximal\  functions}$]\label{1}
For  $f\in S'(\RR, d_{\mu}x)$, $0<\gamma\leq1$,
we use  $f_{S\gamma}^*(x)$ to denote as:
$$f_{S\gamma}^*(x)=\sup_{\phi,r}\left\{\left|\int_\RR f(y)\phi(y)d\mu(y)\right|/r:r>0,{\rm
supp}\,\phi\subset B_\mu(x, r), L(\phi, \gamma)\leq r^{-\gamma}, \phi\in S(\RR, d_\mu x), \|\phi\|_{L^{\infty}}\leq1 \right\} .$$
\end{definition}
From the Definition\,\ref{01}, we could deduce that the above Definition\,\ref{11} and Definition\,\ref{1}  associated with the maximal functions are meaningful.
\begin{definition}[$\mathbf{\phi^{(n)}(x)}$,  $\mathbf{H^{\alpha}(\phi)}$,   $\mathbf{[\phi]_{\beta}}$]
For $\phi\in C(\RR, dx)$, $n\in \NN$, $1\geq\alpha\geq0$ and $\beta>0$, we use $\{\beta\}$, $[\beta]$, $H^{\alpha}(\phi)$ and $\phi^{(n)}(x)$ to denote as:
 $$\{\beta\}=\beta-[\beta];\ \ \ \  [\beta]=\max\{n: n\in\ZZ; n\leq\beta\};$$
$$H^{\alpha}(\phi)=\sup_{x, y\in\RR, x\neq y}|\phi(x)-\phi(y)|/|x-y|^{\alpha};$$
$$\phi^{(n)}(x)=\frac{d^n}{dx^n}\phi(x);\ \ \ \ [\phi]_{\beta}=H^{\{\beta\}}(\phi^{([\beta])}).$$

Thus we could see that if $0<\beta\leq1$  $$[\phi]_{\beta}=H^{\beta}(\phi).$$

\end{definition}
Thus it is clear that the following Propositions\,\ref{holder} and \ref{holder2} hold:
\begin{proposition}\label{holder}
For $\phi\in C(\RR, dx)$ satisfying $H^{\alpha}(\phi)\leq1$, $|\phi|\leq1$ ($1\geq\alpha\geq0$, $\beta>0$), there exists $\phi_{\tau}(x)\in S(\RR, dx)$ satisfying the following:

{\rm(i)}$\lim_{\tau\rightarrow0}\|\phi_{\tau}(x)-\phi(x)\|_{\infty}=0,$

{\rm(ii)}$\|\phi_{\tau}(x)\|_{\infty}\leq1,$ \ \ $H^{\alpha}\phi_{\tau}\leq1,$

{\rm(iii)}$H^{\alpha}(\phi^{(1)}_{\tau})\leq C\frac{1}{\tau^{\alpha+1}}.$
\end{proposition}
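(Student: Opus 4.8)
The plan is to construct $\phi_\tau$ by \emph{mollification}. Fix once and for all a nonnegative $\eta\in S(\RR,dx)$ (one may even take $\eta\in C^\infty$ with $\mathrm{supp}\,\eta\subset[-1,1]$) normalized so that $\int_\RR\eta(w)\,dw=1$, put $\eta_\tau(x)=\tau^{-1}\eta(x/\tau)$, and define $\phi_\tau=\eta_\tau*\phi$, i.e. $\phi_\tau(x)=\int_\RR\eta_\tau(x-v)\phi(v)\,dv$. Since $\eta_\tau$ is smooth and $\phi$ is bounded, $\phi_\tau\in C^\infty(\RR)$ with all derivatives bounded; the only delicate issue, genuine Schwartz decay, is deferred to the last paragraph. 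The exponent $\beta$ in the hypotheses plays no role in the conclusions and will not be used.

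For property (ii), since $\eta_\tau\ge0$ and $\int\eta_\tau=1$ the convolution is an average, so $\|\phi_\tau\|_\infty\le\|\phi\|_\infty\le1$. Writing $\phi_\tau(x)-\phi_\tau(y)=\int_\RR\eta_\tau(u)(\phi(x-u)-\phi(y-u))\,du$ and using $|\phi(x-u)-\phi(y-u)|\le H^{\alpha}(\phi)\,|x-y|^{\alpha}\le|x-y|^{\alpha}$ gives $H^{\alpha}(\phi_\tau)\le H^{\alpha}(\phi)\le1$. For property (i), I subtract the constant $\phi(x)=\int_\RR\eta_\tau(u)\phi(x)\,du$ to obtain $\phi_\tau(x)-\phi(x)=\int_\RR\eta_\tau(u)(\phi(x-u)-\phi(x))\,du$, whence
$|\phi_\tau(x)-\phi(x)|\le H^{\alpha}(\phi)\int_\RR|u|^{\alpha}\eta_\tau(u)\,du=\tau^{\alpha}H^{\alpha}(\phi)\int_\RR|w|^{\alpha}\eta(w)\,dw\le C\tau^{\alpha}$, uniformly in $x$; this tends to $0$ as $\tau\to0$ when $\alpha>0$ (and for $\alpha=0$ the conclusion still follows from uniform continuity of $\phi$, which is automatic on a compact support).

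For property (iii), differentiate under the integral sign: $\phi_\tau^{(1)}(x)=\int_\RR\eta_\tau'(x-v)\phi(v)\,dv=\int_\RR\eta_\tau'(u)\phi(x-u)\,du$, the integral converging absolutely because $\eta_\tau'\in L^1$ and $\phi$ is bounded. Forming the difference at $x$ and $y$ and again moving the increment onto $\phi$, $\phi_\tau^{(1)}(x)-\phi_\tau^{(1)}(y)=\int_\RR\eta_\tau'(u)(\phi(x-u)-\phi(y-u))\,du$, so that $|\phi_\tau^{(1)}(x)-\phi_\tau^{(1)}(y)|\le|x-y|^{\alpha}\int_\RR|\eta_\tau'(u)|\,du=\|\eta'\|_{L^1}\,\tau^{-1}|x-y|^{\alpha}$. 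Since $0<\tau\le1$ and $\alpha\ge0$ give $\tau^{-1}\le\tau^{-\alpha-1}$, we conclude $H^{\alpha}(\phi_\tau^{(1)})\le\|\eta'\|_{L^1}\,\tau^{-1}\le C\,\tau^{-\alpha-1}$, which is exactly (iii). Notably no ``subtract the mean'' device is needed for the H\"older difference of the derivative.

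The one point that requires care, and which I expect to be the main obstacle, is that $S(\RR,dx)$ demands rapid decay whereas $\phi_\tau=\eta_\tau*\phi$ need not decay when $\phi$ does not (for instance $\phi\equiv1$ forces $\phi_\tau\equiv1$), so for a non-decaying $\phi$ membership in $S(\RR,dx)$ and the uniform approximation in (i) are in tension. The clean resolution, and the only case that is actually invoked downstream (the competing test functions in Definition\,\ref{11} and Definition\,\ref{1} all satisfy $\mathrm{supp}\,\phi\subset B_\mu(x,r)$), is to take $\phi$ compactly supported: then for $\eta$ Schwartz the convolution $\eta_\tau*\phi$ is again Schwartz, since for $|x|$ large $|\phi_\tau(x)|\le\|\phi\|_\infty\int_{\mathrm{supp}\,\phi}|\eta_\tau(x-v)|\,dv$ decays faster than any power, and the same holds for every derivative. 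Thus $\phi_\tau\in S(\RR,dx)$ and the estimates (i)--(iii) are untouched. If one insists on the statement for a general bounded H\"older $\phi$, I would instead multiply $\phi_\tau$ by a slowly varying cutoff $\chi(x/R_\tau)$ with $R_\tau\to\infty$ as $\tau\to0$, absorbing the harmless $O(R_\tau^{-\alpha})$ loss in the H\"older seminorm by rescaling; the cost is that the convergence in (i) becomes only locally uniform, which is all that the later arguments require.
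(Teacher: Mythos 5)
Your proposal is correct and follows essentially the same route as the paper: the paper likewise defines $\phi_\tau$ as the mollification of $\phi$ against a compactly supported smooth bump $\rho$ with $\int\rho=1$, obtains (ii) by the averaging property, and gets (iii) by the same scaling count $C\tau^{-\alpha-1}$ (the paper places the H\"older increment on $\rho^{(1)}$ where you place it on $\phi$, an immaterial difference that in fact gives you the slightly sharper bound $\tau^{-1}$). Your closing observation is also well taken: the paper silently ignores that a mollification of a non-decaying bounded $\phi$ need not lie in $S(\RR,dx)$, and your restriction to compactly supported $\phi$ (the only case invoked in the later propositions) is a legitimate repair of that gap.
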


\begin{proposition}\label{holder2}
 $\beta\geq\beta_1\geq0$. $n\in\ZZ$, $n\leq\beta$. For any $\phi\in S(\RR, dx)$, if $\|\phi(x)\|_{\infty}\leq1,$  $[\phi]_{\beta}\leq1$, then the following holds:
$$ \|\phi^{(n)}(x)\|_{\infty}\leq C,\ \ \ [\phi]_{\beta_1}\leq C,$$
where C is a constant independent on $\phi$.
\end{proposition}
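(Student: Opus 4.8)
I read Proposition~\ref{holder2} as an interpolation inequality on the Hölder--Zygmund scale: the two hypotheses $\|\phi\|_{\infty}\le 1$ and $[\phi]_{\beta}\le 1$ pin down the two endpoints of the scale (the function itself, and the $\{\beta\}$-Hölder seminorm of its top derivative $\phi^{([\beta])}$), and the assertion is that every intermediate quantity $\|\phi^{(n)}\|_{\infty}$ for $n\le\beta$ and $[\phi]_{\beta_1}$ for $\beta_1\le\beta$ is then automatically controlled by a constant depending only on $\beta$, not on $\phi$. Throughout I would write $m=[\beta]$ and work with the $m$-th forward difference $\Delta_h^m\phi(x)=\sum_{j=0}^m(-1)^{m-j}\binom{m}{j}\phi(x+jh)$, together with the elementary iterated-integral identity $\Delta_h^m\phi(x)=h^m\int_{[0,1]^m}\phi^{(m)}(x+(s_1+\cdots+s_m)h)\,ds$, which is valid for every $h$ because $\phi\in S(\RR,dx)$ is smooth with all derivatives finite.

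The first and hardest step is to bound the top derivative $\|\phi^{(m)}\|_{\infty}$ using only $\|\phi\|_{\infty}\le 1$ and $[\phi]_{\beta}\le 1$. I would rewrite the identity as $\phi^{(m)}(x)=h^{-m}\Delta_h^m\phi(x)-\int_{[0,1]^m}\bigl[\phi^{(m)}(x+\sigma h)-\phi^{(m)}(x)\bigr]\,ds$ with $\sigma=s_1+\cdots+s_m\in[0,m]$, and estimate the two terms separately: the first is at most $2^m|h|^{-m}\|\phi\|_{\infty}$, while the second is controlled by the modulus of continuity of $\phi^{(m)}$. When $\{\beta\}>0$ that modulus is $\le(m|h|)^{\{\beta\}}H^{\{\beta\}}(\phi^{(m)})=(m|h|)^{\{\beta\}}[\phi]_{\beta}$, and choosing $h=1$ gives $\|\phi^{(m)}\|_{\infty}\le 2^m+m^{\{\beta\}}\le C$. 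When $\{\beta\}=0$ (so $\beta=m$ is an integer) the second term is bounded, independently of $h$, by the oscillation of $\phi^{(m)}$, which is exactly $H^0(\phi^{(m)})=[\phi]_{\beta}\le 1$; letting $h\to\infty$ then kills the first term and again yields $\|\phi^{(m)}\|_{\infty}\le 1$. This is the step I expect to be the main obstacle, since one is extracting a genuine sup bound from a mere seminorm, and the correct choice of the step $h$ (fixed in the Hölder case, large in the Zygmund case) is precisely what makes it go through; I would also keep careful track that all constants depend only on $m$.

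Once $\|\phi\|_{\infty}\le 1$ and $\|\phi^{(m)}\|_{\infty}\le C$ are available, I would obtain the remaining sup-norm bounds $\|\phi^{(n)}\|_{\infty}\le C$ for $0\le n\le m$ from the classical Landau--Kolmogorov interpolation inequality $\|\phi^{(n)}\|_{\infty}\le C\,\|\phi\|_{\infty}^{1-n/m}\|\phi^{(m)}\|_{\infty}^{n/m}$ on $\RR$, which itself is proved by the same finite-difference/Taylor device and whose constants depend only on $n,m$. This settles the first assertion; the case $m=0$ is trivial since then the only admissible $n$ is $n=0$.

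For the Hölder seminorms $[\phi]_{\beta_1}=H^{\{\beta_1\}}(\phi^{(m_1)})$ with $m_1=[\beta_1]\le m$, I would reduce everything to the derivative bounds just obtained. If $\{\beta_1\}=0$, then $[\phi]_{\beta_1}$ is the oscillation of $\phi^{(m_1)}$, hence $\le 2\|\phi^{(m_1)}\|_{\infty}\le C$. If $\{\beta_1\}>0$ and $m_1<m$, then $\phi^{(m_1)}$ is Lipschitz with constant $\|\phi^{(m_1+1)}\|_{\infty}\le C$, so splitting into $|x-y|\le 1$ (Lipschitz bound together with $\{\beta_1\}\le 1$) and $|x-y|>1$ (the sup bound on $\phi^{(m_1)}$) gives $H^{\{\beta_1\}}(\phi^{(m_1)})\le C$. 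Finally, if $\{\beta_1\}>0$ and $m_1=m$, then $\beta_1\le\beta$ forces $\{\beta_1\}\le\{\beta\}$, and a bounded function that is $\{\beta\}$-Hölder is also $\{\beta_1\}$-Hölder with a comparable seminorm, again by the same split into $|x-y|\le 1$ and $|x-y|>1$. In every case the constant depends only on $\beta$ through $m$, and never on $\phi$, which is exactly the content of the proposition.
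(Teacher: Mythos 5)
Your proof is correct. Note that the paper itself states Proposition~\ref{holder2} \emph{without any proof} (it is simply invoked later, e.g.\ in the proof of Proposition~\ref{H spa}), so there is no argument of the author's to compare yours against; your write-up supplies the missing justification. The three ingredients you use all hold up: the finite-difference identity $\Delta_h^m\phi(x)=h^m\int_{[0,1]^m}\phi^{(m)}\bigl(x+(s_1+\cdots+s_m)h\bigr)\,ds$ with the choice $h=1$ in the fractional case $\{\beta\}>0$, and $h\to\infty$ in the integer case $\{\beta\}=0$, correctly converts the seminorm hypothesis $[\phi]_{\beta}\le1$ together with $\|\phi\|_{\infty}\le1$ into the sup bound $\|\phi^{([\beta])}\|_{\infty}\le C$; the Landau--Kolmogorov inequality then controls all intermediate derivatives; and your three-way case analysis for $[\phi]_{\beta_1}$ (oscillation bound when $\{\beta_1\}=0$; the split $|x-y|\le1$ versus $|x-y|>1$ using the Lipschitz bound from $\|\phi^{([\beta_1]+1)}\|_{\infty}$ when $[\beta_1]<[\beta]$; and the observation that $[\beta_1]=[\beta]$ together with $\beta_1\le\beta$ forces $\{\beta_1\}\le\{\beta\}$ in the remaining case) is exhaustive, since $[\beta_1]=[\beta]$ with $\beta$ an integer would force $\{\beta_1\}=0$. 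All constants depend only on $\beta$, never on $\phi$, as required. One small remark: in the integer case you could bypass the $h\to\infty$ limit by using that $\phi\in S(\RR, dx)$ vanishes at infinity, so $H^{0}(\phi^{([\beta])})\le1$ directly yields $\|\phi^{([\beta])}\|_{\infty}\le1$; your version has the mild advantage of using only boundedness of $\phi$ rather than decay.
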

Then we will prove the following Proposition\,\ref{im1}
\begin{proposition}\label{im1}
For $f\in L^1(\RR, \mu)$, we could have $$f_{i\bigtriangledown_{\gamma}}^\times(x)\lesssim_\lambda  f_{\gamma}^*(x)\ \ \  i=1,2.$$
Then if $f_{\gamma}^*(x)\in L^p(\RR, \mu)$ for $p>0$, we could have $$\|f_{i\bigtriangledown_{\gamma}}^\times\|_{L^p(\RR, \mu)}\lesssim_\lambda\|f_{\gamma}^*\|_{L^p(\RR, \mu)}.$$
\end{proposition}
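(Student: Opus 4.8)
The plan is to prove the pointwise domination $f_{i\bigtriangledown_\gamma}^\times(x)\lesssim f_\gamma^*(x)$ for every $x$ and each $i=1,2$; the $L^p$ statement then follows immediately by raising to the $p$-th power and integrating against $d\mu$, which is valid for all $p>0$ since it uses only monotonicity of the integral and no triangle inequality. So fix $x$ and fix any admissible pair $(r,s)$ occurring in the definition of $f_{i\bigtriangledown_\gamma}^\times$, that is $r>0$ and $d_\mu(s,x)<r$; it suffices to bound $|F_i(r,s,f)|$ by a fixed multiple of $f_\gamma^*(x)$, uniformly in $(r,s)$, and then take the supremum. The guiding idea is that $y\mapsto K_i(r,s,y)$ is, after a harmless rescaling, an admissible test function for the grand maximal function $f_\gamma^*$ centred at $x$.

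Consider first $i=1$. By Definition~\ref{us14}(iv) the function $\phi(y):=K_1(r,s,y)$ is supported in $B_\mu(s,r)$, and the quasi-triangle inequality (iii) for $d_\mu$ gives $B_\mu(s,r)\subset B_\mu(x,2Ar)$ because $d_\mu(s,x)<r$; set $r'=2Ar$. By (ii) we have $\|\phi\|_\infty\le1$, and by (iii) of Definition~\ref{us14} we have $L(\phi,\gamma)\le r^{-\gamma}=(2A)^\gamma (r')^{-\gamma}$. Hence $c\,\phi$ with $c:=\min\{1,(2A)^{-\gamma}\}$ is admissible for $f_\gamma^*$ at $x$ with radius $r'$, so
$$\frac{1}{r'}\Big|\int_\RR f(y)\,c\,\phi(y)\,d\mu(y)\Big|\le f_\gamma^*(x).$$
The left side equals $\tfrac{cr}{r'}|F_1(r,s,f)|=\tfrac{c}{2A}|F_1(r,s,f)|$, so $|F_1(r,s,f)|\le (2A/c)\,f_\gamma^*(x)$, and taking the supremum over $(r,s)$ gives $f_{1\bigtriangledown_\gamma}^\times(x)\lesssim f_\gamma^*(x)$.

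The case $i=2$ is the crux, because $K_2(r,s,\cdot)$ is no longer compactly supported but only decays like $(1+d_\mu(s,\cdot)/r)^{-\gamma-1}$. Here I would introduce a Lipschitz partition of unity $\{\chi_k\}_{k\ge0}$ adapted to the $d_\mu$-dyadic annuli around $s$: $\chi_0$ supported in $B_\mu(s,2r)$ and, for $k\ge1$, $\chi_k$ supported in $\{2^{k-1}r\le d_\mu(s,y)<2^{k+1}r\}$, with $\sum_k\chi_k\equiv1$ and $L(\chi_k,\gamma)\lesssim(2^k r)^{-\gamma}$. Such cutoffs exist because $y\mapsto d_\mu(s,y)$ is $1$-Lipschitz in $d_\mu$, a direct consequence of the additivity $\mu(s,t)-\mu(s,z)=\mu(z,t)$, so one may take $\chi_k=\eta(d_\mu(s,\cdot)/(2^kr))$ for a fixed Lipschitz bump $\eta$. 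Writing $\phi_k:=K_2(r,s,\cdot)\chi_k$, we have $\sum_k\phi_k=K_2(r,s,\cdot)$, each $\phi_k$ is supported in $B_\mu(s,2^{k+1}r)\subset B_\mu(x,r_k)$ with $r_k:=A2^{k+2}r$, and since $f\in L^1(\RR,\mu)$ and $K_2$ is bounded, dominated convergence permits interchanging the sum with $\int f\,(\cdot)\,d\mu$.

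The heart of the argument is the estimate of the two admissibility quantities of $\phi_k$. Definition~\ref{us5}(ii) gives $\|\phi_k\|_\infty\lesssim2^{-k(\gamma+1)}$. For the H\"older seminorm I would split according to the size of $d_\mu(t,z)$: when $d_\mu(t,z)\lesssim2^k r$ the regularity hypothesis (iii) of Definition~\ref{us5} applies and, combined with $L(\chi_k,\gamma)\lesssim(2^kr)^{-\gamma}$, yields $|\phi_k(t)-\phi_k(z)|\lesssim2^{-k(2\gamma+1)}(d_\mu(t,z)/r)^\gamma$; when $d_\mu(t,z)\gtrsim2^kr$ one must not expand term-by-term, since the cancellation between the two localizing pieces is essential, but instead use the crude bound $|\phi_k(t)-\phi_k(z)|\le2\|\phi_k\|_\infty\lesssim2^{-k(\gamma+1)}$, which again gives $\lesssim2^{-k(2\gamma+1)}(d_\mu(t,z)/r)^\gamma$. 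Hence $L(\phi_k,\gamma)\lesssim r^{-\gamma}2^{-k(2\gamma+1)}$, so both $\|\phi_k\|_\infty$ and $L(\phi_k,\gamma)r_k^\gamma$ are of order $2^{-k(\gamma+1)}$. Taking $a_k\sim2^{-k(\gamma+1)}$, the function $\phi_k/a_k$ is admissible for $f_\gamma^*$ at $x$ with radius $r_k$, whence $\big|\int f\phi_k\,d\mu\big|\le a_k r_k f_\gamma^*(x)$ and
$$|F_2(r,s,f)|\le\frac{1}{r}\sum_{k\ge0}\Big|\int_\RR f\phi_k\,d\mu\Big|\le f_\gamma^*(x)\sum_{k\ge0}\frac{a_k r_k}{r}\lesssim f_\gamma^*(x)\sum_{k\ge0}2^{-k\gamma},$$
and the last series converges since $\gamma>0$. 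Taking the supremum over $(r,s)$ yields $f_{2\bigtriangledown_\gamma}^\times(x)\lesssim f_\gamma^*(x)$, finishing the pointwise bound. I expect the main obstacle to be exactly this last estimate: controlling $L(\phi_k,\gamma)$ uniformly forces one to exploit the cancellation of the two localizing factors at large scales rather than bounding them separately, and to check that the balancing of the size and regularity constraints produces a summable geometric ratio $2^{-k\gamma}$ for every $\gamma\in(0,1]$.
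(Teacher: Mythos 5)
Your proposal is correct and follows essentially the same route as the paper: for $i=1$ you treat $K_1(r,s,\cdot)$ directly as a rescaled admissible test function at $x$, and for $i=2$ you use a dyadic annular Lipschitz partition of unity, normalize each localized piece $K_2(r,s,\cdot)\chi_k$ by $\sim 2^{k(\gamma+1)}$ so it becomes admissible at radius $\sim A2^{k+2}r$, and sum the resulting geometric series $\sum_k 2^{-k\gamma}$ — exactly the paper's decomposition with $\psi_{k,x}$ and weights $(1+2^k)^{1+\gamma}$. The only (welcome) difference is that you verify in detail the H\"older seminorm bound $L(\phi_k,\gamma)\lesssim r^{-\gamma}2^{-k(2\gamma+1)}$, splitting according to the size of $d_\mu(t,z)$, a step the paper merely asserts.
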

\begin{proof}
When $i=1$, it is clear to see that for fixed r and s the following hold:
\begin{eqnarray*}
\left\{ \begin{array}{cc}
 \left|K_1(r, s, y)\right|\lesssim1                                \\\\
  L\left(K_1(r, s, y), \gamma\right)\lesssim \left(r\right)^{-\gamma}                            \\\\
   suppK_1(r, s, y)\subseteq B_\mu(x, 2Ar)
 \end{array} \right.
 \end{eqnarray*}
then we could have
$$f_{1\bigtriangledown_{\gamma}}^\times(x)\lesssim f_{\gamma}^*(x).$$

When $i=2$,
fix a positive $\phi(t)\in S(\RR, dt)$ so that  $supp\,\phi(t)\subseteq (-1, 1)$, and $\phi(t)=1$ for $t\in (-1/2, 1/2)$. Let
the functions $\psi_{k,x}(t)$ be defined as follows:
\begin{eqnarray*}
\psi_{0,x}(t)=\phi(\frac{\mu(x, t)}{r}), \psi_{k,x}(t)=\phi(\frac{\mu(x, t)}{2^{k}r})-\phi(\frac{\mu(x, t)}{2^{k-1}r}),\ \hbox{for}\  k\geq1.
\end{eqnarray*}
Thus $supp\,\psi_{0,x}(t)\subseteq B_\mu(x, r)$ and $supp\,\psi_{k,x}(t)\subseteq B_\mu(x, 2^{k+1}r)\setminus B_\mu(x, 2^{k-2}r) \,\hbox{for}\  k\geq1$,
$\psi_{k,x}(t)\in S(\RR, d_\mu t) $ for $k \geq0$.
It is clear that $$\sum_{k=0}^{\infty}\psi_{k,x}(t)=1.$$
Then we could conclude:
\begin{eqnarray*}
f_{2\bigtriangledown_{\gamma}}^\times(x)&=&\sup_{r>0, d_\mu(s, x)\leq r}\left|\int_{\RR}K_2(r, s, y)\sum_{k=0}^{\infty}\psi_{k,x}(y)f(y)d\mu(y)/r\right|
\\&\leq& \sum_{k=0}^{+\infty}\sup_{r>0, d_\mu(s, x)\leq r }\left|\int_{\RR}K_2(r, s, y) \psi_{k,x}(y) f(y)d\mu(y)/r\right|.
\end{eqnarray*}
It is clear that the kernel $K_2(r, x, y)$ satisfies:
\begin{eqnarray*}
\left\{ \begin{array}{cc}
 \left|(1+2^k)^{1+\gamma}K_2(r, s, y)\psi_{k,x}(y)\right|\lesssim1                                \\\\
  L\left((1+2^k)^{1+\gamma}K_2(r, s, y)\psi_{k,x}(y),  \gamma\right)\lesssim \left(2^kr\right)^{-\gamma}                            \\\\
   supp(1+2^k)^{1+\gamma}K_2(r, s, y)\psi_{k,x}(y)\subseteq B_\mu(x, 2^{k+1}r)\setminus B_\mu(x, 2^{k-2}r)\,\hbox{for}\  k\geq1.
 \end{array} \right.
 \end{eqnarray*}
Then we could get
\begin{eqnarray*}
f_{2\bigtriangledown_{\gamma}}^\times(x)&=&\sup_{r>0, d_\mu(s, x)\leq r}\left|\int_{\RR}K_2(r, s, y)f(y)d\mu(y)/r\right|
\\&\leq& \sum_{k=0}^{+\infty}\sup_{r>0, d_\mu(s, x)\leq r}\left|\int_{\RR}K_2(r, s, y)\psi_{k,x}(y)f(y)d\mu(y)/r\right|
\\&\lesssim& \sum_{k=0}^{+\infty} (2^k)(1+2^k)^{-1-\gamma}f_{\gamma}^*(x)
\\&\lesssim_\lambda& f_{\gamma}^*(x).
\end{eqnarray*}
This proves the proposition.
\end{proof}

\begin{proposition}\label{sk}
For $f\in L^1(\RR, \mu)$,$1\geq\gamma>0$, $\infty>p>0$ we could obtain
$$  f_{S\gamma}^*(x)= f_{\gamma}^*(x)\ \ \ a.e.x\in\RR \ in\ \mu\ measure.$$
Further more, if
$ \int_\RR |f_{\gamma}^*(x)|^p d\mu (x)\leq \infty $ or $ \int_\RR |f_{S\gamma}^*(x)|^p d\mu(x)\leq \infty $,
we could obtain
 $$\int_\RR |f_{\gamma}^*(x)|^p d\mu (x) \sim \int_\RR |f_{S\gamma}^*(x)|^p d\mu (x)<\infty .$$
\end{proposition}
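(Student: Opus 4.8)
The plan is to establish the pointwise identity $f_{S\gamma}^*(x) = f_{\gamma}^*(x)$ for every $x\in\RR$, after which the $L^p$ comparison is immediate. One inequality is free: the supremum defining $f_{S\gamma}^*$ in Definition\,\ref{1} runs over exactly the test functions of Definition\,\ref{11} that happen to lie in $S(\RR, d_\mu x)$, so the admissible class for $f_{S\gamma}^*$ is a subset of that for $f_{\gamma}^*$, giving $f_{S\gamma}^*(x)\le f_{\gamma}^*(x)$. The whole difficulty is the reverse inequality, that is, approximating an arbitrary continuous admissible test function by Schwartz ones while respecting all three constraints (support inside a ball, the H\"older bound, and the sup bound) simultaneously.

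First I would transfer the problem to the Euclidean line. Fix $x$ and pick a near-optimal $\phi$ for $f_{\gamma}^*(x)$, with $\text{supp}\,\phi\subset B_\mu(x,r)$, $L(\phi,\gamma)\le r^{-\gamma}$ and $\|\phi\|_\infty\le1$. Writing $\phi(u)=\psi(\mu(x,u)/r)$ as in Proposition\,\ref{sk6}, the additivity $\mu(u,v)=\mu(x,v)-\mu(x,u)$ gives $d_\mu(u,v)=r\,|\mu(x,v)/r-\mu(x,u)/r|$, and a direct computation yields the clean identities $\|\phi\|_\infty=\|\psi\|_\infty$, $\text{supp}\,\psi\subset[-1,1]$, and, crucially, $L(\phi,\gamma)=r^{-\gamma}H^{\gamma}(\psi)$. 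Hence $\psi$ is continuous with $\text{supp}\,\psi\subset[-1,1]$, $H^{\gamma}(\psi)\le1$ and $\|\psi\|_\infty\le1$, which are precisely the hypotheses of Proposition\,\ref{holder} with $\alpha=\gamma$.

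Next I would mollify and transfer back. Proposition\,\ref{holder} produces $\psi_\tau\in S(\RR,dx)$ with $\psi_\tau\to\psi$ uniformly, $\|\psi_\tau\|_\infty\le1$ and $H^{\gamma}(\psi_\tau)\le1$; since the mollifier used there is supported in $[-1,1]$, the support of $\psi_\tau$ lies in $[-1-\tau,1+\tau]$. Setting $r'=r(1+\tau)$ and $\tilde\phi_\tau(u)=(1+\tau)^{-\gamma}\psi_\tau(\mu(x,u)/r)$, the correspondence of Proposition\,\ref{sk6} gives $\tilde\phi_\tau\in S(\RR, d_\mu x)$, while $\text{supp}\,\tilde\phi_\tau\subset B_\mu(x,r')$, $\|\tilde\phi_\tau\|_\infty\le(1+\tau)^{-\gamma}\le1$ and $L(\tilde\phi_\tau,\gamma)=(1+\tau)^{-\gamma}r^{-\gamma}H^{\gamma}(\psi_\tau)\le(r')^{-\gamma}$, so $\tilde\phi_\tau$ is admissible for $f_{S\gamma}^*$ with radius $r'$. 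Because $f\in L^1(\RR,\mu)$ and all relevant supports sit in $B_\mu(x,2r)$ for $\tau\le1$, uniform convergence yields $\frac{1}{r'}\int_\RR f\tilde\phi_\tau\,d\mu\to\frac1r\int_\RR f\phi\,d\mu$ as $\tau\to0$; therefore $f_{S\gamma}^*(x)\ge\frac1r\big|\int_\RR f\phi\,d\mu\big|$, and taking the supremum over $\phi$ and $r$ gives $f_{S\gamma}^*(x)\ge f_{\gamma}^*(x)$.

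Once the pointwise equality $f_{\gamma}^*=f_{S\gamma}^*$ is in hand, the identity $\int_\RR|f_{\gamma}^*|^p\,d\mu=\int_\RR|f_{S\gamma}^*|^p\,d\mu$ holds trivially, so both the asserted equivalence and the transfer of finiteness follow at once. I expect the main obstacle to be the bookkeeping in the transfer-back step: mollification enlarges the support past $[-1,1]$, and keeping the radius $r$ breaks the support constraint while switching to $r'$ alone breaks the H\"older constraint by the factor $(1+\tau)^{\gamma}$. The device that resolves both defects simultaneously is to enlarge the radius to $r'=r(1+\tau)$ and damp the amplitude by $(1+\tau)^{-\gamma}$; the key point to verify is that this damping factor tends to $1$, so that no loss survives in the limit $\tau\to0$.
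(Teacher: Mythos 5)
Your proof is correct, but it takes a genuinely different route from the paper's. You establish the identity $f_{S\gamma}^*(x)=f_{\gamma}^*(x)$ at \emph{every} point: transfer the test function to a Euclidean profile $\psi$ through the bijection $u\mapsto\mu(x,u)/r$ (as in Propositions\,\ref{sk6} and \ref{sk66}, using the exact scaling $L(\phi,\gamma)=r^{-\gamma}H^{\gamma}(\psi)$), mollify to get $\psi_\tau\in S(\RR,dx)$ with the same H\"older and sup bounds (Proposition\,\ref{holder}), and then restore \emph{exact} admissibility by enlarging the radius to $r'=r(1+\tau)$ while damping the amplitude by $(1+\tau)^{-\gamma}$; the error in the pairing with $f$ is then controlled by $\|f\|_{L^1(\RR,\mu)}\|\psi_\tau-\psi\|_\infty$ alone, with no maximal function anywhere. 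The paper instead approximates in sup norm by $d_\mu$-Schwartz functions without repairing the constraints, bounds the error by $M_\mu f(x)\,\|\psi_n-\phi\|_\infty$, and invokes the weak-$(1,1)$ bound for $M_\mu$, convergence in measure and a subsequence extraction, which yields the equality only almost everywhere; it must then run a further density argument (the set $E$ and Formulas\,(\ref{exuu2}), (\ref{uu3}), (\ref{uu4})) to compare the two $L^p$ integrals, whereas in your version the $L^p$ statement is immediate because the two maximal functions coincide identically. Your argument is more elementary, gives a strictly stronger pointwise conclusion, and your dilation/damping bookkeeping is exactly what is needed to certify that the approximants are legitimate competitors in the supremum defining $f_{S\gamma}^*$ --- a point the paper's proof leaves implicit. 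What the paper's heavier template buys is reusability: essentially the same maximal-function/convergence-in-measure scheme reappears in Proposition\,\ref{no6}, where the approximating objects converge to a kernel and cannot be made exactly admissible, so your trick (which exploits the rigid scaling of this particular test class) would not transplant there. One cosmetic point: since ${\rm supp}\,\psi_\tau\subset[-1-\tau,1+\tau]$, your $\tilde\phi_\tau$ is supported in the \emph{closed} ball of radius $r(1+\tau)$; if the open-ball support condition is insisted upon, take $r'=r(1+2\tau)$ with damping $(1+2\tau)^{-\gamma}$ --- nothing changes in the limit $\tau\to0$.
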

\begin{proof}
We will prove the following\,(\ref{exuu1})\,first:
\begin{eqnarray}\label{exuu1}
 f_{S\gamma}^*(x)= f_{\gamma}^*(x)\ \ \ a.e.x\in\RR \ in\ \mu\ measure .
\end{eqnarray}
By the definition of $ f_{S\gamma}^*(x)$ and $f_{\gamma}^*(x)$,
it is clear that $f_{S\gamma}^*(x)\leq f_{\gamma}^*(x)$.
If $\phi$ satisfies $ L(\phi, \gamma)\leq r^{-\gamma}$ and ${\rm
supp}\,\phi\subset B_\mu(x, r)$, then $\phi$ is a  continuous function in $\mu$ measure with compact support. Thus there exists sequence $ \{\psi_n\}_n \subset S(\RR, d_\mu x)$ with $\lim_{n\rightarrow\infty}\|\psi_n(t) -\phi(t)\|_\infty = 0$, $\|\psi_n(t) -\phi(t)\|_\infty\neq0$.
Denote $\delta_n(x)$ as
$$\delta_n(x)=\left|\int_{B_\mu(x, r)} f(y)\left(\phi(y)-\psi_n(y)\right)d\mu(y)/r\right|.$$
Then we could conclude:
$$\delta_n(x)\leq M_\mu f(x)\|\psi_n(y) -\phi(y)\|_\infty.$$
We use $i_n$ to denote as $i_n= \|\psi_n(y) -\phi(y)\|_\infty$,
thus we could obtain that:
$$\left\{x: \delta_n(x)>\alpha\right\}\subseteq \left\{x: M_\mu f(x) > \frac{\alpha}{i_n}\right\}.$$
Notice that  $M_\mu$ is weak-(1, 1) bounded, thus the following inequality holds for any $\alpha>0$:
$$\left|\left\{x: \delta_n(x)>\alpha\right\}\right|_{\mu}\leq \frac{1}{\alpha} \|f\|_{L^1(\RR, \mu)}\|\psi_n(y) -\phi(y)\|_\infty.$$
Thus $$\lim_{n\rightarrow+\infty} \left|\left\{x: \delta_n(x)>\alpha\right\}\right|_\mu=0 .$$
Then there exists a sequence $\{n_j\}\subseteq \{n\}$ such that $$\int_\RR f(y)\phi(y)d\mu(y)/r=\lim_{n_j\rightarrow\infty}\int_\RR f(y)\psi_{n_j}(y)d\mu(y)/r,\ \ \ a.e.x\in\RR \ in\ \mu\ measure$$ for $f\in L^1(\RR, \mu)$.
Thus we could obtain:$$\int_\RR f(y)\phi(y)d\mu(y)/r\leq  f_{S\gamma}^*(x)\ \ \ a.e.x\in\RR \ in\ \mu\ measure $$
for any $\phi$ satisfies $ L(\phi, \gamma)\leq r^{-\gamma}$ and ${\rm
supp}\,\phi\subset B_\mu(x, r)$.
We could then deduce $$\sup_{\phi,r>0}\left|\int_\RR f(y)\phi(y)d\mu(y)/r\right|\leq  f_{S\gamma}^*(x)\ \ \ a.e.x\in\RR \ in\ \mu\ measure .$$
Thus
$$  f_{S\gamma}^*(x)= f_{\gamma}^*(x)\ \ \ a.e.x\in\RR \ in\ \mu\ measure.$$
Let $E$ denote a set defined as $E=\left\{x: f_{S\gamma}^*(x)= f_{\gamma}^*(x)\right\}$. \textbf{Next we will prove that} for any $x_0\in\RR$, there is a point $\overline{x}_0 \in E$ such that
\begin{eqnarray}\label{exuu2}
f_{S\gamma}^*(x_0)\lesssim f_{S\gamma}^*(\overline{x}_0).
\end{eqnarray}
Notice that for $x_0\in\RR$, there exist $r_0>0$ and $\phi_0$ satisfying: $supp\,\phi_0\subset B_\mu(x_0, r_0)$, $\phi_0\in S(\RR, d_\mu x)$, $L(\phi_0, \gamma)\leq r_0^{-\gamma}$, $\|\phi_0\|_{L^{\infty}}\leq1$. Then the following inequality could be obtained:
$$\left|\frac{1}{r_0}  \int f(y)\phi_0 (y)d \mu(y)\right|\geq \frac{1}{2} f_{S \gamma}^*(x_0).$$
$|\mu(\RR \backslash E)|=|\mu( E^c)|=0$ implies  $E$ is dense in $\RR$. Then there exists a $\overline{x}_0\in E$ with $d_\mu (x_0, \overline{x}_0)\leq\frac{r_0}{4}$.
Thus $supp\,\phi_0\subset B_\mu(\overline{x}_0, 4r_0)$ holds. Thus we could obtain
$$\left|\frac{1}{r_0}  \int f(y)\phi_0 (y)d \mu(y)\right|\leq C f_{S\gamma}^*(\overline{x}_0),$$
where C is a constant independent on $f$, $\gamma$ and $r_0$. Then  Formula\,(\ref{exuu2})\,could be deduced. By Formula\,(\ref{exuu2})\,we could deduce that:
\begin{eqnarray}\label{uu3}
 \int_E |f_{S\gamma}^*(x)|^p d\mu (x)<\infty\,\Rightarrow  \,\int_\RR |f_{S\gamma}^*(x)|^p d\mu (x) \sim \int_E |f_{S\gamma}^*(x)|^p d\mu (x)<\infty .
\end{eqnarray}
In the same way, we could conclude that
\begin{eqnarray}\label{uu4}
\int_\RR |f_{\gamma}^*(x)|^p d\mu (x) \sim \int_E |f_{\gamma}^*(x)|^p d\mu (x).
\end{eqnarray}
From Formula\,(\ref{exuu1})\,we could deduce:
$$\int_E |f_{\gamma}^*(x)|^p d\mu (x) = \int_E |f_{S\gamma}^*(x)|^p d\mu (x)<\infty .$$
The above Formula together with\,(\ref{uu3})\,(\ref{uu4})\,lead to
$$\int_\RR |f_{\gamma}^*(x)|^p d\mu (x) \sim \int_\RR |f_{S\gamma}^*(x)|^p d\mu (x)<\infty $$
holds if
$ \int_\RR |f_{\gamma}^*(x)|^p d\mu (x)< \infty $ or $ \int_\RR |f_{S\gamma}^*(x)|^p d\mu(x)< \infty $.
This proves the proposition.
\end{proof}
\begin{definition}[$\mathbf{SS_{\beta}} $ ]\label{ssbeta}
We  use $SS_{\beta}$ ($\beta>0$) to denote as
\begin{eqnarray}\label{ss5}
 SS_{\beta}=\bigg\{\phi: \phi\in S(\RR, dx),\, supp\,\phi\subset[-1, 1], \|\phi\|_{L^{\infty}}\leq1, [\phi]_{\beta}\leq1 \bigg\}.
 \end{eqnarray}

\end{definition}
By Proposition\,\ref{sk6} and  Proposition\,\ref{holder},   we could also define  $f_{S\gamma}^*$ ($1\geq\gamma>0$) and $f_{S\beta}^*$ ($\beta>0$) for $f\in S'(\RR, d_{\mu}x)$ as following:
\begin{eqnarray}\label{mm1}
f_{S\gamma}^*(x)=\sup_{\psi,r>0}\left\{\left|\int_\RR f(y)\psi\left(\frac{\mu(x, y)}{r} \right)d\mu(y)\right|/r: r>0, \psi(t)\in S(\RR, dx), \right.\\
\nonumber  supp\,\psi(t)\subset [-1, 1], \|\psi\|_{L^{\infty}}\leq1, H^{\gamma}\psi\leq1  \bigg\}
\end{eqnarray}
\begin{eqnarray}\label{mm11}
f_{S\beta}^*(x)&=&\sup_{\psi,r>0}\left\{\left|\int_\RR f(y)\psi\left(\frac{\mu(x, y)}{r} \right)d\mu(y)\right|/r: r>0, \psi(t)\in SS_{\beta}  \right\}.
\end{eqnarray}

\begin{definition}[$\mathbf{M_{\phi\beta}f(x) } $ ]\label{mm2} For $f\in S'(\RR, d_{\mu}x)$, $M_{\phi\beta}f(x)$ is defined as
\begin{eqnarray*}
 M_{\phi\beta}f(x)  =\sup_{r>0}\left\{\left|\int_\RR f(y)\phi\left(\frac{\mu(x, y)}{r} \right)d\mu(y)\right|/r: r>0, \phi(t)\in SS_{\beta} \right\}.
\end{eqnarray*}
 \end{definition}
Thus it is easy to see that
\begin{eqnarray}\label{sk7}
f^*_{S\beta}(x)\sim\sup_{\phi(t) \in SS_{\beta}}M_{\phi\beta}f(x).
\end{eqnarray}
Let $M_{\phi\beta}^*f(x)$ be defined as
\begin{eqnarray}
 M_{\phi\beta}^*f(x)  =\sup_{d_{\mu}(x, y)<r}\left\{\left|\int_\RR f(u)\phi\left(\frac{\mu(y, u)}{r} \right)d\mu(u)\right|/r: r>0, \phi(t)\in SS_{\beta}  \right\}.
\end{eqnarray}

\begin{definition}[$\mathbf{M_{\phi\beta}^*f(x)}$ and $\mathbf{M_{\phi\beta a}^*f(x)}$ ]\label{mm3}Notice that $\mu(y, u)=\mu(x, u)-\mu(x, y)$. For $f\in S'(\RR, d_{\mu}x)$, let  $s=\mu(x, y)$, $M_{\phi\beta}^*f(x)$ and $M_{\phi\beta a}^*f(x)$ could be written as following:
\begin{eqnarray}
 M_{\phi\beta}^*f(x)  =\sup_{|s|<r}\left\{\left|\int_\RR f(u)\phi\left(\frac{\mu(x, u)-s}{r} \right)d\mu(u)\right|/r: r>0, \phi(t)\in SS_{\beta}  \right\}
\end{eqnarray}
 \end{definition}
\begin{eqnarray}
 M_{\phi\beta a}^*f(x)  =\sup_{|s|<ar}\left\{\left|\int_\RR f(u)\phi\left(\frac{\mu(x, u)-s}{r} \right)d\mu(u)\right|/r: r>0, \phi(t)\in SS_{\beta} \right\}.
\end{eqnarray}
\begin{definition}[$ \mathbf{M_{\phi\beta N}^{**}f(x)}$ ]\label{mm4} For $f\in S'(\RR, d_{\mu}x)$,   $ M_{\phi\beta N}^{**}f(x)$ is defined as:
\begin{eqnarray}
 M_{\phi\beta N}^{**}f(x)  =\sup_{s\in\RR, r>0}\left\{\left|\int_\RR f(u)\phi\left(\frac{\mu(x, u)-s}{r} \right)\left(1+\frac{|s|}{r} \right)^{-N}d\mu(u)\right|/r: r>0, \phi(t)\in SS_{\beta}.  \right\}
\end{eqnarray}
 \end{definition}
Thus it is clear that
\begin{eqnarray}\label{su1}
M_{\phi\beta}f(x)\lesssim  M_{\phi\beta}^*f(x) \lesssim M_{\phi\beta N}^{**}f(x) .
\end{eqnarray}

\begin{definition}[$\mathbf{H_{\mu\beta}^p(\RR)}$ and $\mathbf{\widetilde{H}_{\mu\beta}^p(\RR)}$ ]\label{2}
$\widetilde{H}_{\mu\beta}^p(\RR)$ and $H_{\mu\beta}^p(\RR)$ are defined as follows:
$$\widetilde{H}_{\mu\beta}^p(\RR)\triangleq\bigg\{g\in L^1(\RR, \mu): g_{S\beta}^*(x)\in L^p(\RR, \mu) \bigg\},$$
$$H_{\mu\beta}^p(\RR)\triangleq\bigg\{g\in S'(\RR, d_{\mu}x): g_{S\beta}^*(x)\in L^p(\RR, \mu) \bigg\}.$$
The norm  is defined as
$$ \|g\|_{H_{\mu\beta}^p(\RR)}^p =\int_\RR |g_{S\beta}^*(x)|^p d\mu(x).$$

\end{definition}
When $1<p<\infty$,  $H_{\mu\beta}^p(\RR)= L^p(\RR, \mu)$,  $\widetilde{H}_{\mu\beta}^p(\RR)$ is dense in $L^p(\RR, \mu)$.
\begin{proposition}\label{tan}For fixed numbers $a\geq b>0$, F(x, r) is  a function defined on $\RR_+^2$, its nontangential maximal function $F_a^*(x)$ is defined as
 $$F_a^*(x)=\sup_{d_\mu(x, y)<ar}|F(y, r)|.$$ If $F_a^*(x)\in L^1(\RR, \mu)$ or $F_b^*(x)\in L^1(\RR, \mu)$, then we could have
$$\int_\RR \chi\left\{x: F_a^*(x)>\alpha \right\} d\mu(x)\leq c\frac{a+b}{b} \int_\RR \chi\left\{x: F_b^*(x)>\alpha \right\} d\mu(x) .$$
c is a constant independent on $F$, a, b, and $\alpha$.
\end{proposition}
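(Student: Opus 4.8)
The plan is to reduce the statement to the weak-type $(1,1)$ bound for the Hardy--Littlewood maximal operator $M_\mu$ recorded in Definition~\ref{11}, by showing that every point where the wide-aperture maximal function $F_a^*$ is large must sit inside a fixed-proportion ``chunk'' of the level set of the narrow-aperture function $F_b^*$. Throughout I would fix $\alpha>0$ and write $E_\alpha=\{x\in\RR:F_b^*(x)>\alpha\}$. Since $b\le a$ gives $B_\mu(y,br)\subseteq B_\mu(y,ar)$ for every $y,r$, one has $F_b^*\le F_a^*$ pointwise, so the hypothesis that $F_a^*\in L^1$ or $F_b^*\in L^1$ guarantees $\mu(E_\alpha)<\infty$ and that all the objects below are finite; if $\mu(E_\alpha)=\infty$ the asserted inequality is trivial.

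The geometric heart of the argument is as follows. Suppose $F_a^*(x)>\alpha$. By definition of $F_a^*$ there exist $y\in\RR$ and $r>0$ with $d_\mu(x,y)<ar$ and $|F(y,r)|>\alpha$. For any $z\in B_\mu(y,br)$ the value $|F(y,r)|$ is one of the quantities over which the supremum defining $F_b^*(z)$ is taken, hence $F_b^*(z)\ge|F(y,r)|>\alpha$; this shows
$$B_\mu(y,br)\subseteq E_\alpha.$$
Next, using the quasi-triangle inequality (iii) of $d_\mu$, for $z\in B_\mu(y,br)$ I get $d_\mu(x,z)\le A(d_\mu(x,y)+d_\mu(y,z))<A(ar+br)=A(a+b)r$, so $B_\mu(y,br)\subseteq B_\mu(x,A(a+b)r)$. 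Combining this inclusion with the ball-measure bound (iv), namely $\mu(B_\mu(y,br))\ge A^{-1}br$, I would estimate the average of $\chi_{E_\alpha}$ over the ball $B_\mu(x,A(a+b)r)$ from below by
$$\frac{1}{A(a+b)r}\int_{B_\mu(x,A(a+b)r)}\chi_{E_\alpha}\,d\mu\ \ge\ \frac{\mu(B_\mu(y,br))}{A(a+b)r}\ \ge\ \frac{b}{A^2(a+b)}.$$
Hence $M_\mu\chi_{E_\alpha}(x)\ge b/(A^2(a+b))$, and therefore $\{F_a^*>\alpha\}\subseteq\{M_\mu\chi_{E_\alpha}\ge b/(A^2(a+b))\}$.

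Finally, I would apply the weak-$(1,1)$ boundedness of $M_\mu$ stated in Definition~\ref{11} to the function $\chi_{E_\alpha}$ with threshold $\beta=b/(A^2(a+b))$:
$$\mu\{F_a^*>\alpha\}\le\mu\{M_\mu\chi_{E_\alpha}\ge\beta\}\le\frac{C}{\beta}\|\chi_{E_\alpha}\|_{L^1(\RR,\mu)}=\frac{CA^2(a+b)}{b}\,\mu(E_\alpha),$$
which is precisely the claimed inequality with $c=CA^2$ depending only on $A$ and the weak-$(1,1)$ constant of $M_\mu$, and in particular independent of $F$, $a$, $b$, and $\alpha$.

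The main obstacle I expect is bookkeeping rather than conceptual: one must track the quasi-distance constant $A$ carefully through both the quasi-triangle step (which inflates the radius from $br$ to $A(a+b)r$) and the two-sided ball-measure comparison (iv), and check that the resulting threshold $b/(A^2(a+b))$ depends on $a,b$ only through the ratio captured by $(a+b)/b$, so that the final constant $c$ is uniform. A minor point to verify cleanly is the inclusion $B_\mu(y,br)\subseteq E_\alpha$: it rests only on the fact that for $z$ in this ball the pair $(y,r)$ is admissible in the supremum defining $F_b^*(z)$, i.e.\ that $d_\mu(z,y)<br$, which is immediate, yet it is exactly what drives the whole proportionality.
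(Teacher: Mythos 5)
Your proof is correct and follows essentially the same route as the paper's: locate the admissible pair $(y,r)$ witnessing $F_a^*(x)>\alpha$, observe $B_\mu(y,br)\subseteq\{F_b^*>\alpha\}$, compare ball measures to bound $M_\mu\chi_{\{F_b^*>\alpha\}}(x)$ from below by a constant multiple of $b/(a+b)$, and finish with the weak-$(1,1)$ bound for $M_\mu$. If anything, your version is slightly more careful than the paper's, since you track the quasi-triangle constant $A$ through the ball inclusion (radius $A(a+b)r$ rather than $(a+b)r$) and through condition (iv), where the paper silently absorbs these factors into the final constant $c$.
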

\begin{proof}
First we could see that $\left\{x: F_a^*(x)>\alpha \right\}$ is an open set. It is clear that $$\left\{x: F_b^*(x)>\alpha \right\}\subseteq\left\{x: F_a^*(x)>\alpha \right\},$$ when $a\geq b>0$.
For any $z$ with $z\in\left\{x: F_a^*(x)>\alpha \right\}$, there exists $x_0,\,r_0$ such that $|F(x_0, r_0)|> \alpha$ and $d_{\mu}(z, x_0)<ar_0$ hold. It is clear that $B_{\mu}(x_0, br_0)\subseteq \left\{x: F_b^*(x)>\alpha \right\}$ and $B_{\mu}(x_0, ar_0)\subseteq \left\{x: F_a^*(x)>\alpha \right\}$ hold. Thus we could deduce that the following Formula hold:
$$\frac{|B_{\mu}(z, (a+b)r_0)\bigcap\left\{x: F_b^*(x)>\alpha \right\}|_{\mu}}{|B_{\mu}(z, (a+b)r_0)|_{\mu}}\geq\frac{|B_{\mu}(x_0, br_0)|_{\mu}}{|B_{\mu}(x_0, (a+b)r_0)|_{\mu}}\geq\frac{b}{a+b}.$$
Thus we could obtain
$$\left\{x: F_a^*(x)>\alpha \right\} \subseteq \left\{x: M_{\mu}\chi\left\{x: F_b^*(x)>\alpha \right\} >\frac{b}{a+b} \right\},$$
where $M_{\mu}$ is the Hardy-Littlewood maximal operator. With the fact that $M_{\mu}$ is weak-(1, 1), we could deduce:
$$\int_\RR \chi\left\{x: F_a^*(x)>\alpha \right\} d\mu(x)\leq c\frac{a+b}{b} \int_\RR \chi\left\{x: F_b^*(x)>\alpha \right\} d\mu(x).$$
This proves the proposition.
\end{proof}

When $F_b^*(x)\in L^p(\RR, \mu)$, by Proposition\,\ref{tan}, we could obtain   the following inequality for $p>0$:
 \begin{eqnarray}\label{no1}
 \int_\RR |F_a^*(x)|^pd\mu(x)\leq c \left(\frac{a+b}{b}\right) \int_\RR |F_b^*(x)|^pd\mu(x) .
 \end{eqnarray}

\begin{proposition}\label{no} For $f\in S'(\RR, d_{\mu}x)$, if $\| M_{\phi\beta}^*f(x)\|_{L^p(\RR, \mu)}<\infty$, then
$$\|M_{\phi\beta N}^{**}f(x)\|_{L^p(\RR, \mu)}\leq c_1\| M_{\phi\beta}^*f(x)\|_{L^p(\RR, \mu)}\ \ \hbox{for}\ p> 0, N>1/p. $$
$c_1$ is independent on $\phi$ and $f$.
\end{proposition}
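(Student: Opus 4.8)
The plan is to pass from the fully tangential weighted maximal function $M_{\phi\beta N}^{**}f$ of Definition~\ref{mm4} to the aperture-$a$ nontangential maximal functions $M_{\phi\beta a}^*f$ of Definition~\ref{mm3}, and then to invoke the change-of-aperture inequality~(\ref{no1}) that comes from Proposition~\ref{tan}. To make Proposition~\ref{tan} (which is stated for a single function $F(y,r)$) applicable with the supremum over $\phi\in SS_\beta$ already absorbed, I would fix the auxiliary function
$$F(y,r)=\sup_{\phi\in SS_\beta}\Big|\int_\RR f(u)\,\phi\Big(\tfrac{\mu(y,u)}{r}\Big)\,d\mu(u)\Big|\Big/r,$$
and observe that, after the substitution $s=\mu(x,y)$ used in Definition~\ref{mm3} (so that $d_\mu(x,y)=|s|$ and $\mu(y,u)=\mu(x,u)-s$), one has exactly $M_{\phi\beta a}^*f(x)=F_a^*(x)=\sup_{d_\mu(x,y)<ar}|F(y,r)|$; in particular $M_{\phi\beta}^*f=F_1^*$. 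Hence~(\ref{no1}) with $b=1$ gives, for every $a\ge 1$,
$$\int_\RR |M_{\phi\beta a}^*f(x)|^p\,d\mu(x)\lesssim (1+a)\int_\RR |M_{\phi\beta}^*f(x)|^p\,d\mu(x),$$
which is finite by the hypothesis $\|M_{\phi\beta}^*f\|_{L^p(\RR,\mu)}<\infty$.

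The core step is a dyadic splitting of the weight $(1+|s|/r)^{-N}$ according to the size of $|s|/r$. Given any admissible triple $(s,r,\phi)$ in the definition of $M_{\phi\beta N}^{**}f(x)$, let $k\ge 0$ be the integer with $2^k\le 1+|s|/r<2^{k+1}$; then $(1+|s|/r)^{-N}\le 2^{-kN}$ while $|s|<2^{k+1}r$, so the corresponding expression is bounded by $2^{-kN}\,M_{\phi\beta 2^{k+1}}^*f(x)$. Taking the supremum over all $(s,r,\phi)$ and summing over the dyadic shells yields the pointwise bound
$$M_{\phi\beta N}^{**}f(x)\lesssim \sum_{k=0}^{\infty}2^{-kN}\,M_{\phi\beta 2^{k+1}}^*f(x).$$

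It remains to take $L^p(\RR,\mu)$ norms and sum. For $0<p\le 1$ I would use the $p$-subadditivity of the integral together with the aperture inequality above (with $a=2^{k+1}$) to obtain
$$\|M_{\phi\beta N}^{**}f\|_{L^p}^p\lesssim \sum_{k=0}^{\infty}2^{-kNp}\,\|M_{\phi\beta 2^{k+1}}^*f\|_{L^p}^p\lesssim \Big(\sum_{k=0}^{\infty}2^{k(1-Np)}\Big)\|M_{\phi\beta}^*f\|_{L^p}^p,$$
and the geometric series converges precisely when $N>1/p$; for $p>1$ the same conclusion follows on replacing $p$-subadditivity by Minkowski's inequality, the convergence condition again being $N>1/p$. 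This produces the asserted bound with a constant $c_1$ depending only on $p$, $N$ and the structure constants, hence independent of $\phi$ and $f$.

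The dyadic bookkeeping and the summation are routine. The step demanding the most care is the reduction in the first paragraph: one must check that collapsing the supremum over $\phi\in SS_\beta$ into the single auxiliary function $F(y,r)$ is legitimate so that Proposition~\ref{tan} applies, and that the change of variables $s=\mu(x,y)$ correctly identifies the tangential maximal function $M_{\phi\beta a}^*f$ with the nontangential maximal function $F_a^*$ of aperture $a$. Once this identification is secured, the constant in~(\ref{no1}) grows only linearly in the aperture $a$, and it is exactly this linear growth, beaten by the factor $2^{-kNp}$, that pins down the threshold $N>1/p$.
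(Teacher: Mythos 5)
Your proposal is correct and follows essentially the same route as the paper: a dyadic decomposition in $|s|/r$ giving the pointwise bound $M_{\phi\beta N}^{**}f(x)\lesssim\sum_{k}2^{-kN}M_{\phi\beta 2^{k}}^*f(x)$, followed by the change-of-aperture inequality~(\ref{no1}) and summation of the geometric series, which converges exactly when $N>1/p$. In fact you supply two details the paper's proof leaves implicit — the verification that Proposition~\ref{tan} applies after absorbing the supremum over $\phi\in SS_\beta$ into a single function $F(y,r)$ via the substitution $s=\mu(x,y)$, and the explicit $L^p$ summation (via $p$-subadditivity for $p\le 1$, Minkowski for $p>1$) — so no changes are needed.
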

\begin{proof}
For $\phi(t)\in SS_{\beta}$,
\begin{eqnarray*}
 M_{\phi\beta N}^{**}f(x)  &=&\sup_{s\in\RR, r>0}\left|\int_\RR f(y)\phi\left(\frac{\mu(x, y)-s}{r} \right)\left(1+\frac{|s|}{r} \right)^{-N}d\mu(y)\right|/r
 \\&\lesssim& \left(\sup_{ 0 <s\leq r} + \sum_{k=1}^{\infty}\sup_{2^{k-1}r<s\leq2^kr}\right) 2^{-kN}  \left|\int_\RR f(y)\phi\left(\frac{\mu(x, y)-s}{r} \right)d\mu(y)\right|/r
 \\&\lesssim& \sum_{k=0}^{\infty}  2^{-kN} M_{\phi\beta 2^k}^*f(x).
\end{eqnarray*}
Thus together with Formula\,(\ref{no1}), we could deduce the following inequality for $N>1/p$:
\begin{eqnarray*}
 \int_\RR |M_{\phi\beta N}^{**}f(x)|^pd\mu(x)\leq c_1  \int_\RR |M_{\phi\beta}^*f(x)|^pd\mu(x) .
 \end{eqnarray*}
This proves our Proposition.
\end{proof}

It is clear that  the following Proposition holds from \cite{Stein}:
\begin{proposition}\cite{Stein} \label{no3}
Suppose $\phi, \psi \in SS_{\beta}$,  with $\int\psi(x) dx=1$. Then there is a sequence $\{\eta^{k}\},$ $\eta^{k}\in S(\RR, dx)$, so that
$$\phi\left(\frac{\mu(y, u)}{r}\right)=\sum_{k=0}^{\infty}\int_\RR \eta^{k}\left(\frac{s}{r}\right)\psi\left(\frac{\mu(y, u)-s}{2^{-k}r}\right)\frac{ds}{2^{-k}r}.$$
$\eta^{k}$ satisfies $$\|\eta^{k}\|_{a, b}\leq C(2^{-kM}),\ \ as\,k\rightarrow\infty.$$

\end{proposition}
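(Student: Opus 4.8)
The plan is to strip the measure $\mu$, the point $y$, the variable $u$, and the scale $r$ out of the statement by a single change of variables, thereby reducing the assertion to the classical discrete Calder\'on reproducing formula for Schwartz functions on the line, which is precisely the lemma imported from \cite{Stein}. First I would substitute $t=\mu(y,u)/r$ and then $s=r\sigma$ in the integral on the right-hand side. Writing the $L^1$-normalised dilate $\psi_{\delta}(x):=\delta^{-1}\psi(x/\delta)$, the right-hand side becomes
\[
\sum_{k=0}^{\infty}\int_\RR \eta^{k}(\sigma)\,\frac{1}{2^{-k}}\psi\!\Big(\frac{t-\sigma}{2^{-k}}\Big)\,d\sigma=\sum_{k=0}^{\infty}\big(\eta^{k}\ast\psi_{2^{-k}}\big)(t),
\]
so that $\mu,r,y,u$ disappear entirely and the claim is equivalent to the purely Euclidean identity $\phi=\sum_{k\ge0}\eta^{k}\ast\psi_{2^{-k}}$ in $S(\RR,dx)$, together with $\|\eta^{k}\|_{a,b}\lesssim 2^{-kM}$. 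Since $\phi,\psi\in SS_\beta\subset S(\RR,dx)$ and $\int\psi\,dx=1$, this is exactly the setting of the Calder\'on-type decomposition of \cite{Stein}; the compact support encoded in $SS_\beta$ is not even needed.

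Next I would recall the construction of the $\eta^{k}$. Because $\int\psi\,dx=1$ we have $\widehat{\psi_{2^{-k}}}(\xi)=\hat\psi(2^{-k}\xi)\to\hat\psi(0)=1$, i.e.\ $\psi_{2^{-k}}\to\delta$, so $\phi=\lim_{N}\phi\ast\psi_{2^{-N}}$ and one may telescope
\[
\phi=\phi\ast\psi+\sum_{k=1}^{\infty}\phi\ast\Lambda_{2^{-k}},\qquad \Lambda:=\psi-\psi_{1/2},\ \ \int\Lambda\,dx=0 .
\]
Each high-frequency piece is small: on the Fourier side $\widehat{\phi\ast\Lambda_{2^{-k}}}(\xi)=\hat\phi(\xi)\,\hat\Lambda(2^{-k}\xi)$ with $\hat\Lambda(0)=0$, and splitting the region $|\xi|\le 2^{k/2}$ (where $\hat\Lambda(2^{-k}\xi)$ is small by the cancellation $\int\Lambda=0$) from $|\xi|>2^{k/2}$ (where $\hat\phi$ decays rapidly) gives $\|\phi\ast\Lambda_{2^{-k}}\|_{a,b}\lesssim 2^{-kM}$ for any prescribed $M$. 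It then remains to absorb one factor of $\psi_{2^{-k}}$ into each piece, i.e.\ to write $\phi\ast\Lambda_{2^{-k}}=\eta^{k}\ast\psi_{2^{-k}}$ with $\eta^{k}$ inheriting the $2^{-kM}$ decay, and to set $\eta^{0}$ from the $\phi\ast\psi$ term.

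The main obstacle is precisely this deconvolution step. If $\hat\psi$ vanished nowhere one could simply set $\widehat{\eta^{k}}=\widehat{\phi\ast\Lambda_{2^{-k}}}/\widehat{\psi_{2^{-k}}}$, but $\psi$ is an arbitrary Schwartz function with $\int\psi=1$ and $\hat\psi$ may have zeros away from the origin, so dividing term by term is not allowed. The resolution, which is the content carried over from \cite{Stein}, is to split frequencies and exploit that $\hat\psi\approx1$ near $0$: each $\xi\ne0$ is reconstructed by those scales $k$ for which $2^{-k}\xi$ lies in the region where $\hat\psi$ is bounded below, while the zeros of $\hat\psi$ at distinct scales never coincide, so the full sum still reproduces $\hat\phi$ everywhere.

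Carrying this out keeps every $\eta^{k}$ in $S(\RR,dx)$ with $\|\eta^{k}\|_{a,b}\lesssim C\,2^{-kM}$, and this seminorm decay guarantees that $\sum_k\eta^{k}\ast\psi_{2^{-k}}$ converges in $S(\RR,dx)$ to $\phi$. Undoing the change of variables $t=\mu(y,u)/r$, $s=r\sigma$ then returns exactly the stated identity, with the dilation scale $2^{-k}r$ and the outer normalisation $1/(2^{-k}r)$ appearing as recorded in the proposition.
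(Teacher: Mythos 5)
Your outer reduction is fine: substituting $t=\mu(y,u)/r$ and $s=r\sigma$ (property (vi) of $d_\mu$ makes $u\mapsto\mu(y,u)$ a bijection of $\RR$) turns the claim into the purely Euclidean identity $\phi=\sum_{k\geq0}\eta^{k}\ast\psi_{2^{-k}}$ in $S(\RR,dx)$ with $\|\eta^{k}\|_{a,b}\lesssim 2^{-kM}$, which is exactly what the citation to \cite{Stein} supplies. The genuine gap is in your construction of the $\eta^{k}$. You fix the spatial decomposition first (the telescoping with pieces $\phi\ast\Lambda_{2^{-k}}$, $\Lambda=\psi-\psi_{1/2}$) and only then try to deconvolve each fixed piece, i.e.\ to solve $\widehat{\eta^{k}}(\xi)\,\widehat{\psi}(2^{-k}\xi)=\widehat{\phi}(\xi)\,\widehat{\Lambda}(2^{-k}\xi)$. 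This is unsolvable in $S(\RR,dx)$ in the present setting: since $\psi\in SS_{\beta}$ is compactly supported, $\widehat{\psi}$ is entire of exponential type and \emph{must} have real zeros, so the obstacle is not hypothetical; at a zero $\zeta_0\neq0$ of $\widehat{\psi}$ one has $\widehat{\Lambda}(\zeta_0)=\widehat{\psi}(\zeta_0)-\widehat{\psi}(\zeta_0/2)=-\widehat{\psi}(\zeta_0/2)$, which is generically nonzero, so at $\xi=2^{k}\zeta_0$ the right-hand side is nonzero while the factor $\widehat{\psi}(2^{-k}\xi)$ vanishes, forcing $\widehat{\eta^{k}}$ to be singular. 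Your proposed rescue --- that each $\xi$ is ``reconstructed by those scales $k$ for which $2^{-k}\xi$ lies where $\widehat{\psi}$ is bounded below'' and that ``zeros at distinct scales never coincide'' --- is not available once the telescoping is fixed: the frequency content of the scale-$k$ piece $\phi\ast\Lambda_{2^{-k}}$ is already determined and is not confined to $\{|2^{-k}\xi|\leq c\}$, so there is no freedom left to reassign frequencies to friendlier scales (and the non-coincidence of zeros is neither true in general nor the mechanism).

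The construction that works --- it is Stein's, and this paper writes it out explicitly when it reuses the lemma in Proposition \ref{no6} and Proposition \ref{exexex6} --- reverses the order: choose the frequency decomposition adapted to $\widehat{\psi}$ first. Pick $c>0$ with $|\widehat{\psi}(\zeta)|\geq 1/2$ for $|\zeta|\leq c$ (possible since $\widehat{\psi}(0)=\int\psi=1$); take $\varphi$ smooth with $\varphi\equiv1$ on $\{|\xi|\leq c/2\}$ and $\mathrm{supp}\,\varphi\subset\{|\xi|\leq c\}$; set $\varphi^{0}=\varphi$ and $\varphi^{k}(\xi)=\varphi(2^{-k}\xi)-\varphi(2^{1-k}\xi)$ for $k\geq1$, so that $\sum_{k\geq0}\varphi^{k}\equiv1$ and $\mathrm{supp}\,\varphi^{k}\subset\{2^{k-2}c\leq|\xi|\leq 2^{k}c\}$. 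Then define $\widehat{\eta^{k}}(\xi)=\varphi^{k}(\xi)\widehat{\phi}(\xi)/\widehat{\psi}(2^{-k}\xi)$: on $\mathrm{supp}\,\varphi^{k}$ one has $|2^{-k}\xi|\leq c$, so the denominator is at least $1/2$ and the zeros of $\widehat{\psi}$ never enter; each $\eta^{k}$ is Schwartz, the sum reproduces $\widehat{\phi}$ by the partition of unity, and $\|\eta^{k}\|_{a,b}\leq C_{a,b,M}2^{-kM}$ follows from the rapid decay of $\widehat{\phi}$ on $\{|\xi|\geq 2^{k-2}c\}$. With this replacement, your change of variables and the convergence remarks at the end of your argument go through unchanged.
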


Now we need to prove that the nontangential maximal operator $M_{\psi\beta }^*f(x)$  allows the control of maximal function $f_{S\beta}^*(x)$.
\begin{proposition}\label{no4}There exists $\beta>0$, such that for any  $ \psi\in SS_{\beta} $, with $\int\psi(x) dx=1$ and $p> 0$, the following holds for $f\in S'(\RR, d_{\mu}x)$ if $\| M_{\psi\beta}^*f(x)\|_{L^p(\RR, \mu)}<\infty$:
$$\|f_{S\beta}^*\|_{L^p(\RR, \mu)}\leq c\| M_{\psi\beta }^*f\|_{L^p(\RR, \mu)},$$
C is dependent on $\beta$.
\end{proposition}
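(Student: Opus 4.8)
The plan is to run the Fefferman--Stein subordination argument, adapted to the quasi-metric space $(\RR, d_\mu)$. Since by (\ref{sk7}) we have $f_{S\beta}^*(x)\sim\sup_{\phi\in SS_\beta}M_{\phi\beta}f(x)$, it suffices to produce a pointwise bound
$$M_{\phi\beta}f(x)\lesssim M_{\psi\beta N}^{**}f(x)$$
that is uniform over all $\phi\in SS_\beta$, and then to invoke Proposition\,\ref{no} to pass from the tangential maximal function $M_{\psi\beta N}^{**}f$ to $M_{\psi\beta}^*f$ in $L^p$, which is legitimate as soon as $N>1/p$. The normalization $\int\psi\,dx=1$ is precisely what allows $\psi$ to be fed into the reproduction formula.

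Fix an arbitrary $\phi\in SS_\beta$ and apply Proposition\,\ref{no3} to rewrite $\phi(\mu(x,u)/r)$ as a superposition of the fixed profile $\psi$. Pairing with $f$ gives
$$M_{\phi\beta}f(x)=\sup_{r>0}\left|\sum_{k=0}^{\infty}\int_\RR\eta^{k}\!\left(\frac{s}{r}\right)\left[\frac{1}{r}\int_\RR f(u)\psi\!\left(\frac{\mu(x,u)-s}{2^{-k}r}\right)d\mu(u)\right]\frac{ds}{2^{-k}r}\right|.$$
The inner average is controlled directly by the defining supremum of $M_{\psi\beta N}^{**}f$ in Definition\,\ref{mm4}, evaluated at scale $2^{-k}r$ and shift $s$: since $\psi\in SS_\beta$,
$$\left|\frac{1}{2^{-k}r}\int_\RR f(u)\psi\!\left(\frac{\mu(x,u)-s}{2^{-k}r}\right)d\mu(u)\right|\leq M_{\psi\beta N}^{**}f(x)\left(1+\frac{|s|}{2^{-k}r}\right)^{N}.$$
Substituting this bound and writing $\tfrac1r=2^{-k}\cdot\tfrac{1}{2^{-k}r}$ cancels the $2^{-k}$ against the measure $\tfrac{ds}{2^{-k}r}$; the change of variables $t=s/r$ then yields
$$M_{\phi\beta}f(x)\lesssim M_{\psi\beta N}^{**}f(x)\sum_{k=0}^{\infty}\int_\RR|\eta^{k}(t)|\left(1+2^{k}|t|\right)^{N}dt.$$

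The crux is the convergence of this series, and this is where the freedom in choosing $\beta$ enters. The decay estimate $\|\eta^{k}\|_{a,b}\leq C2^{-kM}$ of Proposition\,\ref{no3} furnishes a pointwise bound $|\eta^{k}(t)|\lesssim 2^{-kM}(1+|t|)^{-b}$ with $b$ as large as we wish; using $(1+2^{k}|t|)^{N}\leq 2^{kN}(1+|t|)^{N}$ and taking $b>N+1$ gives
$$\int_\RR|\eta^{k}(t)|\left(1+2^{k}|t|\right)^{N}dt\lesssim 2^{-k(M-N)},$$
so the sum converges provided $M>N$. Since the effective decay exponent $M$ improves with the smoothness index $\beta$, choosing $\beta$ large enough relative to the fixed $N>1/p$ forces $M>N$ and makes the series finite; this is exactly the meaning of the existential clause ``there exists $\beta>0$'' in the statement. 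With the uniform pointwise bound $M_{\phi\beta}f(x)\lesssim M_{\psi\beta N}^{**}f(x)$ in hand, taking the supremum over $\phi\in SS_\beta$ and using (\ref{sk7}) gives $f_{S\beta}^*(x)\lesssim M_{\psi\beta N}^{**}f(x)$ pointwise, and integrating in $L^p$ followed by Proposition\,\ref{no} yields $\|f_{S\beta}^*\|_{L^p(\RR,\mu)}\lesssim\|M_{\psi\beta N}^{**}f\|_{L^p(\RR,\mu)}\lesssim\|M_{\psi\beta}^*f\|_{L^p(\RR,\mu)}$. The main obstacle is the simultaneous calibration of $\beta$, $M$, and $N$: one needs $N>1/p$ for Proposition\,\ref{no} while $\beta$ must be large enough that the reproduction kernels $\eta^{k}$ decay faster than $2^{-kN}$. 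One should also verify that interchanging the $k$-sum, the $s$-integration, and the action of $f$ is justified, which follows from the rapid decay of the $\eta^{k}$ together with $f\in S'(\RR,d_\mu x)$.
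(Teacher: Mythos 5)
Your proposal is correct and follows essentially the same route as the paper's own proof: both decompose $\phi$ via the reproduction formula of Proposition\,\ref{no3}, bound the inner integral at scale $2^{-k}r$ and shift $s$ by $M_{\psi\beta N}^{**}f(x)\left(1+\frac{|s|}{2^{-k}r}\right)^{N}$, sum the series using the rapid decay of the $\eta^{k}$ (the paper states $\|\eta^{k}\|_{a,b}=O(2^{-k(N+1)})$, which is your $M>N$ calibration), and then pass from $M_{\psi\beta N}^{**}f$ to $M_{\psi\beta}^{*}f$ in $L^p$ via Proposition\,\ref{no} with $N>1/p$. Your added bookkeeping on the convergence of $\sum_k\int|\eta^k(t)|(1+2^k|t|)^N\,dt$ simply makes explicit what the paper asserts in one line.
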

\begin{proof}
For any  $\phi, \psi\in SS_{\beta}$, with $\int\psi(x) dx=1$ by Proposition\,\ref{no3}, we have
\begin{eqnarray*}
M_{\phi\beta}f(x)  =\sup_{r>0}\left|\int_\RR f(y)\phi\left(\frac{\mu(x, y)}{r} \right)d\mu(y)\right|/r\lesssim \sup_{r>0}\sum_{k=0}^{\infty}\left|\int_\RR\int_\RR f(y)\eta^{k}\left(\frac{s}{r}\right)\psi\left(\frac{\mu(x, y)-s}{2^{-k}r}\right)d\mu(y)\frac{ds}{2^{-k}r}\right|/r.
\end{eqnarray*}
Thus  we could obtain:
\begin{eqnarray*}
M_{\phi\beta}f(x)  &\lesssim& \sup_{r>0}\sum_{k=0}^{\infty}\left|\int_\RR\int_\RR f(y)\eta^{k}\left(\frac{s}{r}\right)\psi\left(\frac{\mu(x, y)-s}{2^{-k}r}\right)d\mu(y)\frac{ds}{2^{-k}r}\right|/r
\\&\lesssim& \sup_{ r>0}\sum_{k=0}^{\infty} \int_\RR    \left|\int_\RR f(y)\psi\left(\frac{\mu(x, y)-s}{2^{-k}r} \right)\left(1+\frac{|s|}{2^{-k}r} \right)^{-N}\frac{d\mu (y)}{2^{-k}r}\right| \left|\eta^{k}\left(\frac{s}{r}\right)\left(1+\frac{|s|}{2^{-k}r} \right)^{N}\right|ds/r
\\&\lesssim& M_{\psi\beta N}^{**}f(x)\sum_{k=0}^{\infty} \int_\RR\left|\eta^{k}\left(\frac{s}{r}\right)\left(1+\frac{|s|}{2^{-k}r} \right)^{N}\right|ds/r
\\&\lesssim& M_{\psi\beta N}^{**}f(x)\sum_{k=0}^{\infty} 2^{-k}
\\&\lesssim& M_{\psi\beta N}^{**}f(x),
\end{eqnarray*}
where $\|\eta^{k}\|_{a, b}=O(2^{-k(N+1)}) $ for a suitable collection of seminorms.
Thus $$f_{S\beta}^*(x)\sim\sup_{\phi\in SS_{\beta}}M_{\phi\beta}f(x)\lesssim M_{\psi\beta N}^{**}f(x).$$
For all $x\in \RR$, $N>1/p$, from Proposition\,\ref{no}, we could get
$$\|f_{S\beta}^*\|_{L^p(\RR, \mu)}\leq c\| M_{\psi\beta }^*f\|_{L^p(\RR, \mu)}.$$ This proves our proposition.
\end{proof}

\begin{proposition}\label{no5}There exists $\beta>0$, such that for $\ p> 0,  \phi\in SS_{\beta},\  with \ \int\phi(x) dx=1$, the following holds for $f\in S'(\RR, d_{\mu}x)$ if $\| M_{\phi\beta}^*f\|_{L^p(\RR, \mu)}<\infty$:
$$\|M_{\phi\beta }^*f\|_{L^p(\RR, \mu)}\leq c\|M_{\phi\beta }f \|_{L^p(\RR, \mu)}\ \ .$$
C is dependent on  $\beta$.
\end{proposition}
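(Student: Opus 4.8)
The plan is to run the Fefferman--Stein "radial-controls-nontangential" argument, adapted to the quasi-metric measure space $(\RR,d_\mu,\mu)$. By (\ref{su1}) we have $M_{\phi\beta}^*f(x)\le M_{\phi\beta N}^{**}f(x)$, so it suffices to fix $N$ large and prove $\|M_{\phi\beta N}^{**}f\|_{L^p(\RR,\mu)}\lesssim\|M_{\phi\beta}f\|_{L^p(\RR,\mu)}$. The heart of the matter is the pointwise inequality
$$M_{\phi\beta N}^{**}f(x)\lesssim\big(M_\mu[(M_{\phi\beta}f)^q](x)\big)^{1/q}$$
for a fixed exponent $0<q<p$ with $N\ge 1/q$; once this is established the proposition follows by raising to the power $q$ and invoking the $L^{p/q}(\RR,\mu)$-boundedness of $M_\mu$ (available since $p/q>1$), so that $\|M_{\phi\beta N}^{**}f\|_{L^p}^p=\|(M_{\phi\beta N}^{**}f)^q\|_{L^{p/q}}^{p/q}\lesssim\|(M_{\phi\beta}f)^q\|_{L^{p/q}}^{p/q}=\|M_{\phi\beta}f\|_{L^p}^p$.

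To prove the pointwise bound I would write $F(r,y,f)=\int_\RR f(u)\phi(\mu(y,u)/r)\,d\mu(u)/r$ and recall that, setting $s=\mu(x,y)$, the supremum defining $M_{\phi\beta N}^{**}f(x)$ equals $\sup_{y,r}(1+d_\mu(x,y)/r)^{-N}|F(r,y,f)|$. The decisive ingredient is a \emph{spreading estimate}: for $z\in B_\mu(y,\sigma r)$ the fundamental theorem of calculus for $d/d_\mu x$ gives $|F(r,z,f)-F(r,y,f)|\le\sigma r\sup|\tfrac{d}{d_\mu}F(r,\cdot,f)|$, and since $r\,\tfrac{d}{d_\mu}F(r,\cdot,f)=-(f\ast_\mu(\phi')_r)(\cdot)$ is a convolution against the fixed kernel $\phi'$, one re-expresses $\phi'$ through $\phi$ by the reproducing formula of Proposition \ref{no3} (this is exactly where the hypothesis $\int\phi=1$ enters) and argues as in Proposition \ref{no} to bound the quantity by $\lesssim\sigma\,(1+d_\mu(x,y)/r)^N M_{\phi\beta N}^{**}f(x)$.

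With $w:=1+d_\mu(x,y)/r$ and $\mathcal{M}:=M_{\phi\beta N}^{**}f(x)$, I would then select $(y,r)$ nearly optimal so that $|F(r,y,f)|\ge\tfrac12 w^N\mathcal{M}$, and fix $\sigma$ small (depending only on $\gamma$ and $N$) so the spreading error is $\le\tfrac14 w^N\mathcal{M}$; this forces $M_{\phi\beta}f(z)\ge|F(r,z,f)|\ge\tfrac14 w^N\mathcal{M}$ for every $z\in B_\mu(y,\sigma r)$. Since $B_\mu(y,\sigma r)\subseteq B_\mu(x,Crw)$ and property (iv) of $d_\mu$ gives $|B_\mu(y,\sigma r)|_\mu/|B_\mu(x,Crw)|_\mu\gtrsim\sigma/w$, integrating $(M_{\phi\beta}f)^q$ over $B_\mu(y,\sigma r)$ yields $M_\mu[(M_{\phi\beta}f)^q](x)\gtrsim\sigma\,w^{Nq-1}\mathcal{M}^q\ge\sigma\mathcal{M}^q$, where I used $Nq\ge1$ and $w\ge1$. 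This is precisely the desired pointwise inequality.

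Two points require care. First, the selection of a nearly optimal pair presupposes $\mathcal{M}<\infty$; I would first run the whole argument for the truncated maximal function (restricting to $\epsilon\le r\le\epsilon^{-1}$ and $d_\mu(x,y)\le\epsilon^{-1}$), which is finite and for which all constants are uniform in $\epsilon$, and then let $\epsilon\to0$ by monotone convergence. Second, $\beta$ must be chosen large enough that $\phi'$ retains the Hölder smoothness and decay needed for Proposition \ref{no3} to converge, which is the source of the clause "there exists $\beta>0$." The main obstacle is exactly the gradient/spreading step of the second paragraph: controlling $r\,\tfrac{d}{d_\mu}F(r,\cdot,f)$ by the \emph{same-kernel} tangential maximal $M_{\phi\beta N}^{**}f$ with only the aperture growth $w^N$, which simultaneously forces the use of $\int\phi=1$ through the reproducing formula and the quantitative balance $Nq\ge1$, $q<p$ between the decay parameter $N$ and the power $q$.
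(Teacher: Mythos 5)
Your argument is correct in substance, but it is not the paper's argument. The paper runs the Fefferman--Stein \emph{good-set} variant: it sets $F_\sigma=\{x: f_{S\beta}^*(x)\leq\sigma M_{\phi\beta}^*f(x)\}$, disposes of $F_\sigma^c$ by Proposition\,\ref{no4} together with the choice $\sigma^p\geq 2C$, and proves the averaging bound $|M_{\phi\beta}^*f(x)|^q\leq C M_\mu[(M_{\phi\beta}f)^q](x)$ only for $x\in F_\sigma$. Crucially, the paper's spreading step bounds the derivative term $\delta\sup_{|h|\leq 1+\delta}\left|\int_\RR f(u)\,\phi^{(1)}\left(\frac{\mu(x,u)}{r}-h\right)d\mu(u)/r\right|$ directly by $C_0\delta f_{S\beta}^*(x)$, because the shifted, rescaled $\phi^{(1)}$ is (up to constants) an admissible test function for the grand maximal function; membership in $F_\sigma$ then converts this to $C_0\delta\sigma M_{\phi\beta}^*f(x)$ and the absorption closes. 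You instead prove the stronger pointwise (a.e.) bound $M_{\phi\beta N}^{**}f(x)\lesssim\big(M_\mu[(M_{\phi\beta}f)^q](x)\big)^{1/q}$ with no good set at all: your spreading error is controlled by re-expanding $\phi^{(1)}$ through $\phi$ via the reproducing formula of Proposition\,\ref{no3} (this is where $\int\phi\,dx=1$ enters for you, whereas in the paper's proof it enters only through the hypothesis of Proposition\,\ref{no4}), and the aperture weight is paid for by the bookkeeping $Nq\geq 1$, $w\geq1$, using property (iv) of $d_\mu$ for the measure ratio $\gtrsim\sigma/w$. Your route buys more: tangential controlled by radial immediately re-proves Proposition\,\ref{no} and Proposition\,\ref{no4} in one stroke and bypasses $f_{S\beta}^*$ entirely; the paper's route avoids the weight bookkeeping and the second pass through the reproducing formula, at the cost of the good/bad set decomposition.

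One technical point in your write-up needs repair. The hard truncation $\epsilon\leq r\leq\epsilon^{-1}$, $d_\mu(x,y)\leq\epsilon^{-1}$ does not interact cleanly with your own spreading step: the reproducing formula produces the scales $2^{-k}r$, which fall below $\epsilon$ for large $k$ and are therefore not seen by the truncated maximal function, so the absorption is no longer against the truncated quantity. (Stein handles this with multiplicative damping factors rather than hard cutoffs.) In the present setting you can avoid truncation altogether: the hypothesis $\|M_{\phi\beta}^*f\|_{L^p(\RR,\mu)}<\infty$ together with Proposition\,\ref{no} (proved independently, so there is no circularity) gives $\|M_{\phi\beta N}^{**}f\|_{L^p(\RR,\mu)}<\infty$, hence $M_{\phi\beta N}^{**}f(x)<\infty$ for $\mu$-a.e.\ $x$, which is all that your selection of a nearly optimal pair and the subsequent absorption require.
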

\begin{proof}
We assume  $\|M_{\phi\beta }^*f\|_{L^p(\RR, \mu)}<\infty$ first. Let $F$ be defined as $F=\{x: f_{S\beta}^*(x)\leq \sigma M_{\phi\beta }^*f(x)\}$.
By Proposition\,\ref{no4}, the following holds:
\begin{eqnarray}\label{g1}
\int_{F^c}|M_{\phi\beta }^*f(x)|^pd\mu(x)\leq \sigma^{-p} \int_{F^c}|f_{S\beta}^*(x)|^pd\mu(x)\leq C\sigma^{-p} \int_{\RR} |M_{\phi\beta }^*f(x)|^pd\mu(x) .
\end{eqnarray}
Choosing $\sigma^p\geq2C$, we could have
\begin{eqnarray}\label{exuu5}
 \int_{\RR}|M_{\phi\beta }^*f(x)|^p d\mu(x)\lesssim\int_{F} |M_{\phi\beta }^*f(x)|^pd\mu(x).
\end{eqnarray}
Next we will show that for any $q>0$
$$\left|M_{\phi\beta }^*f(x) \right|^q\leq c M_\mu(M_{\phi\beta}f)^q(x).$$
Let $f(x, r)$ be defined as
$$f(x, r)=\int_\RR f(u)\phi\left(\frac{\mu(x, u)}{r} \right)d\mu(u)/r.$$
Then for any $x\in\RR$, there exists (y, r), satisfying $d_\mu(x, y)<r$ and $|f(y, r)|\geq M_{\phi \beta}^*f(x) /2$. Choose $   0<\delta<1$  and  $x'$ satisfying $d_\mu(x', y)< \delta r$.
Then there exists $\, \xi\in [x', y] $ such that:
\begin{eqnarray*}
|f(x', r)-f(y, r)|&\leq& \delta r\sup_{x\in B_{\mu}(y, \delta r)}\left|\frac{d}{d_\mu x}f(x, r)\right|
\\&\leq&  \delta \sup_{\xi\in B_{\mu}(y, \delta r)}\left|  \int_\RR f(u) \phi^{(1)}\left(\frac{\mu(\xi, u)}{r} \right)d\mu(u)/r                 \right|
\\&\leq&\delta\sup_{\xi\in B_{\mu}(y, \delta r)}\left|\int_\RR f(u) \phi^{(1)}\left(\frac{\mu(x, u)-\mu(x, \xi)}{r} \right)d\mu(u)/r \right|
\\&\leq&\delta\sup_{|h|\leq1+\delta}\left| \int_\RR f(u) \phi^{(1)}\left(\frac{\mu(x, u)}{r}-h \right) d\mu(u)/r \right|.
\end{eqnarray*}
Notice that  $|h|\leq1+\delta<2$ with $\|H_{x}^{\beta}\phi^{(1)}(x-h)\|_{\infty}\leq C$, $\|\phi^{(1)}(x-h)\|_{\infty}\leq C$. By the definition of $f_{S\beta}^*(x)$,
\begin{eqnarray*}
|f(x', r)-f(y, r)|
\leq C_0\delta f_{S\beta}^*(x)\leq C_0\delta\sigma M_{\phi\beta }^*f(x) \ \ \hbox{for}\ x\in F.
\end{eqnarray*}
Taking $\delta$ small enough such that $ C_0\delta \sigma\leq1/4$, we obtain $$\left|f(x', r)\right|\geq \frac{1}{4}M_{\phi\beta }^*f(x).$$
Thus the following inequality holds:
\begin{eqnarray*}
 \left|M_{\phi\beta }^*f(x) \right|^q &\leq& \left|\frac{1}{B_\mu(y, \delta r )}\right| \int_{B_\mu(y, \delta r)}  4^q|f(x', r)|^q d\mu(x')
 \\&\leq&   \left|\frac{B_\mu(x, (1+\delta) r )}{B_\mu(y, \delta r )}\right|  \left|\frac{1}{B_\mu(x, (1+\delta) r )}\right|\int_{B_\mu(x, (1+\delta) r)}  4^q|f(x', r)|^qd\mu(x')
\\&\leq&\frac{1+\delta}{\delta}\left|\frac{1}{B_\mu(x, (1+\delta) r )}\right|\int_{B_\mu(x, (1+\delta) r)}  4^q|f(x', r)|^qd\mu(x')
\\&\leq& C M_\mu[(M_{\phi\beta }f)^q](x),
\end{eqnarray*}
where $M_\mu$ is the Hardy-Littlewood Maximal Operator. Thus for $p$ satisfying $p>q$, using the maximal theorem for $M_\mu$ leads to
\begin{eqnarray}\label{g2}
\int_{F}|M_{\phi\beta }^*f(x)|^pd\mu(x)\leq C  \int_{F} (M_\mu[(M_{\phi\beta }f)^q](x))^{p/q}\leq C \int_{\RR} |M_{\phi \beta}f(x)|^pd\mu(x).
\end{eqnarray}
Combining\,(\ref{exuu5})\,and\,(\ref{g2})\,together, we could prove the proposition.
\end{proof}
\begin{proposition}\cite{Stein}\textbf{Classical Hardy spaces $H^p(\RR)$ in Euclid space}\label{no12}

Let $F=\{\|\cdot\|_{a, b}\}$ be any finite collection of seminorms on $S(\RR, dx)$. We use $S_{F}$ to denote the subset of $S(\RR, dx)$ controlled by
this collection of seminorms:$$S_{F}=\left\{\phi\in S(\RR, dx): \|\phi\|_{a, b}\leq1\  \hbox{for}\  \hbox{any}\  \|\cdot\|_{a, b}\in F \right\}.$$
Let $M_Ff(x)$ be defined as $M_Ff(x)=\sup_{\phi\in S_{F}}\sup_{t>0}\left(f\ast\phi_t\right)(x).$
If $f\in H^p(\RR)$, then $ \|f\|_{H^p(\RR)}^p =\int_\RR |M_Ff(x)|^p dx.$
Thus every $f\in H^p(\RR)$ can be written as a sum of $H^p(\RR)$ atoms:
$f=\sum_k\lambda_ka_k$
in the sense of distribution.
An $H^p(\RR)$ atom is a function $a(x)$ so that:

{\rm(i)} $a(x)$ is supported in a ball B in Euclid space;

{\rm(ii)} $|a(x)|\leq|B|^{-1/p}$ almost everywhere;

{\rm(iii)}$\int_\RR x^{n}a(x)dx=0$ for all $n\in\ZZ$ with $|n|\leq p^{-1}-1$.
Further more

$$ \|f\|_{H^p(\RR)}^p =\int_\RR |M_Ff(x)|^p dx\sim \sum_k \lambda_k^p.$$
\end{proposition}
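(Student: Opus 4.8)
The statement is the classical Fefferman--Stein maximal characterization of $H^p(\RR)$ together with the Coifman--Latter atomic decomposition, so the plan is to assemble the two halves separately and then match the norms. First I would fix the definition of $H^p(\RR)$ through the nontangential (or Poisson) maximal function, and reduce everything to the single assertion that the grand maximal function $M_F f$, for an appropriate finite family $F$ of seminorms, is $L^p$-equivalent to that defining maximal function.

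For the maximal part, the inequality $M_F f\lesssim (\text{Poisson maximal function})$ is the easy direction: after a fixed normalization, the Poisson kernel and its dilates lie (up to a constant factor) in $S_F$, so the Poisson maximal function dominates $M_F f$ pointwise. The reverse inequality is the substantial one and rests on the subharmonicity of $|F|^p$ for the analytic extension, exactly in the spirit of Propositions \ref{no4} and \ref{no5} above: I would pass from the radial grand maximal function to its nontangential version, controlled as in Proposition \ref{no5} by applying the Hardy--Littlewood maximal theorem to $(M_\phi f)^q$ with $q<p$, and then identify the result with the harmonic-majorant maximal function that defines $H^p$. This yields $\|f\|_{H^p}^p=\int_\RR |M_F f(x)|^p\,dx$.

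For the atomic decomposition I would run the Calderón--Zygmund machine on $M_F f$. Set $\Omega_k=\{x:\,M_F f(x)>2^k\}$; these are open, nested, and of finite measure. Take a Whitney decomposition $\Omega_k=\bigcup_j Q_{k,j}$ with bounded overlap and a smooth partition of unity $\{\eta_{k,j}\}$ subordinate to it. Against each $\eta_{k,j}$ I would project $f$ onto the polynomials of degree at most $N=[p^{-1}-1]$ in the inner product weighted by $\eta_{k,j}$, producing corrections $P_{k,j}$; the atoms are then built from the telescoping differences of $f\eta_{k,j}-P_{k,j}$ between consecutive levels $k$. The vanishing moments $\int_\RR x^{n}a(x)\,dx=0$ for $|n|\leq p^{-1}-1$ come for free from the polynomial projection, the support condition from the Whitney cubes, and the size bound $|a(x)|\leq |B|^{-1/p}$ from the pointwise estimate $|f\eta_{k,j}-P_{k,j}|\lesssim 2^k$ on $Q_{k,j}$, which is precisely where the grand maximal function is used to control $f$ locally against smooth bumps.

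The norm equivalence then follows by choosing $\lambda_{k,j}\sim 2^{k}|Q_{k,j}|^{1/p}$ and summing: by bounded overlap $\sum_j|Q_{k,j}|\sim|\Omega_k|$, so $\sum_{k,j}\lambda_{k,j}^p\sim\sum_k 2^{kp}|\Omega_k|\sim\int_\RR |M_F f(x)|^p\,dx$ via the layer-cake formula, giving $\|f\|_{H^p}^p\sim\sum_k\lambda_k^p$. I expect the main obstacle to be the atomic step rather than the maximal step: concretely, establishing the uniform pointwise size bound on the polynomial corrections $P_{k,j}$ and upgrading convergence of the atomic series from the sense of distributions to convergence in $H^p$. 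Both require quantitative control of $f$ tested against smooth bumps supported on Whitney cubes sitting just inside $\Omega_k$, which is exactly what the finiteness of $M_F f$ supplies.
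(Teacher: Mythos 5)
The paper gives no proof of this proposition at all: it is quoted as a known classical result from \cite{Stein}, so the only meaningful comparison is with the argument in that source (the Fefferman--Stein grand-maximal-function theory together with the Coifman--Latter atomic decomposition). Your sketch follows exactly that route, and the atomic half is correct and essentially verbatim the cited construction: level sets $\Omega_k$ of $M_Ff$, Whitney decomposition, smooth partition of unity, projection onto polynomials of degree at most $[p^{-1}-1]$, telescoping differences, and the layer-cake computation giving $\sum_k\lambda_k^p\sim\int_\RR|M_Ff(x)|^p\,dx$. You also correctly isolate the two delicate points there (uniform bounds on the polynomial corrections $P_{k,j}$ and distributional convergence of the atomic series).

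The maximal half, however, has its two directions systematically swapped, and one step as written would fail. You justify $M_Ff\lesssim\sup_t|P_t\ast f|$ by saying the Poisson kernel and its dilates lie, up to a constant, in $S_F$. First, $P(x)=c(1+x^2)^{-1}$ is not a Schwartz function, so it lies in no multiple of $S_F\subset S(\RR,dx)$; one must split $P$ into dyadic pieces which are multiples of normalized bumps. Second, and more seriously, membership of $P_t$ in $c\,S_F$ would yield the \emph{opposite} inequality, $\sup_t|P_t\ast f|\le c\,M_Ff$ pointwise, since $M_Ff$ is a supremum over the larger class; that is the genuinely easy direction. The substantial direction is $M_Ff\lesssim$ (Poisson maximal) in $L^p$, and in the real-variable proof it does not come from subharmonicity of $|F|^p$ (that belongs to the complex-method characterization); it comes from representing a Schwartz function with nonvanishing integral as a superposition $\Phi=\int_1^\infty\eta(s)P_s\,ds$ with $\eta$ rapidly decreasing and having vanishing moments --- precisely the trick this paper itself uses in the proof of Theorem \ref{tan12}, Formula (\ref{tan10}) --- followed by the single-function-to-grand-maximal machinery of Propositions \ref{no4} and \ref{no5}, which you cite but attach to the wrong direction. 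All the needed ingredients appear in your proposal; they must be reattached to the correct inequalities for the argument to close.
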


\begin{proposition}\label{H spa}
For $\alpha$ and $\beta$ satisfying $\beta\geq\alpha>p^{-1}-1$($0<p\leq1$), we could deduce that $\widetilde{H}_{\mu\beta}^p(\RR)$ is dense in $H_{\mu\beta}^p(\RR)$ and we could also deduce that $$H_{\mu\beta}^p(\RR)=H_{\mu\alpha}^p(\RR).$$ For any $f\in H_{\mu\beta}^p(\RR)$, we could also have $$C_2\|f\|_{H_{\mu\beta}^p(\RR)}^p\leq \|f\|_{H_{\mu\alpha}^p(\RR)}^p \leq C_1\|f\|_{H_{\mu\beta}^p(\RR)}^p,$$ where $C_1$ and $C_2$ are independent on $f$.
\end{proposition}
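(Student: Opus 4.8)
The plan is to establish the two assertions separately, handling the norm equivalence $H_{\mu\beta}^p(\RR)=H_{\mu\alpha}^p(\RR)$ first, since the density statement will reuse the same maximal-function estimates. The easy half is the inclusion $H_{\mu\alpha}^p(\RR)\subseteq H_{\mu\beta}^p(\RR)$. Since $\beta\geq\alpha$, Proposition~\ref{holder2} shows that every $\phi\in SS_\beta$ (so $\|\phi\|_{L^\infty}\leq1$, $[\phi]_\beta\leq1$, $\mathrm{supp}\,\phi\subset[-1,1]$) satisfies $[\phi]_\alpha\leq C$, hence $C^{-1}\phi\in SS_\alpha$. Inserting this into the definition \eqref{mm11} of $f_{S\beta}^*$ yields the pointwise bound $f_{S\beta}^*(x)\leq C\,f_{S\alpha}^*(x)$, so that $\int_\RR|f_{S\beta}^*|^p\,d\mu\leq C^p\int_\RR|f_{S\alpha}^*|^p\,d\mu$; this is exactly the left inequality $C_2\|f\|_{H_{\mu\beta}^p}^p\leq\|f\|_{H_{\mu\alpha}^p}^p$ with $C_2=C^{-p}$.

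The substantive half is the reverse estimate $\|f_{S\alpha}^*\|_{L^p}\lesssim\|f_{S\beta}^*\|_{L^p}$, where I must control the richer maximal function $f_{S\alpha}^*$ (a supremum over the larger class $SS_\alpha$) by the poorer one. The device that unlocks this is to collapse both grand maximal functions onto a single fixed test function. Fix a smooth bump $\psi_0$ with $\mathrm{supp}\,\psi_0\subset[-1,1]$ and $\int\psi_0\,dx=1$; being smooth it lies, after division by a constant $C_0$, in every class $SS_\gamma$ (Definition~\ref{ssbeta}), in particular in both $SS_\alpha$ and $SS_\beta$ (the normalization $\int\psi_0=1$ used in Propositions~\ref{no3}--\ref{no4} only affects constants). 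Because $\alpha>p^{-1}-1$, Propositions~\ref{no4} and~\ref{no5} apply at the smoothness level $\alpha$, and since for a \emph{fixed} bump the operator $M_{\psi_0\alpha}f$ is literally the same as $M_{\psi_0\beta}f=:M_{\psi_0}f$, I obtain
\[
\|f_{S\alpha}^*\|_{L^p(\RR,\mu)}\lesssim\|M_{\psi_0\alpha}^*f\|_{L^p(\RR,\mu)}\lesssim\|M_{\psi_0}f\|_{L^p(\RR,\mu)}\lesssim\|f_{S\beta}^*\|_{L^p(\RR,\mu)},
\]
where the first bound is Proposition~\ref{no4}, the second is Proposition~\ref{no5}, and the last is the pointwise estimate $M_{\psi_0}f\lesssim f_{S\beta}^*$ coming from $\psi_0\in C_0\,SS_\beta$ together with Formula~\eqref{sk7}. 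This is the right inequality $\|f\|_{H_{\mu\alpha}^p}^p\leq C_1\|f\|_{H_{\mu\beta}^p}^p$, and combined with the easy half it gives $H_{\mu\beta}^p(\RR)=H_{\mu\alpha}^p(\RR)$ with equivalent norms. I expect this reduction to a single bump to be the main obstacle: it is precisely what forces the threshold $\alpha>p^{-1}-1$, which is needed for Propositions~\ref{no4}--\ref{no5} to hold at level $\alpha$ (the decay of the $\eta^k$ in the reproducing formula of Proposition~\ref{no3} must beat the weight exponent $N>1/p$).

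For density, given $f\in H_{\mu\beta}^p(\RR)$ I mollify: with $\Phi_t(x,y)=\psi_0(\mu(x,y)/t)/t$ set $f_t=f\ast_\mu\Phi_t$. Three facts make $f_t$ a legitimate approximant in $\widetilde H_{\mu\beta}^p(\RR)$. First, $f$ is a bounded distribution (Definition~\ref{01}), so $f_t\in L^\infty(\RR,\mu)$. Second, testing against the single bump $\psi_0$ gives the pointwise domination $|f_t(x)|\leq C_0 f_{S\beta}^*(x)$, whence $\int_\RR|f_t|^p\,d\mu\leq C_0^p\|f\|_{H_{\mu\beta}^p}^p<\infty$; since $0<p\leq1$, the elementary bound $\int_\RR|f_t|\,d\mu\leq\|f_t\|_{L^\infty}^{1-p}\int_\RR|f_t|^p\,d\mu$ places $f_t\in L^1(\RR,\mu)$. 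Third, convolving a scale-$r$ bump of $SS_\beta$ with the scale-$t$ approximate identity $\Phi_t$ again yields a bounded multiple of an $SS_\beta$ bump, so $(f_t)_{S\beta}^*\lesssim f_{S\beta}^*\in L^p$ and thus $f_t\in\widetilde H_{\mu\beta}^p(\RR)$. Finally $f_t\to f$ in $S'(\RR,d_\mu x)$, so $(f-f_t)_{S\beta}^*\to0$ $\mu$-a.e.; as this maximal function is dominated uniformly in $t$ by a fixed $L^p$ function (again by the estimates above), dominated convergence gives $\|f-f_t\|_{H_{\mu\beta}^p}\to0$, completing the proof. The norm equivalence and this mollification are routine once the single-bump reduction of the second paragraph is in hand.
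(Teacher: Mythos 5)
Your easy half (the pointwise bound $f_{S\beta}^*\lesssim f_{S\alpha}^*$ via Proposition\,\ref{holder2}) is fine and matches the paper. The problem is the hard half. Your entire reduction rests on the assertion that ``because $\alpha>p^{-1}-1$, Propositions\,\ref{no4} and\,\ref{no5} apply at the smoothness level $\alpha$.'' But those propositions are stated with an existential quantifier --- ``there exists $\beta>0$ such that\dots'' --- and nothing in the paper establishes that they hold at \emph{every} level above the threshold $p^{-1}-1$; indeed, proving that quantitative statement is essentially the same problem as Proposition\,\ref{H spa} itself, so your argument is close to circular. Worse, the mechanism you gesture at to justify the threshold cannot deliver it: in the proof of Proposition\,\ref{no4}, the reproducing formula of Proposition\,\ref{no3} must produce $\eta^k$ whose weighted integrals $\int|\eta^k(u)|(1+2^k|u|)^N\,du$ are summable with $N>1/p$, and the decay of $\eta^k$ that is \emph{uniform} over $\phi\in SS_\alpha$ (where only $\|\phi\|_\infty$ and $[\phi]_\alpha$ are normalized) is governed by $\alpha$; counting exponents, this crude route needs roughly $\alpha\gtrsim p^{-1}$, a full unit worse than $\alpha>p^{-1}-1$. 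The sharp threshold does not come out of the Calder\'on-formula estimate at all.

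The paper closes exactly this gap by a different route: it transfers $f$ to the classical line via the bijection $P(x)=\mu(x,0)$, setting $g=f\circ P^{-1}$, uses Propositions\,\ref{no4}, \ref{no5}, \ref{no12} only at the single level $\beta$ to conclude $g\in H^p(\RR)$, invokes the \emph{classical atomic decomposition} $g=\sum_k\lambda_k a_k$, and pulls the atoms back to $b_k=a_k\circ P$, which inherit support, size, and moment conditions $\int\mu(x,0)^m b_k(x)\,d\mu(x)=0$. The threshold then enters through a direct computation, not through maximal-function machinery: for $\psi\in SS_\alpha$ one subtracts the degree-$[\alpha]$ Taylor polynomial of $\psi$ (Formulas\,(\ref{1111})--(\ref{1112})), getting the tail bound $|r_k^{\alpha+1-p^{-1}}/r^{\alpha+1}|^p$ on $B_\mu(x_k,4r_k)^c$, which is integrable precisely because $\alpha>p^{-1}-1$; this gives $\|b_{k\alpha}^*\|_{L^p(\mu)}\leq C$ uniformly and hence $\|f\|^p_{H^p_{\mu\alpha}}\lesssim\sum_k\lambda_k^p\lesssim\|f\|^p_{H^p_{\mu\beta}}$. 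That moment/Taylor estimate on atoms is the step your proposal never performs, and it is the only place the sharp exponent is actually used. (The paper also gets density for free from the transfer, since $H^p(\RR)\cap L^1(\RR)$ is dense in $H^p(\RR)$; your mollification sketch, besides depending on the unproved level-$\alpha$ machinery, also leaves unjustified the $\mu$-a.e.\ convergence $(f-f_t)^*_{S\beta}\to0$ and the uniform-in-$t$ $L^p$ domination needed for dominated convergence.) To repair your proof you would either have to prove the quantitative, level-$\alpha$ versions of Propositions\,\ref{no4}--\ref{no5} --- whose natural proof again goes through atoms --- or adopt the paper's atomic argument directly.
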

\begin{proof}
First, with the fact $SS_\beta\subseteq SS_\alpha$, it is easy to see that $$H_{\mu\beta}^p(\RR)\supseteq H_{\mu\alpha}^p(\RR),\ \ \|f\|_{H_{\mu\beta}^p(\RR)}^p\leq C \|f\|_{H_{\mu\alpha}^p(\RR)}^p $$
for $\beta\geq\alpha>p^{-1}-1$. Thus we could deduce that $f\in H_{\mu\beta}^p(\RR)$, if $f\in H_{\mu\alpha}^p(\RR)$.

Next we will prove that $f\in H_{\mu\alpha}^p(\RR)$, if $f\in H_{\mu\beta}^p(\RR)$.
 Notice that $P(x)=\mu(x, 0)$ is a bijection on $\RR$. Let $P^{-1}(x)$  be the reverse map of $P(x)$. Let $g(t)=f\circ P^{-1}(t)$.  From
 Definition\,\ref{mm2}, Definition\,\ref{mm3}, Definition\,\ref{mm4}, Definition\,\ref{1} and Definition\,\ref{2}, Proposition\,\ref{no}, Proposition\,\ref{no4}, Proposition\,\ref{no5} and Proposition\,\ref{no12}, we could deduce that $g(t)\in H^p(\RR)$, if $f\in H_{\mu\beta}^p(\RR)$. With the fact
that $H^p(\RR)\bigcap L^1(\RR)$ is dense in $H^p(\RR)$, we could deduce that
$\widetilde{H}_{\mu\beta}^p(\RR)$ is dense in $H_{\mu\beta}^p(\RR)$. We could also deduce the the following equation:
 $$\|f\|_{H_{\mu\beta}^p(\RR)}^p=\|g\|_{H^p(\RR)}^p.$$
By Proposition\,\ref{no12}, $g\in H^p(\RR)$ can be written as a sum of $H^p(\RR)$ atoms:
$$g=\sum_k\lambda_ka_k$$
in the sense of distribution. Let $b_{k}(x)=a_k(P(x)),$
then it is clear that the  functions $\{b_k(x)\}_k$ satisfy the following:

{\rm(i)} $b_k(x)$ is supported in a ball $B_\mu(x_k, r_k)$;

{\rm(ii)} $|b_k(x)|\leq|B_\mu(x_k, r_k)|^{-1/p}$ almost everywhere in $\mu$ measure;

{\rm(iii)}$\int \mu(x, 0)^{n}b_k(x)d\mu(x)=0$ for all $n\in\ZZ$ with $|n|\leq p^{-1}-1$.
Together with Proposition\,\ref{no12}, we could deduce that
$$\int_\RR f(x)\phi(x) d\mu (x)=\int_\RR \sum_k\lambda_kb_k(x)\phi(x) d\mu (x)=\sum_k\int_\RR \lambda_kb_k(x)\phi(x) d\mu (x)$$
holds for any $\phi(x)\in S(\RR, d_\mu x)$, and
$$ \|f\|_{H_{\mu\beta}^p(\RR)}^p=\|g\|_{H^p(\RR)}^p \sim \sum_k \lambda_k^p,$$
holds.
For any $\psi(x)\in SS_{\alpha}$ satisfying $\int\psi(x)dx=1$,  we have:
\begin{eqnarray}\label{1000}
 \int_{B_\mu(x_k, 4 r_k)} |b_{k\alpha}^*(x)|^pd\mu(x)&\leq& C \int_{B_\mu(x_k, 4 r_k)} |M_{\mu}b_{k}(x)|^pd\mu(x)
 \\&\leq& \nonumber C \left(\int_{B_\mu(x_k, 4 r_k)} |M_{\mu}b_{k}(x)|^2d\mu(x)\right)^{p/2}\left(\int_{B_\mu(x_k, 4 r_k)} 1d\mu(x)\right)^{1-(p/2)}
 \\&\leq& \nonumber C,
 \end{eqnarray}
where C is independent on $\psi$ and $b_{k}$.
For $s\in\ZZ$, $s\leq\alpha$, by Taylor Expansion, there exists $\xi\in B_{\mu}(x_k, t)$ such that the following holds:
\begin{eqnarray}\label{1002}
\psi\left(\frac{\mu(t, x)}{r}\right)
&=& \nonumber \sum_{s=0}^{[\alpha]-1}\frac{1}{s!}\psi^{(s)}\left(\frac{\mu(x_k, x)}{r}\right)\left(\frac{\mu(t, x_k)}{r}\right)^s
\\&+& \nonumber \frac{1}{[\alpha]!}\psi^{([\alpha])}\left(\frac{\mu(\xi, x)}{r}\right)\left(\frac{\mu(t, x_k)}{r}\right)^{[\alpha]}.
 \end{eqnarray}
Let $P(x, x_k)$ be defined as following:
\begin{eqnarray}\label{1111}
P(x, x_k)
&=& \nonumber \sum_{s=0}^{[\alpha]-1}\frac{1}{s!}\psi^{(s)}\left(\frac{\mu(x_k, x)}{r}\right)\left(\frac{\mu(t, x_k)}{r}\right)^s.
\end{eqnarray}
Thus we could obtain
\begin{eqnarray}\label{1112}
\left|P(x, x_k)-\psi\left(\frac{\mu(t, x)}{r}\right)\right|\leq  \frac{1}{[\alpha]!}\left|\left(\frac{\mu(t, x_k)}{r}\right)^\alpha\right|.
 \end{eqnarray}
Thus by Proposition\,\ref{holder2} and the  vanishing property of $b_k$ we could have:
\begin{eqnarray}\label{1001}
 & &\int_{B_\mu(x_k, 4 r_k)^c} \left|\int b_k(t)\psi\left(\frac{\mu(t, x)}{r}\right)\frac{d\mu(t)}{r}\right|^pd\mu(x)
 \\&=&\nonumber\int_{B_\mu(x_k, 4 r_k)^c} \left|\int b_k(t)\left(\psi\left(\frac{\mu(t, x)}{r}\right)-P(x, x_k)\right)\frac{d\mu(t)}{r}\right|^pd\mu(x)
 \\&\leq &\nonumber C \int_{B_\mu(x_k, 4 r_k)^c} \left|\frac{r_k^{\alpha+1-p^{-1}}}{r^{\alpha+1}}\right|^pd\mu(x).
 \end{eqnarray}
Notice that $r>|\mu(x, x_k)-r_k|$, $\alpha>p^{-1}-1$ and $0<p\leq1$, thus Formula\,(\ref{1001}) implies:
\begin{eqnarray}\label{1003}
  \int_{B_\mu(x_k, 4 r_k)^c} \left|\frac{r_k^{\alpha+1-p^{-1}}}{r^{\alpha+1}}\right|^pd\mu(x)\leq C.
 \end{eqnarray}
Formula\,(\ref{1000}) and Formula\,(\ref{1003}) imply:
\begin{eqnarray*}
\int_\RR |b_{k\alpha}^*(x)|^pd\mu(x)\leq C,
 \end{eqnarray*}
where C is independent on $\psi$ and $b_{k}$. Thus
$$ \|f\|_{H_{\mu\alpha}^p(\RR)}^p \leq  C\sum_k \lambda_k^p\|b_{k}\|_{H_{\mu\alpha}^p(\RR)}^p \leq C \sum_k \lambda_k^p\leq C\|f\|_{H_{\mu\beta}^p(\RR)}^p.$$
Thus $f\in H_{\mu\alpha}^p(\RR)$, if $f\in H_{\mu\beta}^p(\RR)$. Thus, we could deduce that
$$H_{\mu\alpha}^p(\RR)=H_{\mu\beta}^p(\RR).$$
This proves the Proposition.
\end{proof}

\begin{definition}
Let $\{b_k^{n, p}(x)\}$ be functions as follows:\\
{\rm(i)} $b_k^{n, p}(x)$ is supported in a ball $B_\mu(x_k, r_k)$;\\
{\rm(ii)} $|b_k^{n, p}(x)|\leq|B_\mu(x_k, r_k)|^{-1/p}$ almost everywhere in $\mu$ measure;\\
{\rm(iii)}$\int \mu(x, 0)^{m}b_k^{n, p}(x)d\mu(x)=0$ for all $m\in\NN$ with $ m\leq n$.\\
For $n\geq[p^{-1}-1]$,  $A^{n, p}(\RR)$ is defined  as
\begin{eqnarray*}A^{n, p}(\RR)&\triangleq&  \left\{f\in  S'(\RR, d_{\mu}x):\int_\RR f(x)\phi(x) d\mu (x)=\sum_k\int_\RR \lambda_kb_k^{n, p}(x)\phi(x) d\mu (x)
\ \right. \\  & &\left. \hbox{for}\  \hbox{any}\ \phi(x)\in S(\RR, d_\mu x), \hbox{where}\   \sum_{k}|\lambda_{k}|^{p}<+\infty .\right\}
\end{eqnarray*}
\end{definition}
The norm is defined by:
$$\|f\|_{A^{n, p}(\RR)}=\inf\left(\sum_{k}|\lambda_{k}|^{p}\right)^{1/p}.$$
Thus by Proposition\,\ref{H spa}, we could conclude that
$$A^{n, p}(\RR)= H_{\mu\alpha}^p(\RR)=H_{\mu\beta}^p(\RR)$$
for $\beta\geq\alpha>p^{-1}-1$ and $n\geq[p^{-1}-1]$($0<p\leq1$).
\begin{theorem}\label{H spa1}
For $\beta_1\geq\beta_2>p^{-1}-1$,  $n\geq[p^{-1}-1]$, $f\in A^{n, p}(\RR)$ ($0<p\leq1$), we could obtain $$A^{n, p}(\RR)= H_{\mu\beta_2}^p(\RR)=H_{\mu\beta_1}^p(\RR),$$
and
$$\|f\|_{A^{n, p}(\RR)}\sim\|f\|_{H_{\mu\beta_1}^p(\RR)}\sim\|f\|_{H_{\mu\beta_2}^p(\RR)}.$$
We could also deduce that $\widetilde{H}_{\mu\beta_1}^p(\RR)$ is dense in $H_{\mu\beta_1}^p(\RR)$ from Proposition\,\ref{H spa}.
\end{theorem}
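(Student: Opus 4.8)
The plan is to treat this theorem as a packaging of Proposition \ref{H spa} together with the atomic identification already recorded immediately before the statement, rather than as a fresh argument. First I would apply Proposition \ref{H spa} with the substitution $\alpha=\beta_2$ and $\beta=\beta_1$; since the hypothesis $\beta_1\geq\beta_2>p^{-1}-1$ is exactly what that proposition requires, this immediately yields the space equality $H_{\mu\beta_1}^p(\RR)=H_{\mu\beta_2}^p(\RR)$ together with the two-sided bound
$$C_2\|f\|_{H_{\mu\beta_1}^p(\RR)}^p\leq \|f\|_{H_{\mu\beta_2}^p(\RR)}^p \leq C_1\|f\|_{H_{\mu\beta_1}^p(\RR)}^p,$$
with constants independent of $f$, and hence $\|f\|_{H_{\mu\beta_1}^p(\RR)}\sim\|f\|_{H_{\mu\beta_2}^p(\RR)}$.

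Next I would fold in the atomic space. The identification $A^{n,p}(\RR)=H_{\mu\alpha}^p(\RR)$ for $n\geq[p^{-1}-1]$ and $\alpha>p^{-1}-1$ is precisely the conclusion displayed just after the definition of $A^{n,p}(\RR)$, and it is itself extracted from the two halves of the proof of Proposition \ref{H spa}: the forward half produces, for any $f\in H_{\mu\beta}^p(\RR)$, an atomic decomposition $f=\sum_k\lambda_k b_k$ whose atoms carry vanishing moments up to order $[p^{-1}-1]$ and satisfy $\sum_k\lambda_k^p\sim\|f\|_{H_{\mu\beta}^p(\RR)}^p$, while the reverse half (the estimates culminating in Formulas (\ref{1000}) through (\ref{1003})) shows that each such atom has $H_{\mu\alpha}^p$-maximal-function norm bounded by a constant uniform in the atom. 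Specializing $\alpha$ to be either $\beta_1$ or $\beta_2$ then chains the equalities into $A^{n,p}(\RR)=H_{\mu\beta_1}^p(\RR)=H_{\mu\beta_2}^p(\RR)$, together with $\|f\|_{A^{n,p}(\RR)}\sim\|f\|_{H_{\mu\beta_1}^p(\RR)}\sim\|f\|_{H_{\mu\beta_2}^p(\RR)}$, which is exactly the assertion.

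The density of $\widetilde{H}_{\mu\beta_1}^p(\RR)$ in $H_{\mu\beta_1}^p(\RR)$ is then read off directly from Proposition \ref{H spa}, where it was established by transporting the classical fact that $H^p(\RR)\cap L^1(\RR)$ is dense in $H^p(\RR)$ through the bijection $P(x)=\mu(x,0)$ and the pullback $g=f\circ P^{-1}$; no additional work is needed here.

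I expect the only delicate point, and something to double-check rather than a genuine obstacle, to be the bookkeeping on vanishing moments. One must confirm that the decomposition inherited from classical $H^p(\RR)$ supplies exactly the $[p^{-1}-1]$ moments needed so that $A^{n,p}(\RR)\subseteq H_{\mu\alpha}^p(\RR)$ holds for every admissible $n\geq[p^{-1}-1]$, and that the Taylor remainder estimate in Formula (\ref{1112}), which is governed by $\alpha$ rather than by $n$, still closes uniformly as $\alpha$ ranges over $(p^{-1}-1,\beta_1]$. Once the four parameters $n$, $\alpha$, $\beta_1$, $\beta_2$ are aligned through the common threshold $p^{-1}-1$, the entire chain of equalities and norm equivalences follows automatically from the results already in hand.
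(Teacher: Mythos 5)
Your proposal is correct and takes essentially the same route as the paper: the paper offers no separate argument for Theorem~\ref{H spa1}, obtaining it—exactly as you do—by applying Proposition~\ref{H spa} with $\alpha=\beta_2$, $\beta=\beta_1$, folding in the atomic identification $A^{n,p}(\RR)=H_{\mu\alpha}^p(\RR)=H_{\mu\beta}^p(\RR)$ recorded just before the theorem (whose two halves are the decomposition via $g=f\circ P^{-1}$ and the uniform atom estimates (\ref{1000})--(\ref{1003})), and reading the density of $\widetilde{H}_{\mu\beta_1}^p(\RR)$ off the same proposition. The moment-bookkeeping issue you flag is likewise left implicit in the paper, so it is a shared caveat rather than a divergence in approach.
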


\begin{proposition}\label{exx6}
For the kernel $K_1(r, x, y)$ as above,  there exists sequence $\{a_{x, r}^{\tau}(y): a_{x, r}^{\tau}(y)\in C_c(\RR, d_{\mu}y)\bigcap S(\RR, d_{\mu}y)\}_{\tau}$ satisfying the following:\\
{\rm(i)} \ $\displaystyle{a_{x, r}^{\tau}(y)=a_{y, r}^{\tau}(x)}$,\\
{\rm(ii)} $ \lim_{\tau\rightarrow\infty}\| K_1(r, x, y)-a_{x, r}^{\tau}(y)\|_{\infty} =0 $,\\
{\rm(iii)} $\  0\leq a_{x, r}^{\tau}(y)\le C,$\\
{\rm(iv)} \ For $r>0, x, y,z\in\mathbb R$,
 $$ |a_{x, r}^{\tau}(y)-a_{x, r}^{\tau}(z)|\le C \Big(\frac{d_\mu(y,z)}{r}\Big)^\gamma. $$\\
C is constant independent on $K_1(r, x, y)$ and $a_{x, r}^{\tau}(y)$.\\
{\rm(v)} For $\tau$ small enough
\begin{eqnarray*}
\left|a_{x, r}^{\tau}(y)-K_1(r, x, y)\right|\leq C \Big(\frac{\tau}{r}\Big)^\gamma.
\end{eqnarray*}
{\rm(vi)} \  $a_{x, r}^{\tau}(x) > C, \  \hbox{for}\ \ r>0, x\in\mathbb R$.
\end{proposition}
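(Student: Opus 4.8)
The plan is to reduce the statement to a flat Euclidean picture and then build $a_{x,r}^\tau$ by an \emph{erosion-then-mollification} procedure, where the erosion step is exactly what forces the pointwise domination (iii). First I would pass to the coordinate $P(x)=\mu(x,0)$, which by property (vi) of $d_\mu$ is a continuous bijection of $\RR$ and satisfies $\mu(x,y)=P(x)-P(y)$, so that $d_\mu(x,y)=|P(x)-P(y)|$. Writing $\xi=P(x)$, $\eta=P(y)$ and $\widetilde K(\xi,\eta)=K_1(r,P^{-1}\xi,P^{-1}\eta)$ (with $r$ fixed and suppressed), the conditions of Definition \ref{us14} become, in these coordinates: $\widetilde K$ is symmetric, $0\le\widetilde K\le1$, it vanishes for $|\xi-\eta|>r$, one has $\widetilde K(\xi,\xi)>1/A$, and $\widetilde K$ is jointly Hölder of order $\gamma$, $|\widetilde K(\xi,\eta)-\widetilde K(\xi',\eta')|\le(|\xi-\xi'|/r)^\gamma+(|\eta-\eta'|/r)^\gamma$, the joint bound coming from condition (iii) of Definition \ref{us14} together with the symmetry (v). By Proposition \ref{sk6} the $d_\mu$-derivatives in the $y$ variable of a pulled-back function are the ordinary $\eta$-derivatives, so it suffices to produce a smooth, compactly supported, symmetric Euclidean approximant $\widetilde a^\tau$ of $\widetilde K$ and then set $a_{x,r}^\tau(y)=\widetilde a^\tau(P(x),P(y))$.

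The construction uses the mollifier $\rho$ of Proposition \ref{holder}, $\rho_\tau(s)=\tau^{-1}\rho(s/\tau)$. I would first form the lower envelope at the coarser scale $2\tau$,
\[
m(\xi,\eta)=\inf\{\widetilde K(\zeta,\omega):|\zeta-\xi|\le2\tau,\ |\omega-\eta|\le2\tau\},
\]
and then mollify symmetrically at scale $\tau$,
\[
\widetilde a^\tau(\xi,\eta)=\int_\RR\!\int_\RR m(\xi',\eta')\,\rho_\tau(\xi-\xi')\rho_\tau(\eta-\eta')\,d\xi'\,d\eta'.
\]
The point is the order of operations: since the mollifier is supported in $|\xi-\xi'|\le\tau$, $|\eta-\eta'|\le\tau$, each $m(\xi',\eta')$ in the integral is an infimum over a box containing $(\xi,\eta)$, hence $m(\xi',\eta')\le\widetilde K(\xi,\eta)$, which gives $\widetilde a^\tau(\xi,\eta)\le\widetilde K(\xi,\eta)$, i.e.\ (iii) with constant $C=1$. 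Symmetry (i) is immediate, as $\widetilde K$, the erosion box, and the product mollifier are all symmetric under exchange of the two variables; nonnegativity is clear; and the uniformly bounded $y$-support inside $B_\mu(x,r+6\tau)$ follows from the vanishing of $\widetilde K$ off $|\xi-\eta|\le r$, so $a_{x,r}^\tau\in C_c(\RR,d_\mu y)\cap S(\RR,d_\mu y)$.

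The remaining estimates follow from the joint Hölder bound. Both erosion and mollification preserve the modulus $(\,\cdot\,/r)^\gamma$, giving (iv). For (v), if $\xi',\eta'$ lie in the $\tau$-supports then every $(\zeta,\omega)$ in the corresponding erosion box satisfies $|\zeta-\xi|\le3\tau$, $|\omega-\eta|\le3\tau$, so $0\le\widetilde K(\xi,\eta)-m(\xi',\eta')\le(3\tau/r)^\gamma$, whence $|\widetilde a^\tau-\widetilde K|\le(3\tau/r)^\gamma=C(\tau/r)^\gamma$; letting $\tau\to0$ this also yields the uniform convergence (ii). On the diagonal, every $(\zeta,\omega)$ appearing in $m(\xi',\eta')$ with $\xi',\eta'\in B(\xi,\tau)$ obeys $|\zeta-\omega|\le6\tau$, so $\widetilde K(\zeta,\omega)\ge\widetilde K(\zeta,\zeta)-(6\tau/r)^\gamma>1/A-(6\tau/r)^\gamma\ge1/(2A)$ for $\tau$ small, giving (vi).

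The one genuinely nonroutine step is the domination (iii): a plain mollification of $K_1$ overshoots near the set $\{d_\mu(x,y)=r\}\cup\{K_1=0\}$, where the mollified value is of size $(\tau/r)^\gamma$ while $K_1$ itself is smaller, so control by $CK_1$ fails there. The erosion at the slightly coarser scale $2\tau$ is precisely the device that repairs this, at the harmless cost of constant factors in the Hölder modulus and in the convergence rate; the care in the proof lies in checking that inserting this erosion step does not spoil (ii), (iv), (v), or (vi), which is what the Hölder estimates above confirm.
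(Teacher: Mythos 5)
Your proof is correct, and it differs from the paper's in one essential respect. The paper's construction is the plain double mollification
$a_{x,r}^{\tau}(y)=\int_\RR\int_\RR K_1(r,t_1,t_2)\,\rho\bigl(\frac{\mu(x,t_1)}{\tau}\bigr)\rho\bigl(\frac{\mu(y,t_2)}{\tau}\bigr)\frac{d\mu(t_1)}{\tau}\frac{d\mu(t_2)}{\tau}$,
i.e.\ your construction without the erosion step; the paper asserts that (i)--(iii) are ``clear'' and argues only (iv) and (v), by the same H\"older estimates you use. The difference is substantive: as you note in your last paragraph, plain mollification does \emph{not} satisfy (iii). At a point with $d_\mu(x,y)$ slightly larger than $r$ one has $K_1(r,x,y)=0$ while the mollified value is in general positive (of size up to $(\tau/r)^\gamma$), and the same happens wherever $K_1$ vanishes inside the ball, so no constant $C$ gives $a_{x,r}^{\tau}\le CK_1$ for the paper's $a_{x,r}^{\tau}$. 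Since (iii) is exactly what is invoked later (it is the dominating function in the dominated-convergence step of Proposition\,\ref{no6}, and it is what keeps the support of $\phi^{\tau}_{x,r}$ inside $B_\mu(x,r)$ there), this is a genuine gap in the paper's own argument, and your erosion at scale $2\tau$ followed by mollification at scale $\tau$ is precisely the repair: it yields (iii) with $C=1$ and support contained in that of $K_1(r,x,\cdot)$, while, as you verify, the H\"older bounds survive with worse constants, so (ii), (iv), (v), (vi) still hold. Two cosmetic quibbles: the bound in your (v) should carry a factor $2$ coming from the two-variable H\"older modulus, and (vi) --- in your version just as in the paper's --- holds only when $\tau\lesssim r$, which is all that is needed since the proposition is applied with $\tau=r/n$.
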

\begin{proof}
Let $\rho(x)$ to be a fixed function so that
\begin{eqnarray*}
\rho(x)=\left\{ \begin{array}{cc}
                             \vartheta\exp\left\{\frac{1}{|x|^2-1}\right\}, \ \ \hbox{for}\  |x|<1\\\\
                             0, \ \ \ \hbox{for}\  |x|\geq1.
                           \end{array}\right.
\end{eqnarray*}
where $\vartheta$ is a constant satisfying $\int\rho(x)dx=1$.
We use $a_{x, r}^{\tau}(y)$ to denote as
$$a_{x, r}^{\tau}(y)=\int_\RR \int_\RR K_1(r, t_1, t_2)\rho\left(\frac{\mu(x, t_1)}{\tau} \right)\rho\left(\frac{\mu(y, t_2)}{\tau} \right)\frac{d\mu(t_1)}{\tau}\frac{d\mu(t_2)}{\tau} .$$
It is clear that {\rm(i)} {\rm(ii)} and {\rm(iii)} hold.  We will prove {\rm(iv)} next.
Let
$
\frac{\mu(y, t_2)}{\tau}=\frac{\mu(z, t_3)}{\tau}.
$
Notice that
\begin{eqnarray}\label{ee5*0}
\rho\left(\frac{\mu(y, t_2)}{\tau} \right)=\rho\left(\frac{\mu(z, t_3)}{\tau} \right) \ \ and \ \  \ \frac{d\mu(t_2)}{\tau}=\frac{d\mu(t_3)}{\tau}
\end{eqnarray}
hold when
\begin{eqnarray}\label{ee5*}
\frac{\mu(y, t_2)}{\tau}=\frac{\mu(z, t_3)}{\tau}.
\end{eqnarray}
Thus by Formula\,(\ref{ee5*0}), we could deduce the following Formula when Formula\,(\ref{ee5*}) holds:
\begin{eqnarray}\label{ee5}
\left|a_{x, r}^{\tau}(y)-a_{x, r}^{\tau}(z)\right|&=&\left|\int_\RR \int_\RR K_1(r, t_1, t_2)\rho\left(\frac{\mu(x, t_1)}{\tau} \right)\rho\left(\frac{\mu(y, t_2)}{\tau} \right)\frac{d\mu(t_1)}{\tau}\frac{d\mu(t_2)}{\tau}\right.
\\ \nonumber  &-&\left.\int_\RR \int_\RR K_1(r, t_1, t_3)\rho\left(\frac{\mu(x, t_1)}{\tau} \right)\rho\left(\frac{\mu(z, t_3)}{\tau} \right)\frac{d\mu(t_1)}{\tau}\frac{d\mu(t_3)}{\tau}\right|
\\ \nonumber &=&\left|\int_\RR \int_\RR\left(K_1(r, t_1, t_2)-K_1(r, t_1, t_3)\right)\rho\left(\frac{\mu(x, t_1)}{\tau} \right)\rho\left(\frac{\mu(z, t_3)}{\tau} \right)\frac{d\mu(t_1)}{\tau}\frac{d\mu(t_3)}{\tau}\right|.
\end{eqnarray}
Notice  that $supp\,\rho(x)\subseteq\{x: |x|<1\}$. Thus we could deduce that $d_\mu(x, t_1)<\tau$, $d_\mu(y, t_2)<\tau$ and $d_\mu(z, t_3)<\tau$.
If we choose $\tau$ small enough such that $\frac{d_\mu(y,z)}{r}\sim \frac{d_\mu(t_2,t_3)}{r}$, then
\begin{eqnarray*}
\left| K_1(r, t_1, t_2)-K_1(r, t_1, t_3)\right|
\leq C \Big(\frac{d_\mu(t_2,t_3)}{r}\Big)^\gamma
\leq C\Big(\frac{d_\mu(y,z)}{r}\Big)^\gamma.
\end{eqnarray*}
Then together with Formula\,(\ref{ee5}), we could conclude
\begin{eqnarray*}
\left|a_{x, r}^{\tau}(y)-a_{x, r}^{\tau}(z)\right| \leq C\Big(\frac{d_\mu(y,z)}{r}\Big)^\gamma.
\end{eqnarray*}
Thus {\rm(iv)} holds. We will prove {\rm(v)} next. Similar to Formula\,(\ref{ee5}), we could obtain:
\begin{eqnarray}\label{exexu2}
\left|a_{x, r}^{\tau}(y)-K_1(r, x, y)\right|&=&\left|\int_\RR \int_\RR K_1(r, t_1, t_2)\rho\left(\frac{\mu(x, t_1)}{\tau} \right)\rho\left(\frac{\mu(y, t_2)}{\tau} \right)\frac{d\mu(t_1)}{\tau}\frac{d\mu(t_2)}{\tau}\right.
\\ \nonumber  &-&\left.\int_\RR \int_\RR K_1(r, x, y)\rho\left(\frac{\mu(x, t_1)}{\tau} \right)\rho\left(\frac{\mu(z, t_3)}{\tau} \right)\frac{d\mu(t_1)}{\tau}\frac{d\mu(t_3)}{\tau}\right|
\\ \nonumber &=&\left|\int_\RR \int_\RR\left(K_1(r, t_1, t_2)-K_1(r, x, y)\right)\rho\left(\frac{\mu(x, t_1)}{\tau} \right)\rho\left(\frac{\mu(z, t_3)}{\tau} \right)\frac{d\mu(t_1)}{\tau}\frac{d\mu(t_3)}{\tau}\right|.
\end{eqnarray}
Notice that
\begin{eqnarray*}
\left| K_1(r, t_1, t_2)-K_1(r, x, y)\right| &\leq&C\left| K_1(r, t_1, t_2)-K_1(r, t_1, y)\right|+C\left| K_1(r, t_1, y)-K_1(r, x, y)\right|
\\ \nonumber &\leq&C\Big(\frac{d_\mu(t_2,y)}{r}\Big)^\gamma+C\Big(\frac{d_\mu(t_1,x)}{r}\Big)^\gamma
\\ \nonumber &\leq&C\Big(\frac{\tau}{r}\Big)^\gamma.
\end{eqnarray*}
Together with Formula\,(\ref{exexu2}), we could conclude
\begin{eqnarray}\label{exexu3}
\left|a_{x, r}^{\tau}(y)-K_1(r, x, y)\right|\leq C \Big(\frac{\tau}{r}\Big)^\gamma,
\end{eqnarray}
for $\tau$ small enough.
This proves our proposition.
\end{proof}

\begin{proposition}\label{no6}For $\displaystyle{ p> \frac{1}{1+\gamma}}, \ \  f\in L^1(\RR, \mu)$,  $1\geq\gamma>0$, there exists some $\beta$ with $\beta>\gamma$ such that  the following inequality holds:
$$\| f_{S\beta}^*\|_{L^p(\RR, \mu)}\leq c\|f_{1\bigtriangledown_{\gamma}}^\times \|_{L^p(\RR, \mu)}\ \ .$$
\end{proposition}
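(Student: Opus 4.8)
The plan is to transfer the whole inequality to a comparison between classical-type maximal functions, collapse the grand maximal function $f_{S\beta}^*$ to the radial maximal function of a single smooth bump, and then synthesize that bump out of the averaging operators built from $K_1$; the merely $\gamma$-Hölder regularity of $K_1$ is exactly what confines the argument to $p>\tfrac{1}{1+\gamma}$ and forces the slack $\beta>\gamma$. Via $P(x)=\mu(x,0)$ and Proposition \ref{sk6}, the pairing $\int f(y)\psi(\mu(x,y)/r)\,d\mu(y)$ becomes a genuine Euclidean convolution of $g=f\circ P^{-1}$ with a dilate of $\psi$, while $F_1(r,x,f)$ becomes $\int\tilde K(r,P(x),w)g(w)\,dw/r$ for $\tilde K(r,u,w)=K_1(r,P^{-1}u,P^{-1}w)$, which on the line with Lebesgue measure satisfies precisely the size, support, symmetry and $\gamma$-Hölder conditions of Definition \ref{us14}; thus $f_{S\beta}^*$ and $f_{1\bigtriangledown_\gamma}^\times$ are the classical grand and $\tilde K$-nontangential maximal functions of $g$, and the claim is Uchiyama's comparison pushed down to $p>\tfrac{1}{1+\gamma}$. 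I would then fix $\beta>\gamma$ large enough for Propositions \ref{no4} and \ref{no5} and choose one $\psi\in SS_\beta$ with $\int\psi=1$; chaining $f_{S\beta}^*\lesssim M_{\psi\beta}^*f$ with $M_{\psi\beta}^*f\lesssim M_{\psi\beta}f$ reduces everything to the single-kernel bound $\|M_{\psi\beta}f\|_{L^p(\RR,\mu)}\lesssim\|f_{1\bigtriangledown_\gamma}^\times\|_{L^p(\RR,\mu)}$. The a priori finiteness those propositions require I would secure by first proving the estimate for a truncation of $f$ and passing to the limit.

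For the single-kernel bound, set $m(r,x)=\int K_1(r,x,y)\,d\mu(y)\sim r$ and $A_r f(x)=\tfrac{1}{m(r,x)}\int K_1(r,x,y)f(y)\,d\mu(y)$, an approximate identity that reproduces constants and satisfies $|A_r f(x)|\sim|F_1(r,x,f)|$. I would express the smooth pairing
$$\int f(y)\,\psi\!\Big(\tfrac{\mu(x,y)}{r}\Big)\frac{d\mu(y)}{r}$$
as a rapidly convergent superposition of the averages $A_\rho f$ over dyadic scales $\rho=2^{-k}r$ and offset centers $x_s$ with $d_\mu(x,x_s)=|s|$, through a Calderón-type reproducing identity adapted to the non-convolution family $\{A_\rho\}$; the coefficients $\eta^k$ should inherit the fast decay $\|\eta^k\|_{a,b}\lesssim 2^{-kM}$ of Proposition \ref{no3}, while the smoothing of $K_1$ is supplied by Proposition \ref{exx6}, which provides compactly supported $\gamma$-Hölder mollifications $a_{x,r}^\tau$ with $0\le a^\tau\le CK_1$ and $|a^\tau-K_1|\le C(\tau/r)^\gamma$. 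Each resulting term is dominated by $F_1(2^{-k}r,x_s,f)$, i.e. by the aperture-$(2^{k}|s|/r)$ nontangential maximal function of $F(x,r)=F_1(r,x,f)$.

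Finally, Proposition \ref{tan} and \eqref{no1} compare each aperture-$a$ maximal function of $F$ with the aperture-one function $f_{1\bigtriangledown_\gamma}^\times$ at the cost of a factor $\sim a+1$; taking $M$ (hence $\beta$) large makes $2^{-kM}$ beat these aperture constants, and $p$-subadditivity of $\|\cdot\|_{L^p}^p$ for $p<1$ then sums the scale series to yield $\|M_{\psi\beta}f\|_p\lesssim\|f_{1\bigtriangledown_\gamma}^\times\|_p$. The hard part is exactly this synthesis and summation: because $K_1$ offers only $\gamma$-Hölder cancellation, the telescoping differences $A_\rho-A_{\rho/2}$ are wavelets of smoothness only $\gamma$, the crude bound $C(\tau/\rho)^\gamma M_\mu f$ is useless for $p<1$ (as $M_\mu$ is unbounded on $L^p$ there), and one must instead exploit the reproduction of constants ($A_\rho\mathbf{1}=1$) to recast every defect as a true scale-difference of $F_1$-averages before summing. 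Keeping this scale-sum convergent in the $L^p$ quasi-norm is what pins the admissible range to $p>\tfrac{1}{1+\gamma}$ and necessitates the gap $\beta>\gamma$; constructing the adapted reproducing identity with the required decay is the technical heart of the argument.
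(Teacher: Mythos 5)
Your skeleton coincides with the paper's: transfer to the Euclidean line through $P(x)=\mu(x,0)$ (Proposition \ref{sk6}), mollify $K_1$ by Proposition \ref{exx6}, expand a smooth bump against translates and dilates of the (mollified) kernel by a Calder\'on-type identity with the decay of Proposition \ref{no3}, control the shifted terms by the aperture comparison (\ref{no1}), and sum the dyadic series using $N>1/p$. But the two places you defer are precisely where the proof lives, and as sketched they do not close. The crux is the passage from the mollified kernel back to $K_1$ itself. Your reproducing identity is necessarily built from the mollifications $a^{\tau}_{x,r}$ (your $A_\rho$), not from $K_1$, and the claim that each resulting term "is dominated by $F_1(2^{-k}r,x_s,f)$" does not follow: Proposition \ref{exx6}(iii) gives $0\le a^{\tau}_{x,r}\le CK_1$, but this only yields $\left|\int f\,a^{\tau}_{x,r}\,d\mu/r\right|\le C\int |f|\,K_1(r,x,\cdot)\,d\mu/r$, i.e.\ domination by the $K_1$-maximal function of $|f|$, which is \emph{not} controlled by $f_{1\bigtriangledown_{\gamma}}^\times$ (the latter sees the cancellation of $f$). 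You correctly note that the error bound $C(\tau/r)^{\gamma}M_\mu f$ is useless in $L^p$ for $p\le1$, but your replacement (recasting defects as scale-differences via $A_\rho\mathbf{1}=1$) is an unproven idea that you yourself label the technical heart. The paper closes exactly this hole by a different mechanism: it fixes $\tau$, proves $\int_\RR|f_{S\beta}^*|^p d\mu\le C\int_\RR|T(x,0,\tau)|^p d\mu$ with $C$ independent of $\tau$ (Formula (\ref{no9})), and then uses the bound $C_\gamma (1/n)^\gamma M_\mu f$ together with the weak-$(1,1)$ property of $M_\mu$ \emph{only} to get convergence in measure, hence a.e.\ convergence $F^{\tau_j}f\to K_1 f$ along a subsequence; continuity in $u$, density of the convergence set $E$, and reverse Fatou (dominated by \ref{exx6}(iii)) then convert $\limsup_{j}\int_\RR|T(x,0,\tau_j)|^p d\mu$ into $C\|f_{1\bigtriangledown_{\gamma}}^\times\|_{L^p(\RR,\mu)}^p$. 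Without some version of this limiting step your estimate terminates at the mollified kernel and never reaches $f_{1\bigtriangledown_{\gamma}}^\times$.

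Separately, your reduction through Propositions \ref{no4} and \ref{no5} imports their hypothesis $\|M^*_{\psi\beta}f\|_{L^p(\RR,\mu)}<\infty$, which is essentially the conclusion being sought, and your fix --- proving the estimate for a truncation of $f$ and passing to the limit --- cannot supply it: for $p\le1$, even a nonnegative, compactly supported $f\in L^1(\RR,\mu)$ with $\int f\,d\mu=1$ satisfies $M_{\psi\beta}f(x)\gtrsim (1+d_\mu(x,0))^{-1}$, so $\|M_{\psi\beta}^*f\|_{L^p(\RR,\mu)}=\infty$; truncating $f$ does not change this (the standard repair truncates the maximal \emph{operator}, restricting scales and adding decay factors, not the function). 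The paper avoids this circle entirely: it never routes Proposition \ref{no6} through \ref{no4} or \ref{no5}, but applies the reproducing identity to an arbitrary $\phi\in SS_\beta$ with bounds uniform over $SS_\beta$, so that $f_{S\beta}^*$ is estimated directly, no a priori finiteness being required anywhere in the chain.
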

\begin{proof}
Let  $\phi\in SS_\beta$ first. Notice that $C_c(\RR, dx)$ is dense in $C_0(\RR, dx)$,  by Proposition\,\ref{sk6} and Proposition\,\ref{sk66}, $C_c(\RR, d_\mu x)$ is dense in $C_0(\RR, d_\mu x)$. By the fact that $K_1(r, x, y)=K_1(r, y, x)$ and $\left|\int_\RR K_1(r, x, y) d_\mu(y)/r\right|\geq m>0$, together with Proposition\,\ref{exx6}, there exists sequence $\{\phi_{x, r}^\tau(y): \phi_{x, r}^\tau(y) \in S(\RR, d_\mu (y))\}_\tau$ satisfying the following conditions:
\begin{eqnarray*}
\left\{ \begin{array}{cc}
                             \displaystyle{\phi_{x, r}^\tau(y)=\phi_{y, r}^\tau(x)}, \ \phi^\tau_{x, r}(y) \in S(\RR, d_\mu y) , \\

                             supp\,\phi_{x, r}^\tau(y) \subseteq B_\mu(x, r), \ \ \displaystyle{\left|\int_\RR \phi_{x, r}^\tau(y) d_\mu(y)/r\right|\geq m/2>0}\\
                              L(\phi_{x, r}^\tau(y), \gamma)\leq r^{-\gamma}, 0\leq\phi_{x, r}^\tau(y)\leq C, \hbox{for}\ r>0\\
                              \displaystyle{ \lim_{\tau\rightarrow0} \phi_{x, r}^\tau(y) = K_1(r, x, y)} .
                           \end{array}\right.
\end{eqnarray*}
Thus by  Proposition\,\ref{sk6}, Proposition\,\ref{sk66} and Proposition\,\ref{exx6}, there exists sequence $\{\phi_{x}^\tau(y): \phi_{x}^\tau(y) \in S(\RR, d y)\}_\tau$ satisfying:
\begin{eqnarray*}
\left\{ \begin{array}{cc}
                            \phi_{x, r}^\tau(y)= \phi_{x}^\tau\left(\frac{\mu(x, y)}{r} \right)=\phi_{y}^\tau\left(\frac{\mu(y, x)}{r} \right), \ \|\phi_{x}^\tau(t)\|_{L^{\infty}}\leq1, H^{\gamma}\phi_{x}^{\tau}(t)\lesssim 1\\
                            \phi_{x}^\tau(y) \in S(\RR, d y),\ \ \displaystyle{ \lim_{\tau\rightarrow0} \phi_{x}^\tau\left(\frac{\mu(x, y)}{r} \right) = K_1(r, x, y)}\\
                            \displaystyle{\left|\int_\RR \phi_{x}^\tau(t) dt\right|\geq \frac{m}{2}>0},\ \ supp\,\phi_{x}^\tau(t) \subseteq   [-1, 1]        .
                           \end{array}\right.
\end{eqnarray*}

Notice that $\left|\int t^\beta \phi_y^\tau(t)dt\right|\lesssim C_{ \beta} $, thus we could deduce the following inequality:
\begin{eqnarray}\label{tano1}
\sup_{\xi\in\RR}\left|\frac{d^{\beta}}{d\xi^\beta}(\SF\phi_y^\tau)(\xi)\right|\leq C_{ \beta},
\end{eqnarray}
where $C_{ \beta}$ is a
constant independent on $\tau$. Notice that $(\SF\phi_y^\tau)(0)=1$, thus by Formual\,(\ref{tano1}), we could also deduce that there exists a $k_o$ independent on $\tau$, such that
$$\left|(\SF\phi_y^\tau)(2^{-k_o}\xi)\right|\geq 1/2 \ \ \hbox{for}\,\, |\xi|\leq2.$$
Fix a function $\varphi\in S(\RR, dx)$ so that
\begin{eqnarray*}
\left\{ \begin{array}{cc}
\varphi(\xi)=0 \ \ \hbox{for}\,|\xi|\geq1 \\
\\
\varphi(\xi)=1\ \ \hbox{for}\,|\xi|\leq1/2.
\end{array}\right.
\end{eqnarray*}
The function $\varphi^k\in S(\RR, dx)$ is definted as:
\begin{eqnarray*}
\left\{ \begin{array}{cc}
\varphi^k(\xi)=\varphi(\xi) \ \ \hbox{for}\,k=0, \\
\\
\varphi^k(\xi)=\varphi(2^{-k}\xi)-\varphi(2^{1-k}\xi)\ \ \hbox{for}\,k\geq1.
\end{array}\right.
\end{eqnarray*}

We use $\eta_\tau^{k}$ to denote as
$$(\SF\eta_\tau^{k})(\xi)=\frac{\varphi^k(\xi)(\SF\phi)(\xi)}{(\SF\phi_y^\tau)(2^{-k}2^{-k_o}\xi)},$$
where $\SF$ is the Fourier transform. By the fact that $\sup_{\xi\in\RR}|\frac{d^{\beta}}{d\xi^\beta}(\SF\phi_y^\tau)(2^{-k_o}\xi)|\leq C_{ \beta, k_o}$ and \begin{eqnarray}\label{ee7}
\sup_{\xi\in\RR}\left||\xi|^{\alpha}\frac{d^{\beta}}{d\xi^\beta}(\SF\phi)(\xi)\right|\lesssim_{\alpha, \beta} 1,
\end{eqnarray} where $C_{ \beta, k_o}$ is a
constant independent on $\tau$, we could deduce that for any $M>0$, the following inequality holds:
\begin{eqnarray}\label{ee2}
\sup_{\xi\in\RR}\left||\xi|^{\alpha}\frac{d^{\beta}}{d\xi^\beta}(\SF\eta_\tau^{k})(\xi)\right|\leq C_{\alpha, \beta, M, k_o}2^{-kM},
\end{eqnarray}
where $C_{\alpha, \beta, M, k_o}$ is a
constant independent on $\tau$ and $k$. Then by Proposition\,\ref{no3}, for any $\,\phi(t)\in SS_{\beta}$ with $\int_\RR \phi(t)dt=1$, we could deduce:
\begin{eqnarray}\label{ee1}
\phi\left(\frac{\mu(x, y)}{r}\right)=\sum_{k=0}^{\infty}\int_\RR \eta_\tau^{k}\left(\frac{s}{r}\right)\phi_y^\tau\left(\frac{\mu(x, y)-s}{2^{-k}r}\right)\frac{ds}{2^{-k}r}.
\end{eqnarray}
Then by Formula\,(\ref{ee1}) with the fact that  $f\in L^1(\RR, \mu)$ we have
\begin{eqnarray}\label{ee3}
M_{\phi\beta}f(x)&=& \sup_{r>0}\left|\int_\RR f(y)\phi\left(\frac{\mu(x, y)}{r} \right)d\mu(y)\right|/r
\\&=&\nonumber C\sup_{r>0}\sum_{k=0}^{+\infty}\left|\int_\RR\int_\RR f(y)\eta_\tau^{k}\left(\frac{s}{r}\right)\phi_y^\tau\left(\frac{\mu(x, y)-s}{2^{-k}2^{-k_o}r}\right)\frac{ds}{2^{-k}2^{-k_o}r}d\mu(y)\right|/r
\\&\leq&\nonumber C \sum_{k=0}^{+\infty} \left|\int_\RR\eta_\tau^{k}\left(\frac{s}{r}\right)\left(1+\frac{|s|}{2^{-k-k_o}r} \right)^{N}\frac{ds}{r}\right| \sup_{ r>0, s\in\RR}\left|\int_\RR f(y)\phi_y^\tau\left(\frac{\mu(x, y)-s}{r} \right)\left(1+\frac{|s|}{r} \right)^{-N}\frac{d\mu (y)}{r}\right|.
\end{eqnarray}
By Formula\,(\ref{ee2}), we could deduce that $$\sum_{k=0}^{+\infty} \left|\int_\RR\eta_\tau^{k}\left(\frac{s}{r}\right)\left(1+\frac{|s|}{2^{-k-k_o}r} \right)^{N}\frac{ds}{r}\right|\leq C_{N, k_o} \sum_{k=0}^{\infty} 2^{-k} ,$$
 where $C_{N, k_o}$ is a constant independent on $\tau$. Together with Formula\,(\ref{ee3}), we could obtain:
\begin{eqnarray}\label{ee6}
M_{\phi\beta}f(x)&\lesssim&  \sup_{ r>0, s\in\RR} \left|\int_\RR f(y)\phi_y^\tau\left(\frac{\mu(x, y)-s}{r} \right)\left(1+\frac{|s|}{r} \right)^{-N}\frac{d\mu (y)}{r}\right|
\\&\lesssim&\nonumber   \left(\sup_{0\leq s< r}+\sum_{k=1}^{\infty}\sup_{ 2^{k-1}r\leq s< 2^{k}r}\right)\left|\int_\RR f(y)\phi_y^    \tau\left(\frac{\mu(x, y)-s}{r} \right)\left(1+\frac{|s|}{r} \right)^{-N}\frac{d\mu (y)}{r}\right|
\\&\lesssim&\nonumber   \sum_{k=0}^{+\infty}2^{-(k-1)N}\sup_{ 0\leq s< 2^{k}r}\left|\int_\RR f(y)\phi_y^\tau\left(\frac{\mu(x, y)-s}{r} \right)\frac{d\mu (y)}{r}\right|.
\end{eqnarray}
Thus by Formula\,(\ref{ee6}) the following holds:
\begin{eqnarray}\label{ee4}
f_{S\beta}^*(x)&= &\sup_{\phi\in SS_\beta}M_{\phi\beta}f(x)
\\&\lesssim&\nonumber C \sum_{k=0}^{+\infty}2^{-(k-1)N} \sup_{0\leq s< 2^{k}r}\left|\int_\RR f(y)\phi_y^\tau\left(\frac{\mu(x, y)-s}{r} \right)\frac{d\mu (y)}{r}\right|.
\end{eqnarray}
For a positive measure $\mu$ where $\mu(x, u)$ is a bijection on $\RR$,  let $s=\mu(x, u)$ with $d_\mu(x, u)<2^{k}r$.
We use $T(x, k, \tau)$, $\left(F^\tau f\right)(u, r)$ and $\left(K_1f\right)(u, r)$ to denote as:
$$T(x, k, \tau)= \sup_{0\leq s< 2^{k}r}\left|\int_\RR f(y)\phi_y^\tau\left(\frac{\mu(x, y)-s}{r} \right)\frac{d\mu (y)}{r}\right|=\sup_{0\leq d_\mu(x, u)< 2^{k}r}\left|\int_\RR f(y)\phi_{u, r}^\tau(y)\frac{d\mu (y)}{r}\right|,$$
and
$$\left(F^\tau f\right)(u, r)=\int_\RR f(y)\phi_{u, r}^\tau(y)\frac{d\mu (y)}{r},\ \ \ \ \ \left(K_1f\right)(u, r)=\int_\RR f(y)K_1(r, u, y)\frac{d\mu (y)}{r}.$$
$\int_\RR |T(x, k, \tau)|^pd\mu(x)<\infty$ and Formula\,(\ref{no1}) lead to
 \begin{eqnarray}\label{no7}
 \int_\RR |T(x, k, \tau)|^pd\mu(x)\leq c \left(1+2^k\right) \int_\RR |T(x, 0, \tau)|^pd\mu(x).
 \end{eqnarray}
For $N>1/p$,  we could obtain
\begin{eqnarray}\label{no9}
 \int_\RR |f_{S\beta}^*(x)|^pd\mu(x)\leq C_{p,n,\beta} \int_\RR |T(x, 0, \tau)|^pd\mu(x),
 \end{eqnarray}
where $C_{p,n,\beta}$ is a constant independent on $\tau$. By Formula\,(\ref{exexu3}) it is clear that (taking $\tau=\frac{r}{n}$)
\begin{eqnarray}
\left|\left(F^\tau f\right)(u, r)-\left(K_1f\right)(u, r)\right|&\leq& \int_\RR \left|f(y)\right||\phi_{u, r}^\tau(y)-K_1(r, u, y)|\frac{d\mu (y)}{r}
\\&\leq&\nonumber C_{\gamma} |M_\mu f(u)|\left(\frac{1}{n}\right)^{\gamma},
\end{eqnarray}
where $C_\gamma$ is dependent on $\gamma$,  and $M_\mu$ is the Hardy-Littlewood Maximal Operator.
 Let us set  $$\delta_n(u)=\left|\left(F^\tau f\right)(u, r)-\left(K_1f\right)(u, r)\right|.$$
Thus we could deduce the following:
$$\left\{x: \delta_n(x)>\alpha\right\}\subseteq \left\{x: M_\mu f(x) > \frac{1}{C_\gamma}n^{\gamma}\alpha \right\}.$$
Notice that $M_\mu$ is weak-(1, 1) bounded. Then the following holds for any $\alpha>0$:
$$\left|\left\{x: \delta_n(x)>\alpha\right\}\right|_{\mu}\leq \frac{C_\gamma}{\alpha} \|f\|_{L^1(\RR, \mu)}\left(\frac{1}{n}\right)^{\gamma}.$$
Thus $$\lim_{n\rightarrow+\infty} \left|\left\{x: \delta_n(x)>\alpha\right\}\right|_\mu=0 .$$
Thus there exists a sequence $\{\tau_j\}\subseteq \{\tau\}$ such that the following holds: $$\lim_{\tau_j\rightarrow0}\left(F^{\tau_j}f\right)(u, r)=\left(K_1f\right)(u, r),\ \ \ a.e.u\in\RR \ in\ \mu\ measure$$ for $f\in L^1(\RR, \mu)$.
Denote $$E=\{u\in\RR: \lim_{\tau_j\rightarrow0}\left(F^{\tau_j}f\right)(u, r)=\left(K_1f\right)(u, r)\}.$$
That   E is  dense in $\RR$ could be deduced from the fact $\left|E^c\right|_{\mu}=0$. Notice that for any $x_0\in\RR$ and any $\tau_j\in \{\tau_j\}$, there exists a $(u_0, r_0)$ with $r_0>0$, $u_0\in\RR$, $d_\mu(u_0, x_0)<r_0$ such that the following holds:
$$\left|\left(F^{\tau_j}f\right)(u_0, r_0)\right| \geq \frac{1}{2} |T(x_0, 0, \tau_j)|.$$
Because $\left(F^{\tau_j}f\right)(u, r_0)$ is a continuous function in $u$ variable and E is  dense in $\RR$. There exists a $\widetilde{u}_0\in E$ with $d_\mu(\widetilde{u}_0, x_0)<r_0$ such that
$$\left|\left(F^{\tau_j}f\right)(\widetilde{u}_0, r_0)\right| \geq \frac{1}{4}  |T(x_0, 0, \tau_j)|.$$
Thus we could deduce that
\begin{eqnarray}\label{exexu5}
\sup_{\{u\in E: d_\mu(u, x)<r\}} \left|\left(F^{\tau_j}f\right)(u, r)\right|\sim \sup_{\{u\in \RR: d_\mu(u, x)<r\}} \left|\left(F^{\tau_j}f\right)(u, r)\right|.
\end{eqnarray}
Formula\,(\ref{exexu5}) together with the dominated convergence theorem  (Proposition\,\ref{exx6}(iii)), we could conclude:
\begin{eqnarray}
\overline{\lim}_{\tau_j\rightarrow0}\int_\RR |T(x, 0, \tau_j)|^pd\mu(x) &\sim&\nonumber \overline{\lim}_{\tau_j\rightarrow0}\int_\RR \sup_{\{u\in E: d_\mu(u, x)<r\}} \left|\left(F^{\tau_j}f\right)(u, r)\right|^pd\mu(x)
\\&\leq&\nonumber C \int_\RR \overline{\lim}_{\tau_j\rightarrow0} \sup_{\{u\in E: d_\mu(u, x)<r\}} \left|\left(F^{\tau_j}f\right)(u, r)\right|^pd\mu(x)
\\&\leq&\nonumber C  \int_\RR\sup_{\{u\in E: d_\mu(u, x)<r\}} \left|\left(K_1f\right)(u, r)\right|^pd\mu(x)
\\&\leq& C  \int_\RR\sup_{\{u\in \RR: d_\mu(u, x)<r\}} \left|\left(K_1f\right)(u, r)\right|^pd\mu(x).
\end{eqnarray}
That is
$$\|f_{S\beta}^*\|_{L^p(\RR, \mu)}\leq c\|f_{1\bigtriangledown_{\gamma}}^\times \|_{L^p(\RR, \mu)}\ \ .$$
This proves our proposition.
\end{proof}

\begin{proposition}\label{kernel1}
$K_2(r, x, y)$ is the kernel in Definition\,\ref{us5}.  Then for any fixed $\alpha$ with $ 0<\alpha<\gamma\leq1,$ the following holds:

$$0\le \left|K_2(r, a, y)-K_2(r, b, y)\right| \le
C \Big(\frac{d_\mu(a,b)}{r}\Big)^{\alpha}\Big(1+\frac{d_\mu(x,y)}{r}\Big)^{-(\gamma-\alpha)-1}, $$
and
\begin{eqnarray*}
& &\left|\left(K_2(r, a, y)-K_2(r, b, y)\right)-\left(K_2(r, a, z)-K_2(r, b, z)\right)\right|
\\&\leq & C \Big(\frac{d_\mu(a,b)}{r}\Big)^{\alpha}\Big(\frac{d_\mu(y,z)}{r}\Big)^{\gamma-\alpha}\Big(1+\frac{d_\mu(x,y)}{r}\Big)^{-2(\gamma-\alpha)-1},
\end{eqnarray*}
for $d_\mu(a,
b)\lesssim r $, $\frac{d_\mu(y,
z)}{r}\leq C_3\min\{1+\frac{d_\mu(a,y)}{r}, 1+\frac{d_\mu(a,z)}{r}\} $, $x\in B_{\mu}(a, 2r)\bigcap B_{\mu}(b, 2r)$.
\end{proposition}
\begin{proof}
First,  we consider the case when $$d_\mu(a, b)\leq d_\mu(y, z).$$
From the fact that $d_\mu(a,
b)\lesssim r $, $\frac{d_\mu(y,
z)}{r}\leq C_3\min\{1+\frac{d_\mu(a,y)}{r}, 1+\frac{d_\mu(a,z)}{r}\} $, the following relations could be obtained:
\begin{eqnarray}\label{1120}
1+\frac{d_\mu(a,y)}{r}\sim1+\frac{d_\mu(b,y)}{r}, 1+\frac{d_\mu(a,z)}{r}\sim1+\frac{d_\mu(b,z)}{r},\,and\, 1+\frac{d_\mu(a,z)}{r}\sim1+\frac{d_\mu(a,y)}{r}.
\end{eqnarray}
Notice that $$K_2(r, x, y)=K_2(r, y, x).$$
Then we could get
\begin{eqnarray}\label{1119}
| K_2(r, a, y)-K_2(r, b, y))|&\le&
C\Big(\frac{d_\mu(a,b)}{r}\Big)^{\gamma}\Big(1+\frac{d_\mu(a,y)}{r}\Big)^{-2\gamma-1}
\\&\le&\nonumber
C\Big(\frac{d_\mu(a,b)}{r}\Big)^{\gamma}\Big(1+\frac{d_\mu(a,y)}{r}\Big)^{-\gamma-\alpha}\Big(1+\frac{d_\mu(a,y)}{r}\Big)^{-(\gamma-\alpha)-1}
\\&\le&\nonumber
C \Big(\frac{d_\mu(a,b)}{r}\Big)^{\alpha}\Big(1+\frac{d_\mu(a,y)}{r}\Big)^{-(\gamma-\alpha)-1}.
\end{eqnarray}
Also we could obtain
$$| K_2(r, a, y)-K_2(r, b, y))|\le
C\Big(\frac{d_\mu(a,b)}{r}\Big)^{\gamma}\Big(1+\frac{d_\mu(a,y)}{r}\Big)^{-2\gamma-1},$$
and
$$| K_2(r, a, z)-K_2(r, b, z))|\le
C\Big(\frac{d_\mu(a,b)}{r}\Big)^{\gamma}\Big(1+\frac{d_\mu(a,z)}{r}\Big)^{-2\gamma-1}.$$
Together with Formula\,(\ref{1120}), we could conclude
\begin{eqnarray*}
& &\left|\left(K_2(r, a, y)-K_2(r, b, y)\right)-\left(K_2(r, a, z)-K_2(r, b, z)\right)\right|
\\&\leq & C \Big(\frac{d_\mu(a,b)}{r}\Big)^{\gamma}\Big(1+\frac{d_\mu(a,y)}{r}\Big)^{-2\gamma-1}.
\end{eqnarray*}
By the fact $d_\mu(a, b)\leq d_\mu(y, z)$ and $\displaystyle{1\lesssim 1+\frac{d_\mu(a,y)}{r}}$, we could obtain:
$$\Big(\frac{d_\mu(a,b)}{r}\Big)^{\gamma}\Big(1+\frac{d_\mu(a,y)}{r}\Big)^{-2\gamma-1}\lesssim\Big(\frac{d_\mu(a,b)}{r}\Big)^{\alpha}\Big(\frac{d_\mu(y,z)}{r}\Big)^{\gamma-\alpha}\Big(1+\frac{d_\mu(a,y)}{r}\Big)^{-2(\gamma-\alpha)-1}$$
Then for  $d_\mu(a, b)\leq d_\mu(y, z),$ the Formula
\begin{eqnarray}\label{exu1}
& &\left|\left(K_2(r, a, y)-K_2(r, b, y)\right)-\left(K_2(r, a, z)-K_2(r, b, z)\right)\right|\nonumber
\\&\leq & C \Big(\frac{d_\mu(a,b)}{r}\Big)^{\alpha}\Big(\frac{d_\mu(y,z)}{r}\Big)^{\gamma-\alpha}\Big(1+\frac{d_\mu(a,y)}{r}\Big)^{-2(\gamma-\alpha)-1}
\end{eqnarray}
holds.
In a similar way, we will obtain the Formula\,(\ref{exu1}) for the case when $d_\mu(a, b)\geq d_\mu(y, z).$
Notice that by Formula\,(\ref{1120}),
$$| K_2(r, a, y)-K_2(r, a, z))|\le
C\Big(\frac{d_\mu(y,z)}{r}\Big)^{\gamma}\Big(1+\frac{d_\mu(a,y)}{r}\Big)^{-2\gamma-1},$$
and
\begin{eqnarray*}
| K_2(r, b, y)-K_2(r, b, z))|&\le&
C\Big(\frac{d_\mu(y,z)}{r}\Big)^{\gamma}\Big(1+\frac{d_\mu(b,y)}{r}\Big)^{-2\gamma-1}
\\&\le&
C\Big(\frac{d_\mu(y,z)}{r}\Big)^{\gamma}\Big(1+\frac{d_\mu(a,y)}{r}\Big)^{-2\gamma-1}
\end{eqnarray*}
hold. Then we could obtain
\begin{eqnarray*}
& &\left|\left(K_2(r, a, y)-K_2(r, b, y)\right)-\left(K_2(r, a, z)-K_2(r, b, z)\right)\right|
\\&\leq & C \Big(\frac{d_\mu(y,z)}{r}\Big)^{\gamma}\Big(1+\frac{d_\mu(a,y)}{r}\Big)^{-2\gamma-1}.
\end{eqnarray*}
By the fact $d_\mu(a, b)\geq d_\mu(y, z)$ and $\displaystyle{1\lesssim 1+\frac{d_\mu(a,y)}{r}}$, the following holds:
$$\Big(\frac{d_\mu(y,z)}{r}\Big)^{\gamma}\Big(1+\frac{d_\mu(a,y)}{r}\Big)^{-2\gamma-1}\lesssim\Big(\frac{d_\mu(a,b)}{r}\Big)^{\alpha}\Big(\frac{d_\mu(y,z)}{r}\Big)^{\gamma-\alpha}\Big(1+\frac{d_\mu(a,y)}{r}\Big)^{-2(\gamma-\alpha)-1}.$$
Then for  $d_\mu(a, b)\geq d_\mu(y, z),$ we could get
\begin{eqnarray}\label{exu2}
& &\left|\left(K_2(r, a, y)-K_2(r, b, y)\right)-\left(K_2(r, a, z)-K_2(r, b, z)\right)\right|\nonumber
\\&\leq & C \Big(\frac{d_\mu(a,b)}{r}\Big)^{\alpha}\Big(\frac{d_\mu(y,z)}{r}\Big)^{\gamma-\alpha}\Big(1+\frac{d_\mu(a,y)}{r}\Big)^{-2(\gamma-\alpha)-1}.
\end{eqnarray}
By the fact that $x\in B_{\mu}(a, 2r)\bigcap B_{\mu}(b, 2r)$, we could deduce that:
\begin{eqnarray}\label{1121}
1+\frac{d_\mu(a,y)}{r}\sim1+\frac{d_\mu(x,y)}{r}.
\end{eqnarray}
Formulas\,(\ref{1119})\,(\ref{exu1})\,(\ref{exu2})\,(\ref{1121})\,yeald the Proposition.
\end{proof}

\begin{proposition}\label{no11}
For any $0<\gamma\leq1$, $f\in L^1(\RR, \mu)$,  if the following inequality holds
$$\|f_{i\bigtriangledown_\gamma}^\times\|_{L^p(\RR, \mu)}\sim\|f_{\gamma}^* \|_{L^p(\RR, \mu)}$$
then for $\displaystyle{\ 1\geq p> \frac{1}{1+\gamma}}, $ we could deduce  that:
$$\|f_{i\bigtriangledown_{\gamma}}^\times\|_{L^p(\RR, \mu)}\leq C\|f_{i}^\times \|_{L^p(\RR, \mu)},$$
where $C$ is dependent on $p$ and $\gamma$, and $i=1, 2$.
\end{proposition}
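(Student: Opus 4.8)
The plan is to follow the good-$\lambda$ scheme used in Proposition~\ref{no5}, but with the test-function maximal operators there replaced by the kernel maximal functions $f_{i\bigtriangledown_{\gamma}}^\times$ (nontangential) and $f_i^\times$ (radial). I may assume $\|f_{i\bigtriangledown_{\gamma}}^\times\|_{L^p(\RR,\mu)}<\infty$, since otherwise there is nothing to prove. By Proposition~\ref{im1} the radial bound $\|f_{i\bigtriangledown_{\gamma}}^\times\|_{L^p}\lesssim\|f_{\gamma}^*\|_{L^p}$ holds, while the standing hypothesis supplies the reverse; thus $\|f_{\gamma}^*\|_{L^p(\RR,\mu)}\sim\|f_{i\bigtriangledown_{\gamma}}^\times\|_{L^p(\RR,\mu)}$, so $f_{\gamma}^*\in L^p(\RR,\mu)$ as well. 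This a priori finiteness is what lets the iteration close.

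First I would fix a small $\delta>0$ and a large $\sigma>0$, both to be chosen at the end, and introduce the good set $F=F_\sigma=\{x\in\RR:\ f_{\gamma}^*(x)\le\sigma f_{i\bigtriangledown_{\gamma}}^\times(x)\}$. On $F^c$ one has $f_{i\bigtriangledown_{\gamma}}^\times<\sigma^{-1}f_{\gamma}^*$, so using $\|f_{\gamma}^*\|_{L^p}\le C\|f_{i\bigtriangledown_{\gamma}}^\times\|_{L^p}$ we obtain
$$\int_{F^c}|f_{i\bigtriangledown_{\gamma}}^\times|^p\,d\mu\le\sigma^{-p}\int_\RR|f_{\gamma}^*|^p\,d\mu\le C\sigma^{-p}\int_\RR|f_{i\bigtriangledown_{\gamma}}^\times|^p\,d\mu.$$
Choosing $\sigma$ with $C\sigma^{-p}\le 1/2$ absorbs the $F^c$ contribution and reduces the claim to $\int_F|f_{i\bigtriangledown_{\gamma}}^\times|^p\,d\mu\lesssim\int_\RR|f_i^\times|^p\,d\mu$.

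The heart of the argument is the pointwise bound, valid for $x\in F$ and a fixed $0<q<p$,
$$\bigl(f_{i\bigtriangledown_{\gamma}}^\times(x)\bigr)^q\le C\,M_\mu\bigl[(f_i^\times)^q\bigr](x).$$
To prove it I would select $(s,r)$ with $d_\mu(s,x)<r$ and $|F_i(r,s,f)|\ge\tfrac12 f_{i\bigtriangledown_{\gamma}}^\times(x)$, and then compare $F_i(r,x',f)$ with $F_i(r,s,f)$ for $x'\in B_\mu(s,\delta r)$. The increment is $\int_\RR f(y)\bigl(K_i(r,x',y)-K_i(r,s,y)\bigr)\,d\mu(y)/r$, and the decisive point is that, after dividing by a suitable power $\delta^{\gamma'}$ with $\gamma'>0$, the function $y\mapsto K_i(r,x',y)-K_i(r,s,y)$ is an admissible competitor in the grand maximal function $f_{\gamma}^*$ of Definition~\ref{11}: for $i=1$ this uses the symmetry $K_1(r,x,y)=K_1(r,y,x)$ together with estimate (iii) of Definition~\ref{us14}, and for $i=2$ it uses precisely the first- and second-difference estimates of Proposition~\ref{kernel1}. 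This yields $|F_i(r,x',f)-F_i(r,s,f)|\le C_0\delta^{\gamma'}f_{\gamma}^*(x)$, and since $x\in F$ we may bound $f_{\gamma}^*(x)\le\sigma f_{i\bigtriangledown_{\gamma}}^\times(x)\le 2\sigma|F_i(r,s,f)|$. Choosing $\delta$ so small that $2C_0\sigma\delta^{\gamma'}\le\tfrac12$ forces $|F_i(r,x',f)|\ge\tfrac14 f_{i\bigtriangledown_{\gamma}}^\times(x)$, hence $f_i^\times(x')\ge\tfrac14 f_{i\bigtriangledown_{\gamma}}^\times(x)$ for every $x'\in B_\mu(s,\delta r)$. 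Averaging the $q$-th power over $B_\mu(s,\delta r)\subset B_\mu(x,A(1+\delta)r)$ and using the doubling property (iv) of $d_\mu$ gives the displayed estimate. Since $p/q>1$, the $(p/q,p/q)$ boundedness of $M_\mu$ recorded in Definition~\ref{11} then yields $\int_F|f_{i\bigtriangledown_{\gamma}}^\times|^p\,d\mu\le C\int_\RR(f_i^\times)^p\,d\mu$, which together with the reduction above proves the proposition.

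I expect the genuine obstacle to be the admissibility claim for the difference kernel $K_i(r,x',y)-K_i(r,s,y)$ with a true gain $\delta^{\gamma'}$. The conditions of Definitions~\ref{us14} and~\ref{us5} furnish only first-order H\"older/decay control, and a naive triangle-inequality split produces the correct smallness in $\|\cdot\|_\infty$ but loses it in the H\"older seminorm; recovering a uniform gain requires the mixed (double-)difference estimate, which is exactly the content of Proposition~\ref{kernel1} for $K_2$ and follows for $K_1$ by interpolating its two first-order bounds. The attendant bookkeeping is that this interpolation lowers the inner H\"older exponent from $\gamma$ to $\gamma-\alpha$ (with $0<\alpha<\gamma$), so one must check that the rescaled competitor still falls in the class controlled by the hypothesis $\|f_{i\bigtriangledown_{\gamma}}^\times\|_{L^p}\sim\|f_{\gamma}^*\|_{L^p}$ and that the admissible ranges of $q$ and $\gamma'$ remain nonempty precisely when $p>\frac{1}{1+\gamma}$, equivalently $\gamma>p^{-1}-1$, the same threshold as in Proposition~\ref{no6}.
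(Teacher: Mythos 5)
Your overall strategy coincides with the paper's (a good-$\lambda$ absorption over a good set, a pointwise comparison of the nontangential and radial kernel maximal functions via difference kernels viewed as competitors for a grand maximal function, then the Hardy--Littlewood maximal theorem), and you correctly identify the role of Proposition~\ref{kernel1} for $K_2$ and the interpolation trick for $K_1$. But as written the argument has a genuine gap, one that your own last paragraph flags without resolving. Your central display
$$|F_i(r,x',f)-F_i(r,s,f)|\le C_0\delta^{\gamma'}f_{\gamma}^*(x)$$
cannot hold with the exponent $\gamma$ on the right: as you yourself observe, any rescaling of the difference kernel that gains a positive power of $\delta$ necessarily lowers the H\"older exponent, so the rescaled difference kernel is an admissible competitor only for $f_{\gamma-\alpha}^*$ with $0<\alpha<\gamma$, and the true bound is $C\delta^{\alpha}f_{\gamma-\alpha}^*(x)$. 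Since $f_{\gamma-\alpha}^*\gtrsim f_{\gamma}^*$ pointwise (the comparison between the two grand maximal functions goes only in that direction), membership in your good set $F=\{x:\, f_{\gamma}^*(x)\le\sigma f_{i\bigtriangledown_{\gamma}}^\times(x)\}$ gives no control over $f_{\gamma-\alpha}^*(x)$, and the bootstrap does not close.

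The repair --- which is exactly what the paper does --- needs two ingredients you never invoke. First, the good set must be defined with the lower exponent, $F=\{x:\, f_{\gamma-\alpha}^*(x)\le\sigma f_{2\bigtriangledown_{\gamma}}^\times(x)\}$; then the increment bound $C\delta^{\alpha}f_{\gamma-\alpha}^*(x)\le C\delta^{\alpha}\sigma f_{2\bigtriangledown_{\gamma}}^\times(x)$ does close the pointwise estimate on $F$. Second, the absorption step over $F^c$ now requires $\|f_{\gamma-\alpha}^*\|_{L^p(\RR,\mu)}\lesssim\|f_{\gamma}^*\|_{L^p(\RR,\mu)}$, which is not a consequence of your standing hypothesis or of Proposition~\ref{im1}; it is the nontrivial $L^p$-equivalence of grand maximal functions with different H\"older exponents, which the paper obtains by chaining Proposition~\ref{sk} with Proposition~\ref{H spa}, valid precisely when $\gamma-\alpha>p^{-1}-1$. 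This forces the argument to run first under the stronger constraint $p>\frac{1}{1+\gamma-\alpha}$ with constants depending on $\alpha$, and a final step is then needed to recover the stated range: for a given $p>\frac{1}{1+\gamma}$ one chooses $\alpha=1+\gamma-\frac{1}{p_0}$ with $p_0=\frac12\left(p+\frac{1}{1+\gamma}\right)$, so that the constant depends only on $p$ and $\gamma$. Your closing remark that ``one must check that the rescaled competitor still falls in the class controlled by the hypothesis'' names the problem but not the tool; the hypothesis alone cannot supply this equivalence, so the proposal stops short of a proof.
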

\begin{proof}We will only prove the proposition when $i=2$.
For any fixed $\alpha$ satisfying $ 0<\alpha<\gamma$ and $\displaystyle{\ p> \frac{1}{1+\gamma-\alpha}}$, Let $F$ denote as: $$F=\bigg\{x\in\RR: f_{\gamma-\alpha}^*(x)\leq\sigma f_{2\bigtriangledown_{\gamma}}^\times(x)\bigg\}.$$
By Proposition\,\ref{sk} and Proposition\,\ref{H spa}, we could deduce that the following holds for  $f\in L^1(\RR, \mu)$:
$$\|f_{\gamma-\alpha}^*\|_{L^p(\RR, \mu)}\sim_{\gamma, \alpha}\|f_{S(\gamma-\alpha)}^*\|_{L^p(\RR, \mu)}\sim_{\gamma, \alpha}\|f_{S\gamma}^*\|_{L^p(\RR, \mu)}\sim_{\gamma, \alpha}\|f_{\gamma}^*\|_{L^p(\RR, \mu)}.$$
Then it is clear that
\begin{eqnarray}\label{sv1}
\int_{F^c} |f_{2\bigtriangledown_{\gamma}}^\times(x)|^pd\mu(x)\leq \frac{C}{\sigma^p} \int_{F^c}| f_{\gamma-\alpha}^*(x)|^p d\mu(x)\leq \frac{C_{\gamma, \alpha}'}{\sigma^p} \int_{\RR}| f_{\gamma}^*(x)|^p d\mu(x)\leq \frac{C_{\gamma, \alpha}'}{\sigma^p} \int_{\RR}|f_{2\bigtriangledown_{\gamma}}^\times(x)|^p d\mu(x).
\end{eqnarray}\\
Choosing $\sigma^p\geq2C_{\gamma, \alpha}'$, we could have
\begin{eqnarray}\label{ex5}
 \int_{\RR}|f_{2\bigtriangledown_{\gamma}}^\times(x)|^p d\mu(x)\lesssim\int_{F} |f_{2\bigtriangledown_{\gamma}}^\times(x)|^pd\mu(x).
\end{eqnarray}
We use $D f(x)$ and $F(x, r)$ to denote  as: $$ D f(x)=\sup_{r>0}\left|\int_\RR f(t)K_2(r, x, t)\frac{d\mu(t)}{r}\right|,\ \ \ F(x, r)=\int_\RR f(t)K_2(r, x, t)\frac{d\mu(t)}{r}.$$
Next, we will show that for any $q>0$,
\begin{eqnarray}\label{ex6}
f_{2\bigtriangledown_{\gamma}}^\times(x)\leq C \left[M_\mu\left(D f\right)^q(x)\right]^{1/q} \ \ \hbox{for}\ x\in F,
\end{eqnarray}
where $M_\mu$ is the Hardy-Littlewood maximal operator. For any fixed $x_0\in F$,  there exists $(u_0, r_0)$ satisfying $d_\mu(u_0, x_0)<r_0$ such that  the following inequality holds:
\begin{eqnarray}\label{ex7}
 \left|F(u_0, r_0)\right|>\frac{1}{2} f_{2\bigtriangledown_{\gamma}}^\times(x_0).
\end{eqnarray}
Choosing $\delta<1$ small enough and  $u$ satisfying $d_\mu(u, u_0)< \delta r_0$,  we could deduce that
\begin{eqnarray*}
|F(u, r_0)-F(u_0, r_0)|&=&\left|\int_{\RR} f(y)K_2(r_0, u, y) d\mu(y)/r_0-\int_{\RR} f(y)K_2(r_0, u_0, y)d\mu(y)/r_0  \right|
\\&\leq& \left|  \int_{\RR} f(y) \left( K_2(r_0, u, y)-K_2(r_0, u_0, y)\right)d\mu(y)/r_0                 \right|.
\end{eqnarray*}
We could consider $\left( K_2(r_0, u, y)-K_2(r_0, u_0, y)\right)$ as a new kernel. By Proposition\,\ref{kernel1} and Proposition\,\ref{im1}, we could obtain:
\begin{eqnarray*}
|F(u, r_0)-F(u_0, r_0)|
\leq C\delta^{\alpha} f_{\gamma-\alpha}^*(x_0)\leq C\delta^{\alpha}\sigma f_{2\bigtriangledown_{\gamma}}^\times(x_0) \ \ \hbox{for}\ x_0\in F.
\end{eqnarray*}
Taking $\delta$ small enough such that $ C\delta^{\alpha}\sigma\leq1/4$, we obtain $$\left|F(u, r_0)\right|\geq \frac{1}{4}f_{2\bigtriangledown_{\gamma}}^\times(x_0) \ \ \hbox{for}\ u\in B_\mu(u_0, \delta r_0 ).$$
Thus the following inequality holds: for any $x_0\in F$,
\begin{eqnarray*}
 \left|f_{2\bigtriangledown_{\gamma}}^\times(x_0) \right|^q &\leq& \left|\frac{1}{B_\mu(u_0, \delta r_0 )}\right| \int_{B_\mu(u_0, \delta r_0 )}  4^q|F(u, r_0)|^q d\mu(u)
 \\&\leq&   \left|\frac{B_\mu(x_0, (1+\delta) r_0 )}{B_\mu(u_0, \delta r )}\right|  \left|\frac{1}{B_\mu(x_0, (1+\delta) r_0 )}\right|\int_{B_\mu(x_0, (1+\delta) r_0)}  4^q|F(u, r_0)|^qd\mu(u)
\\&\leq&\frac{1+\delta}{\delta}\left|\frac{1}{B_\mu(x_0, (1+\delta) r_0)}\right|\int_{B_\mu(x_0, (1+\delta) r_0)}  4^q|F(u, r_0)|^qd\mu(u)
\\&\leq& C M_\mu[(D f)^q](x_0)
\end{eqnarray*}
C is independent on $x_0$. Finally, using the maximal theorem for $M_\mu$ when $q<p$ leads to
\begin{eqnarray}\label{3}
\int_{F} \left|f_{2\bigtriangledown_{\gamma}}^\times(x)d\mu(x) \right|^p dx\leq C \int_{\RR}\left\{M_\mu[(D f)^{q}](x)\right\}^{p/q}d\mu(x) \leq C \int_{\RR}\left|f_{2}^\times(x)\right|^{p}d\mu(x).
\end{eqnarray}
Thus for any fixed $\alpha$ satisfying $ 0<\alpha<\gamma$ and $\displaystyle{\ p> \frac{1}{1+\gamma-\alpha}}$, the above Formula\,(\ref{3}) combined with Formula\,(\ref{ex5}) leads to
\begin{eqnarray}\label{4}
\|f_{2\bigtriangledown_{\gamma}}^\times\|_{L^p(\RR, \mu)}\leq C\|f_{2}^\times\|_{L^p(\RR, \mu)}\ \ ,
\end{eqnarray}
where $C$ is dependent on $p$ and $\alpha$. Next we will remove the number $\alpha$.  For any $\displaystyle{\ p> \frac{1}{1+\gamma}}$, let $p_0=\frac{1}{2}\left(p+\frac{1}{1+\gamma}\right)$ with $\displaystyle{\ p> p_0>\frac{1}{1+\gamma}}$ and let $\alpha= 1+\gamma -\frac{1}{p_0}$. Thus it is clear that $$p_0=\frac{1}{1+\gamma-\alpha},\ \ \ p>p_0.$$  Thus by Formula\,(\ref{4}),   we could obtain the following inequality holds for $\displaystyle{\ 1\geq p> \frac{1}{1+\gamma}}$
$$\|f_{2\bigtriangledown_{\gamma}}^\times(x)\|_{L^p(\RR, \mu)}\leq C\|f_{2}^\times(x) \|_{L^p(\RR, \mu)}\ \ $$
$C$ is dependent on $p$ and $\gamma$. This proves the Proposition.
\end{proof}
At last we will prove the following Proposition:
\begin{proposition}\label{important1} For  $\displaystyle{\frac{1}{1+\gamma}<p\leq1}$, $0<\gamma\leq1$, $f\in L^1(\RR, \mu)$, there exists $\beta>0$, such that the following conditions are equivalent:

{\rm(i)} \  $f_{S\beta}^*\in L^p(\RR, \mu) $.

{\rm(ii)} \ There is a $\phi(x)\in SS_{\beta} $ satisfying $\int \phi(x) dx \neq 0$ so that  $M_{\phi\beta}f(x) \in L^p(\RR, \mu) $.

{\rm(iii)} \  $f_{1\bigtriangledown_{\gamma}}^\times(x)=\sup_{d_\mu(x, y)<r}|F_{1}(r, y, f)| \in L^p(\RR, \mu)$.

{\rm(iv)}  $f_1^\times(x)=\sup_{r>0}|F_1(r, x, f)| \in L^p(\RR, \mu)$.

{\rm(v)} \  $f_{\gamma}^*\in L^p(\RR, \gamma) $.
\end{proposition}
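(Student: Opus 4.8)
The plan is to prove the five statements equivalent by assembling the estimates established in Propositions \ref{im1}, \ref{sk}, \ref{no4}, \ref{no5}, \ref{no6} and \ref{no11}, together with Theorem \ref{H spa1}, into a closed cycle of implications. Since the hypothesis $\frac{1}{1+\gamma}<p\leq1$ is equivalent to $\gamma>p^{-1}-1$, I would first fix once and for all a single $\beta$ large enough (at least the $\beta>\gamma$ produced by Proposition \ref{no6}, and in any case $\beta\geq\gamma>p^{-1}-1$) so that Propositions \ref{no4}, \ref{no5}, \ref{no6} and Theorem \ref{H spa1} are all simultaneously available for this $\beta$; the reconciliation of the various admissible choices of $\beta$ is exactly what Theorem \ref{H spa1} provides. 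The backbone of the argument is then the chain of norm equivalences
$$\|f_\gamma^*\|_{L^p(\RR,\mu)}\sim\|f_{S\gamma}^*\|_{L^p(\RR,\mu)}\sim\|f_{S\beta}^*\|_{L^p(\RR,\mu)},$$
where the first equivalence is Proposition \ref{sk} and the second is Theorem \ref{H spa1} (valid because $\beta\geq\gamma>p^{-1}-1$ and, for $0<\gamma\leq1$, the class $SS_\gamma$ coincides with the class of test functions defining $f_{S\gamma}^*$). This already gives the equivalence (i)$\Leftrightarrow$(v).

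For (i)$\Leftrightarrow$(ii), the implication (i)$\Rightarrow$(ii) is immediate from (\ref{sk7}), since $M_{\phi\beta}f(x)\lesssim f_{S\beta}^*(x)$ for every fixed $\phi\in SS_\beta$, so one may take any $\phi\in SS_\beta$ with $\int\phi\neq0$. For (ii)$\Rightarrow$(i) I would chain Propositions \ref{no5} and \ref{no4}: starting from the given $\phi\in SS_\beta$ with $c=\int\phi\neq0$, Proposition \ref{no5} controls $\|M_{\phi\beta}^*f\|_{L^p}$ by $\|M_{\phi\beta}f\|_{L^p}$, and Proposition \ref{no4} controls $\|f_{S\beta}^*\|_{L^p}$ by $\|M_{\phi\beta}^*f\|_{L^p}$. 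The only wrinkle is that Propositions \ref{no3}--\ref{no4} are stated for a reproducer normalized by $\int\psi=1$; here I would simply apply the reproducing identity of Proposition \ref{no3} to $\psi=\phi/c$ (replacing $\phi$ by $-\phi\in SS_\beta$ beforehand if $c<0$), which rescales the coefficients $\eta^k$ by $1/c$ and leaves the geometric decay intact, so the argument goes through with a constant depending only on the fixed nonzero number $1/|c|$. This yields $\|f_{S\beta}^*\|_{L^p}\lesssim\|M_{\phi\beta}f\|_{L^p}$, i.e.\ (ii)$\Rightarrow$(i).

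It remains to tie in the kernel maximal functions (iii) and (iv). The inequality $f_1^\times(x)\leq f_{1\bigtriangledown_\gamma}^\times(x)$ is pointwise (take $s=x$ in the supremum), giving (iii)$\Rightarrow$(iv) for free. The converse (iv)$\Rightarrow$(iii) is Proposition \ref{no11} with $i=1$, whose hypothesis $\|f_{1\bigtriangledown_\gamma}^\times\|_{L^p}\sim\|f_\gamma^*\|_{L^p}$ I would verify by combining the easy bound $\|f_{1\bigtriangledown_\gamma}^\times\|_{L^p}\lesssim\|f_\gamma^*\|_{L^p}$ of Proposition \ref{im1} with the reverse bound $\|f_\gamma^*\|_{L^p}\sim\|f_{S\beta}^*\|_{L^p}\lesssim\|f_{1\bigtriangledown_\gamma}^\times\|_{L^p}$ coming from Proposition \ref{no6} and the backbone equivalence above. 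Finally, Proposition \ref{im1} gives (v)$\Rightarrow$(iii) directly, while Proposition \ref{no6} gives (iii)$\Rightarrow$(i); together with (i)$\Leftrightarrow$(v)$\Leftrightarrow$(ii) and (iii)$\Leftrightarrow$(iv), this closes the cycle and proves all five conditions equivalent.

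The genuinely hard analytic content is not in this proposition but in the ingredients it invokes: the nontangential-to-radial passages of Propositions \ref{no5} and \ref{no11} (via the Hardy--Littlewood maximal theorem applied to $|\cdot|^q$ with $q<p$), and above all Proposition \ref{no6}, where the smooth test functions are approximated by the compactly supported kernel $K_1$ through the mollification of Proposition \ref{exx6}. The main obstacle here is therefore bookkeeping rather than new estimation: one must choose a single $\beta$ serving every cited proposition, track the dependence of constants on $\gamma$, $\alpha$, $p$ and $1/|\int\phi|$, and confirm that the normalization $\int\psi=1$ assumed in the reproducing formula can be removed at the cost of a harmless fixed constant.
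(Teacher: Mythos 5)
Your proposal is correct and follows essentially the same route as the paper's own proof: the identical cycle of implications, with Propositions\,\ref{no4} and\,\ref{no5} giving (ii)$\Rightarrow$(i), Propositions\,\ref{sk} and\,\ref{H spa} giving (i)$\Leftrightarrow$(v), Proposition\,\ref{im1} giving (v)$\Rightarrow$(iii), Proposition\,\ref{no6} giving (iii)$\Rightarrow$(i), and Proposition\,\ref{no11} (its hypothesis verified through Propositions\,\ref{im1}, \ref{no6}, \ref{sk} and\,\ref{H spa}) giving (iv)$\Rightarrow$(iii). You are in fact slightly more careful than the paper, which passes silently over both the normalization $\int\phi\,dx=1$ demanded by Propositions\,\ref{no3}--\ref{no5} (versus the mere $\int\phi\,dx\neq0$ in condition (ii)) and the bookkeeping needed to fix a single $\beta$ serving all the cited propositions simultaneously.
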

\begin{proof}
${\rm(i)} \Rightarrow{\rm(ii)}$ is obvious.
${\rm(ii)} \Rightarrow{\rm(i)}$ is deduced from Proposition\,\ref{no4} and Proposition\,\ref{no5}.
${\rm(i)} \rightleftharpoons{\rm(v)}$ is deduced from Proposition\,\ref{H spa}.
$ {\rm(iii)}\Rightarrow{\rm(i)}$ is deduced from Proposition\,\ref{no6}.
$ {\rm(iv)}\Rightarrow{\rm(iii)}$ is deduced from Proposition\,\ref{no11}, Proposition\,\ref{H spa}, Proposition\,\ref{im1} and Proposition\,\ref{no6}.
$ {\rm(iii)}\Rightarrow{\rm(iv)}$ is obvious.
$ {\rm(v)}\Rightarrow{\rm(iii)}$ is deduced from Proposition\,\ref{im1}.
This proves the proposition.
\end{proof}
We define $H_{\mu}^p(\RR)$ and $\widetilde{H}_{\mu}^p(\RR)$  as:
\begin{definition}[$\mathbf{\widetilde{H}_{\mu}^p(\RR)}$ and $\mathbf{H_{\mu}^p(\RR)}$]\label{ha}
 $H_{\mu}^p(\RR)$ is defined as:
$$H_{\mu}^p(\RR)\triangleq\left\{g\in S'(\RR, d_{\mu}x): g_{S\beta}^*(x)\in L^p(\RR, \mu), \ \hbox{for}\ any\ \beta>p^{-1}-1 \right\}.$$
And its norm is is given by
$$ \|g\|_{H_{\mu}^p(\RR)}^p =\int_\RR |g_{S\beta}^*(x)|^p d\mu(x).$$
 $\widetilde{H}_{\mu}^p(\RR)$ is defined as:
$$\widetilde{H}_{\mu}^p(\RR)\triangleq\left\{g\in L^1(\RR, \mu): g_{S\beta}^*(x)\in L^p(\RR, \mu), \hbox{for\ any}\  \beta>p^{-1}-1 \right\}.$$
\end{definition}
From Theorem\,\ref{H spa1}, we could know that $H_{\mu}^p(\RR)$ space is the completion of  $\widetilde{H}_{\mu}^p(\RR)$  with  $\|\cdot\|_{H_{\mu}^p(\RR)}^p$ norm. Thus by Proposition\,\ref{important1} and Hahn-Banach Theorem, we could deduce the following:
\begin{theorem}\label{important} For  $\displaystyle{\frac{1}{1+\gamma}<p\leq1}$, $0<\gamma\leq1$, $f\in S'(\RR, d_{\mu}x)$, there exists $\beta>0$, such that the following conditions are equivalent:

{\rm(i)}  $f_{S\beta}^*\in L^p(\RR, \mu) $;

{\rm(ii)}  There is a $\phi(x)\in SS_{\beta} $ satisfying $\int \phi(x) dx \neq 0$ so that  $M_{\phi\beta}f(x) \in L^p(\RR, \mu) $;

{\rm(iii)} $f_{1\bigtriangledown_{\gamma}}^\times(x)=\sup_{d_\mu(x, y)<r}|F_{1}(r, y, f)| \in L^p(\RR, \mu)$;

{\rm(iv)}  $f_1^\times(x)=\sup_{r>0}|F_1(r, x, f)| \in L^p(\RR, \mu)$;

{\rm(v)}  $f_{\gamma}^*\in L^p(\RR, \mu) $;

{\rm(vi)} $H_{\mu}^p(\RR)$ space is the completion of  $\widetilde{H}_{\mu}^p(\RR)$  with  $\|\cdot\|_{H_{\mu}^p(\RR)}^p$ norm.
\end{theorem}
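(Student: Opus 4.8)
The plan is to bootstrap Theorem~\ref{important} from its $L^1$-counterpart. Proposition~\ref{important1} already establishes the equivalence of (i)--(v) for every $f\in L^1(\RR,\mu)$, so the entire new content of the theorem is to drop the a priori hypothesis $f\in L^1(\RR,\mu)$ in favour of $f\in S'(\RR,d_\mu x)$, and to adjoin the completion statement (vi). Accordingly, once $f$ is known to lie in $L^1(\RR,\mu)$ there is nothing left to prove beyond Proposition~\ref{important1}; the whole difficulty is the passage to genuine distributions. First I would record that (vi) is essentially a restatement of what has already been proved: by Theorem~\ref{H spa1} (through Proposition~\ref{H spa}) the subspace $\widetilde{H}_{\mu}^p(\RR)$, whose elements are honest $L^1(\RR,\mu)$ functions, is dense in $H_{\mu}^p(\RR)$ for the $\|\cdot\|_{H_{\mu}^p(\RR)}$ norm, which is exactly the assertion that $H_{\mu}^p(\RR)$ is the completion of $\widetilde{H}_{\mu}^p(\RR)$.

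This density is the bridge I would use to transport the five maximal-function conditions from the dense $L^1$-core to the whole distribution space. Given $f\in S'(\RR,d_\mu x)$ with $f_{S\beta}^*\in L^p(\RR,\mu)$, so that $f\in H_{\mu}^p(\RR)$, I would choose $f_n\in\widetilde{H}_{\mu}^p(\RR)\subseteq L^1(\RR,\mu)$ with $f_n\to f$ in $H_{\mu}^p(\RR)$, apply Proposition~\ref{important1} to each $f_n$ to obtain the five comparable $L^p$ bounds with constants independent of $n$, and then let $n\to\infty$. The conditions (i), (ii) and (v) pair $f$ only against Schwartz functions $\phi\in SS_\beta$ (or against admissible $\phi$ in the definition of $f_\gamma^*$), so for these the distributional pairings are defined outright and the limit is transparent.

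The step I expect to be the main obstacle is the limiting argument for (iii) and (iv), since the kernel $K_1(r,x,y)$ is only continuous with compact support and need not be smooth; consequently $\int_\RR K_1(r,x,y)f(y)\,d\mu(y)$ is not literally a Schwartz pairing for a general $f\in S'(\RR,d_\mu x)$. To handle this I would invoke the smooth approximants $a_{x,r}^{\tau}$ of Proposition~\ref{exx6}: they lie in $S(\RR,d_\mu y)$, are symmetric, converge uniformly to $K_1$, and are dominated by $C\,K_1$, so each pairing $\int_\RR f\,a_{x,r}^{\tau}\,d\mu$ is well defined. Using the quantitative bound (v) of Proposition~\ref{exx6}, together with the fact that $M_\mu$ is weak-$(1,1)$, I would realise $F_1(r,x,f)$ as $\lim_{\tau\to0}\int_\RR f\,a_{x,r}^{\tau}\,d\mu/r$ while keeping the associated maximal functions under control, exactly as in the proof of Proposition~\ref{no6}. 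The Hahn--Banach theorem enters precisely here, to extend the functionals $\phi\mapsto\int_\RR f\phi\,d\mu$ from the Schwartz core to the closure containing the kernels, so that the limit is independent of the chosen approximating sequence and condition~(iv) becomes meaningful for distributions.

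Finally, with $f_1^\times$, $f_{1\bigtriangledown_{\gamma}}^\times$, $f_{\gamma}^*$, $f_{S\beta}^*$ and $M_{\phi\beta}f$ all realised as limits of the corresponding quantities for an approximating sequence $f_n\to f$, the cyclic chain of implications of Proposition~\ref{important1} survives the passage to the limit: Fatou's lemma gives lower semicontinuity of the maximal functions in one direction, while the norm convergence $\|f_n-f\|_{H_{\mu}^p(\RR)}\to0$ supplies the reverse control. This yields the equivalence of (i)--(v) for every $f\in S'(\RR,d_\mu x)$, and together with (vi), already settled, completes the argument. I would expect the routine parts (the Fatou estimates and the uniformity of constants) to be straightforward, with the only genuine subtlety being the non-smoothness of $K_1$ addressed by Proposition~\ref{exx6}.
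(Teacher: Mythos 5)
Your proposal follows essentially the same route as the paper: the paper derives Theorem~\ref{important} exactly by combining Proposition~\ref{important1} on the dense $L^1$ core $\widetilde{H}_{\mu}^p(\RR)$ with the completion statement coming from Theorem~\ref{H spa1} and a Hahn--Banach extension to all of $S'(\RR, d_{\mu}x)$. Your extra detail (the Fatou-type limiting estimates and the use of the smooth approximants $a_{x,r}^{\tau}$ of Proposition~\ref{exx6} to make the kernel pairings in (iii)--(iv) meaningful for distributions) simply fleshes out steps the paper leaves implicit.
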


\section{ Hardy spaces associated with the Dunkl setting}\label{sss5}
In this Section we will discuss the Hardy spaces associated with the one dimensional  Dunkl setting. In section\.\S\ref{area}, we will give a real characterization of  $H^p_{\lambda}(\RR^2_+)$.  We will use another way different from Burkholder-Gundy-Silverstein in\,\cite{BGS}, in a very simple way. In section\.\S\ref{homo},  we will prove  that $H_{\lambda}^p(\RR)$ is a kind of Homogeneous Hardy spaces  for $\frac{1}{1+\gamma_\lambda}<p\leq1$, then we could obtain the real-variable method of $H_{\lambda}^p(\RR)$ by the theory of  Homogeneous Hardy spaces.
\subsection{Real Parts of  function in $H_{\lambda}^p(\RR_+^2)$ and  maximal function }\label{area}

\begin{definition}\cite{JianQuan Liao 1}\cite{ZhongKai Li 3}\label{Poisson-a}
For $f\in L_{\lambda}^1(\RR)\bigcap L_{\lambda}^{\infty}(\RR)$, $x\in\RR, \ y\in(0,\infty)$, we can define $\lambda$-Possion integral and conjugate $\lambda$-Poisson integral by
\begin{eqnarray*}
(Pf)(x,y)=(f\ast_{\lambda}P_y)(x)=c_{\lambda}\int_{\RR}f(t)(\tau_xP_y)(-t)|t|^{2\lambda}dt, \\
(Qf)(x,y)=(f\ast_{\lambda}Q_y)(x)=c_{\lambda}\int_{\RR}f(t)(\tau_xQ_y)(-t)|t|^{2\lambda}dt,
\end{eqnarray*}
where $\lambda$-Poisson kernel $(\tau_xP_y)(-t)$ has the  representation
\begin{eqnarray}\label{D-Poisson-ker-11}
(\tau_xP_y)(-t)=
\frac{\lambda\Gamma(\lambda+1/2)}{2^{-\lambda-1/2}\pi}\int_0^\pi\frac{y(1+{\rm
sgn}(xt)\cos\theta)
}{\big(y^2+x^2+t^2-2|xt|\cos\theta\big)^{\lambda+1}}\sin^{2\lambda-1}\theta
d\theta,
\end{eqnarray}

and $(\tau_xQ_y)(-t)$ is the conjugate $\lambda$-Poisson kernel, with the following representation:
\begin{eqnarray}\label{D-conjugate-Poisson-ker-1}
(\tau_xQ_y)(-t)=
\frac{\lambda\Gamma(\lambda+1/2)}{2^{-\lambda-1/2}\pi}\int_0^\pi\frac{(x-t)(1+{\rm
sgn}(xt)\cos\theta)
}{\big(y^2+x^2+t^2-2|xt|\cos\theta\big)^{\lambda+1}}\sin^{2\lambda-1}\theta
d\theta.
\end{eqnarray}

The  maximal functions are: $Q_{\nabla}^*f(x)=\sup_{|s-x|<y}|(Qf)(s,y)|$, $P_{\nabla}^*f(x)=\sup_{|s-x|<y}|(Pf)(s,y)|$, and $F_{\nabla}^*(x)=\sup_{|s-x|<y}|F(s, y)|$.
\end{definition}
\begin{proposition}\label{ss}\cite{ZhongKai Li 3}Let $F\in H_{\lambda}^p(\RR_+^2)$ and $f(x)\in L_{\lambda}^p(\RR)$, then the following hold:

{\rm(i)} For $1<p<\infty$,  $\|Q_{\nabla}^*f\|_{L^p_{\lambda}}\leq c^1_p \|f\|_{L^p_{\lambda}}$, $\|P_{\nabla}^*f\|_{L^p_{\lambda}}\leq c^2_p \|f\|_{L^p_{\lambda}}$.\

{\rm(ii)}For $\frac{2\lambda}{2\lambda+1}<p$, $F\in H_{\lambda}^p(\RR_+^2)$ if and only if $F_{\nabla}^*\in L_{\lambda}^p(\RR)$, and moreover $\|F\|_{H_{\lambda}^p}\geq\|F_{\nabla}^*\|_{L_{\lambda}^p}\geq c\|F\|_{H_{\lambda}^p} $.

{\rm(iii)} For $1\leq p<\infty$,  F(x, y) has boundary values, and let f(x) to be the real part of the boundary values of F(x, y) satisfying $F(x, y)=Pf(x, y)+iQf(x, y).$

{\rm(iv)}For  $1\leq p<\infty$,   $Pf(x, y)$ and  $Qf(x, y)$ satisfy the generalized Cauchy-Riemann system\,(\ref{a c r0})\,on $\RR_+^2$.

\end{proposition}

\begin{proposition}\cite{JianQuan Liao 1}\cite{ZhongKai Li 3}\label{k1}
Let $F(x, y)\in H_{\lambda}^p(\RR_+^2)$, $f(x)$ to be the boundary value of $F(x, y)$ for $p> p_0=\frac{2\lambda}{2\lambda+1}$,  then the following hold:

(i)For almost every $x\in\RR$, $\lim F(t,y)=f(x)$ exists as (t, y) approaches the point (x, 0) nontangentially.

(ii)  $\lim_{y\rightarrow0+}\|F(.,y)-f\|_{L_{\lambda}^p}=0$, for $\frac{2\lambda}{2\lambda+1}<p$.  $\|F\|_{H_{\lambda}^p}=\|f\|_{L_{\lambda}^p}$, for $1\leq p$.  $\|F\|_{H_{\lambda}^p}\geq\|f\|_{L_{\lambda}^p}\geq2^{1-2/p}\|F\|_{H_{\lambda}^p}$, for $\frac{2\lambda}{2\lambda+1}<p<1$, where $\|f\|_{L_{\lambda}^p}=(c_{\lambda}\int_{\RR}|f(x)|^p|x|^{2\lambda}dx)^{1/p}$.

(iii)Let $p>\frac{2\lambda}{2\lambda+1} $,  $p_1>\frac{2\lambda}{2\lambda+1} $ , $F(x , y)\in H_{\lambda}^p(\RR_+^2)$, and $f\in L_{\lambda}^{p_1}(\RR)$, then $F(x , y)\in H_{\lambda}^{p_1}(\RR_+^2).$

\end{proposition}

\begin{proposition}\cite{JianQuan Liao 1}\cite{ZhongKai Li 3}\label{sss1}
For simplicity, we write $\tau_tu(x, y)=\left[\tau_t\left(u(.,y)\right)\right](x)$.

(1) If u is twice continuously differentiable on $\RR_+^2$ and satisfies $\triangle_{\lambda}u=0$, then for $(x_0, y_0)\in\RR^2_+$,  $0<r<y_0$, we have
\begin{eqnarray*}
u(x_0, y_0)=\sigma_\lambda\int_{-\pi}^\pi (\tau_{r\cos\theta}u)(x_0, y_0+r\sin\theta)|\cos\theta|^{2\lambda}d\theta,
\end{eqnarray*}
where $\sigma_\lambda^{-1}=\int_{-\pi}^\pi |\cos\theta|^{2\lambda}d\theta =2\sqrt{\pi}\Gamma(\lambda+\frac{1}{2})/\Gamma(\lambda+1)$.

(2) For $f\in S(\RR, dx)$, for fixed $t\in \RR$, the function $x\rightarrow \tau_tf(x) \in S(\RR)$, and the following holds:
$$D_t(\tau_tf(x))=D_x(\tau_tf(x))=\left(\tau_t(Df)\right)(x).$$

(3) For $f\in L_\lambda^\infty(\RR)$, the following holds for $t\in\RR$ (We could use $\|.\|_{\infty}$ instead of $\|.\|_{L_\lambda^\infty(\RR)}$ for convenience):
$$\|\tau_t f\|_{L_\lambda^\infty(\RR)}\leq 4 \| f\|_{L_\lambda^\infty(\RR)} .$$

(4) For $1<p<\infty$, $u(x, y)$ is a $\lambda$-harmonic function on $\RR^2_+$. $u(x, y)$ is the $\lambda$-Poisson integral of some function $f(x)\in L^p_\lambda(\RR)$ if and only if $u(x, y)$ satisfies the following:
$$\sup_{t>0}c_\lambda\int_\RR |u(x, y)|^p |x|^{2\lambda}dx<\infty.$$
\end{proposition}

In \cite{T0}, the dual of intertwining operator are introduced as follows.
\begin{definition}[$\mathbf{Dual\  of\  intertwining\  operator}$]\cite{T0}
We use $V_\lambda^t$ to denote as the dual of intertwining operator:
$$V_\lambda^t(f)=\SF^{-1}\SF_\lambda(f),$$
$\left(V_\lambda^t\right)^{-1}$ to denote as:
$$\left(V_\lambda^t\right)^{-1}(f)=\SF_\lambda^{-1}\SF(f).$$
\end{definition}
The  properties of the dual of intertwining operator are as follows:
\begin{proposition}\label{u0}\cite{T0}
{\rm(i)}  \ $V_\lambda^t$ is a topological automorphism on $S(\RR, dx)$;

{\rm(ii)} \ If $\,suppf\subseteq B(0, a)$, then $\,supp V_\lambda^t(f)\subseteq B(0, a)$ and $\,supp \left(V_\lambda^t\right)^{-1}(f)\subseteq B(0, a)$;

{\rm(iii)}  \   $\displaystyle{V_\lambda^t(Df)(x)=\frac{d}{dx}V_\lambda^t(f)(x)}$ for any $f\in S(\RR, dx)$, where $D$ is the Dunkl operator.
\end{proposition}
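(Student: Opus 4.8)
The plan is to exploit the factorization $V_\lambda^t=\SF^{-1}\SF_\lambda$, whose candidate inverse is $\SF_\lambda^{-1}\SF$, and to treat the three assertions separately, reducing each one to a known property of the two transforms $\SF$ and $\SF_\lambda$. The three statements require, respectively, a functional-analytic fact about both transforms, an algebraic intertwining identity, and a complex-analytic (Paley--Wiener) argument.

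For (i), I would recall that the ordinary Fourier transform $\SF$ is a topological automorphism of $S(\RR,dx)$, and that the Dunkl transform $\SF_\lambda$ is likewise a topological automorphism of $S(\RR,dx)$ (it maps the Schwartz class continuously onto itself with continuous inverse $\SF_\lambda^{-1}$). Since a composition of topological automorphisms is again one, $V_\lambda^t=\SF^{-1}\SF_\lambda$ is a topological automorphism of $S(\RR,dx)$, with continuous inverse $\SF_\lambda^{-1}\SF$.

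For (iii), I would combine the two ``differentiation-to-multiplication'' identities. The Dunkl transform turns the Dunkl operator $D$ into multiplication, $\SF_\lambda(Df)(\xi)=i\xi(\SF_\lambda f)(\xi)$, while the inverse Fourier transform turns multiplication by $i\xi$ into ordinary differentiation, $\frac{d}{dx}\SF^{-1}(h)=\SF^{-1}(i\xi\,h)$. Composing these gives
\[
V_\lambda^t(Df)=\SF^{-1}\SF_\lambda(Df)=\SF^{-1}\big(i\xi(\SF_\lambda f)(\xi)\big)=\frac{d}{dx}\SF^{-1}(\SF_\lambda f)=\frac{d}{dx}V_\lambda^t(f),
\]
valid for every $f\in S(\RR,dx)$ since both identities hold on the Schwartz class.

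For (ii), which I expect to be the main obstacle, I would argue by a Paley--Wiener comparison. Suppose $\mathrm{supp}\,f\subseteq B(0,a)$. Because the Dunkl kernel $E_\lambda(-ix\xi)$ extends to an entire function of $\xi$ with the exponential-type bound $|E_\lambda(-ix\xi)|\lesssim e^{|x|\,|\mathrm{Im}\,\xi|}$, the Dunkl transform $\SF_\lambda f$ extends from $\RR$ to an entire function of exponential type at most $a$, while remaining rapidly decreasing on the real axis. The classical Paley--Wiener theorem for $\SF$ then forces $\SF^{-1}(\SF_\lambda f)$ to be supported in $B(0,a)$, i.e. $\mathrm{supp}\,V_\lambda^t(f)\subseteq B(0,a)$. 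The statement for $(V_\lambda^t)^{-1}=\SF_\lambda^{-1}\SF$ follows by the symmetric argument: for $\mathrm{supp}\,f\subseteq B(0,a)$ the classical Paley--Wiener theorem makes $\SF f$ entire of exponential type $a$, and the Paley--Wiener theorem for the Dunkl transform then confines $\mathrm{supp}\,\SF_\lambda^{-1}(\SF f)$ to $B(0,a)$. The delicate point is precisely this complex-analytic input---verifying the exponential-type estimates on $E_\lambda$ for complex spectral parameter so that both Paley--Wiener theorems apply; these are exactly the estimates established in \cite{T0}, which one may invoke rather than redo.
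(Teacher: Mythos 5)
The paper offers no proof of this proposition at all: it is quoted from Trim\`eche \cite{T0}, where $V_\lambda^t$ is constructed as the dual of the intertwining operator and the factorization $V_\lambda^t=\SF^{-1}\SF_\lambda$ is itself a theorem; in particular, the support property (ii) is obtained in \cite{T0} from the explicit integral representation of the dual operator, whose kernel is supported in the region $\{|y|\le|x|\}$, so support preservation is immediate and purely real-variable. Your argument runs in the opposite direction and is correct: taking the paper's definition $V_\lambda^t=\SF^{-1}\SF_\lambda$ as the starting point, (i) follows because $\SF$ and (by de Jeu's theorem, \cite{dJ}) $\SF_\lambda$ are topological automorphisms of $S(\RR,dx)$; (iii) follows from $\SF_\lambda(Df)(\xi)=i\xi(\SF_\lambda f)(\xi)$ (a consequence of the eigenfunction property of the Dunkl kernel and the antisymmetry of $D$ with respect to $|x|^{2\lambda}dx$) together with $\frac{d}{dx}\SF^{-1}(h)=\SF^{-1}(i\xi h)$, and the sign conventions do match the paper's definition of $\SF_\lambda$; and (ii) follows from the two-sided Paley--Wiener comparison, where the forward steps need only the bound $|E_\lambda(-ix\zeta)|\le e^{|x|\,|\operatorname{Im}\zeta|}$ and holomorphy, while the backward steps (especially the Dunkl-side Paley--Wiener theorem needed for $(V_\lambda^t)^{-1}$) are the genuinely deep input, which you correctly flag as the point to be cited rather than redone. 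One small remark: $B(0,a)$ in this paper denotes a ball in the homogeneous metric $d_\lambda$, but since it is centered at the origin it is still a symmetric Euclidean interval, so your argument applies verbatim after rescaling the radius. What your route buys is a proof internal to this paper's framework, at the cost of complex-analytic machinery; Trim\`eche's kernel argument is more elementary but relies on the integral representation of the dual intertwining operator, which this paper never introduces.
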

By Proposition\,\ref{u0}, we could deduce the following Proposition\,\ref{uu5}:
\begin{proposition}\label{uu5}
For any $\phi\in S(\RR, dx)$, $$\sup_{x\in\RR}\left||x|^\alpha D^\beta\phi(x)\right|<\infty.$$
\end{proposition}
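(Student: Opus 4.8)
The plan is to reduce every Dunkl derivative $D^\beta\phi$ to an ordinary $\beta$-th derivative transported through the dual intertwining operator $V_\lambda^t$, and then to invoke Proposition\,\ref{u0}(i), which says that $V_\lambda^t$—and hence also its inverse $\left(V_\lambda^t\right)^{-1}$, being the inverse of a topological automorphism—is a topological automorphism of $S(\RR, dx)$. Since the classical Schwartz class is closed under ordinary differentiation, and since for a natural number $\alpha$ one has $\left||x|^\alpha D^\beta\phi(x)\right| = \left|x^\alpha D^\beta\phi(x)\right|$, which is a standard Schwartz seminorm (and for real $\alpha$ is dominated by $\max\{1,|x|^{\lceil\alpha\rceil}\}$), it suffices to show that $D^\beta\phi$ again lies in $S(\RR, dx)$.

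First I would iterate Proposition\,\ref{u0}(iii). Writing $g = V_\lambda^t\phi$, property (iii) gives $V_\lambda^t(D\phi) = \frac{d}{dx}g$, so that $D\phi = \left(V_\lambda^t\right)^{-1} g'$; in particular $D\phi\in S(\RR, dx)$. Applying (iii) again to $D(D^{\beta-1}\phi)$ and cancelling $V_\lambda^t$ against $\left(V_\lambda^t\right)^{-1}$, a straightforward induction on $\beta$ yields the commutation identity
\begin{equation*}
D^\beta\phi = \left(V_\lambda^t\right)^{-1}\!\left(\frac{d^\beta}{dx^\beta}\,V_\lambda^t\phi\right),
\end{equation*}
valid for every $\phi\in S(\RR, dx)$ and every $\beta\in\NN$. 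At each inductive step the hypothesis already exhibits $D^{\beta-1}\phi$ as an element of $S(\RR, dx)$, so property (iii) is legitimately applicable to it.

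With this identity in hand the conclusion is immediate: $V_\lambda^t\phi\in S(\RR, dx)$ because $V_\lambda^t$ is an automorphism of $S(\RR, dx)$; its derivative $\frac{d^\beta}{dx^\beta}V_\lambda^t\phi$ again belongs to $S(\RR, dx)$ since the Schwartz class is stable under differentiation; and finally $\left(V_\lambda^t\right)^{-1}$ carries this back into $S(\RR, dx)$. Hence $D^\beta\phi\in S(\RR, dx)$, and in particular $\sup_{x\in\RR}\left||x|^\alpha D^\beta\phi(x)\right|<\infty$ for all $\alpha,\beta$, which is the assertion.

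I expect the only genuine point requiring care to be the inductive step producing the commutation identity, where one must verify that property (iii) may be applied to $D^{\beta-1}\phi$ (guaranteed by the inductive hypothesis, which places $D^{\beta-1}\phi$ in $S(\RR, dx)$) and cancel $V_\lambda^t\left(V_\lambda^t\right)^{-1}=\mathrm{id}$ correctly; everything else is formal once one records that $\left(V_\lambda^t\right)^{-1}$ is itself a topological automorphism of $S(\RR, dx)$, which follows directly from Proposition\,\ref{u0}(i).
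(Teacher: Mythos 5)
Your proposal is correct and is essentially the paper's own argument: the paper deduces Proposition\,\ref{uu5} directly from Proposition\,\ref{u0}, i.e.\ from the facts that $V_\lambda^t$ is a topological automorphism of $S(\RR, dx)$ and that it intertwines $D$ with $\frac{d}{dx}$, which is exactly the commutation identity $D^\beta\phi = \left(V_\lambda^t\right)^{-1}\!\left(\frac{d^\beta}{dx^\beta}V_\lambda^t\phi\right)$ you establish by induction. Your write-up merely makes explicit the induction and the reduction of the weighted seminorm bound to membership in $S(\RR, dx)$, which the paper leaves implicit.
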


\begin{theorem}\label{u2}
Let $u(x, y)$ to be a $\lambda$-harmonic function satisfying  $u_{\nabla}^* \in L^p_{\lambda}(\RR)$. For $\frac{2\lambda}{2\lambda+1}<p <\infty$, there exists a $\lambda$-analytic function $F(z)\in H_{\lambda}^p(\RR_+^2)$ satisfying $u(x, y)=ReF(z)$ and
$$\|F\|_{H_{\lambda}^p(\RR_+^2)}\sim \|u_{\nabla}^*\|_{L_{\lambda}^p(\RR)}.$$
\end{theorem}
\begin{proof}

Case 1 $1<p<\infty$: It is clear that part\,(2) of this Theorem holds for $1<p<\infty$ by Proposition\,\ref{sss1}(4) and Proposition\,\ref{ss}(i)(ii)(iv).

Case 2  $\frac{2\lambda}{2\lambda+1}<p\leq 1$:
Notice that the following inequality holds for any $h\in \{h:|x-h|<t\}$:
$$|u(x, t)|\leq \sup_{|h-s|<l}|u(s, l)|.$$
We could also deduce  that $\int_{\{h: |x-h|< t\}}|h|^{2\lambda}dh\sim |x|^{2\lambda}|t|\gtrsim |t|^{2\lambda+1}$ for $0<t\leq |x|/2$, and  $\int_{\{h: |x-h|< t\}}|h|^{2\lambda}dh\sim\int_{\{h: |h|< t\}}|h|^{2\lambda}dh\sim |t|^{2\lambda+1}$ for $t\geq |x|/2$.
Then for $0<t$, we could have:
\begin{eqnarray*}
|u(x, t)|^p &\lesssim& \frac{1}{\int_{\{h: |x-h|< t\}}|h|^{2\lambda}dh}\int_{\{h: |x-h|< t\}}  \sup_{|h-s|<l}|u(s, l)|^p |h|^{2\lambda}dh
\\ &\lesssim& \frac{1}{t^{2\lambda+1}}\int_{\{h: |x-h|< t\}}  \sup_{|h-s|<l}|u(s, l)|^p |h|^{2\lambda}dh.
\end{eqnarray*}
Thus we could deduce the following Formula\,(\ref{sk4})  holds:
\begin{equation}\label{sk4}
|u(x, y)|\lesssim\|u_{\nabla}^*\|_{L^p_{\lambda}(\RR)} y^{-(2\lambda+1)/p}.
\end{equation}
 We define $v(x, y)$ as the conjugate $\lambda$-harmonic function of $u(x, y)$ as following:
\begin{equation}\label{sk8}
 v(x, y)=-\int_{y}^{+\infty} D_x u(x, r)dr.
\end{equation}
Next we will show that $v(x, y)$ is a well defined function. We use $\psi_{(\rho)}(\zeta,\xi)$ ($0<\rho<\infty$) to denote a radial  positive function on $\RR^2$ satisfying
$$supp\,\psi_{(\rho)}(\zeta,\xi)\subseteq \left\{(\zeta,\xi):\sqrt{\zeta^2+\xi^2}< \frac{\rho}{100}  \right\},\,\psi_{(\rho)}(\zeta,\xi)\in S(\RR^2, dx),$$
and
$$\int_{\RR^2}\psi_{(\rho)}(\zeta,\xi) |\zeta|^{2\lambda}d\zeta d\xi=1,\ \|\psi_{(\rho)}\|_{\infty}\sim \frac{1}{\rho^{2\lambda+2}}.$$
Thus it is clear that
\begin{equation}\label{sk2}
\|D_\zeta\psi_{(\rho)}(\zeta,\xi)\|_{\infty}\lesssim \frac{1}{\rho^{2\lambda+3}},\  \|(D_\zeta)^2\psi_{(\rho)}(\zeta,\xi)\|_{\infty}\lesssim \frac{1}{\rho^{2\lambda+4}},\ \|D_\zeta(\partial_\xi)\psi_{(\rho)}(\zeta,\xi)\|_{\infty}\lesssim \frac{1}{\rho^{2\lambda+4}}.
\end{equation}
By Proposition\,\ref{sss1}(1), we could write $u(x, r)$ as following:
\begin{eqnarray}\label{sk1}
u(x, r)=\nonumber \sigma_\lambda\int_0^{+\infty}\int_\RR (\tau_{x}u)(-\zeta, r-\xi)\psi_{(r)}(\zeta,\xi) |\zeta|^{2\lambda}d\zeta d\xi
\\=\sigma_\lambda\int_0^{+\infty}\int_\RR u(s, t) \tau_{-s}\psi_{(r)}(x, r-t)|s|^{2\lambda}ds dt,
\end{eqnarray}
where $\sigma_\lambda^{-1}=\int_{-\pi}^\pi |\cos\theta|^{2\lambda}d\theta =2\sqrt{\pi}\Gamma(\lambda+\frac{1}{2})/\Gamma(\lambda+1)$. Thus we could deduce that $$(s, t)\in \left\{(s, t):\sqrt{(x-s)^2+(r-t)^2}< \frac{r}{10}\right\}\bigcup \left\{(s, t):\sqrt{(x+s)^2+(r-t)^2}< \frac{r}{10}\right\}.$$
We use $A_{\mu, \nu}$ to denote as the set: $$A_{\mu, \nu}= \left\{(s, t):\sqrt{(\mu-s)^2+(\nu-t)^2}< \frac{\nu}{10}\right\}\bigcup \left\{(s, t):\sqrt{(\mu+s)^2+(\nu-t)^2}< \frac{\nu}{10}\right\}.$$
Thus by Proposition\,\ref{sss1}(2)(3), Formula\,(\ref{sk4}), Formula\,(\ref{sk1}), Formula\,(\ref{sk2}) we could deduce the following inequality:
\begin{eqnarray}\label{sk9}
|D_xu(x, r)|&=&\left|\sigma_\lambda\int_0^{+\infty}\int_\RR u(s, t) \tau_{-s}(D\psi_{(r)})(x, r-t)|s|^{2\lambda}ds dt\right|
 \\ \nonumber &\lesssim& \sup_{(s, t)\in A_{x, r}}|u(s, t)| \|D_\zeta\psi_{(r)}(\zeta,\xi)\|_{\infty} r^{2\lambda+2}
 \\ \nonumber &\lesssim& r^{-(2\lambda+1)/p} \frac{1}{r^{2\lambda+3}}  r^{2\lambda+2}
 \\ \nonumber &\lesssim& r^{-(2\lambda+1)/p}r^{-1}.
\end{eqnarray}
In a similar way, we could obtain the following inequality
\begin{eqnarray}\label{sk10}
|(D_x)^2u(x, r)|&=&\left|\sigma_\lambda\int_0^{+\infty}\int_\RR u(s, t) \tau_{-s}(D^2\psi_{(r)})(x, r-t)|s|^{2\lambda}ds dt\right|
\\ \nonumber &\lesssim& r^{-(2\lambda+1)/p}r^{-2}.
\end{eqnarray}

Thus from Formula\,(\ref{sk8}), Formula\,(\ref{sk9}) and Formula\,(\ref{sk10}),  we could know that
the integral of $D_xu(x, r)$ and $(D_x)^2u(x, r)$ are meaningful. Thus $v(x, y)$, $D_x v(x, y)$ and $\partial_y v(x, y)$ are well defined functions. Thus it is not difficult to check that $v(x, y)$ and $u(x, y)$ satisfy the  $\lambda$-Cauchy-Riemann equations:
\begin{eqnarray*}
\left\{\begin{array}{ll}
                                    D_xu(x, y)-\partial_y v(x, y)=0,&  \\
                                    \partial_y u(x, y) +D_xv(x, y)=0.&
                                 \end{array}\right.
\end{eqnarray*}
Thus the function $F(z)=u(x, y)+iv(x, y)$ is a $\lambda$-harmonic function and $u(x, y)=ReF(z)$. By Formula\,(\ref{sk8}), it is clear that the following inequality holds:
\begin{eqnarray}\label{sk12}
|v(x, y)|&=&\left|-\int_{y}^{+\infty} D_x u(x, r)dr\right|
\\  \nonumber &=&\sigma_\lambda\left|\int_{y}^{+\infty}\int_0^{+\infty}\int_\RR u(s, t) \left(\tau_{-s}(D\psi_{(r)})\right)(x, r-t)|s|^{2\lambda}ds dtdr\right|
\\ \nonumber &\lesssim& \left|\int_{y}^{+\infty}\left(\sup_{(s, t)\in A_{x, r}} |ru(s, t)| \right) \frac{1}{r^2} dr\right|
\\ \nonumber &\lesssim& \left|\left(\sup_{r\geq y>0}\sup_{(s, t)\in A_{x, r}} |tu(s, t)|\right) \int_y^{+\infty} \frac{1}{r^2} dr\right|.
\end{eqnarray}
By Formula\,(\ref{sk4}), we could know that
$$\sup_{r\geq y>0}\sup_{(s, t)\in A_{x, r}} |tu(s, t)|<\infty.$$
Notice  that the balls $\{(s, t):(s, t)\in A_{x, r}\}$ are in the cone $\{(s, t):|s-x|<|t-\frac{y}{2}|, t>\frac{y}{2}\}$, that is: $\{(s, t):(s, t)\in A_{x, r}\}\subset \{(s, t):|s-x|<|t-\frac{y}{2}|, t>\frac{y}{2}\}$. Thus we could deduce that
\begin{eqnarray}\label{sk13}
\sup_{r\geq y>0}\sup_{(s, t)\in A_{x, r}} |tu(s, t)|\lesssim |yu_\nabla^*(x, \frac{y}{2})|+|yu_\nabla^*(-x, \frac{y}{2})|,
\end{eqnarray}
where $u_\nabla^*(x, \frac{y}{2})$ denotes $ \sup_{|s|<t}|u(x+s, \frac{y}{2}+t)|$.

Thus by Formula\,(\ref{sk12}) and  Formula\,(\ref{sk13}), we could obtain that:
\begin{eqnarray}\label{sk14}
|v(x, y)|&\lesssim&|yu_\nabla^*(x, \frac{y}{2})| \frac{1}{y}+|yu_\nabla^*(-x, \frac{y}{2})|\frac{1}{y}
\\ \nonumber &\lesssim& u_\nabla^*(x)+u_\nabla^*(-x).
\end{eqnarray}
Thus by Formula\,(\ref{sk14}), we could deduce the following inequality for any $y>0$:
$$\int_{-\infty}^{+\infty}|v(x, y)|^p|x|^{2\lambda}dx\lesssim\int_{-\infty}^{+\infty}|u_\nabla^*(x)|^p|x|^{2\lambda}dx\ \ \hbox{for}\ \frac{2\lambda}{2\lambda+1}<p\leq 1.$$
Then for $\frac{2\lambda}{2\lambda+1}<p\leq 1$, we could deduce that:
\begin{eqnarray}\label{sk15}
\|F\|_{H_{\lambda}^p(\RR_+^2)}\leq c\|u_{\nabla}^*\|_{L^p_{\lambda}}.
\end{eqnarray}

By Formula\,(\ref{sk15}) and Proposition\,\ref{ss}, we deduce the following inequality for $\frac{2\lambda}{2\lambda+1}<p\leq 1$:
$$\|F\|_{H_{\lambda}^p(\RR_+^2)}\sim\|u_{\nabla}^*\|_{L^p_{\lambda}}.$$
This proves the Theorem.
\end{proof}

\begin{proposition}\label{p4}
  $H_{\lambda}^p(\RR_+^2)\bigcap H_{\lambda}^2(\RR_+^2)\bigcap H_{\lambda}^1(\RR_+^2)$ is dense in $H_{\lambda}^p(\RR_+^2)$, for $ \frac{2\lambda}{2\lambda+1}< p\leq 1$.
\end{proposition}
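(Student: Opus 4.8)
The plan is to approximate an arbitrary $F\in H^p_\lambda(\RR^2_+)$ by its vertical translates $F_t(x,y):=F(x,y+t)$, $t>0$, and to show that each $F_t$ already lies in the triple intersection while $F_t\to F$ in $H^p_\lambda(\RR^2_+)$ as $t\to0+$. First I would record that $F_t$ is again $\lambda$-analytic, since the $\lambda$-Cauchy--Riemann system \eqref{a c r0} is invariant under translation in the $y$-variable, and that $\|F_t\|_{H^p_\lambda}\le\|F\|_{H^p_\lambda}$ because $\|F_t(\cdot,y)\|_{L^p_\lambda}=\|F(\cdot,y+t)\|_{L^p_\lambda}$ and the supremum over $y>0$ is taken over a smaller range. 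Hence $F_t\in H^p_\lambda(\RR^2_+)$.

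To place $F_t$ in $H^2_\lambda$ and $H^1_\lambda$ I would invoke the growth estimate of Proposition \ref{p5}(ii). For $l=2$ (and, when $p<1$, also for $l=1$) it gives $\|F(\cdot,s)\|_{L^l_\lambda}\le c\,s^{-(1/p-1/l)(1+2\lambda)}\|F\|_{H^p_\lambda}$; since $l>p$ the exponent is negative, so the right-hand side is nonincreasing in $s$ and $\sup_{y>0}\|F_t(\cdot,y)\|_{L^l_\lambda}=\sup_{s\ge t}\|F(\cdot,s)\|_{L^l_\lambda}\le c\,t^{-(1/p-1/l)(1+2\lambda)}\|F\|_{H^p_\lambda}<\infty$. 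Thus $F_t\in H^2_\lambda(\RR^2_+)$ for all $p\le1$, and $F_t\in H^1_\lambda(\RR^2_+)$ as well (for $p<1$ by the same estimate with $l=1$, and for $p=1$ trivially since then $F\in H^1_\lambda$). Consequently $F_t\in H^p_\lambda\cap H^2_\lambda\cap H^1_\lambda$ for every $t>0$.

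It remains to prove $\|F_t-F\|_{H^p_\lambda}\to0$ as $t\to0+$. I would first observe that $G_t:=F_t-F$ is $\lambda$-analytic and, by the $p$-subadditivity of $\int_\RR|\cdot|^p|x|^{2\lambda}dx$ for $0<p\le1$, satisfies $\|G_t\|_{H^p_\lambda}^p\le\|F_t\|_{H^p_\lambda}^p+\|F\|_{H^p_\lambda}^p<\infty$, so $G_t\in H^p_\lambda(\RR^2_+)$. By Proposition \ref{k1}(i) the functions $F_t$ and $F$ admit nontangential boundary values almost everywhere; since $F$ is continuous on the open half-plane the boundary value of $F_t$ is $F(\cdot,t)$, and therefore the boundary value of $G_t$ is $F(\cdot,t)-F(\cdot)$, where $F(\cdot)$ denotes the boundary function of $F$. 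Applying the norm comparison of Proposition \ref{k1}(ii) to $G_t$ then yields $\|F_t-F\|_{H^p_\lambda}\le 2^{2/p-1}\|F(\cdot,t)-F\|_{L^p_\lambda}$ for $\frac{2\lambda}{2\lambda+1}<p<1$, and the equality $\|F_t-F\|_{H^1_\lambda}=\|F(\cdot,t)-F\|_{L^1_\lambda}$ when $p=1$. Finally, Proposition \ref{k1}(ii) gives $\lim_{t\to0+}\|F(\cdot,t)-F\|_{L^p_\lambda}=0$, so $\|F_t-F\|_{H^p_\lambda}\to0$, which establishes the density.

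The main obstacle is the last paragraph: one must be careful that the quasi-norm comparison in Proposition \ref{k1}(ii) is legitimately applied to the difference $G_t$, which requires first verifying that $G_t$ is itself an element of $H^p_\lambda(\RR^2_+)$ and then correctly identifying its nontangential boundary value as $F(\cdot,t)-F(\cdot)$. Once this bookkeeping is in place, the convergence reduces entirely to the boundary $L^p_\lambda$-continuity already supplied by Proposition \ref{k1}(ii), so that no delicate maximal-function estimate on the difference $F_t-F$ is needed.
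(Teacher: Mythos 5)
Your proof is correct and follows essentially the same route as the paper: approximate $F$ by the vertical translates $F(\cdot,\cdot+t)$, use the growth estimates of Proposition \ref{p5}(ii) to place the translates in $H_{\lambda}^2\bigcap H_{\lambda}^1$, and conclude convergence in $H_{\lambda}^p$ from the boundary $L_{\lambda}^p$-convergence of Proposition \ref{k1}(ii). In fact your final paragraph, which applies the quasi-norm comparison of Proposition \ref{k1}(ii) to the difference $G_t=F_t-F$ after verifying $G_t\in H_{\lambda}^p(\RR_+^2)$ and identifying its boundary values, carefully spells out a bookkeeping step that the paper's own proof leaves implicit.
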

\begin{proof}
From\,\cite{ZhongKai Li 3}, we could know that for $F(x, y)\in H_{\lambda}^p(\RR_+^2)$  and  $s>0$
\begin{eqnarray*}
\left(\int_{\RR}|F(x,y+s)|^{2}|x|^{2\lambda}dx \right)^{\frac{1}{2}} \leq cs^{(1/2-1/p)(1+2\lambda)}\|F\|_{H_{\lambda}^p(\RR_+^2)},
\end{eqnarray*}
and
$$\left(\int_{\RR}|F(x,y+s)||x|^{2\lambda}dx \right)^{\frac{1}{1}} \leq cs^{-(1/p-1/1)(1+2\lambda)}\|F\|_{H_{\lambda}^p(\RR_+^2)},$$
hold for $ \frac{2\lambda}{2\lambda+1}< p\leq 1$.
Thus we could deduce that $F(x,y+s)\in H_{\lambda}^2(\RR_+^2)\bigcap H_{\lambda}^1(\RR_+^2)$.  By Proposition\,\ref{k1}(ii), we could see that  $\lim_{s\rightarrow0+}\|F(\cdot,y+s)-F(\cdot,y)\|_{L^p_{\lambda}}=0$.
Then we could see that $H_{\lambda}^p(\RR_+^2)\bigcap H_{\lambda}^2(\RR_+^2)\bigcap H_{\lambda}^1(\RR_+^2)$ is dense in $H_{\lambda}^p(\RR_+^2)$. This proves the proposition.
\end{proof}
\begin{definition}\label{o1}
By Proposition\,\ref{k1} and Theorem\,\ref{u2},
 $\widetilde{H}_{\lambda}^p(\RR)$ ($\frac{2\lambda}{2\lambda+1}<p<\infty$) could be defined as
\begin{eqnarray*}
\widetilde{H}_{\lambda}^p(\RR)&\triangleq&\left\{g(x):g(x)=\lim_{y\rightarrow0} ReF(t,y), F\in H_{\lambda}^p(\RR_+^2)\bigcap H_{\lambda}^1(\RR_+^2)\bigcap H_{\lambda}^2(\RR_+^2)\right.\\& & \,(t, y)\, approaches\, the\, point\, (x, 0)\, nontangentially \bigg\}.
\end{eqnarray*}
with the norm:
$$\|g\|^p_{H_{\lambda}^p(\RR)}=\|P_{\nabla}^*g\|^p_{L_{\lambda}^p(\RR)}.$$
Thus
\begin{eqnarray*}
\widetilde{H}_{\lambda}^p(\RR)&\triangleq&\left\{g(x)\in L_{\lambda}^1(\RR)\bigcap L_{\lambda}^2(\RR): \|P_{\nabla}^*g\|^p_{L_{\lambda}^p(\RR)}<\infty\right\}.
\end{eqnarray*}
Thus $\widetilde{H}_{\lambda}^p(\RR)$ is a linear space equipped with the norm: $\|\cdot\|^p_{H_{\lambda}^p(\RR)}$, which is not complete. The completion of $\widetilde{H}_{\lambda}^p(\RR)$ with the norm $\|\cdot\|^p_{H_{\lambda}^p(\RR)}$ is denoted as  $H_{\lambda}^p(\RR)$.
(We will also define  $H_{\lambda}^p(\RR)$  as Theorem\,\ref{tan12}.)
\end{definition}
Thus we could have the following conclusions:
\begin{proposition}\label{s5}
 $H_{\lambda}^p(\RR)\bigcap H_{\lambda}^2(\RR)\bigcap H_{\lambda}^1(\RR)$ is dense in $H_{\lambda}^p(\RR)$ for $\frac{2\lambda}{2\lambda+1}<p<\infty$.   $H_{\lambda}^p(\RR)=L^{p}_{\lambda}(\RR)$, for $1<p<\infty$.    $H_{\lambda}^1(\RR)\subset L^{1}_{\lambda}(\RR).$
\end{proposition}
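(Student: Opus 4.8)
The plan is to read off all three assertions from Definition \ref{o1}, which tells us that $H_{\lambda}^p(\RR)$ is the completion of $\widetilde{H}_{\lambda}^p(\RR)$ in the norm $\|\cdot\|_{H_{\lambda}^p(\RR)}=\|P_{\nabla}^*(\cdot)\|_{L_{\lambda}^p(\RR)}$; in particular $\widetilde{H}_{\lambda}^p(\RR)$ is automatically dense in $H_{\lambda}^p(\RR)$. Hence for the first (density) statement it suffices to prove the inclusion
$$\widetilde{H}_{\lambda}^p(\RR)\subseteq H_{\lambda}^1(\RR)\cap H_{\lambda}^2(\RR),$$
since then the intermediate set $H_{\lambda}^p(\RR)\cap H_{\lambda}^2(\RR)\cap H_{\lambda}^1(\RR)$ already contains the dense subset $\widetilde{H}_{\lambda}^p(\RR)$ and is therefore itself dense in $H_{\lambda}^p(\RR)$.

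To establish this inclusion I would take $g\in\widetilde{H}_{\lambda}^p(\RR)$, so that $g\in L_{\lambda}^1(\RR)\cap L_{\lambda}^2(\RR)$ and $g=\lim_{y\to0}\operatorname{Re}F(t,y)$ for some $F=u+iv\in H_{\lambda}^p(\RR_+^2)\cap H_{\lambda}^1(\RR_+^2)\cap H_{\lambda}^2(\RR_+^2)$. Because $F\in H_{\lambda}^1(\RR_+^2)$, Proposition \ref{ss}(iii) shows that $F$ is the $\lambda$-Poisson integral of its boundary values, whence $u(x,y)=(Pg)(x,y)$ and consequently $u_{\nabla}^*=P_{\nabla}^*g$. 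Theorem \ref{u2} then gives $\|P_{\nabla}^*g\|_{L_{\lambda}^1}=\|u_{\nabla}^*\|_{L_{\lambda}^1}\sim\|F\|_{H_{\lambda}^1(\RR_+^2)}<\infty$, and together with $g\in L_{\lambda}^1\cap L_{\lambda}^2$ this places $g$ in $\widetilde{H}_{\lambda}^1(\RR)\subseteq H_{\lambda}^1(\RR)$. Similarly, since $g\in L_{\lambda}^2(\RR)$, Proposition \ref{ss}(i) yields $\|P_{\nabla}^*g\|_{L_{\lambda}^2}\le c\,\|g\|_{L_{\lambda}^2}<\infty$, so $g\in\widetilde{H}_{\lambda}^2(\RR)\subseteq H_{\lambda}^2(\RR)$. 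This proves the inclusion and hence the density statement.

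For the identification $H_{\lambda}^p(\RR)=L_{\lambda}^p(\RR)$ when $1<p<\infty$, the key point is the two-sided estimate $\|P_{\nabla}^*g\|_{L_{\lambda}^p}\sim\|g\|_{L_{\lambda}^p}$: the bound $\lesssim$ is Proposition \ref{ss}(i), while $\gtrsim$ follows from the nontangential boundary convergence of Proposition \ref{k1}, which gives $|g(x)|\le P_{\nabla}^*g(x)$ for a.e.\ $x$. Thus the defining condition $\|P_{\nabla}^*g\|_{L_{\lambda}^p}<\infty$ is equivalent to $g\in L_{\lambda}^p(\RR)$, so that $\widetilde{H}_{\lambda}^p(\RR)=L_{\lambda}^1\cap L_{\lambda}^2\cap L_{\lambda}^p$ with $\|\cdot\|_{H_{\lambda}^p(\RR)}\sim\|\cdot\|_{L_{\lambda}^p}$ there. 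Since this set contains every bounded compactly supported function it is dense in $L_{\lambda}^p(\RR)$, and completing in the equivalent norm forces $H_{\lambda}^p(\RR)=L_{\lambda}^p(\RR)$. For the last inclusion $H_{\lambda}^1(\RR)\subseteq L_{\lambda}^1(\RR)$, the same pointwise domination $|g|\le P_{\nabla}^*g$ gives $\|g\|_{L_{\lambda}^1}\le\|g\|_{H_{\lambda}^1(\RR)}$ on $\widetilde{H}_{\lambda}^1(\RR)$; hence the identity map embeds $\widetilde{H}_{\lambda}^1(\RR)$ boundedly and injectively into the complete space $L_{\lambda}^1(\RR)$, and this extends to the completion $H_{\lambda}^1(\RR)$.

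I expect the main subtlety to lie in the first inclusion: one must verify that the real part $u=\operatorname{Re}F$ is genuinely the $\lambda$-Poisson integral of the boundary datum $g$, which is precisely why membership $F\in H_{\lambda}^1(\RR_+^2)$ (rather than only $F\in H_{\lambda}^p(\RR_+^2)$ with $p<1$) is built into Definition \ref{o1}, so that $u_{\nabla}^*$ may be identified with $P_{\nabla}^*g$ before Theorem \ref{u2} is applied. The remaining estimates are routine consequences of the maximal-function bounds of Proposition \ref{ss} and of the pointwise domination of boundary values by the nontangential maximal function.
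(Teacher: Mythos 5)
Your proposal is correct and takes essentially the same route as the paper: the paper states Proposition \ref{s5} without a separate proof, as an immediate consequence of Definition \ref{o1} (under which $\widetilde{H}_{\lambda}^p(\RR)$ is by construction dense in its completion $H_{\lambda}^p(\RR)$), Proposition \ref{ss}, Proposition \ref{k1} and Theorem \ref{u2}, and these are precisely the ingredients you assemble. In particular, your identification $u_{\nabla}^*=P_{\nabla}^*g$ via Proposition \ref{ss}(iii), the bound $\|P_{\nabla}^*g\|_{L_{\lambda}^2}\lesssim\|g\|_{L_{\lambda}^2}$ from Proposition \ref{ss}(i), and the two-sided estimate $\|P_{\nabla}^*g\|_{L_{\lambda}^p}\sim\|g\|_{L_{\lambda}^p}$ for $1<p<\infty$ are exactly the intended reasoning, carried out at the same level of rigor as the paper itself.
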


\subsection{  Homogeneous type Hardy Spaces on Dunkl setting  }\label{homo}
In Definition\,\ref{o1}, we have introduced the real-variable Hardy spaces: $H_{\lambda}^p(\RR)$ which  is  associated with the Complex-Hardy spaces $H_{\lambda}^p(\RR_+^2)$. In this section, we  will prove that the $H_{\lambda}^p(\RR)$ is  Homogeneous Hardy spaces.

We use $d\mu_{\lambda}(x)$ $\mu_{\lambda}(x, y)$ and $d_{\lambda}(x, y)$ to denote as:
$d_{\lambda}(x, y)=(2\lambda+1)\left|\int_y^x|t|^{2\lambda}dt\right|$, $\mu_{\lambda}(x, y)=(2\lambda+1)\int_y^x|t|^{2\lambda}dt$, $d\mu_{\lambda}(x)=(2\lambda+1)|x|^{2\lambda}dx$. And the ball $B(x, r)$ is denoted as: $B(x, r)=B_{\lambda}(x, r)=\{y: d_\lambda(x, y)<r\}$.

We will introduce a new kernel $K(r, x, t)$ as following:
\begin{eqnarray}\label{kernel} K(r, x, t) =\left\{ \begin{array}{cc}
                             r(\tau_xP_{r|x|^{-2\lambda}})(-t)  & \hbox{for} \ \  r<|x|^{2\lambda+1}, \\
                             r(\tau_xP_{r^{1/(2\lambda+1)}})(-t)  & \hbox{for}\ \  r\ge
                             |x|^{2\lambda+1}.
                           \end{array}\right.
\end{eqnarray}
Thus $K(r, x, t)=r(\tau_xP_{y})(-t)$, where $y$ has the representation
\begin{equation}\label{y(x,r)} y  =\left\{ \begin{array}{cc}
                             r|x|^{-2\lambda} & \hbox{for} \ \  r<|x|^{2\lambda+1}, \\
                             r^{1/(2\lambda+1)} & \hbox{for}\ \  r\ge
                             |x|^{2\lambda+1}.
                           \end{array}\right.
\end{equation}
Then for any $ f(x) \in L_{\lambda}^2(\RR)\bigcap L_{\lambda}^1 (\RR)\bigcap H_{\lambda}^p(\RR)$,  $ \frac{2\lambda}{2\lambda+1}< p\leq 1$, the following holds:
\begin{equation}\sup\limits_{r>0}
\int_{\mathbb
R}K(r, x, t)f(t)\frac{|t|^{2\lambda}dt}{r}=\sup\limits_{y>0}\left(P_{y}\ast_{\lambda}f\right)(x).\end{equation}
From\cite{ZhongKai Li 3}, the following inequality holds:
\begin{eqnarray}\label{Poisson-ker-3}
(\tau_xP_y)(-t)\sim\frac{y[y^2+(|x|+|t|)^2]^{-\lambda}}{y^2+(x-t)^2}\ln\left(\frac{y^2+(x-t)^2}{y^2+(x+t)^2}+2\right).
\end{eqnarray}
Then  we  will prove the following Theorem\,\ref{kkk}.
\begin{theorem}\label{kkk}
  $K(r, x, t)=r(\tau_xP_{y})(-t)$ is a kernel  satisfying the following:

 {\rm(i)} \  $K(r, x, x) \gtrsim1, \  \hbox{for}\ \ r>0, x\in\mathbb R$;

{\rm(ii)}  \ $0\le K(r, x, t)\lesssim
\Big(1+\frac{d_{\lambda}(x,t)}{r}\Big)^{-1-\gamma_\lambda}, \ \hbox{for} \ \ r>0, x,
t\in\mathbb R$;

{\rm(iii)} \ For $r>0, x, t,z\in\mathbb R$, if $\frac{d_\lambda(t,
z)}{r}\leq C\min\{1+\frac{d_\lambda(x,t)}{r}, 1+\frac{d_\lambda(x,z)}{r}\} $
$$| K(r, x, t)-K(r, x, z)|\lesssim \Big(\frac{d_{\lambda}(t,z)}{r}\Big)^{\gamma_\lambda}\Big(1+\frac{d_{\lambda}(x,t)}{r}\Big)^{-1-2\gamma_\lambda};$$
{\rm(iv)} \ $$K(r, x, y)=K(r, y, x),$$
where $\gamma_\lambda=\frac{1}{2(2\lambda+1)}$.
\end{theorem}

\begin{proof}
$K(r, x, y)=K(r, y, x)$ can be deduced from the fact that $(\tau_xP_y)(-t)=(\tau_tP_y)(-x).$
Notice that for any $ s\neq0$, we have
\begin{eqnarray*}
K(|s|^{2\lambda+1}r, sx, st)=K(r, x, t),\ \
d_{\lambda}(sx, st)=|s|^{-2\lambda-1}d_{\lambda}(x, t).
\end{eqnarray*}
Thus we need to only prove the theorem  for the case when $x=0$ and $x=1$. First, we will prove $K(r, x, x)\geq c>0$ for some constant c.\\
Case 1 \ \ \ \ \ \ \   x=0. By Formula\,(\ref{y(x,r)}), we could deduce that $y=r^{\frac{1}{2\lambda+1}}$. Thus from Formula\,(\ref{Poisson-ker-3}), we could deduce that\\
$$K(r, 0, 0)=\frac{r\ast r^{\frac{1}{2\lambda+1}}}{(r^{\frac{2}{2\lambda+1}})^{\lambda+1}}\geq 1.$$
Case 2 \ \ \ \ \ \ \ \  $x\neq0$, we need only to consider the case when x=1. \\
When $r<1$, by Formula\,(\ref{kernel}) and Formula\,(\ref{y(x,r)}), we have $y=r<1$. Thus from Formula\,(\ref{D-Poisson-ker-11}), we could deduce that:
\begin{eqnarray*}
K(r, 1, 1)&=&\frac{\lambda\Gamma(\lambda+1/2)}{2^{-\lambda-1/2}\pi}\int_0^\pi\frac{ry(1+\cos\theta)
}{\big(y^2+2-2\cos\theta\big)^{\lambda+1}}\sin^{2\lambda-1}\theta
d\theta.\\
&\geq& c\int_0^{y/4}\frac{ry(1+\cos\theta)
}{\big(y^2+2-2\cos\theta\big)^{\lambda+1}}\sin^{2\lambda-1}\theta
d\theta\\
&\geq& c.
\end{eqnarray*}
When $r\geq1$, from Formula\,(\ref{kernel}) and Formula\,(\ref{y(x,r)}), we could deduce that $y=r^{\frac{1}{2\lambda+1}}\geq1$. Thus we could obtain the following from Formula\,(\ref{Poisson-ker-3}):
\begin{eqnarray*}
K(r, 1, 1)&\geq&\frac{r^{\frac{1}{2\lambda+1}}r}{\big(r^{\frac{2}{2\lambda+1}}+2\big)^{\lambda+1}}\\
&\geq& c.
\end{eqnarray*}
Second, we will prove that  $0\le K(r, x, t)\le
A\Big(1+\frac{d_{\lambda}(x,t)}{r}\Big)^{-1-\gamma_\lambda}, \  \hbox{for}\ \ r>0, x,
t\in\mathbb R.$\\
Case 1 \ \ \ \ \ \ \   When x=0, by Formula\,(\ref{y(x,r)}), we could deduce that $y=r^{\frac{1}{2\lambda+1}}$. Thus from Formula\,(\ref{Poisson-ker-3}) the following holds:
$$K(r, 0, t)\sim C\Big(1+\frac{t^2}{r^{2/(2\lambda+1)}}\Big)^{-\lambda-1} \sim
A\Big(1+\frac{|t|^{2\lambda+1}}{(2\lambda+1)r}\Big)^{-\frac{2(\lambda+1)}{2\lambda+1}}=A\Big(1+\frac{d_\lambda(0,t)}{r}\Big)^{-\frac{2(\lambda+1)}{2\lambda+1}}.$$
Case 2 \ \ \ \ \ \ \ \  When $x\neq0$, we need only to consider the case  for x=1.  Notice that $y=r^{\frac{1}{2\lambda+1}}\geq1$ for $r\geq1$,  and $y=r$, for $r<1$. By Formula\,(\ref{Poisson-ker-3}), we could have\\
\begin{equation}\label{ss1} when\ r\geq1\ \   \ \  K(r, 1, t) \sim\left\{ \begin{array}{cc}
                             \displaystyle{\frac{r^{\frac{2\lambda+2}{2\lambda+1}}}{\left(r^{\frac{2}{2\lambda+1}}+t^2+1\right)^{\lambda+1}}}\ln\left(\frac{r^2+t^2+1}{r^2+(t+1)^2}+1\right) & \hbox{for} \ \  t<0, \\ \\
                            \displaystyle{ \frac{r^{\frac{2\lambda+2}{2\lambda+1}}}{\left(r^{\frac{2}{2\lambda+1}}+t^2+1\right)^{\lambda}\left((r^{\frac{2}{2\lambda+1}}+\left(1-t\right)^2\right)}} & \hbox{for} \ \  t\geq0.
                           \end{array}\right.
\end{equation}
\begin{equation}\label{ss2} when\ r<1\ \  \ \  K(r, 1, t) \sim\left\{ \begin{array}{cc}
                             \displaystyle{\frac{r^2}{\left(r^2+t^2+1\right)^{\lambda+1}}} & \hbox{for} \ \  t<0, \\ \\
                             \displaystyle{\frac{r^2}{\left(r^2+t^2+1\right)^{\lambda}\left(r^2+\left(1-t\right)^2\right)}} & \hbox{for} \ \  t\geq0.
                           \end{array}\right.
\end{equation}

If $r<1$, $1/2\leq t\leq 3/2$, we have $d_\lambda(1, t)\sim |1-t|$. Then
$$K(r, 1, t)\lesssim\left(1+\left(\frac{|1-t|}{r}\right)\right)^{-2}\lesssim \left(1+\left(\frac{d_{\lambda}(1, t)}{r}\right)\right)^{-\frac{2(\lambda+1)}{2\lambda+1}}.$$

If $r\geq1$, $1/2\leq t\leq 3/2$, we have $d_\lambda(1, t)\sim |1-t|$. Then
$$K(r, 1, t)\lesssim r^{\frac{2\lambda+2}{2\lambda+1}}\left(r+|1-t|\right)^{-\frac{2\lambda+2}{2\lambda+1}}\lesssim \left(1+\left(\frac{d_{\lambda}(1, t)}{r}\right)\right)^{-\frac{2(\lambda+1)}{2\lambda+1}}.$$

If $r<1$, $t\geq 3/2$, we have $d_\lambda(1, t)\sim |1-t|^{2\lambda+1}$. Then
$$K(r, 1, t)\lesssim r^{2}\left(|1-t|\right)^{-2(\lambda+1)}\lesssim \left(1+\left(\frac{d_{\lambda}(1, t)}{r}\right)\right)^{-\frac{2(\lambda+1)}{2\lambda+1}}.$$

If $r\geq1$, $t\geq 3/2$, we have $d_\lambda(1, t)\sim |1-t|^{2\lambda+1}$. Then
$$K(r, 1, t)\lesssim r^{\frac{2\lambda+2}{2\lambda+1}}\left(r^{\frac{2}{2\lambda+1}}+|1-t|^2\right)^{-(\lambda+1)}\lesssim \left(1+\left(\frac{d_{\lambda}(1, t)}{r}\right)\right)^{-\frac{2(\lambda+1)}{2\lambda+1}}.$$

If $r<1$, $-2 \leq t\leq 1/2$, we have $d_\lambda(1, t)\sim 1$. Then

$$K(r, 1, t)\lesssim r^{2}\ln(r^{-1}+1)\lesssim \left(1+\left(\frac{d_{\lambda}(1, t)}{r}\right)\right)^{-\frac{2(\lambda+1)}{2\lambda+1}}.$$

If $r\geq1$, $-2 \leq t\leq 1/2$, we have $d_\lambda(1, t)\sim 1$. Then

$$K(r, 1, t)\lesssim C\lesssim \left(1+\left(\frac{d_{\lambda}(1, t)}{r}\right)\right)^{-\frac{2(\lambda+1)}{2\lambda+1}}.$$

If $r<1$, $t\leq -2$, we have $d_\lambda(1, t)\sim t^{2\lambda+1}$. Then
$$K(r, 1, t)\lesssim C\frac{r^2}{|t|^{2(\lambda+1)}}\lesssim \left(1+\left(\frac{d_{\lambda}(1, t)}{r}\right)\right)^{-\frac{2(\lambda+1)}{2\lambda+1}}.$$

If $r\geq1$, $t\leq -2$, we have $d_\lambda(1, t)\sim t^{2\lambda+1}$. Then

$$K(r, 1, t)\lesssim C\frac{r^{\frac{2\lambda+2}{2\lambda+1}}}{(r^{\frac{2}{2\lambda+1}}+t^2)^{\lambda+1}} C\lesssim \left(1+\left(\frac{d_{\lambda}(1, t)}{r}\right)\right)^{-\frac{2(\lambda+1)}{2\lambda+1}}.$$

Thus we have established
\begin{equation}\label{dp3oo}
0\le K(r, x, t)\lesssim
\Big(1+\frac{d_{\lambda}(x,t)}{r}\Big)^{-\frac{2(\lambda+1)}{2\lambda+1}}, \  \hbox{for} \ \ r>0, x,
t\in\mathbb R.
\end{equation}
From the above Formula\,(\ref{dp3oo}), we could deduce that
\begin{equation*}
0\le K(r, x, t)\lesssim
\Big(1+\frac{d_{\lambda}(x,t)}{r}\Big)^{-1-\gamma_\lambda}, \  \hbox{for} \ \ r>0, x,
t\in\mathbb R.
\end{equation*}

At last, if $\frac{d_\lambda(t,
z)}{r}\leq C\min\{1+\frac{d_\lambda(x,t)}{r}, 1+\frac{d_\lambda(x,z)}{r}\} $, we will prove the following inequality
$$| K(r, x, t)-K(r, x, z)|\lesssim\Big(\frac{d_{\lambda}(t,z)}{r}\Big)^{\gamma_\lambda}\Big(1+\frac{d_{\lambda}(x,t)}{r}\Big)^{-1-2\gamma_\lambda}$$
for $r>0, x, t,z\in\mathbb R$.
If $\frac{d_{\lambda}(t, z)}{r}\lesssim1+\frac{d_{\lambda}(x,t)}{r},$ then we could deduce  the following inequality:
\begin{equation*}
\frac{d_{\lambda}(x, z)}{r}\lesssim\left(\frac{d_{\lambda}(x, t)}{r}+\frac{d_{\lambda}(t, z)}{r}\right)\\
\lesssim\left(\frac{d_{\lambda}(x, t)}{r}+1+\frac{d_{\lambda}(x, t)}{r}\right)\\
\lesssim1+\frac{d_{\lambda}(x, t)}{r}.
\end{equation*}
Then
\begin{equation*}
1+\frac{d_{\lambda}(x, z)}{r}\lesssim 1+\frac{d_{\lambda}(x, t)}{r}.
\end{equation*}
Thus we could deduce:
\begin{equation}\label{equ1}
1+\frac{d_{\lambda}(x, z)}{r}\sim 1+\frac{d_{\lambda}(x, t)}{r}.
\end{equation}

For $u\in\RR $ satisfying $(u-t)(u-z)\leq0 $, we could obtain
$$ \frac{d_{\lambda}(u, t)}{r}\lesssim\frac{d_{\lambda}(t, z)}{r}\lesssim C\min\{1+\frac{d_\lambda(x,t)}{r}, 1+\frac{d_\lambda(x,z)}{r}\} .$$
Thus :
\begin{equation}\label{equ111}
1+\frac{d_{\lambda}(x, u)}{r}\sim 1+\frac{d_{\lambda}(x, t)}{r},  \ \ \hbox{when}\, (u-t)(u-z)\leq0.
\end{equation}

It is enough to prove that if $\frac{d_\lambda(t,
z)}{r}\leq C\min\{1+\frac{d_\lambda(x,t)}{r}, 1+\frac{d_\lambda(x,z)}{r}\} $, then
\begin{eqnarray}\label{estimate kernel 2}
\Big(1+\frac{d_{\lambda}(x,t)}{r}\Big)^{1+2\gamma_\lambda}|K(r, x, t)-K(r, x, z)|\lesssim \Big(\frac{d_{\lambda}(t,z)}{r}\Big)^{\gamma_\lambda}.
\end{eqnarray}

Let $t, z$ to be fixed first. We could see that
\begin{eqnarray}\label{dp8}
|t-z|\lesssim_\lambda \left(d_\lambda(t, z)\right)^{\frac{1}{2\lambda+1}}.
\end{eqnarray}

$\mathbf{Case 1}$ \ \ \  When  x=0 ($y=r^{\frac{1}{2\lambda+1}}$), we  suppose that $z>0$ first.
By Formula\,(\ref{equ111}), we could obtain   the following inequality for $(u-t)(u-z)\leq0$:
$$1+\frac{d_{\lambda}(0, u)}{r}\sim 1+\frac{d_{\lambda}(0, z)}{r}\sim 1+\frac{d_{\lambda}(0, t)}{r} \sim 1+\frac{u^{2\lambda+1}}{r}.$$
By the Mean value theorems for definite integrals, we could have:\\
\begin{eqnarray*}
& &\Big(1+\frac{d_{\lambda}(0,z)}{r}\Big)^{\frac{2\lambda+3}{2\lambda+1}}| K(r, 0, t)-K(r, 0, z)|
\\&=&c_{\lambda}\Big(1+\frac{d_{\lambda}(0,z)}{r}\Big)^{\frac{2\lambda+3}{2\lambda+1}}\int_0^{\pi} r\left(\frac{y}{\big(y^2+t^2\big)^{\lambda+1}}-\frac{y}{\big(y^2+z^2\big)^{\lambda+1}} \right) \sin^{2\lambda-1}\theta d\theta\nonumber\\
&\lesssim& \left|\Big(1+\frac{u^{2\lambda+1}}{r}\Big)^{\frac{2\lambda+3}{2\lambda+1}}\frac{ur^{\frac{2\lambda+2}{2\lambda+1}}}{\big(r^{\frac{2}{2\lambda+1}}+u^2\big)^{\lambda+2}}\right| |t-z|. \nonumber\\
\end{eqnarray*}
\begin{equation*} \left\{ \begin{array}{cc}
                      \displaystyle{\left|\Big(1+\frac{u^{2\lambda+1}}{r}\Big)^{\frac{2\lambda+3}{2\lambda+1}}\frac{ur^{\frac{2\lambda+2}{2\lambda+1}}}{\big(r^{\frac{2}{2\lambda+1}}+u^2\big)^{\lambda+2}}\right|}\leq\frac{r^{\frac{1}{2\lambda+1}}r^{\frac{2\lambda+2}{2\lambda+1}}}{\big(r^{\frac{2}{2\lambda+1}}\big)^{\lambda+2}}\leq \frac{1}{r^{\frac{1}{2\lambda+1}}}& \hbox{for} \ \  |u|<r^{\frac{1}{2\lambda+1}}, \\ \\ \\
                            \displaystyle{ \left|\Big(1+\frac{u^{2\lambda+1}}{r}\Big)^{\frac{2\lambda+3}{2\lambda+1}}\frac{ur^{\frac{2\lambda+2}{2\lambda+1}}}{\big(r^{\frac{2}{2\lambda+1}}+u^2\big)^{\lambda+2}}\right|}\leq \left|\frac{1}{r^{\frac{1}{2\lambda+1}}}\frac{u^{2\lambda+4}}{\big(r^{\frac{2}{2\lambda+1}}+u^2\big)^{\lambda+2}}\right|\leq \frac{1}{r^{\frac{1}{2\lambda+1}}} & \hbox{for} \ \  |u|\geq r^{\frac{1}{2\lambda+1}}.
                           \end{array}\right.
\end{equation*}
Thus  when $\frac{d_\lambda(t,
z)}{r}\leq C\min\{1+\frac{d_\lambda(0,t)}{r}, 1+\frac{d_\lambda(0,z)}{r}\} $, the following inequality holds:
\begin{equation}\label{dpo}
\Big(1+\frac{d_{\lambda}(0,t)}{r}\Big)^{\frac{2\lambda+3}{2\lambda+1}}| K(r, 0, t)-K(r, 0, z)|\lesssim \frac{|t-z|}{r^{\frac{1}{2\lambda+1}}}\lesssim \Big(\frac{d_{\lambda}(t,z)}{r}\Big)^{\frac{1}{2\lambda+1}}.
\end{equation}
$\mathbf{Case  2}$ \ \ \ \ When $x\neq0$, it will be enough to prove Formula\,(\ref{estimate kernel 2}) for the case when $x=1$. From Formula\,(\ref{D-Poisson-ker-11}), we could write $K(r, 1, t)=r(\tau_1P_y)(-t)$  as following:
\begin{eqnarray}\label{D-Poisson-2}
r(\tau_1P_y)(-t)=
\frac{\lambda\Gamma(\lambda+1/2)}{2^{-\lambda-1/2}\pi}\int_{-1}^1\frac{ry
}{\big(y^2+1+t^2-2ts\big)^{\lambda+1}}(1+s)(1-s^2)^{\lambda-1}ds.
\end{eqnarray}
By Formula\,(\ref{D-Poisson-2}) and Mean value theorems for definite integrals, we could obtain:
\begin{eqnarray}\label{dp5}
| K(r, 1, t)-K(r, 1, z)|\nonumber&\sim&\nonumber \left|\int_{-1}^{1}\left(\frac{ry(1-s^2)^{\lambda-1}(1+s)
}{\big(y^2+1+t^2-2ts\big)^{\lambda+1}}-\frac{ry(1-s^2)^{\lambda-1}(1+s)
}{\big(y^2+1+z^2-2zs\big)^{\lambda+1}}\right)
ds\right|\\
&\lesssim&\left|\int_{-1}^{1}\frac{ry|u-s|
}{\big(y^2+1+u^2-2us\big)^{\lambda+2}}(1-s^2)^{\lambda-1}(1+s)
ds\right||t-z|,
\end{eqnarray}
where $u$ satisfies $(u-t)(u-z)\leq0$. Then we will discuss the Formula\,(\ref{dp5}) for three conditions: $Condition A\ \ u\geq 0$,
$Condition B\ \ u\leq -3/2\,or\, -1/2\leq u\leq0$, and  $Condition C\ \ -3/2 \leq u\leq -1/2$.

$\mathbf{Condition A\ \ u\geq 0}$.

When $u\geq0$,  for $\frac{d_\lambda(t,
z)}{r}\leq C\min\{1+\frac{d_\lambda(1, t)}{r}, 1+\frac{d_\lambda(1, z)}{r}\} $,  we will prove the following inequality:
$$\Big(1+\frac{d_{\lambda}(1, t)}{r}\Big)^{\frac{2\lambda+3}{2\lambda+1}}| K(r, 1,  t)-K(r, 1, z)|\lesssim\Big(\frac{d_{\lambda}(t, z)}{r}\Big)^{\frac{1}{2\lambda+1}}.$$
By Formula\,(\ref{D-Poisson-2}), Formula\,(\ref{equ111}) and Mean value theorems for definite integrals, we could obtain:
\begin{eqnarray}\label{dp1}
& &\Big(1+\frac{d_{\lambda}(1, t)}{r}\Big)^{\frac{2\lambda+3}{2\lambda+1}}| K(r, 1, t)-K(r, 1, z)|\nonumber\\
&\lesssim&\left|\Big(1+\frac{d_{\lambda}(1, |u|)}{r}\Big)^{\frac{2\lambda+3}{2\lambda+1}}\int_{-1}^{1}\frac{ry|u-s|
}{\big(y^2+1+u^2-2us\big)^{\lambda+2}}(1-s^2)^{\lambda-1}(1+s)
ds\right|\left|\left(t-z\right)\right|,
\end{eqnarray}
where $u$ satisfies $(u-t)(u-z)\leq0$.

Notice that the following Formulas\,(\ref{dp2}),\,(\ref{dp3}),\,(\ref{dp4}) hold for $-1\leq s\leq 1$ and $u\geq0$:
\begin{eqnarray}\label{dp2}
\left|\frac{u-1}{(y^2+1+u^2-2us)}\right|<\left|\frac{u-1}{(y^2+1+u^2-2u)}\right|.
\end{eqnarray}
For $0\leq s\leq 1$, we have:
\begin{eqnarray}\label{dp3}
\left|\frac{1-s}{(y^2+1+u^2-2us)}\right|\lesssim\frac{1}{(y^2+1+u^2)}.
\end{eqnarray}
For $-1\leq s\leq 0$, we have:
\begin{eqnarray}\label{dp4}
\left|\frac{1}{(y^2+1+u^2-2us)}\right|\lesssim\frac{1}{(y^2+1+u^2)}.
\end{eqnarray}

From Formula\,(\ref{dp1}) Formula\,(\ref{dp2}) Formula\,(\ref{dp3}) Formula\,(\ref{dp4}) and Formula\,(\ref{Poisson-ker-3}), we could obtain
the following Formula\,(\ref{dp9}):
\begin{eqnarray}\label{dp9}
& &\left|\int_{-1}^{1}\frac{ry|u-s|
}{\big(y^2+1+u^2-2us\big)^{\lambda+2}}(1-s^2)^{\lambda-1}(1+s)
ds\left(t-z\right)\right| \\ \nonumber
&\leq&\left(\left| \int_{-1}^{1}\frac{ry|u-1|(1-s^2)^{\lambda-1}(1+s)
}{\big(y^2+1+u^2-2us\big)^{\lambda+2}}
ds\right| +\left| \int_{-1}^{1}\frac{ry(1-s^2)^{\lambda}
}{\big(y^2+1+u^2-2us\big)^{\lambda+2}}
ds\right|\right)\left|\left(t-z\right)\right| \\ \nonumber
&\leq&C\left|\frac{|u-1|
}{\big(y^2+1+u^2-2|u|\big)}\left|r(\tau_1P_y)(-u)\right|\left(t-z\right)\right|+C\left|\frac{1}{\big(y^2+1+u^2\big)}\left|r(\tau_1P_y)(-u)\right|\left(t-z\right)\right|\\ \nonumber &\leq& C\left|\left(t-z\right)yr\frac{(1-|u|)^2+y^2+(1+u^2+y^2)|1-|u||}{((1-|u|)^2+y^2)^2(1+u^2+y^2)^{\lambda+1}}\right|.
\end{eqnarray}

i: If $r<1$,  then $y=r$.

$\mathbf{Condition A_1}$. $\hbox{For}\ r<1 ,\  |1-|u||\geq\frac{1}{10C}$ (for some constant $C>1$), we could deduce that   $d_{\lambda}(1, |u|)\gtrsim \frac{1}{10C}$. Thus the following could be obtained by Formula\,(\ref{dp8}):
\begin{eqnarray*}
& &\left|\Big(1+\frac{d_{\lambda}(1,|u|)}{r}\Big)^{\frac{2\lambda+3}{2\lambda+1}}\left(|t-z|\right)yr\frac{(1-|u|)^2+y^2+(1+u^2+y^2)|1-|u||}{((1-|u|)^2+y^2)^2(1+u^2+y^2)^{\lambda+1}}\right|
\\&=&
\left|\Big(1+\frac{d_{\lambda}(1,|u|)}{r}\Big)^{\frac{2\lambda+3}{2\lambda+1}}\left(|t-z|\right)r^2\frac{(1-|u|)^2+r^2+(1+u^2+r^2)|1-|u||}{((1-|u|)^2+r^2)^2(1+u^2+r^2)^{\lambda+1}}\right|
\\&\lesssim&\frac{d_\lambda(1, |u|)^{\frac{2\lambda+3}{2\lambda+1}}}{r^{\frac{2\lambda+3}{2\lambda+1}}}r^2\left|\left(|t-z|\right)\right|\frac{(1+u^2+r^2)|1-|u||}{((1-|u|)^2+r^2)^2(1+u^2+r^2)^{\lambda+1}}
\lesssim  \Big(\frac{d_{\lambda}(t,z)}{r}\Big)^{\frac{1}{2\lambda+1}}.
\end{eqnarray*}

$\mathbf{Condition A_2}$. $\hbox{For}\ r<1 ,\   \frac{r}{20C}\leq|1-|u||\leq\frac{1}{10C}$, it is clear that $d_{\lambda}(1, |u|)\sim |1-|u||$, $d_{\lambda}(t, z)\lesssim r+ d_{\lambda}(1, |u|)\leq C_1 d_{\lambda}(1, |u|) \leq \frac{C_1}{10C}$. Let $C$ to be a constant satisfying $\frac{C_1}{C}\leq1$, thus we could deduce that $d_{\lambda}(t, z)\leq \frac{1}{10}$. Then we could obtain that  $d_{\lambda}(t, z)\sim |t-z|$. Thus
\begin{eqnarray*}
& &\left|\Big(1+\frac{d_{\lambda}(1,|u|)}{r}\Big)^{\frac{2\lambda+3}{2\lambda+1}}\left(|t-z|\right)yr\frac{(1-|u|)^2+y^2+(1+u^2+y^2)|1-|u||}{((1-|u|)^2+y^2)^2(1+u^2+y^2)^{\lambda+1}}\right|
\\&=&
\left|\Big(1+\frac{d_{\lambda}(1,|u|)}{r}\Big)^{\frac{2\lambda+3}{2\lambda+1}}\left(|t-z|\right)r^2\frac{(1-|u|)^2+r^2+(1+u^2+r^2)|1-|u||}{((1-|u|)^2+r^2)^2(1+u^2+r^2)^{\lambda+1}}\right|
\\&\lesssim& \frac{1}{r^{\frac{1}{2\lambda+1}}}\frac{d_{\lambda}(t, z)}{d_{\lambda}(1, |u|)^{\frac{2\lambda}{2\lambda+1}}}
\lesssim \Big(\frac{d_{\lambda}(t,z)}{r}\Big)^{\frac{1}{2\lambda+1}}.
\end{eqnarray*}

$\mathbf{Condition A_3}$. $\hbox{For}\ r<1 ,\   |1-|u||\leq \frac{r}{20C}$, we\  have\ $d_{\lambda}(1, |u|)\sim |1-|u||,  |t-z|\sim d_{\lambda}(t, z)\lesssim r+d_{\lambda}(1,|u|)\lesssim r$, then
\begin{eqnarray*}
& &\left|\Big(1+\frac{d_{\lambda}(1,|u|)}{r}\Big)^{\frac{2\lambda+3}{2\lambda+1}}\left(|t-z|\right)yr\frac{(1-|u|)^2+y^2+(1+u^2+y^2)|1-|u||}{((1-|u|)^2+y^2)^2(1+u^2+y^2)^{\lambda+1}}\right|
\\&=&
\left|\Big(1+\frac{d_{\lambda}(1,|u|)}{r}\Big)^{\frac{2\lambda+3}{2\lambda+1}}\left(|t-z|\right)r^2\frac{(1-|u|)^2+r^2+(1+u^2+r^2)|1-|u||}{((1-|u|)^2+r^2)^2(1+u^2+r^2)^{\lambda+1}}\right|
\\&\lesssim&\frac{\left|t-z\right|}{r}
\lesssim \Big(\frac{d_{\lambda}(t, z)}{r}\Big)^{\frac{1}{2\lambda+1}}.
\end{eqnarray*}

$\mathbf{Condition A_4}$. ii: If $r\geq1$, then $y=r^{\frac{1}{2\lambda+1}}$. Thus
\begin{eqnarray*}
& &\Big(1+\frac{d_{\lambda}(1,|u|)}{r}\Big)^{\frac{2\lambda+3}{2\lambda+1}}\left|\left(|t-z|\right)\right|yr\frac{(1-|u|)^2+y^2+(1+u^2+y^2)|1-|u||}{((1-|u|)^2+y^2)^2(1+u^2+y^2)^{\lambda+1}}
\\&=&
\Big(1+\frac{d_{\lambda}(1,|u|)}{r}\Big)^{\frac{2\lambda+3}{2\lambda+1}}\left|\left(|t-z|\right)\right|r^{\frac{2\lambda+2}{2\lambda+1}}\frac{(1-|u|)^2+r^{\frac{2}{2\lambda+1}}+(1+u^2+r^{\frac{2}{2\lambda+1}})|1-|u||}{((1-|u|)^2+r^{\frac{2}{2\lambda+1}})^2(1+u^2+r^{\frac{2}{2\lambda+1}})^{\lambda+1}}
\\&\lesssim&
\left\{ \begin{array}{cc}
\frac{|u|^{2\lambda+3}}{r^{\frac{2\lambda+3}{2\lambda+1}}}\left|\left(t-z\right)\right|r^{\frac{2\lambda+2}{2\lambda+1}} \frac{(1-|u|)(1+u^2)}{(1-|u|)(1+u^2)r^{\frac{2\lambda+3}{2\lambda+1}}}\lesssim\Big(\frac{d_{\lambda}(t,z)}{r}\Big)^{\frac{1}{2\lambda+1}}, \ \hbox{for}\  |1-|u||\geq 2r^{\frac{1}{2\lambda+1}}, \ d_{\lambda}(1, |u|)\sim u^{2\lambda+1} \\ \\
\left|\left(t-z\right)\right|r^{\frac{2\lambda+2}{2\lambda+1}} \frac{r^{\frac{1}{2\lambda+1}}r^{\frac{2}{2\lambda+1}}}{r^{\frac{4}{2\lambda+1}}r^{\frac{2\lambda+2}{2\lambda+1}}}\lesssim\Big(\frac{d_{\lambda}(t,z)}{r}\Big)^{\frac{1}{2\lambda+1}}\,\ \ \hbox{for}\  |1-|u||\leq 2r^{\frac{1}{2\lambda+1}}.\\
\end{array} \right.
\end{eqnarray*}
Thus we have proved the following inequality when $u\geq0$:
\begin{eqnarray}\label{dp10}
\Big(1+\frac{d_{\lambda}(1,t)}{r}\Big)^{\frac{2\lambda+3}{2\lambda+1}}| K(r, 1, t)-K(r, 1, z)|\lesssim\Big(\frac{d_{\lambda}(t,z)}{r}\Big)^{\frac{1}{2\lambda+1}}
\end{eqnarray}
for $\frac{d_\lambda(t,
z)}{r}\leq C\min\{1+\frac{d_\lambda(1,t)}{r}, 1+\frac{d_\lambda(1,z)}{r}\} $.

$\mathbf{Condition B\ \ u\leq -3/2\,or -1/2\leq u\leq 0}$.

When $u\leq -3/2\,or -1/2\leq u\leq 0$,  for $\frac{d_\lambda(t,
z)}{r}\leq C\min\{1+\frac{d_\lambda(1, t)}{r}, 1+\frac{d_\lambda(1, z)}{r}\} $,  we will prove the following inequality:
$$\Big(1+\frac{d_{\lambda}(1, t)}{r}\Big)^{\frac{2\lambda+3}{2\lambda+1}}| K(r, 1,  t)-K(r, 1, z)|\lesssim\Big(\frac{d_{\lambda}(t, z)}{r}\Big)^{\frac{1}{2\lambda+1}}.$$
Notice that $\Big(1+\frac{d_{\lambda}(1, |u|)}{r}\Big) \sim \Big(1+\frac{d_{\lambda}(1, -u)}{r}\Big)$ when $u\leq -3/2\,or -1/2\leq u\leq 0$.
Thus by Formula\,(\ref{D-Poisson-2}), Formula\,(\ref{equ111}) and Mean value theorems for definite integrals, we could obtain:
\begin{eqnarray}\label{dp1o}
& &\Big(1+\frac{d_{\lambda}(1, t)}{r}\Big)^{\frac{2\lambda+3}{2\lambda+1}}| K(r, 1, t)-K(r, 1, z)|\nonumber\\
&\lesssim&\left|\Big(1+\frac{d_{\lambda}(1, |u|)}{r}\Big)^{\frac{2\lambda+3}{2\lambda+1}}\int_{-1}^{1}\frac{ry|u-s|
}{\big(y^2+1+u^2-2us\big)^{\lambda+2}}(1-s^2)^{\lambda-1}(1+s)
ds\right|\left|\left(t-z\right)\right|.
\end{eqnarray}
Notice that the following inequality hold for $-1\leq s\leq 1$:
\begin{eqnarray}\label{dp2o}
\left|\frac{u+1}{(y^2+1+u^2-2us)}\right|<\left|\frac{u+1}{(y^2+1+u^2+2u)}\right|.
\end{eqnarray}
For $0\leq s\leq 1$, we have:
\begin{eqnarray}\label{dp3o}
\left|\frac{1}{(y^2+1+u^2-2us)}\right|\lesssim\frac{1}{(y^2+1+u^2)}.
\end{eqnarray}
For $-1\leq s\leq 0$, we have:
\begin{eqnarray}\label{dp4o}
\left|\frac{1+s}{(y^2+1+u^2-2us)}\right|\lesssim\frac{1}{(y^2+1+u^2)}.
\end{eqnarray}
From Formula\,(\ref{dp1o}) Formula\,(\ref{dp2o}) Formula\,(\ref{dp3o}) Formula\,(\ref{dp4o}) and Formula\,(\ref{Poisson-ker-3}), we could obtain
\begin{eqnarray}
& &\left|\int_{-1}^{1}\frac{ry|u-s|
}{\big(y^2+1+u^2-2us\big)^{\lambda+2}}(1-s^2)^{\lambda-1}(1+s)
ds\left(t-z\right)\right| \nonumber\\ \nonumber
&\leq&\left(\left| \int_{-1}^{1}\frac{ry|u+1|(1-s^2)^{\lambda-1}(1+s)
}{\big(y^2+1+u^2-2us\big)^{\lambda+2}}
ds\right| +\left| \int_{-1}^{1}\frac{ry(1-s^2)^{\lambda-1}(1+s)^2
}{\big(y^2+1+u^2-2us\big)^{\lambda+2}}
ds\right|\right)\left|\left(t-z\right)\right|\\
&\lesssim&\left|\frac{|u+1|
}{\big(y^2+1+u^2-2|u|\big)}\left|r(\tau_{1}P_y)(-u)\right|\left(t-z\right)\right|+\left|\frac{1}{\big(y^2+1+u^2\big)}\left|r(\tau_{1}P_y)(-u)\right|\left(t-z\right)\right| \label{so1}\\ &\leq& C\left|\left(t-z\right)yr\frac{(1-|u|)^2+y^2+(1+u^2+y^2)|1-|u||}{((1-|u|)^2+y^2)(1+u^2+y^2)^{\lambda+2}}\right| \label{dp9o}.
\end{eqnarray}
From Formula\,(\ref{dp9o}), similar to the case $\mathbf{Condition A\ \ u\geq 0}$, we could deduce the following inequality:
\begin{eqnarray}\label{dp10o}
& &\Big(1+\frac{d_{\lambda}(1, t)}{r}\Big)^{\frac{2\lambda+3}{2\lambda+1}}| K(r, 1, t)-K(r, 1, z)|\nonumber\\
&\lesssim&\left|\Big(1+\frac{d_{\lambda}(1, |u|)}{r}\Big)^{\frac{2\lambda+3}{2\lambda+1}}\int_{-1}^{1}\frac{ry|u-s|
}{\big(y^2+1+u^2-2us\big)^{\lambda+2}}(1-s^2)^{\lambda-1}(1-s)
ds\right|\left|\left(t-z\right)\right|
\nonumber\\ &\lesssim&\left|\Big(1+\frac{d_{\lambda}(1, |u|)}{r}\Big)^{\frac{2\lambda+3}{2\lambda+1}} \left(t-z\right)yr\frac{(1-|u|)^2+y^2+(1+u^2+y^2)|1-|u||}{((1-|u|)^2+y^2)^1(1+u^2+y^2)^{\lambda+2}}\right|
\nonumber \\ &\lesssim&\Big(\frac{d_{\lambda}(t,z)}{r}\Big)^{\frac{1}{2\lambda+1}}.
\end{eqnarray}
Thus we have proved the following inequality when $u\leq -3/2\,or -1/2\leq u\leq 0$:
\begin{eqnarray}\label{dp10o0}
\Big(1+\frac{d_{\lambda}(1,t)}{r}\Big)^{\frac{2\lambda+3}{2\lambda+1}}| K(r, 1, t)-K(r, 1, z)|\lesssim\Big(\frac{d_{\lambda}(t,z)}{r}\Big)^{\frac{1}{2\lambda+1}}
\end{eqnarray}
for $\frac{d_\lambda(t,
z)}{r}\leq C\min\{1+\frac{d_\lambda(1,t)}{r}, 1+\frac{d_\lambda(1,z)}{r}\} $.

$\mathbf{Condition C\ \ -3/2 \leq u\leq -1/2}$

Notice that $d_{\lambda}(1, u)\sim 1$ and $ 1+ \frac{d_\lambda(1, u)}{r} \sim 1+ \frac{d_\lambda(1, t)}{r}\sim1+\frac{d_\lambda(1, z)}{r}$ for $\frac{d_\lambda(t,
z)}{r}\leq C\min\{1+\frac{d_\lambda(1,t)}{r}, 1+\frac{d_\lambda(1,z)}{r}\} $. Thus by Formula\,(\ref{dp9o}), we could deduce that:
\begin{eqnarray}\label{dp1oo}
& &\Big(1+\frac{d_{\lambda}(1, t)}{r}\Big)^{\frac{2\lambda+3}{2\lambda+1}}| K(r, 1, t)-K(r, 1, z)|\nonumber\\
& \lesssim &\Big(1+\frac{d_{\lambda}(1, u)}{r}\Big)^{\frac{2\lambda+3}{2\lambda+1}}| K(r, 1, t)-K(r, 1, z)|\nonumber\\
&\lesssim&\left|\Big(1+\frac{1}{r}\Big)^{\frac{2\lambda+3}{2\lambda+1}}yr\frac{(1-|u|)^2+y^2+(1+u^2+y^2)|1-|u||}{((1-|u|)^2+y^2)(1+u^2+y^2)^{\lambda+2}}\right|\left|\left(t-z\right)\right|.
\end{eqnarray}

$\mathbf{Condition C_1}$:  When $r>1$, we could deduce that $y=r^{\frac{1}{2\lambda+1}}$. By Formula\,(\ref{dp8}) and Formula\,(\ref{dp1oo}), we could deduce that
\begin{eqnarray}\label{dp2oo}
& &\Big(1+\frac{d_{\lambda}(1, t)}{r}\Big)^{\frac{2\lambda+3}{2\lambda+1}}| K(r, 1, t)-K(r, 1, z)|\nonumber\\
&\lesssim&\left|\Big(1+\frac{1}{r}\Big)^{\frac{2\lambda+3}{2\lambda+1}}yr\frac{(1-|u|)^2+y^2+(1+u^2+y^2)|1-|u||}{((1-|u|)^2+y^2)(1+u^2+y^2)^{\lambda+2}}\right|\left|\left(t-z\right)\right|\nonumber\\
&\lesssim&\frac{|t-z|}{r^{\frac{2}{2\lambda+1}}}\nonumber\\
&\lesssim&\frac{\left(d_\lambda(t, z)\right)^{\frac{1}{2\lambda+1}}}{r^{\frac{1}{2\lambda+1}}}.
\end{eqnarray}

$\mathbf{Condition C_2}$: When $0<r\leq1$ and $|t-z|\geq1/4$, we could deduce that $y=r$ and $d_\lambda(t, z)\geq C$ for some constant. Also it is clear that $$\frac{\left(\frac{1}{r}\right)^{\frac{1}{2\lambda+1}}}{\left(1+\frac{1}{r}\right)^{\frac{1}{2\lambda+1}}}\sim 1.$$
Thus from the above Formula\,(\ref{dp3oo}), we could deduce that
\begin{eqnarray}\label{dp2ooo}
|K(r, 1, t)-K(r, 1, z)|
&\lesssim&\Big(1+\frac{d_{\lambda}(1,t)}{r}\Big)^{-\frac{2(\lambda+1)}{2\lambda+1}}\nonumber\\
&\lesssim&\frac{\left(\frac{1}{r}\right)^{\frac{1}{2\lambda+1}}}{\left(1+\frac{1}{r}\right)^{\frac{1}{2\lambda+1}}}\Big(1+\frac{d_{\lambda}(1,t)}{r}\Big)^{-\frac{2(\lambda+1)}{2\lambda+1}}d_\lambda(t,z)^{\frac{1}{2\lambda+1}}\nonumber\\
&\lesssim&\frac{\left(\frac{d_\lambda(t,z)}{r}\right)^{\frac{1}{2\lambda+1}}}{\left(1+\frac{d_\lambda(1,t)}{r}\right)^{\frac{2\lambda+3}{2\lambda+1}}}.
\end{eqnarray}

$\mathbf{Condition C_3}$: When $0<r\leq1$ and $r/4\leq|t-z|\leq1/4$, with the fact that $-3/2 \leq u\leq -1/2$ we could deduce that $y=r$ and $d_\lambda(t, z)\sim |t-z|$. Thus  it is clear that
$$1<\left(\frac{d_\lambda(t, z)}{r}\right)^{\gamma_\lambda}.$$
And we could also deduce that $d_\lambda(1, t)\sim d_\lambda(1, u)\sim d_\lambda(1, z)\sim1$. Thus from the above Formula\,(\ref{Poisson-ker-3}), we could obtain:
\begin{eqnarray}\label{dp4oo}
|K(r, 1, t)-K(r, 1, z)|
&\lesssim&\Big(1+\frac{d_{\lambda}(1,t)}{r}\Big)^{-\frac{2(\lambda+1)}{2\lambda+1}}\nonumber\\
&\lesssim&\left(\frac{d_\lambda(t, z)}{r}\right)^{\gamma_\lambda}\Big(1+\frac{d_{\lambda}(1,t)}{r}\Big)^{-\frac{2(\lambda+1)}{2\lambda+1}}\nonumber\\
&\lesssim&\left(\frac{d_\lambda(t, z)}{r}\right)^{\gamma_\lambda}\Big(1+\frac{d_{\lambda}(1,t)}{r}\Big)^{-1-2\gamma_\lambda}.
\end{eqnarray}

$\mathbf{Condition C_4}$: When $0<r\leq1$ and $|t-z|\leq r/4$, with the fact that $-3/2 \leq u\leq -1/2$ we could deduce that $y=r$ and $d_\lambda(t, z)\sim |t-z|\lesssim r$. It is clear that
$$\frac{d_\lambda(t, z)}{r}\leq\left(\frac{d_\lambda(t, z)}{r}\right)^{\gamma_\lambda}.$$
Thus by Formula\,(\ref{so1}) and Formula\,(\ref{Poisson-ker-3}), we could deduce that:
\begin{eqnarray*}
& &\Big(1+\frac{d_{\lambda}(1, t)}{r}\Big)^{\frac{2\lambda+2}{2\lambda+1}}| K(r, 1, t)-K(r, 1, z)|\nonumber\\
&\lesssim&\left|\left(t-z\right)\right|\frac{r\ln r}{r^{\frac{2\lambda+2}{2\lambda+1}}}\\
&\lesssim&\frac{d_\lambda(t, z)}{r}\leq\left(\frac{d_\lambda(t, z)}{r}\right)^{\gamma_\lambda}.
\end{eqnarray*}
Thus we could obtain that
\begin{eqnarray}\label{dp5oo}
|K(r, 1, t)-K(r, 1, z)|\lesssim\left(\frac{d_\lambda(t, z)}{r}\right)^{\gamma_\lambda}\Big(1+\frac{d_{\lambda}(1,t)}{r}\Big)^{-1-2\gamma_\lambda}.
\end{eqnarray}
Notice that
$$\Big(1+\frac{d_{\lambda}(1, t)}{r}\Big)^{-1}\frac{d_\lambda(t, z)}{r}\leq\left(\Big(1+\frac{d_{\lambda}(1, t)}{r}\Big)^{-1}\frac{d_\lambda(t, z)}{r}\right)^{\gamma_\lambda}.$$

Thus from Formula\,(\ref{dpo}),  Formula\,(\ref{dp10}),  Formula\,(\ref{dp2oo}),  Formula\,(\ref{dp2ooo}),  Formula\,(\ref{dp4oo}),  Formula\,(\ref{dp5oo})  and Formula\,(\ref{dp10o0}), we could deduce that   for $\frac{d_\lambda(t,
z)}{r}\leq C\min\{1+\frac{d_\lambda(1,t)}{r}, 1+\frac{d_\lambda(1,z)}{r}\} $, the following inequality holds:
$$| K(r, 1, t)-K(r, 1, z)|\lesssim \Big(\frac{d_{\lambda}(t,z)}{r}\Big)^{\gamma_\lambda}\Big(1+\frac{d_{\lambda}(1,t)}{r}\Big)^{-1-2\gamma_\lambda}.$$
This proves the Theorem.
\end{proof}

\begin{proposition}\label{kernel2}
For any $\phi\in S(\RR, dx)$, where $\phi$ is an even function,

{\rm(i)}  \ $ |r\tau_x\phi_y(-t)|\lesssim
\Big(1+\frac{d_{\lambda}(x,t)}{r}\Big)^{-1-\gamma_\lambda}, \ \hbox{for} \ \ r>0, x,
t\in\mathbb R$;

{\rm(ii)} \ For $r>0, x, t,z\in\mathbb R$, if $\frac{d_\lambda(t,
z)}{r}\leq C\min\{1+\frac{d_\lambda(x,t)}{r}, 1+\frac{d_\lambda(x,z)}{r}\} $
$$|r\tau_x\phi_y(-t)-r\tau_x\phi_y(-z)|\lesssim \Big(\frac{d_{\lambda}(t,z)}{r}\Big)^{\gamma_\lambda}\Big(1+\frac{d_{\lambda}(x,t)}{r}\Big)^{-1-2\gamma_\lambda};$$
{\rm(iii)} \ $$r\tau_x\phi_y(-z)=r\tau_z\phi_y(-x).$$
$y$ has the representation
\begin{equation*} y  =\left\{ \begin{array}{cc}
                             r|x|^{-2\lambda} & \hbox{for} \ \  0<r<|x|^{2\lambda+1}, \\
                             r^{1/(2\lambda+1)} & \hbox{for}\ \  r\ge
                             |x|^{2\lambda+1}.
                           \end{array}\right.
\end{equation*}
\end{proposition}
\begin{proof}
When $\phi$ is even, we could write $r\tau_x\phi_y(-t)$ as:
\begin{eqnarray*}
r\tau_x\phi_y(-t)&=&c'_\lambda\int_0^\pi \frac{r}{y^{2\lambda+1}}\phi\left(\frac{\sqrt{x^2+t^2-2|xt|\cos\theta}}{y}\right)\left(1+sgn(xt)\cos\theta\right)\sin^{2\lambda-1}\theta d\theta
\\&=&c'_\lambda\int_{-1}^{1}\frac{r}{y^{2\lambda+1}}\phi\left(\frac{\sqrt{x^2+t^2-2xts}}{y}\right)(1+s)^{\lambda}(1-s)^{\lambda-1}ds ,
\\ & &\hbox{where}\,c'_\lambda=\frac{\Gamma(\lambda+(1/2))}{\Gamma(\lambda)\Gamma(1/2)}.
\end{eqnarray*}
Thus it is clear that the following holds
$$|r\tau_x\phi_y(-t)|\lesssim |r(\tau_xP_y)(-t)|, \ \ r\tau_x\phi_y(-z)=r\tau_z\phi_y(-x),$$
then we could deduce {\rm(ii)} and {\rm(iii)} of the Proposition. Next we will prove  {\rm(i)} of the Proposition. Similar to Theorem\,\ref{kkk},
we will only consider the cases for $x=0$ and $x=1$. \\
Case 1 \ \ \  When  x=0, we suppose $t, z>0$ first. Notice that $\phi'$ is odd and by the mean value theorem we could deduce that:
\begin{eqnarray*}
|r\phi_y(-t)-r\phi_y(-z)|&=&\frac{r}{y^{2\lambda+1}}\left|\phi\left(\frac{t}{y}\right)-\phi\left(\frac{z}{y}\right)\right|
\\&=& \frac{r}{y^{2\lambda+2}}\left|\phi'\left(\frac{\xi}{y}\right)\right|\left|t-z\right|
\\&\lesssim&r\frac{y|\xi|}{(y^2+\xi^2)^{\lambda+2}}\left|t-z\right|.
\end{eqnarray*}
Then by Theorem\,\ref{kkk}, we could obtain:
$$|r\phi_y(-t)-r\phi_y(-z)|\lesssim\Big(\frac{d_{\lambda}(t,z)}{r}\Big)^{\gamma_\lambda}\Big(1+\frac{d_{\lambda}(0,t)}{r}\Big)^{-1-2\gamma_\lambda}.$$
Case 2 \ \ \  When  x=1,  by the mean value theorem we could deduce that:
\begin{eqnarray}\label{3s}
& &|r\tau_1\phi_y(-t)-r\tau_1\phi_y(-z)|\\&=&\nonumber\left|c'_\lambda \frac{r}{y^{2\lambda+1}}\int_{-1}^{1}\left(\phi\left(\frac{\sqrt{1+t^2-2ts}}{y}\right)-\phi\left(\frac{\sqrt{1+z^2-2zs}}{y}\right)\right)(1-s^2)^{\lambda-1}(1+s)
ds\right||t-z|
\\&=&\nonumber\left|c'_\lambda \frac{r}{y^{2\lambda+2}}\int_{-1}^{1}\phi^{(1)}\left(\frac{\sqrt{\xi^2+1-2\xi s}}{y}\right)\frac{\xi-s}{\sqrt{\xi^2+1-2\xi s}}(1-s^2)^{\lambda-1}(1+s)
ds\right||t-z|.
\end{eqnarray}
Notice that $\phi^{(1)}$ is an odd function and $\phi^{(1)}\in S(\RR, dx)$, thus we could deduce the following:
\begin{eqnarray}\label{4s}
\left|\frac{\phi^{(1)}\left(\frac{\sqrt{\xi^2+1-2\xi s}}{y}\right)}{\frac{\sqrt{\xi^2+1-2\xi s}}{y}}\left(\frac{y^2+1+\xi^2-2\xi s}{y^2}\right)^{\lambda+2}\right|\lesssim 1.
\end{eqnarray}
Thus Formula\,(\ref{3s}) and Formula\,(\ref{4s}) lead to:
\begin{eqnarray}\label{5s}
& &|r\tau_1\phi_y(-t)-r\tau_1\phi_y(-z)|
\\&\lesssim&\nonumber \left|\int_{-1}^{1}\frac{ry|\xi-s|
}{\big(y^2+1+\xi^2-2\xi s\big)^{\lambda+2}}(1-s^2)^{\lambda-1}(1+s)
ds\right||t-z|.
\end{eqnarray}
Thus we could obtain {\rm(ii)} of this Proposition by   Formula\,(\ref{5s}) and Theorem\,\ref{kkk} for the case  $x=1$.
This proves the Proposition.
\end{proof}

\begin{proposition}\label{111}Let $B(x_0,r_0)$ satisfying $x_0>0$ and  $r_0^{\frac{1}{2\lambda+1}}<|x_0/2|$ be the ball in the homogeneous type space: $B(x_0, r_0)=\{y: d_{\lambda}(y, x_0)<r_0\}$,    $I_0$ the Euclidean interval: $I_0=(x_0-\delta_2, x_0+\delta_1)=B(x_0,r_0)$. For any $t \in  B(x_0,r_0)$,
the following inequalities hold: $$\delta_1<r_0^{\frac{1}{2\lambda+1}}<|x_0/2|,\ \ \ \  \ \delta_2<r_0^{\frac{1}{2\lambda+1}}<|x_0/2|,$$
$$|x_0|\sim|s|\ \ \hbox{for\,any}\  s\in B(x_0,r_0), \ \ \ \ \ \delta_1\sim\delta_2\sim\frac{r_0}{x_0^{2\lambda}}.$$
\end{proposition}
\begin{proof}
When $r_0^{\frac{1}{2\lambda+1}}<|x_0/2|,$ it is easy to see that:   $$ |x_0|\sim|s|\ \ \hbox{for\,any}\ s\in B(x_0,r_0) .$$
We could see that in fact  $\delta_1$ and $\delta_2$ have the representation:
$$\delta_2=\left|\left(x_0^{2\lambda+1}- r_0 \right)^{\frac{1}{2\lambda+1}}-x_0\right|,\ \ \ \ \delta_1=\left|\left(x_0^{2\lambda+1}+ r_0 \right)^{\frac{1}{2\lambda+1}}-x_0\right|.$$

With the fact that
$$\left|y-x\right|^{2\lambda+1}<\left|y^{2\lambda+1}-x^{2\lambda+1}\right|$$
holds for $x, y>0$, it is easy to see that $\delta_1\leq r_0^{\frac{1}{2\lambda+1}}$ and $\delta_2\leq r_0^{\frac{1}{2\lambda+1}}$.  By Taylor expansion near the origin, for $r_0^{\frac{1}{2\lambda+1}}<|x_0/2|,$  we could obtain that
$$\left|\left(x_0^{2\lambda+1}\pm r_0 \right)^{\frac{1}{2\lambda+1}}-x_0\right|\sim x_0\left|\left(1\pm \frac{r_0}{x_0^{2\lambda+1}} \right)^{\frac{1}{2\lambda+1}}-1\right|\sim\frac{r_0}{x^{2\lambda}_0}.$$
Therefore:
$$\delta_1\sim\delta_2\sim\frac{r_0}{x^{2\lambda}_0}.$$
This proves the proposition.
\end{proof}

\begin{proposition}\label{ex2}
Let $B(x_0,r_0)$ satisfying $x_0>0$ and  $r_0^{\frac{1}{2\lambda+1}}<|x_0/2|$ be the ball in the homogeneous type space: $B(x_0, r_0)=\{y: d_{\lambda}(y, x_0)<r_0\}$,    $I(x_0, t)$  be the Euclid interval: $I(x_0, t)=(x_0-t, x_0+t)$.
There exists constants $c_1>0$ and $c_2>0$ independent on $x_0$ and $r_0$, such that the following  holds: $$ I(x_0, c_2\frac{r_0}{x_0^{2\lambda}})\subseteq B(x_0,r_0)\subseteq I(x_0, c_1\frac{r_0}{x_0^{2\lambda}}).$$
And the following holds:
$$B(x_0,r_0)\subseteq I(x_0, r_0^{\frac{1}{2\lambda+1}}). $$
\end{proposition}
\begin{proof}
Notice that the following inequality holds when $x>0$ and $y>0$:$$\left|y-x\right|<\left|y^{2\lambda+1}-x^{2\lambda+1}\right|^{\frac{1}{2\lambda+1}}.$$
Then we could obtain $B(x_0,r_0)\subseteq I(x_0, r_0^{\frac{1}{2\lambda+1}}).$
By Proposition\,\ref{111}, we could obtain that $$\max_{y, x \in B(x_0,r_0)}|y-x|\sim \frac{r_0}{x^{2\lambda}_0} .$$
Therefore there are  constants $c_1>0$ and $c_2>0$ independent on $x_0$ and $r_0$, such that $$ I(x_0, c_2\frac{r_0}{x_0^{2\lambda}})\subseteq B(x_0,r_0)\subseteq I(x_0, c_1\frac{r_0}{x_0^{2\lambda}}).$$
Hence the Proposition holds.
\end{proof}

\begin{proposition}\label{exexex1}
For any fixed $\phi\in S(\RR, dx)$, where $\phi$ is an even function with $\,supp\phi\subseteq[-1,1],$ $0\leq\phi\leq1$, $\phi(0)=1$, then we could obtain the following:

{\rm(i)}  \ $ 0<r\tau_x\phi_y(-t)\lesssim
\Big(1+\frac{d_{\lambda}(x,t)}{r}\Big)^{-1-\gamma_\lambda}, \ \hbox{for} \ \ r>0, x,
t\in\mathbb R$;

{\rm(ii)} \ For $r>0, x, t,z\in\mathbb R$, if $\frac{d_\lambda(t,
z)}{r}\leq C\min\{1+\frac{d_\lambda(x,t)}{r}, 1+\frac{d_\lambda(x,z)}{r}\} $
$$|r\tau_x\phi_y(-t)-r\tau_x\phi_y(-z)|\lesssim \Big(\frac{d_{\lambda}(t,z)}{r}\Big)^{\gamma_\lambda}\Big(1+\frac{d_{\lambda}(x,t)}{r}\Big)^{-1-2\gamma_\lambda};$$

{\rm(iii)} \ $$r\tau_x\phi_y(-z)=r\tau_z\phi_y(-x);$$

{\rm(iv)}\ \ $|r\tau_x\phi_y(-x)|\sim1$;

{\rm(v)}$\,supp\, r\tau_x\phi_y(-t)\subseteq B(x, cr)\bigcup B(-x, cr) $, where $c$ is constant independent on $r, x, y, t$. There exists
a constant $C_0<\frac{1}{2^{2\lambda+1}} $, such that $B(x, cr)\bigcap B(-x, cr)=\emptyset$ for $0<y<C_0 |x|$;

$y$ has the representation
\begin{equation*} y  =\left\{ \begin{array}{cc}
                             r|x|^{-2\lambda} & \hbox{for} \ \  0<r<|x|^{2\lambda+1}, \\
                             r^{1/(2\lambda+1)} & \hbox{for}\ \  r\ge
                             |x|^{2\lambda+1}.
                           \end{array}\right.
\end{equation*}

\end{proposition}

\begin{proof}
{\rm(i)}, {\rm(ii)}, {\rm(iii)} and {\rm(iv)} of the Proposition could be deduced from Proposition\,\ref{kernel2}.
We will prove ${\rm(iv)}$ next, then we need to consider the cases for $x=0$ and $x=1$. It is clear that $|r\tau_x\phi_y(-x)|_{x=0}|=\phi(0)\sim1$ for the case $x=0$. When $0<y<1=x=t$, $r=y$, we could deduce that for some fixed $0<\delta<1$,
 the following holds:
\begin{eqnarray*}
\left|r\tau_1\phi_y(-1)\right| &=& \left|\int_{-1}^{1} c'_\lambda\frac{r}{y^{2\lambda+1}}\phi\left(\frac{\sqrt{2-2s}}{y}\right)(1-s^2)^{\lambda-1}(1+s)
ds\right|
\\ &\geq&\nonumber \left|\int_{1-\frac{\delta y^2}{100}}^{1} c'_\lambda \frac{r}{y^{2\lambda+1}}\phi\left(\frac{\sqrt{2-2s}}{y}\right)(1-s^2)^{\lambda-1}(1+s)
ds\right|
\\ &\geq& C_{\delta}.
\end{eqnarray*}

When $y>1=x=t$, $r=y^{2\lambda+1}$, we could deduce the following inequality:
\begin{eqnarray*}
\left|r\tau_1\phi_y(-1)\right| &=& \left|\int_{-1}^{1} c'_\lambda\frac{r}{y^{2\lambda+1}}\phi\left(\frac{\sqrt{2-2s}}{y}\right)(1-s^2)^{\lambda-1}(1+s)
ds\right|
\\ &\geq&\nonumber \left|\int_{1/4}^{1} c'_\lambda \frac{r}{y^{2\lambda+1}}\phi\left(\frac{\sqrt{2-2s}}{y}\right)(1-s^2)^{\lambda-1}(1+s)
ds\right|
\\ &\geq& C.
\end{eqnarray*}
Thus {\rm(iv)} of this Proposition holds. We will prove {\rm(v)} of this Proposition at last.

For $x,t,z\in\RR$, we use $W_{\lambda}(x,t,z)$ to denote as:
$W_{\lambda}(x,t,z)=W_{\lambda}^0(x,t,z)(1-\sigma_{x,t,z}+\sigma_{z,x,t}+\sigma_{z,t,x})$,
where
$$
W_{\lambda}^0(x,t,z)=\frac{c''_{\lambda}|xtz|^{1-2\lambda}\chi_{(||x|-|t||,
|x|+|t|)}(|z|)} {[((|x|+|t|)^2-z^2)(z^2-(|x|-|t|)^2)]^{1-\lambda}},
$$
$c''_{\lambda}=2^{3/2-\lambda}\big(\Gamma(\lambda+1/2)\big)^2/[\sqrt{\pi}\,\Gamma(\lambda)]$.
And $\sigma_{x,t,z}=\frac{x^2+t^2-z^2}{2xt}$, for $x\neq 0$ and $t\neq0$. $\sigma_{x,t,z}=0$, for $x= 0$ or $t=0$.
For $t\neq0$, we could write $(\tau_x\phi)(-t)$
\begin{eqnarray}\label{tau-1}
(\tau_x\phi)(-t)=c_{\lambda}\int_{\RR}\phi(z)W_\lambda(-t,x,z)|z|^{2\lambda}dz.
\end{eqnarray}
It is clear that $\tau_x\phi_y(-t)=0$ when $\left|\frac{|x|-|t|}{y}\right|\geq1$.
Thus the function $t\rightarrow\tau_x\phi_y(-t)$ satisfies $supp\,\tau_x\phi_y(-t)\subseteq \left(|x|-|y|,|x|+|y|\right)\bigcup\left(-|x|-|y|, -|x|+|y|\right)$.

Case  1 \ \ \ \ When $y\geq \frac{|x|}{2^{2\lambda+1}}$, it is clear that $r\sim y^{2\lambda+1}\gtrsim|x|^{2\lambda+1}$. Notice that
$supp\,r\tau_x\phi_y(-t)\subseteq \left(|x|-|y|,|x|+|y|\right)\bigcup\left(-|x|-|y|, -|x|+|y|\right)$, thus we could
deduce that there exists constants $c$ and $c_1$ that is independent on $r, x, y, \lambda$ such that: $supp\,r\tau_x\phi_y(-t) \subseteq B(0, cr)\subseteq B(x, c_1r)$ and $supp\,r\tau_x\phi_y(-t) \subseteq B(0, cr)\subseteq B(-x, c_1r)$.

Case  2 \ \ \ \ When $0<y<\frac{|x|}{2^{2\lambda+1}}$, we could see that $r=y|x|^{2\lambda}<\frac{|x|^{2\lambda+1}}{2^{2\lambda+1}}$. Thus, by
Proposition\,\ref{ex2}, we could deduce that the function $t\rightarrow\tau_x\phi_y(-t)$ satisfies $supp\,r\tau_x\phi_y(-t)\subseteq \left(|x|-|y|,|x|+|y|\right)\bigcup\left(-|x|-|y|, -|x|+|y|\right)\subseteq B(x, cr)\bigcup B(-x, cr)$. Also, it is clear that by
Proposition\,\ref{ex2}, there exists a constant $C_0<\frac{1}{2^{2\lambda+1}} $ such that $B(x, cr)\bigcap B(-x, cr)=\emptyset$ when $0<y<C_0 |x|$.  This proves {\rm(v)} of this Proposition.
\end{proof}

\begin{proposition}\label{exexex2}
For any fixed $\phi\in S(\RR, dx)$, where $\phi$ is an even function with $\,supp\phi\subseteq[-1,1],$ $0\leq\phi\leq1$, $\phi(0)=1$,  we use $K_3(r , x, t)$ to denote as:
$$K_3(r , x,  t)= r\tau_x\phi_y(-t)-r\tau_x\phi_y(t),\ \ \hbox{for}\ x\neq0$$
where $y$ has the representation
\begin{equation*} y  =\left\{ \begin{array}{cc}
                              r|x|^{-2\lambda} \ & \hbox{for} \ \  0<r<|x|^{2\lambda+1}, \\
                              0<y<C_0 |x| \  & ( C_0\  \hbox{is\  the\  constant\  in\  Proposition}\,\ref{exexex1} ) \hbox{and}\  x\neq0.
                           \end{array}\right.
\end{equation*}

Then we could obtain the following:

{\rm(i)}  \ $ \left|K_3(r , x,  t)\right|\lesssim
\Big(1+\frac{d_{\lambda}(x,t)}{r}\Big)^{-1-\gamma_\lambda}, \ \hbox{for} \ \ r>0, x,
t\in\mathbb R$;

{\rm(ii)} \ For $r>0, x, t,z\in\mathbb R$, if $\frac{d_\lambda(t,
z)}{r}\leq C\min\{1+\frac{d_\lambda(x,t)}{r}, 1+\frac{d_\lambda(x,z)}{r}\} $
$$|K_3(r , x,  t)-K_3(r , x,  z)|\lesssim \Big(\frac{d_{\lambda}(t,z)}{r}\Big)^{\gamma_\lambda}\Big(1+\frac{d_{\lambda}(x,t)}{r}\Big)^{-1-2\gamma_\lambda};$$

{\rm(iii)} \ $$K_3(r , x,  t)=K_3(r , t,  x);$$

{\rm(iv)}\ \  $ K_3(r , x,  x) \sim 1$ and $K_3(r , x,  t)=-K_3(r , x,  -t)$;

{\rm(v)}$\,supp\, K_3(r , x,  t)\subseteq B(x, cr)\bigcup B(-x, cr)$ with $B(x, cr)\bigcap \{x=0\} =\emptyset$, where $c$ is a constant independent on $r, x, y, t$;

{\rm(vi)} $ 0<K_3(r , x,  t)\leq C$ when $x>0$, and   $ -C \leq K_3(r , x,  t)< 0$ when $x<0$ for some constant $C$ independent on $r, x, t$.
\end{proposition}

\begin{proof}
{\rm(i)} {\rm(ii)} and {\rm(v)} of this Proposition can be deduced from Proposition\,\ref{exexex1} directly. Notice that we could write $K_3(r , x,  t)$ as following:
\begin{eqnarray*}
K_3(r , x,  t)=\int_{-1}^1c'_\lambda\frac{r}{y^{2\lambda+1}}\phi\left(\frac{\sqrt{x^2+t^2-2|xt|s}}{y}\right)2sgn(xt)(1-s^2)^{\lambda-1}s
ds.
\end{eqnarray*}
Thus we could deduce {\rm(iii)} of this Proposition. We will prove {\rm(iv)} of this Proposition at last. From\,(v)\, we could deduce that $sgn(xt)>0$, thus we could write $K_3(r , x,  t)$ as:
\begin{eqnarray*}
K_3(r , x,  t)=\int_{0}^1c'_\lambda\frac{r}{y^{2\lambda+1}}\left(\phi\left(\frac{\sqrt{x^2+t^2-2|xt|s}}{y}\right)-\phi\left(\frac{\sqrt{x^2+t^2+2|xt|s}}{y}\right)\right)2(1-s^2)^{\lambda-1}s
ds.
\end{eqnarray*}
We will prove ${\rm(iv)}$ of this Proposition next, then we need to consider the cases for $x=1$. When $0<y<C_0<1=x=t$, $r=y$, we could deduce that for some fixed $0<\delta<1$, the following holds:
\begin{eqnarray*}
K_3(r , 1,  1) &\geq& \int_{1-\frac{\delta y^2}{100}}^1c'_\lambda\frac{r}{y^{2\lambda+1}}\left(\phi\left(\frac{\sqrt{2-2s}}{y}\right)-\phi\left(\frac{\sqrt{2+2s}}{y}\right)\right)2(1-s^2)^{\lambda-1}s
ds
\\ &\geq& C_{\delta}.
\end{eqnarray*}
 Also it  is clear that $K_3(r , x,  t)$ is an odd function in $t$, thus $K_3(r , -1,  -1)\sim -1$. Thus we obtain {\rm(iv)} of this Proposition. Thus we could also deduce {\rm(vi)} of this Proposition. This proves the Proposition.
\end{proof}
In a similar way, we could obtain the following Proposition:
\begin{proposition}\label{exexex5}
For any fixed $\phi\in S(\RR, dx)$, where $\phi$ is an even function with $\,supp\phi\subseteq[-1,1],$ $0\leq\phi\leq1$, $\phi(0)=1$,  we use $K_4(r , x, t)$ to denote as:
$$K_4(r , x,  t)= r\tau_x\phi_y(-t)+r\tau_x\phi_y(t),\ \ \hbox{for}\ x\neq0,$$
where $y$ has the representation
\begin{equation*} y  =\left\{ \begin{array}{cc}
                              r|x|^{-2\lambda} \ & \hbox{for} \ \  0<r<|x|^{2\lambda+1}, \\
                              0<y<C_0 |x| \  & ( C_0\  \hbox{is\  the\  constant\  in\  Proposition}\,\ref{exexex1} ) \hbox{and}\  x\neq0.
                           \end{array}\right.
\end{equation*}

Then the following holds:

{\rm(i)}  \ $ \left|K_4(r , x,  t)\right|\lesssim
\Big(1+\frac{d_{\lambda}(x,t)}{r}\Big)^{-1-\gamma_\lambda}, \ \hbox{for} \ \ r>0, x,
t\in\mathbb R$;

{\rm(ii)} \ For $r>0, x, t,z\in\mathbb R$, if $\frac{d_\lambda(t,
z)}{r}\leq C\min\{1+\frac{d_\lambda(x,t)}{r}, 1+\frac{d_\lambda(x,z)}{r}\} $
$$|K_4(r , x,  t)-K_4(r , x,  z)|\lesssim \Big(\frac{d_{\lambda}(t,z)}{r}\Big)^{\gamma_\lambda}\Big(1+\frac{d_{\lambda}(x,t)}{r}\Big)^{-1-2\gamma_\lambda};$$

{\rm(iii)} \ $$K_4(r , x,  t)=K_4(r , t,  x);$$

{\rm(iv)}\ \  $ K_4(r , x,  x) \sim 1$ and $K_4(r , x,  t)=K_4(r , x,  -t)$;

{\rm(v)}$\,supp\, K_4(r , x,  t)\subseteq B(x, cr)\bigcup B(-x, cr)$ with $B(x, cr)\bigcap \{x=0\} =\emptyset$, where $c$ is a constant independent on $r, x, y, t$;

{\rm(vi)} $ 0<K_4(r , x,  t)\leq C$.
\end{proposition}

\begin{proposition}\label{exexex4}
We use $F_{\nabla}(x)$ to denote as $F_{\nabla}(x)=\sup_{|x-u|<y}\left|F(u, y)\right|$, $F_{\nabla\lambda}(x)$ to denote as $F_{\nabla\lambda}(x)=\sup_{d_{\lambda}(x, u)<r}\left|F(u, y)\right|$, where $y$ has the representation
\begin{equation}\label{utut1} y  =\left\{ \begin{array}{cc}
                             r|x|^{-2\lambda} & \hbox{for} \ \  0<r<|x|^{2\lambda+1}, \\
                             r^{1/(2\lambda+1)} & \hbox{for}\ \  r\ge
                             |x|^{2\lambda+1}.
                           \end{array}\right.
\end{equation}  Then we could have:
\begin{eqnarray}\label{tan1}
\|F_{\nabla\lambda}\|_{L^p_\lambda(\RR)}\sim_{\lambda, p}\|F_{\nabla}\|_{L^p_\lambda(\RR)}.
\end{eqnarray}
\end{proposition}
We also use $F_{+}(x)$ to denote as $F_{+}(x)=\sup_{y>0}\left|F(x, y)\right|$, $F_{+\lambda}(x)$ to denote as $F_{+\lambda}(x)=\sup_{r>0}\left|F(x, y)\right|$. Thus it is clear that $F_{+}(x)=F_{+\lambda}(x)$.

\begin{proof}

Case 1: When $0<y<\frac{|x|}{2^{2\lambda+1}}$, by Proposition\,\ref{ex2}  we could deduce that for some constants $c_1$ and $c_2$  $$ I(x, c_2\frac{r}{x^{2\lambda}})\subseteq B(x,r)\subseteq I(x, c_1\frac{r}{x^{2\lambda}}).$$  Thus we could deduce that
\begin{eqnarray}\label{tanoo}
I(x, c_2y)\subseteq B(x,r)\subseteq I(x, c_1y).
\end{eqnarray}

Case 2: When $y\geq\frac{|x|}{2^{2\lambda+1}}$, it is clear that $r\sim y^{2\lambda+1}$. Then we could see that there exists $c_1$ and $c_2$ independent on $x, r, y$, such that
\begin{eqnarray}\label{tanoo0}
I(x, c_2y)\subseteq B(x,r)\subseteq I(x, c_1y).
\end{eqnarray}
 Then by Formulas\,(\ref{tanoo})\,and\,(\ref{tanoo0}), together with Proposition\,\ref{tan}, we could deduce that Formula\,(\ref{tan1}) holds.
This proves the Proposition.
\end{proof}
We use $(f*_{\lambda}\phi)_{\nabla\lambda}(x)$, $(f*_{\lambda}\phi)_{\nabla}(x)$ and $(f*_{\lambda}\phi)_{+}(x)$ to denote as following:
\begin{eqnarray*}
(f*_{\lambda}\phi)_{\nabla\lambda}(x)= \sup_{ d_\lambda(u, x)< r} \left|f*_\lambda\phi_y(u)\right|,\ \ \ (f*_{\lambda}\phi)_{\nabla}(x)=  \sup_{ |x-u|< y}  \left|f*_\lambda\phi_y(u)\right|,
\end{eqnarray*}
\begin{eqnarray*}
(f*_{\lambda}\phi)_{+}(x)=  \sup_{ y>0} \left| f*_\lambda\phi_y(x)\right|,
\end{eqnarray*}
where $y$ has the representation as Formula\,(\ref{utut1}) and $\displaystyle{\phi_y(x)=\frac{1}{y^{2\lambda+1}}\phi\left(\frac{x}{y}\right)}.$

\begin{theorem}\label{exexex3}
For any fixed $\phi\in S(\RR, dx)$, where $\phi$ is an even function with $supp\,\phi\subseteq[-1,1],$ $0\leq\phi\leq1$, $\phi(0)=1$, we could deduce that for $f\in L_{\lambda}^1(\RR)$:
\begin{eqnarray}\label{tan2}
\|f_{S\beta}^*\|_{L^p_\lambda(\RR)}\sim_{\lambda, p, \beta, \phi}\|(f*_{\lambda}\phi)_{\nabla}\|_{L^p_\lambda(\RR)}\sim_{\lambda, p, \beta, \phi} \|(f*_{\lambda}\phi)_{+}\|_{L^p_\lambda(\RR)},
\end{eqnarray}
for $p>\frac{1}{1+\gamma_\lambda}$, for some\,$\beta>0$.
\end{theorem}
\begin{proof}
We use $f_o$ and $f_e$ to denote as: $$f_o(x)=\frac{f(x)-f(-x)}{2},\ \ \ f_e(x)=\frac{f(x)+f(-x)}{2}.$$
We use $ \widetilde{K}(r, x, t), \widetilde{K}_o(r, x, t), \widetilde{K}_e(r, x, t)$ to denote as:
\begin{eqnarray*}
& &\widetilde{K}(r, x, t)=r\tau_x\phi_y(-t),\\
& &2\widetilde{K}_o(r, x, t)=r\tau_x\phi_y(-t)-r\tau_x\phi_y(t),\\
& &2\widetilde{K}_e(r, x, t)=r\tau_x\phi_y(-t)+r\tau_x\phi_y(t),
\end{eqnarray*}
where $y$ has the representation
\begin{equation*} y  =\left\{ \begin{array}{cc}
                             r|x|^{-2\lambda} & \hbox{for} \ \  0<r<|x|^{2\lambda+1}, \\
                             r^{1/(2\lambda+1)} & \hbox{for}\ \  r\ge
                             |x|^{2\lambda+1}.
                           \end{array}\right.
\end{equation*}
One obvious fact is that the following two Formulas hold:
\begin{eqnarray}\label{cc1}
\|f_{S\beta}^*\|_{L^p_\lambda(\RR)}\lesssim\|(f_o)_{S\beta}^*\|_{L^p_\lambda(\RR)}+\|(f_e)_{S\beta}^*\|_{L^p_\lambda(\RR)}\lesssim\|f_{S\beta}^*\|_{L^p_\lambda(\RR)},
\end{eqnarray}
\begin{eqnarray}\label{cc2}
\|(f*_{\lambda}\phi)_{\nabla\lambda}\|_{L^p_\lambda(\RR)}\lesssim\|((f_o)*_{\lambda}\phi)_{\nabla\lambda}\|_{L^p_\lambda(\RR)}+\|((f_e)*_{\lambda}\phi)_{\nabla\lambda}\|_{L^p_\lambda(\RR)}\lesssim\|(f*_{\lambda}\phi)_{\nabla\lambda}\|_{L^p_\lambda(\RR)}.
\end{eqnarray}
Next, we will define new kernels as follows\,($C_0$ is the constant  in  Proposition\,\ref{exexex1}):

Case1:  $x>0$
\begin{equation*} K_o(r, x, t)  =\left\{ \begin{array}{cc}
                              \widetilde{K}(r, x, t) \ & \hbox{for} \ \  y\geq C_0 |x|, \\
                              \widetilde{K}_o(r, x, t)\chi_{(0, +\infty)}(t)\  & \hbox{for}\ \  0<y<C_0 |x|,
                           \end{array}\right.
\end{equation*}

\begin{equation*} K_e(r, x, t)  =\left\{ \begin{array}{cc}
                              \widetilde{K}(r, x, t) \ & \hbox{for} \ \  y\geq C_0 |x|, \\
                              \widetilde{K}_e(r, x, t)\chi_{(0, +\infty)}(t)\  & \hbox{for}\ \  0<y<C_0 |x|,
                           \end{array}\right.
\end{equation*}

Case2:  $x<0$
\begin{equation*} K_o(r, x, t)  =\left\{ \begin{array}{cc}
                              \widetilde{K}(r, x, t) \ & \hbox{for} \ \  y\geq C_0 |x|, \\
                              \widetilde{K}_o(r, x, t)\chi_{(-\infty, 0)}(t)\  & \hbox{for}\ \  0<y<C_0 |x|,
                           \end{array}\right.
\end{equation*}

\begin{equation*} K_e(r, x, t)  =\left\{ \begin{array}{cc}
                              \widetilde{K}(r, x, t) \ & \hbox{for} \ \  y\geq C_0 |x|, \\
                              \widetilde{K}_e(r, x, t)\chi_{(-\infty, 0)}(t)\  & \hbox{for}\ \  0<y<C_0 |x|,
                           \end{array}\right.
\end{equation*}

Case3:  $x=0$
\begin{equation*}
K_o(r, x, t)=K_e(r, x, t)=\widetilde{K}(r, x, t).
\end{equation*}
Thus we could see that the following two Formulas hold:
\begin{eqnarray}\label{exee1}
((f_o)*_{\lambda}\phi)_{\nabla\lambda}(x)\sim  \sup_{ d_\lambda(u, x)< r} \left|\int_\RR K_o(r, u, t)f_o(t)|t|^{2\lambda}dt/r\right|,
\end{eqnarray}

\begin{eqnarray}\label{exee2}
((f_e)*_{\lambda}\phi)_{\nabla\lambda}(x)\sim  \sup_{ d_\lambda(u, x)< r} \left|\int_\RR K_e(r, u, t)f_e(t)|t|^{2\lambda}dt/r\right|.
\end{eqnarray}

By Proposition\,\ref{exexex1}, Proposition\,\ref{exexex2}, Proposition\,\ref{exexex5}, we could deduce that $K_o(r, x, t)$ and $K_e(r, x, t)$ are just the kind of kernel $K_1(r, x, t)$ with compact support in Section\,1:\, Theorem\,\ref{important}. Thus by Formula\,(\ref{exee1}),  Formula\,(\ref{exee2}), and Theorem\,\ref{important},   we could deduce the following:
\begin{eqnarray}\label{exee3}
\|(f_o)_{S\beta}^*\|_{L^p_\lambda(\RR)}\sim \|((f_o)*_{\lambda}\phi)_{\nabla\lambda}\|_{L^p_\lambda(\RR)},
\end{eqnarray}

\begin{eqnarray}\label{exee4}
\|(f_e)_{S\beta}^*\|_{L^p_\lambda(\RR)}\sim \|((f_e)*_{\lambda}\phi)_{\nabla\lambda}\|_{L^p_\lambda(\RR)}.
\end{eqnarray}
Thus from Formula\,(\ref{cc1}) Formula\,(\ref{cc2}) Formula\,(\ref{exee3}) Formula\,(\ref{exee4}) and Proposition\,\ref{exexex4}, we could prove the theorem.
\end{proof}

\begin{proposition}\label{exexex6}
For $p>\frac{1}{1+\gamma_\lambda}$, $\phi$ is an even function with $supp\,\phi\subseteq[-1,1]$, $0\leq\phi\leq1$, $\phi(0)=1$, $\psi$ is an even function, $\int_\RR\psi(t)|t|^{2\lambda}dt\sim1$ with $\phi, \psi\,\in S(\RR, dx)$, then we could deduce the following for $f\in L^1_{\lambda}(\RR)$:
\begin{eqnarray}\label{tan3}
\|(f*_{\lambda}\phi)_{+}\|_{L^p_\lambda(\RR)}\sim_{\lambda, p,  \phi, \psi}\|(f*_{\lambda}\psi)_{\nabla}\|_{L^p_\lambda(\RR)} \sim_{\lambda, p, \phi, \psi} \|(f*_{\lambda}\psi)_{+}\|_{L^p_\lambda(\RR)}.
\end{eqnarray}
\end{proposition}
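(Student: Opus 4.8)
The plan is to prove the three-way equivalence by transferring everything to the abstract maximal-function theory of Section~\ref{sss4}, exactly as in the proof of Theorem~\ref{exexex3}, and then closing the loop by a reproducing-kernel argument that exploits $\int_\RR\psi(t)|t|^{2\lambda}dt\sim1\neq0$. First I would record the trivial bound $\|(f*_{\lambda}\psi)_{+}\|_{L^p_\lambda}\le\|(f*_{\lambda}\psi)_{\nabla}\|_{L^p_\lambda}$, since the radial supremum at $x$ is taken over a subset of the cone defining the nontangential one. Next, following the $f=f_o+f_e$ decomposition of Theorem~\ref{exexex3}, I would realize each of $(f*_\lambda\psi)_\nabla$, $(f*_\lambda\psi)_+$ and $(f*_\lambda\phi)_+$ as a maximal operator against symmetrized kernels on the homogeneous space $(\RR,d_\lambda,\mu_\lambda)$. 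The key point is that Proposition~\ref{kernel2}, which needs only that $\psi$ be an even Schwartz function, shows the $\psi$-kernels $r\tau_x\psi_y(-t)$ obey the size and H\"older estimates of the kernel $K_2$ of Definition~\ref{us5} with exponent $\gamma=\tfrac1{2\lambda+1}$, whereas the compactly supported $\phi$ produces $K_1$-kernels as in Theorem~\ref{exexex3}. With $\gamma=\tfrac1{2\lambda+1}$ the admissible range $p>\tfrac1{1+\gamma}$ of Section~\ref{sss4} is exactly $p>\tfrac{2\lambda+1}{2\lambda+2}$, and Proposition~\ref{exexex4} lets me pass freely between the cone $|x-u|<y$ and the ball $d_\lambda(x,u)<r$.

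The easy half of the chain then reads $\|(f*_\lambda\psi)_+\|_{L^p_\lambda}\le\|(f*_\lambda\psi)_\nabla\|_{L^p_\lambda}\lesssim\|f_{S\beta}^*\|_{L^p_\lambda}\sim\|(f*_\lambda\phi)_+\|_{L^p_\lambda}$. The middle inequality is Proposition~\ref{im1} applied to the $K_2$-kernels coming from $\psi$ (nontangential $K_2$-maximal function dominated by the grand maximal function $f_\gamma^*$), together with $\|f_\gamma^*\|_{L^p_\lambda}\sim\|f_{S\beta}^*\|_{L^p_\lambda}$ from Propositions~\ref{sk} and~\ref{H spa}; the final equivalence is Theorem~\ref{exexex3}. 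This bounds two of the three quantities by $\|(f*_\lambda\phi)_+\|_{L^p_\lambda}$.

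It remains to establish the reverse inequality $\|(f*_\lambda\phi)_+\|_{L^p_\lambda}\lesssim\|(f*_\lambda\psi)_+\|_{L^p_\lambda}$, equivalently $\|f_{S\beta}^*\|_{L^p_\lambda}\lesssim\|(f*_\lambda\psi)_+\|_{L^p_\lambda}$. Here I would imitate the argument behind Propositions~\ref{no4}--\ref{no6}: because $\SF_\lambda\psi(0)=c_\lambda\int\psi|t|^{2\lambda}dt\neq0$, a dyadic Fourier-division decomposition of the type used in Proposition~\ref{no6}, with cutoffs $\varphi^k$ and $\SF\eta^k=\varphi^k\SF\phi/\SF\psi(2^{-k}\cdot)$, expresses the $\phi$-kernel as a rapidly converging superposition of translated, rescaled $\psi$-kernels. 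This dominates the $\phi$-maximal function by the weighted nontangential $\psi$-maximal function $M^{**}_{\psi\beta N}$; Proposition~\ref{no} then trades $M^{**}_{\psi\beta N}$ for the plain nontangential $M^{*}_{\psi\beta}$ when $N>1/p$, and the Fefferman--Stein tangential-to-radial comparison of Propositions~\ref{no5} and~\ref{no11} bounds $M^{*}_{\psi\beta}$ by the radial maximal function $(f*_\lambda\psi)_+$. Throughout, Propositions~\ref{u0} and~\ref{uu5} on the dual intertwining operator $V_\lambda^t$ furnish the Schwartz-type decay estimates $\sup_x\big||x|^\alpha D^\beta\psi(x)\big|<\infty$ needed to legitimize these manipulations inside the Dunkl convolution.

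I expect the main obstacle to be exactly this reverse inequality, for two reasons. The reproducing decomposition has to be carried out for a $\psi$ that is \emph{not} compactly supported and inside the non-commutative Dunkl convolution, where $\tau_x$ does not act by a simple shift; the even/odd splitting is what rescues the situation, converting the $\psi$-convolution into honest $K_2$-type kernels on $(\RR,d_\lambda,\mu_\lambda)$ to which the Fourier-division machinery of Section~\ref{sss4} applies. Moreover, one must verify that the resulting series converges in $L^p_\lambda$ for $p\le1$, where only the $p$-subadditivity of the integral is available, so the geometric decay of the coefficients $\eta^k$ inherited from \eqref{ee2} must be quantified carefully against the $K_2$ H\"older exponent $\tfrac1{2\lambda+1}$; this is the delicate quantitative step that forces the threshold $p>\tfrac{2\lambda+1}{2\lambda+2}$.
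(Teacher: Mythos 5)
Your proposal is correct and takes essentially the same route as the paper's own proof: the frequency-division reproducing decomposition exploiting $\int_\RR\psi(t)|t|^{2\lambda}dt\neq0$, the $K_2$-type estimates of Proposition\,\ref{kernel2} fed into Proposition\,\ref{im1}, Theorem\,\ref{exexex3} together with Proposition\,\ref{important1} to tie in the $\phi$-maximal function, and Proposition\,\ref{no11} for the nontangential-to-radial step. The one adjustment is that the division must be carried out with the Dunkl transform, $(\SF_\lambda\eta^{k,\lambda})(\xi)=\varphi^k(\xi)(\SF_\lambda\phi)(\xi)/(\SF_\lambda\psi)(2^{-k}\xi)$, so that $\phi=\sum_k\eta^{k,\lambda}*_\lambda\psi_{2^{-k}}$ holds for the convolution $*_\lambda$; the paper then controls the resulting Dunkl translations via the $W_\lambda$ integral representation and Proposition\,\ref{tan}, rather than through the even/odd splitting you invoke.
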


\begin{proof}
Fix a function $\varphi\in S(\RR, dx)$ so that:
\begin{eqnarray*}
\left\{ \begin{array}{cc}
\varphi(\xi)=0 \ \ \hbox{for}\,|\xi|\geq1 \\
\\
\varphi(\xi)=1\ \ \hbox{for}\,|\xi|\leq1/2,
\end{array}\right.
\end{eqnarray*}
where $\varphi$ is an even function. Then  $\varphi^k\in S(\RR, dx)$ can be defined as:
\begin{eqnarray*}
\left\{ \begin{array}{cc}
\varphi^k(\xi)=\varphi(\xi) \ \ \hbox{for}\,k=0, \\
\\
\varphi^k(\xi)=\varphi(2^{-k}\xi)-\varphi(2^{1-k}\xi)\ \ \hbox{for}\,k\geq1.
\end{array}\right.
\end{eqnarray*}
By Proposition\,\ref{uu5}\,and\,\ref{u0}, we could deduce that $\sup_{\xi\in\RR}\left| |\xi|^\beta \partial_\xi^\alpha(\SF_\lambda\psi)(\xi)\right|\lesssim C_{ \beta, \alpha}$, when $\psi(t)\in S(\RR, dx)$. Thus together with the fact that
$(\SF_\lambda\psi)(0)\sim 1$, we could deduce that there exists a $k_o$, such that
$$\left|(\SF_\lambda\psi)(2^{-k_o}\xi)\right|\gtrsim 1/2 \ \ \hbox{for}\,\, |\xi|\leq2.$$

We use $\eta^{k,\lambda}$ to denote as
$$(\SF_\lambda\eta^{k,\lambda})(\xi)=\frac{\varphi^k(\xi)(\SF_\lambda\phi)(\xi)}{(\SF_\lambda\psi)(2^{-k}2^{-k_o}\xi)},$$
where $\SF_\lambda$ denotes the Dunkl transform.

Then
\begin{eqnarray}\label{uue5}
\phi(x)=\sum_{k=0}^{+\infty}\eta^{k,\lambda}*_{\lambda}\psi_{2^{-k-k_o}}(x).
\end{eqnarray}

By the fact that $\displaystyle{\sup_{\xi\in\RR}\left|D^{\beta}(\SF_\lambda\psi)(\xi)\right|\lesssim_{ \beta} 1}$ and $\displaystyle{\sup_{\xi\in\RR}\left|\xi^{\alpha}D^{\beta}(\SF_\lambda\phi)(\xi)\right|\lesssim_{\alpha, \beta} 1}$, where $D$ is the Dunkl
operator, we could deduce that for any $M>0$
\begin{eqnarray}\label{uue1}
\sup_{\xi\in\RR}\left|\xi^{\alpha}D^{\beta}(\SF_\lambda\eta^{k,\lambda})(\xi)\right|\lesssim_{\alpha, \beta, M, k_o} 2^{-M}.
\end{eqnarray}
Thus we could deduce that
\begin{eqnarray}\label{uue3}
 \left|\int_\RR\eta^{k,\lambda}(x)\left(1+2^{k+k_o}|x| \right)^{N}|x|^{2\lambda}dx\right|\leq C2^{-k}.
\end{eqnarray}

By Formula\,(\ref{uue3}), we could deduce that
\begin{eqnarray}\label{uue4}
\sum_{k=0}^{+\infty} \left|\int_\RR\eta^{k,\lambda}\left(\frac{x}{t}\right)\left(1+\frac{|x|}{2^{-k-k_o}t} \right)^{N}|x|^{2\lambda}\frac{dx}{t^{2\lambda+1}}\right|\leq C_{k_o, N} \sum_{k=0}^{\infty} 2^{-k}.
\end{eqnarray}

Then by Formula\,(\ref{uue5}) and Formula\,(\ref{uue3}), we could deduce the following:
\begin{eqnarray}\label{uue2}
\sup_{t>0}\left| f*_\lambda\phi_t(x)\right| &=& \sup_{t>0}\left|\sum_{k=0}^{+\infty} f*_\lambda\eta_t^{k,\lambda}*_{\lambda}\psi_{2^{-k-k_o}t}(x)\right|
\\&\leq&\nonumber\sup_{t>0}\left|\sum_{k=0}^{+\infty}\int \tau_{-u}\left(f*_{\lambda}\psi_{2^{-k-k_o}t}\right)(x)\eta_t^{k,\lambda}(u)|u|^{2\lambda}du\right|
\\&\leq&\nonumber \sup_{t>0, u\in\RR}\left|\tau_{-u}\left(f*_{\lambda}\psi_{t}\right)(x)\left(1+\frac{|u|}{t} \right)^{-N}\right| \sum_{k=0}^{+\infty}\left|\int \eta^{k,\lambda}\left(\frac{u}{t}\right)\left(1+\frac{|u|}{2^{-k-k_o}t} \right)^{N}|u|^{2\lambda}\frac{du}{t^{2\lambda+1}}\right|
\\&\lesssim&\nonumber       \sum_{m=0}^{+\infty} \sup_{t>0, 2^{m-1}t<|u|\leq2^mt}2^{-mN}\left|\tau_{-u}\left(f*_{\lambda}\psi_{t}\right)(x)\right|+\sup_{t>0, |u|\leq t}\left|\tau_{-u}\left(f*_{\lambda}\psi_{t}\right)(x)\right|
\\&\lesssim&\nonumber       \sum_{m=0}^{+\infty} \sup_{t>0, |u|\leq2^mt}2^{-mN}\left|\tau_{-u}\left(f*_{\lambda}\psi_{t}\right)(x)\right|.
\end{eqnarray}

For $x\neq0$, we could write $\tau_{-u}\left(f*_{\lambda}\psi_{t}\right)(x)$ as
\begin{eqnarray}\label{tau-2}
\tau_{-u}\left(f*_{\lambda}\psi_{t}\right)(x)=c_{\lambda}\int_{\RR}\left(f*_{\lambda}\psi_{t}\right)(z)W_\lambda(x, -u, z)|z|^{2\lambda}dz.
\end{eqnarray}

For $x=0$, we could write $\tau_{u}\left(f*_{\lambda}\psi_{t}\right)(0)$ as
\begin{eqnarray}\label{tau-3}
\tau_{u}\left(f*_{\lambda}\psi_{t}\right)(0)= \left(f*_{\lambda}\psi_{t}\right)(u)
\end{eqnarray}

Notice that $ \left||x|-|u|\right|\leq|z|\leq|x|+|u|$, thus by Formula\,(\ref{uue2}) Formula\,(\ref{tau-2}) and Formula\,(\ref{tau-3}) with the fact that $\int_{\RR}\left|W_\lambda(x, -u, z)\right||z|^{2\lambda}dz\leq 4$, we could deduce that:

\begin{eqnarray}\label{uue6}
\sup_{t>0}\left| f*_\lambda\phi_t(x)\right|
\lesssim     \sum_{m=0}^{+\infty} \left|\sup_{|z-x|\leq2^mt}2^{-mN}\left(f*_{\lambda}\psi_{t}\right)(z)\right|+\sum_{m=0}^{+\infty} \left|\sup_{|z+x|\leq2^mt}2^{-mN}\left(f*_{\lambda}\psi_{t}\right)(z)\right|.
\end{eqnarray}

Thus Proposition\,\ref{tan} and Formula\,(\ref{uue6}) lead to the following inequality for $N>\frac{1}{p}$:

\begin{eqnarray}\label{tan4}
\|(f*_{\lambda}\phi)_{+}\|_{L^p_\lambda(\RR)}\lesssim\|(f*_{\lambda}\psi)_{\nabla}\|_{L^p_\lambda(\RR)}.
\end{eqnarray}

Proposition\,\ref{exexex4}   Proposition\,\ref{kernel2}  and  Proposition\,\ref{im1} lead to

\begin{eqnarray}\label{tan5}
\|(f*_{\lambda}\psi)_{\nabla}\|_{L^p_\lambda(\RR)}\lesssim\|f_{\gamma_\lambda}^*\|_{L^p_\lambda(\RR)}.
\end{eqnarray}
Formula\,(\ref{tan4}) Proposition\,\ref{exexex4} Proposition\,\ref{important1} and Theorem\,\ref{exexex3} lead to the following:

\begin{eqnarray}\label{tan6}
\|f_{\gamma_\lambda}^*\|_{L^p_\lambda(\RR)}\lesssim \|(f*_{\lambda}\phi)_{\nabla}\|_{L^p_\lambda(\RR)}.
\end{eqnarray}

 Formula\,(\ref{tan4}) Formula\,(\ref{tan5})  Formula\,(\ref{tan6}) Proposition\,\ref{no11}  and Theorem\,\ref{exexex3} lead to Formula\,(\ref{tan3}). This proves the Proposition.
\end{proof}

\begin{theorem}[$H_{\lambda}^p(\RR)$, $\widetilde{H}_{\lambda}^p(\RR)$  for $ p>\frac{1}{1+\gamma_\lambda}$.]\label{tan12}
For $ p>\frac1{1+\gamma_\lambda}$, $f(x)\in H_{\mu_\lambda}^p(\RR)$. Let $\gamma_\lambda=\frac{1}{2(2\lambda+1)}$, then we could obtain:
\begin{eqnarray}\label{tan7}
\|f_{\gamma_\lambda}^*\|_{L^p_\lambda(\RR)}\sim \|P_{\nabla}^*f\|_{L^p_\lambda(\RR)}.
\end{eqnarray}
Thus $\widetilde{H}_{\lambda}^p(\RR)$ and $H_{\lambda}^p(\RR)$ can be defined as follows:
$$\widetilde{H}_{\lambda}^p(\RR)=\widetilde{H}_{\mu_{\lambda}}^p(\RR)=\left\{g\in L_{\lambda}^2(\RR)\bigcap L_{\lambda}^1(\RR) :g_{S\gamma_\lambda}^*(x)\in L_{\lambda}^{p}(\RR) \right\}$$
$$H_{\lambda}^p(\RR)=H_{\mu_\lambda}^p(\RR)=\left\{g\in  S'(\RR, |x|^{2\lambda}dx): g_{S\gamma_\lambda}^*(x)\in L_{\lambda}^{p}(\RR) \right\}.$$

\end{theorem}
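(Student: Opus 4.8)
The plan is to prove the norm equivalence (\ref{tan7}) first on the dense class $L_\lambda^2(\RR)\cap L_\lambda^1(\RR)$ and then extend to all of $H_{\mu_\lambda}^p(\RR)$ by completion; once (\ref{tan7}) holds, the two displayed descriptions of $\widetilde H_\lambda^p(\RR)$ and $H_\lambda^p(\RR)$ coincide with those of Definition \ref{o1}. The first observation is arithmetic: for $\gamma=\frac1{2\lambda+1}$ one has $\frac1{1+\gamma}=\frac{2\lambda+1}{2\lambda+2}$, so the hypothesis $p>\frac{2\lambda+1}{2\lambda+2}$ is exactly $p>\frac1{1+\gamma}$, which places us inside the range of every result of Section \ref{sss4}. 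By Theorem \ref{kkk} the $\lambda$-Poisson kernel $K(r,x,t)$ of (\ref{kernel}) is a kernel of type $K_2$ with this same $\gamma$: its decay exponent $\frac{2(\lambda+1)}{2\lambda+1}=1+\gamma$ and its H\"older data match Definition \ref{us5}. Since $\{y(x,r):r>0\}=(0,\infty)$ by (\ref{y(x,r)}), the radial maximal function attached to $K$ is $f_2^\times(x)=\sup_{y>0}(f*_\lambda P_y)(x)$, and its $d_\lambda$-nontangential version is $f_{2\bigtriangledown_\gamma}^\times$; applying Proposition \ref{exexex4} to $F(u,y)=(Pf)(u,y)$ gives $\|f_{2\bigtriangledown_\gamma}^\times\|_{L^p_\lambda}\sim\|P_\nabla^*f\|_{L^p_\lambda}$, which is how the Euclidean nontangential object of (\ref{tan7}) enters the picture.

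The easy half $\|P_\nabla^*f\|_{L^p_\lambda}\lesssim\|f_\gamma^*\|_{L^p_\lambda}$ is then immediate: by Proposition \ref{im1} in the case $i=2$ we have $f_{2\bigtriangledown_\gamma}^\times\lesssim f_\gamma^*$ pointwise, whence $\|P_\nabla^*f\|_{L^p_\lambda}\sim\|f_{2\bigtriangledown_\gamma}^\times\|_{L^p_\lambda}\lesssim\|f_\gamma^*\|_{L^p_\lambda}$.

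For the reverse half $\|f_\gamma^*\|_{L^p_\lambda}\lesssim\|P_\nabla^*f\|_{L^p_\lambda}$ I would first trade the grand maximal function for a smooth radial one. Proposition \ref{sk} gives $\|f_\gamma^*\|_{L^p_\lambda}\sim\|f_{S\gamma}^*\|_{L^p_\lambda}$; Theorem \ref{H spa1} lets me pass from the exponent $\gamma$ to the exponent $\beta$ furnished by Theorem \ref{exexex3} (both exceed $p^{-1}-1$); and Theorem \ref{exexex3} together with Proposition \ref{exexex6} yields $\|f_{S\beta}^*\|_{L^p_\lambda}\sim\|(f*_\lambda\psi)_+\|_{L^p_\lambda}$ for any fixed even $\psi\in S(\RR,dx)$ with $\int_\RR\psi(t)|t|^{2\lambda}dt\sim1$. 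It therefore suffices to compare this Schwartz radial maximal function with the Poisson one, i.e.\ to prove $\|(f*_\lambda\psi)_+\|_{L^p_\lambda}\lesssim\|(f*_\lambda P)_+\|_{L^p_\lambda}=\|f_2^\times\|_{L^p_\lambda}$, after which $\|f_2^\times\|_{L^p_\lambda}\le\|f_{2\bigtriangledown_\gamma}^\times\|_{L^p_\lambda}\sim\|P_\nabla^*f\|_{L^p_\lambda}$ closes the chain. This comparison I would run through the Calder\'on-type reproducing identity of Proposition \ref{no3}/(\ref{uue5}), now with the Poisson kernel as the building block, writing $\psi=\sum_{k\ge0}\eta^{k,\lambda}*_\lambda P_{2^{-k}}$ with $(\SF_\lambda\eta^{k,\lambda})(\xi)=\varphi^k(\xi)(\SF_\lambda\psi)(\xi)e^{2^{-k}|\xi|}$; this is legitimate because $\SF_\lambda P_1(\xi)=e^{-|\xi|}$ never vanishes, so its reciprocal is controlled on each dyadic frequency shell.

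The heart of the matter, and the step I expect to be the main obstacle, is that $P$ is \emph{not} a Schwartz function: $e^{-|\xi|}$ has a corner at $\xi=0$, so although $\eta^{k,\lambda}$ is Schwartz with seminorms $O(2^{-kM})$ for $k\ge1$, the low-frequency coefficient $\eta^{0,\lambda}$ decays only like the Poisson kernel itself, $|\eta^{0,\lambda}(x)|\sim|x|^{-(2\lambda+2)}$. Consequently the $k=0$ term cannot be absorbed by the crude weighted estimate used in Proposition \ref{exexex6} once $p\le1$, and must instead be handled by a more careful analysis that exploits the fact that the Poisson decay exponent $1+\gamma=\frac{2\lambda+2}{2\lambda+1}$ is \emph{precisely critical} for the summation in the range $p>\frac1{1+\gamma}$, together with the aperture-change estimate of Proposition \ref{tan}. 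Having established $\|f_\gamma^*\|_{L^p_\lambda}\sim\|P_\nabla^*f\|_{L^p_\lambda}$ on $L_\lambda^1\cap L_\lambda^2$, I would finally invoke Proposition \ref{no11} (with Proposition \ref{kernel1} supplying the kernel-difference estimate needed for the $K_2$ kernel $K$) to confirm that the radial and nontangential Poisson maximal functions have equivalent $L^p_\lambda$ norms, and then pass to the completion to conclude $H_{\lambda}^p(\RR)=H_{\mu_\lambda}^p(\RR)$ with the two asserted descriptions.
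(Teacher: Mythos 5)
Your framework and the easy half match the paper: the identity $\frac{1}{1+\gamma}=\frac{2\lambda+1}{2\lambda+2}$, the use of Theorem\,\ref{kkk} to see the $\lambda$-Poisson kernel as a $K_2$-type kernel, Proposition\,\ref{im1} ($i=2$) plus Proposition\,\ref{exexex4} for $\|P_{\nabla}^*f\|_{L^p_\lambda}\lesssim\|f_{\gamma}^*\|_{L^p_\lambda}$, and the chain $\|f_{\gamma}^*\|\sim\|f_{S\beta}^*\|\sim\|(f*_{\lambda}\phi)_{+}\|$ via Proposition\,\ref{sk}, Theorem\,\ref{H spa1}, Theorem\,\ref{exexex3} and Proposition\,\ref{exexex6}, followed by density/Hahn--Banach to extend from $L^1_\lambda$ to $H_{\mu_\lambda}^p(\RR)$. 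All of that is sound and is essentially what the paper does.

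The gap is exactly the step you flag as "the heart of the matter," and your proposed resolution does not close it. Writing $\psi=\sum_{k\ge0}\eta^{k,\lambda}*_{\lambda}P_{2^{-k}}$ with $(\SF_\lambda\eta^{k,\lambda})(\xi)=\varphi^k(\xi)(\SF_\lambda\psi)(\xi)e^{2^{-k}|\xi|}$ is formally correct, but, as you note, $e^{|\xi|}$ has a corner at $\xi=0$, so $\eta^{0,\lambda}$ decays only like $(1+|x|)^{-(2\lambda+2)}$. To dominate the $k=0$ term by the nontangential Poisson maximal function via the weighted trick one needs $\int|\eta^{0,\lambda}(u)|(1+|u|)^{N}|u|^{2\lambda}du<\infty$ with $N>1/p$ (from the aperture-change estimate (\ref{no1}), with $p\le1$ forcing $N\ge1$), while the available decay only gives finiteness for $N<1$: the two requirements are incompatible for every $p\le 1$, and no appeal to "criticality" of the exponent $1+\gamma$ repairs this, since the alternative bound of the $k=0$ term by $f_{\gamma}^*$ (treating $\eta^{0,\lambda}*_\lambda P_1$ as a $K_2$ kernel) is circular. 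The paper avoids the division by $e^{-|\xi|}$ altogether and argues in the synthesis direction: following Stein, it takes $\eta$ on $[1,\infty)$, rapidly decreasing, with $\int_1^\infty\eta(s)\,ds=1$ and $\int_1^\infty s^k\eta(s)\,ds=0$ for all $k\ge1$, and sets $\Phi(x)=\int_1^\infty\eta(s)P_s(x)\,ds$; the moment cancellation makes $\Phi$ an even Schwartz function with $\int\Phi(x)|x|^{2\lambda}dx\sim1$, and then trivially
\begin{equation*}
(f*_{\lambda}\Phi)_{+}(x)\le\Big(\int_1^\infty|\eta(s)|\,ds\Big)\,P_{+}^*f(x)\lesssim P_{\nabla}^*f(x),
\end{equation*}
after which Proposition\,\ref{exexex6}, Theorem\,\ref{exexex3} and Proposition\,\ref{important1} give $\|f_{\gamma}^*\|_{L^p_\lambda}\lesssim\|P_{\nabla}^*f\|_{L^p_\lambda}$. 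You need this (or an equivalent device) in place of the unresolved $k=0$ analysis; as written, your proof of the hard inequality is incomplete.
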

(remark: $H_{\mu_\lambda}^p(\RR)$ with the $\mu_\lambda$ measure is not $H_{\mu\,\lambda}^p(\RR)$, as in Definition\,\ref{ha}.)

\begin{proof}
Let  $f\in L^1_{\lambda}(\RR)$ first. By Proposition\,\ref{exexex4} Theorem\,\ref{kkk} and Proposition\,\ref{im1}, we could deduce that:
\begin{eqnarray}\label{tan8}
\|P_{\nabla}^*f\|_{L^p_\lambda(\RR)}\lesssim\|f_{\gamma_\lambda}^*\|_{L^p_\lambda(\RR)}.
\end{eqnarray}
Next we will prove
\begin{eqnarray}\label{tan9}
\|f_{\gamma_\lambda}^*\|_{L^p_\lambda(\RR)}\lesssim\|P_{\nabla}^*f\|_{L^p_\lambda(\RR)}.
\end{eqnarray}
Notice the $\lambda$-Poisson kernel is $\tau_xP_y(-t)$ with $P_y(x)=a_\lambda y\left(y^2+x^2\right)^{-\lambda-1}$, where $a_\lambda=2^{\lambda+1}\Gamma(\lambda+1)/\sqrt{\pi}$. We use similar idea in \cite{Stein}. There exists a function $\eta$ defined on
$[1, \infty)$  that is rapidly decreasing at $\infty$ and satisfies the moment conditions:
\begin{eqnarray}\label{tan10}
\int_1^\infty \eta(s)ds=1,\ \ \hbox{and}\ \ \int_1^\infty s^k\eta(s)ds=0,\ \hbox{for}\,k=1,2,\ldots.
\end{eqnarray}
Then  we could check that the function $\Phi(x)$  $$\Phi(x)=\int_1^{\infty}\eta(s)P_s(x)ds,$$
is rapidly decreasing and is an even function: $\Phi(x) \in S(\RR, dx)$ is even. Also it is clear that
$$\int \Phi(x)|x|^{2\lambda}dx=C \int_{1}^{\infty}\eta(s)ds\sim 1.$$
Thus we could deduce that:
\begin{eqnarray}\label{tan11}
\left(f*_{\lambda}\Phi_{y}\right)_{+}(x)&=&\sup_{y>0}\left|\int f(t)\tau_{-t}\Phi_{y}(x)|t|^{2\lambda}dt\right|
\\&=&\nonumber\sup_{y>0}\left|\int \tau_{-t}f(x)\Phi_{y}(t)|t|^{2\lambda}dt\right|
\\&\lesssim&\nonumber\sup_{y>0}\left|\int \tau_{-t}f(x)\int_1^{\infty}\eta(s)P_{ys}(t)ds|t|^{2\lambda}dt\right|
\\&\lesssim&\nonumber\sup_{y>0}\left|\int \tau_{-t}f(x)\int_1^{\infty}\eta(s)P_{ys}(t)ds|t|^{2\lambda}dt\right|
\\&\lesssim&\nonumber P_{+}^*f(x)
\\&\lesssim&\nonumber P_{\nabla}^*f(x).
\end{eqnarray}
Thus the above Formula\,(\ref{tan11}),  Proposition\,\ref{important1}, Theorem\,\ref{exexex3}, and Proposition\,\ref{exexex6}, we could deduce Formula\,(\ref{tan9}). Thus Formula\,(\ref{tan7}) holds for $f\in L^1_{\lambda}(\RR)$. Notice that $\widetilde{H}_{\mu_{\lambda}}^p(\RR)$ is dense
in $H_{\mu_\lambda}^p(\RR)$. Thus by the Hahn-Banach theorem, we could deduce that Formula\,(\ref{tan7}) holds for $f\in H_{\mu_\lambda}^p(\RR)$.
Thus together with Theorem\,\ref{important}, $\widetilde{H}_{\lambda}^p(\RR)$ and $H_{\lambda}^p(\RR)$ can be defined as follows:
$$\widetilde{H}_{\lambda}^p(\RR)=\widetilde{H}_{\mu_{\lambda}}^p(\RR)=\left\{g\in L_{\lambda}^2(\RR)\bigcap L_{\lambda}^1(\RR) :g_{S\gamma_\lambda}^*(x)\in L_{\lambda}^{p}(\RR) \right\}$$
$$H_{\lambda}^p(\RR)=H_{\mu_\lambda}^p(\RR)=\left\{g\in  S'(\RR, |x|^{2\lambda}dx): g_{S\gamma_\lambda}^*(x)\in L_{\lambda}^{p}(\RR) \right\},$$
where $\displaystyle{\gamma_\lambda=\frac{1}{2(2\lambda+1)}}$.
This proves the Theorem.
\end{proof}
Thus we could obtain the following Proposition:
\begin{proposition}\label{s3}
$u(x, y)$ is a $\lambda$-harmonic function, for $1 \geq p>\frac{1}{1+\gamma_\lambda}$\\
case1, $u_{\nabla}^\ast(x) \in L_{\lambda}^p(\RR)\bigcap L_{\lambda}^2(\RR)\bigcap L_{\lambda}^1(\RR)$, then there exists $f\in\widetilde{H}_{\lambda}^p(\RR)$, such that
\begin{eqnarray}\label{tan13}
u(x, y)= f*_\lambda P_y(x).
\end{eqnarray}
case2, $u_{\nabla}^\ast(x) \in L_{\lambda}^p(\RR)$, then there exists $f\in H_{\lambda}^p(\RR)$, such that
\begin{eqnarray}\label{tan14}
\displaystyle{\int \sup_{|x-s|<y}\bigg|u(s,y)-f*_\lambda P_y(s)\bigg|^p|x|^{2\lambda}dx=0,}
\end{eqnarray}
moreover, $$\|u_{\nabla}^\ast\|_{L_{\lambda}^p(\RR)}\sim \|f\|_{H_{\lambda}^p(\RR)}.$$

\end{proposition}
\begin{proof}
By Proposition\,\ref{sss1}(4), we could deduce Formula\,(\ref{tan13}). By Theorem\,\ref{u2}(2), Proposition\,\ref{p4}, Formula\,(\ref{tan13}), together with the fact that $\widetilde{H}_{\lambda}^p(\RR)$ is dense in $H_{\lambda}^p(\RR)$, we could deduce that Formula\,(\ref{tan14}) holds.
This proves the Proposition.
\end{proof}

\end{document}